\documentclass[a4paper]{article}

\usepackage[utf8]{inputenc}
\usepackage[T1]{fontenc}       

\usepackage[english]{babel}

\usepackage[bottom=3cm, top=3cm, left=3cm, right=3cm]{geometry}

\usepackage{setspace}

\usepackage{mathtools}
\usepackage{amssymb}
\usepackage{amsthm}
\usepackage{thmtools}
\usepackage{stmaryrd} 
\usepackage{dsfont}       
\usepackage{esvect}

\usepackage{graphicx}
\usepackage{pdflscape}
\usepackage{subcaption}
\usepackage{floatrow}
\usepackage{sidecap}

\usepackage{soul}      
\usepackage{ulem}     
\usepackage{upgreek}
\usepackage[dvipsnames]{xcolor}
\usepackage{authblk}
\usepackage{appendix}
\usepackage{lmodern}
\usepackage[section]{placeins}

\usepackage{csquotes}
\usepackage[unicode]{hyperref}
\usepackage{xurl} 
\usepackage[noabbrev,capitalize,nameinlink]{cleveref}

\usepackage[
    backend=biber,
    uniquename=false,
    bibencoding=utf8,
    sorting=nyt,
    doi=false, isbn=false, url=false,
    style=alphabetic,
    maxcitenames=2,
    uniquelist=false,
    maxbibnames=123,
    backref=false, 
    hyperref=true
]{biblatex}
\newcommand{\R}{\mathds{R}}
\newcommand{\N}{\mathds{N}}

\newcommand{\C}{C}

\newcommand{\Diff}{\operatorname{Diff}}
\newcommand{\id}{\operatorname{id}}

\hypersetup{
    colorlinks=true,
    linktoc=all,
    linkcolor=blue,   
    citecolor=red, 
    urlcolor=red,
    bookmarksnumbered=true,
    unicode=true,
        pagebackref=false,
    pdfpagelabels,
    plainpages=false
}

\theoremstyle{plain}
\newtheorem{theorem}{Theorem}[section]
\newtheorem{lemma}[theorem]{Lemma}
\newtheorem{proposition}[theorem]{Proposition}
\newtheorem{corollary}[theorem]{Corollary}

\theoremstyle{definition}
\newtheorem{definition}[theorem]{Definition}
\newtheorem{example}[theorem]{Example}

\theoremstyle{remark}
\newtheorem{remark}[theorem]{Remark}   
\newtheorem*{remark*}{Remark}          

\newcommand{\app}[4]{\begin{array}{ccl}
   #1 & \longrightarrow & #2 \\
   #3 & \longmapsto & #4 \\
\end{array}}

\newcommand{\eqdef}{\vcentcolon=}
\DeclareMathOperator*{\argmin}{arg\,min}
\sidecaptionvpos{figure}{c}

\addto\captionsfrench{}
\title{A Varifold-Based Score for Decoupling Deformations in Shape Analysis}

\author[1,2]{Rayane Mouhli\thanks{\texttt{rayane.mouhli@math.cnrs.fr}}}
\author[2]{Barbara Gris\thanks{\texttt{barbara.gris@sorbonne-universite.fr}}}
\author[1]{Irène Kaltenmark\thanks{\texttt{irene.kaltenmark@u-paris.fr}}}

\affil[1]{Université Paris Cité, MAP5}
\affil[2]{Sorbonne Université, LJLL}
\date{}

\begin{document}
\maketitle

\begin{abstract}
In computational anatomy, analyzing morphological variability across shape populations often requires multi-component deformation models that combine structured motions and unconstrained diffeomorphisms. However, a major challenge arises during the registration process, as high-dimensional deformations tend to absorb lower-dimensional components, altering the true geometric variability and preventing accurate statistical analysis. To address this issue, we introduce a novel coupling score designed to decouple distinct deformation modes during registration. This score is defined using first variation of varifolds which is a varifold representation of the infinitesimal action of vector fields on shapes. The proposed score quantifies the extent to which the action of a given vector field on a shape can be replicated by another subspace of vector fields. We provide a theoretical analysis of this coupling score, illustrating its behavior on finite-dimensional spaces. Finally, we integrate this score as a penalization term in registration problems. Numerical experiments illustrate its efficiency in various use cases such as enforcing or preventing specific motions, and iteratively correcting complex matching scenarios by enforcing structured directional priors.
\end{abstract}

\tableofcontents
\section{Introduction} 

In computational anatomy, a central objective is to study the geometric variability within a family of shapes. 
More than pure statistical descriptions, explaining the nature of morphological variability is key to biological applications. In particular, mapping shapes and characterizing their differences provide necessary structural insights. A common approach involves shape registration via large deformations, which relies on constructing diffeomorphisms to map these shapes onto one another \cite{LDDMM,Grenanderetal2007,Thirion1998,Vercauterenetal2009}. 

Beyond pairwise registration, the deformation itself becomes an object of study, which is fundamental for longitudinal analyses and for the modeling of population-wide changes, as in the contexts of organ growth, atrophy, or aging \cite{kaltenmark_thesis}. In particular, controlling the deformation throughout the registration process ensures that the underlying anatomical structures are inherently respected. This circumvents the need for the explicit segmentations typically required to guarantee meaningful biological correspondences. To generate meaningful and interpretable shape deformations, a natural strategy is to construct the total deformation as a combination of components (isometries, local scalings, unstructured diffeomorphisms etc.) using semidirect products of deformation groups \cite{Bruveris_2010,Bruveris_2012,mouhli2025}, modular deformations \cite{gris_module} or multiscale approaches \cite{Risser2010,vialard:hal-00663393,debroux2023multiscale,MODIN20191009,sommer2011,sommer2013sparse,BME,pierron2024}. These decompositions aim to enhance the biological fidelity of the matching and to isolate a specific component for each deformation pattern, allowing one to individually track and analyze the underlying structural changes. While recent deep learning models \cite{tian2024unigradicon,shenNetworksJointAffine2019} offer effective registration, current implementations prevent the separation and the interpretation of the various components of the resulting transformations.

However, multi-component deformation models raise the challenge of accurately weighting, balancing, and measuring the individual contribution of each deformation type. Those lying in high-dimensional spaces are extremely flexible and tend to compensate for lower-dimensional deformations. For instance, in a two-mode deformation model combining rotations and unstructured diffeomorphisms, the last component might perform rotations during optimization, preventing the model from properly separating the roles of the different deformations. To achieve a meaningful decomposition, the rotation performed by the total deformation should be captured entirely by the rotation component, leaving only the remaining deformations to the diffeomorphism. This enables an accurate measurement of the true global rotation performed by the model. Another motivation for considering this multi-component framework is that unconstrained diffeomorphisms often struggle to reproduce highly structured deformations due to the shape's reparameterization invariances, separating unstructured and structured deformations becomes essential to accurately capture these distinct behaviors.
A standard approach to decouple different types of deformations employs a sequential two-step registration process, where shapes are first aligned globally at a coarse scale before refining the alignment at a finer scale. Another baseline strategy models the transformation as a diffeomorphism generated by a sum of vector fields, where each field represents a distinct deformation mode. During optimization, the relative influence of these modes is controlled by adjusting their respective penalty weights. However, this method requires manual hyperparameter tuning, and poorly balanced weights can lead to inconsistent deformations. To improve upon these baselines and optimize all scales simultaneously, Sommer et al. \cite{sommer2011multiscale} introduced a multiscale kernel bundle framework to decouple deformations within a single registration step.More recently, \cite{mouhli2025} relied on geometric mechanics and reduction theory \cite{MARSDEN1974121,marsden1994introduction} to achieve this decoupling in the specific case of deformations represented by a semidirect product of a finite-dimensional Lie group and a groups of diffeomorphisms.

In this paper, we introduce a novel coupling score that enforces the decoupling of distinct deformation modes during the matching process. We consider shapes as elements of a shape space as defined in \cite{arguillere2015generalsettingshapeanalysis}, which is a manifold acted upon by a group of diffeomorphisms under specific regularity assumptions. All deformations considered are modeled as diffeomorphisms generated by integrating a sum of time-varying vector fields, where each individual field represents a distinct deformation mode. Deforming a shape via such a diffeomorphism induces an infinitesimal action of the associated vector field on the shape. The definition of the coupling score requires a metric capable of comparing the respective actions of the deformations on a given shape, rather than merely comparing the deformation themselves. Since we are interested in the geometric deformation of the shape, we seek a metric that is invariant under reparameterization. Directly comparing the infinitesimal actions of vector fields on shapes lacks this invariance. Furthermore, while intrinsic metrics \cite{SRV,bauer2022intrinsic, michor2007overview} successfully achieve reparameterization invariance, they remain restricted to curves and surfaces and cannot easily generalize to other shapes. To overcome these limitations, we define a metric using varifolds \cite{Almgren,Allard,Buet_2017}, a concept originating from geometric measure theory and later adapted for computational anatomy by Charon and Trouvé \cite{Charon_2013}. A varifold provides a representation of shapes, defined as submanifolds embedded in $\mathbb{R}^d$, that is reparameterization invariant. Within the RKHS framework, varifolds enable to define distances between shapes \cite{Kaltenmark_2017_CVPR, Charon_2013} or more complex structures like trees \cite{maignant2026velotree}. The use of varifolds has been extended to numerous applications, including partial matching \cite{antonsanti2021partial,sukurdeep2022new,hsieh2021} and classification/regression tasks \cite{hartman2025svarm}.
 The first variation of varifolds \cite{Allard} provides a representation of the infinitesimal deformation of the underlying shape induced by a vector field. Conceptually, it acts as the analog from the varifold viewpoint of the infinitesimal action of a vector field on shapes, as defined by Arguillère \cite{arguillere2015generalsettingshapeanalysis}. In the context of a deformation model generated by two spaces of vector fields $V$ and $W$, for a fixed shape $q$, our coupling score computes the projection of the first variation of $q$ induced by a vector field $v \in V$ onto the space of first variations of $q$ induced by $W$. In other words, it measures the extent to which the action of $v$ on a shape can be replicated by an element of $W$. The primary advantage of using the first variation rather than comparing the infinitesimal action of vector fields on shapes, is that it ignores tangential deformations. Consequently, it strictly captures the deformation of the geometric support without being affected by local reparameterizations. 
 This article focuses specifically on the example of curves to ensure simpler and more interpretable expressions, even though the results generalize to larger families of shapes, such as surfaces. From a practical viewpoint, the coupling score is designed for registration problems that require the decoupling of multiple deformation modes, or where one seeks to enforce or prevent specific behaviors during the matching. This score can be integrated as a penalization term in the optimization functional of any registration problem that optimizes over the space of vector fields.
 
\paragraph{Main contributions} The main contributions of this article are summarized as follows:

\begingroup
\renewcommand\labelenumi{(\theenumi)}
\begin{enumerate}
    \item \textbf{First Variations of Varifolds}: We adapt the first variation of varifolds framework to represent the infinitesimal action of a vector field on a curve for shape analysis. Then, we consider a RKHS continuously embedded in the space of varifolds, whose metric allows to define a tractable dissimilarity measure on this space.
    \item \textbf{Introduction of a Coupling Score}: We define a coupling score that quantifies the extent to which the action of a vector field $v \in V$ on a shape can be replicated by a vector field from a subspace $W$. Furthermore, we provide a theoretical study of this score, illustrating its behavior with concrete examples of actions of finite-dimensional vector field spaces, such as global translations and one-dimensional scalings on curves.
    \item \textbf{Decoupling for Shape Registration and Numerical Experiments}: We integrate this coupling score into the LDDMM registration framework and show different use cases of this score through numerical experiments, such as decoupling deformations,  preventing/enforcing a certain type of deformations or iteratively correcting complex matching scenarios by enforcing structured directional priors.
\end{enumerate}
\endgroup

\paragraph{Structure of the paper} \cref{sec:varifold} reviews the fundamental definitions and properties of varifolds and first variations of varifolds, along with their associated RKHS metrics. \cref{sec:correlation} introduces the coupling score, deriving its general expression and illustrating its behavior when applied to finite-dimensional spaces such as translations and scalings. \cref{sec:stat_to_dyn} adapts the coupling score setting, defined statically in the previous section, into a dynamic framework suitable for shape registration problems. Finally, \cref{sec:experiments} presents numerical experiments demonstrating the versatility of the coupling score in various scenarios such as decoupling deformations, explicitly preventing or enforcing specific deformation modes, and iteratively correcting complex matching errors in population studies.
\section*{Notations}

\begin{itemize}

    \item Unless otherwise specified, the norm $\Vert \cdot \Vert$ refers to the norm in $\R^d$.
    
    \item Given a vector space $H$, we denote $H'$ its dual space, that is the space of continuous linear functionals $H \rightarrow \R$.

    \item  $C^k_0(\R^d,\R^d)$ endowed with the norm $\Vert v \Vert_{C_0^k} = \sum_{i=0}^k \Vert d^i v \Vert_\infty$ is the Banach space of $C^k(\R^d,\R^d)$ functions that vanish at infinity. 
    
    \item $\Diff_{C_0^k}(\R^d)$ is the space of $k$-times continuously differentiable functions that tend towards identity at infinity.
    \item Let $v : \R^d \to \R^d$ be a vector field and let $q_{f^1}, q_{f^2}$ be two landmarks in $\R^d$ forming an edge $f$. Let us denote:
    \begin{itemize}
        \item $\ell_f = \Vert q_{f^2} - q_{f^1} \Vert$,
        \item $c_f = \frac{q_{f^1} + q_{f^2}}{2}$,
        \item $\vec{t}_f = \frac{q_{f^2} - q_{f^1}}{\ell_f}$,
        \item $v_f = \frac{v(q_{f^1}) + v(q_{f^2})}{2}$,
        \item $(\delta \ell_f)_v = \left\langle \frac{v(q_{f^2}) - v(q_{f^1})}{\ell_f}, \vec{t}_f \right\rangle$,
        \item $\nabla^\perp v_f = \frac{v(q_{f^2}) - v(q_{f^1})}{\ell_f} - \left\langle \frac{v(q_{f^2}) - v(q_{f^1})}{\ell_f}, \vec{t}_f \right\rangle \vec{t}_f$.
    \end{itemize}
    
    \item Let $q : I \to \R^d$ be a parameterized curve such that $\Vert q'(s) \Vert > 0$ for all $s \in I$. Let us denote:
    \begin{itemize}
        \item $\tau_q(s) = \frac{q'(s)}{\Vert q'(s) \Vert}$,
        \item $v(q(s))^\perp = v(q(s)) - \langle v(q(s)), \tau_q(s) \rangle \tau_q(s)$.
    \end{itemize}

\end{itemize}

\section{Varifolds and first variations of varifolds} \label{sec:varifold}

Varifolds were originally introduced by Almgren \cite{Almgren} and further developed by Allard \cite{Allard} to address Plateau's problem, which consists in finding surfaces of minimal area bounded by a given contour. Charon and Trouvé \cite{Charon_2013} later adapted this framework to provide an effective representation of shapes as measures in the context of computational anatomy. Notably, they introduced a kernel metric on the space of varifolds, yielding a dissimilarity measure between shapes. Then, they generalized the notion of first variation of varifolds, first introduced by Allard \cite{Allard}, to study the variation of varifolds with respect to their underlying shapes. In particular, it offers a varifold representation of the action of a vector field on a shape. In continuation of this work, we will consider first variations of varifold to define a dissimilarity measure between vector fields.

\subsection{Varifolds}

This section provides a brief overview of the standard definitions and classical properties related to varifolds and Reproducing Kernel Hilbert Space (RKHS) theory \cite{aronszajn50reproducing}. The definition of varifolds and further properties, some of which are recalled here, are presented in \cite{Charon_2013,Kaltenmark_2017_CVPR}.

\subsubsection{Definition and first properties}
The representation of oriented submanifolds as distributions was first formalized by the framework of currents \cite{glaunes_distrib,glaunes_vaillant}. In this setting, an oriented $k$-dimensional submanifold $X$ embedded in $\R^d$ is associated with the current $C_X : \omega \mapsto \int_X \omega$, where $\omega$ belongs to the space of $k$-dimensional differential form. Varifolds extend this framework by enabling representation of non-oriented submanifolds as a measure over unoriented tangent spaces \cite{Charon_2013}. We first give the general definition of varifolds.

\begin{definition}[Varifolds] Let $G_k(\R^d)$ denotes the Grassmannian of dimension $k$, that is the set of $k$-dimensional linear subspaces of $\R^d$.
A $k$-dimensional varifold on $\R^d$ is a Borel finite measure on the product space $\R^d \times G_k(\R^d)$ or equivalently by Riesz theorem, a continuous linear form on $C^0_0(\R^d \times G_k(\R^d),\R)$. 
\end{definition} 
Given a non-oriented submanifold $X$ of dimension $k$, its tangent space at $x \in X$, denoted $T_xX$ belongs to $G_k(\R^d)$. Therefore, one can associate to $X$ a varifold $\mu_X$ defined by, for $\omega \in C^0_0(\R^d \times G_k(\R^d),\R)$
    \begin{equation} \label{eq:general_varifold}
        \mu_X(\omega) = \int_X \omega(x, T_xX)d \mathcal{H}^k(x)
    \end{equation}
where $\mathcal{H}^k$ denotes the $k$-dimensional Hausdorff measure on $\R^d$.

The framework introduced in this paper holds for any submanifolds embedded in $\R^d$ such as open and closed curves in $\R^2$ and $\R^3$, surfaces or curve bundles. However, we will restrict our study to oriented curves in $\R^d$ as this allows us to identify the oriented tangent space with the unit sphere $\mathbb{S}^{d-1}$. Consequently, the tangent space $T_x X$ can be represented by a unit vector, denoted $\vec{t}(x) \in \mathbb{S}^{d-1}$. 

\begin{definition}[Curves]\label{def:curves}  Let  $I=[0,1]$ or $I=\mathbb{S}^{1}$. We define the space of curves as the space of $C^1$-immersions  $Q =\{ q \in C^{1}(I,\mathbb{R}^d) \mid \Vert q'(s) \Vert \ne 0, \forall s \in I \}$. A discrete representation of a curve $q \in Q$ is defined by a sample of $N$ distinct points along the curve $(q_i)_{i=1,...,N}$ linked by segments in $F_q \subset \{1,...,N\}^2$, forming a piecewise linear approximation.
\end{definition}

In what follows, we introduce specific notations for the various varifolds representations presented in \cref{exam:smooth_curve} and \cref{exam:discretized_curve}. For a curve $q \in Q$, we denote its associated varifold by $\tilde{\mu}_q$. Given a discrete representation $((q_i)_i,F_q)$ of $q$, its associated varifold is denoted $\hat{\mu}_q$ and can be approximated by a sum of Dirac masses $\mu_q$, which is also a varifold as detailed below.

\begin{example}[Varifold representation of a curve]\label{exam:smooth_curve}

Using \cref{eq:general_varifold}, the varifold associated with a curve $q \in Q$ is given for any $\omega \in \C_0^0(\R^d \times \mathbb{S}^{d-1},\R)$ by
\begin{equation} \label{eq:smooth_varifold}
    \tilde{\mu}_q(\omega)=\int_I \omega\left(q(s),\tau_{q}(s)\right) \Vert q'(s) \Vert ds 
\end{equation}
with $\tau_{q}(s) = \dfrac{q'(s)}{\Vert q'(s)\Vert}$. Note that by change of variable, this expression is independent of any positive reparameterization of $q$. This formula remains valid for piecewise curves.
\end{example}

\begin{example}[Varifold representation of a discretized curve]\label{exam:discretized_curve}
   For a curve $q \in Q$, the varifold associated with a discretized representation $\left((q_i)_i,F_q \right)$ is defined by, for $\omega \in C_0^0(\R^d \times \mathbb{S}^{d-1},\R)$
     \begin{equation}
         \hat{\mu}_q(\omega)= \sum_{f \in F_q} \int_{f} \omega(x, \vec{t}_f) dx
     \end{equation}
     where $\vec{t}_f=\dfrac{q_{f^2}-q_{f^1}}{\Vert q_{f^2}-q_{f^1} \Vert}$ denotes a unit tangent vector to this edge. Moreover, by denoting $c_f=\dfrac{q_{f^1} + q_{f^2}}{2}$ and $\ell_f = \Vert q_{f^2} - q_{f^1} \Vert$ the center and the length of the edge $f$, the finite weighted sum of Dirac masses
    \begin{equation} \label{eq:varifold_discrete}
        \mu_q(\omega) = \sum_{f \in F_q} \ell_f \updelta_{(c_f,\vec{t}_f)}(\omega) = \sum_{f \in F_q} \ell_f \omega(c_f,\vec{t}_f)
    \end{equation} is a varifold which approximates $\hat{\mu}_q$, in the sense defined by \cite{Kaltenmark_2017_CVPR}, with an approximation error bounded by the maximal edge length. For computational convenience, the varifold representation of a curve that is used in practice is the approximation by a weighted sum of Diracs $\mu_q$. In what follows, unless otherwise specified, a curve $q$ is assumed to be represented by the sum of Dirac $\mu_q$ with an implicitly chosen discretization $((q_i)_i,F_q)$.
\end{example}

In the context of computational anatomy, curve matching consists in the deformation of a template curve by $q$ a diffeomorphism $\varphi$ \cite{LDDMM}, defined by the action $\varphi \cdot q = \varphi \circ q$. This action naturally extends to the transport of the associated varifold given by the push-forward $\varphi_* \mu_q := \mu_{\varphi(q)} $, which defines a varifold again \cite[Proposition 3.2]{Charon_2013}.

\begin{example}[Transport of curves by diffeomorphisms] For $q \in Q$, the transport of the varifold $\tilde{\mu}_q$ by a diffeomorphism $\varphi \in \Diff(\R^d)$ is given,  for $\omega \in C_0^0(\R^d \times \mathbb{S}^{d-1},\R)$, by
\begin{equation*}
   \tilde{\mu}_{\varphi(q)}(\omega)=\int_I \omega\left(\varphi(q(s)),\dfrac{d_{q(s)} \varphi(\tau_{q(s)})}{\Vert d_{q(s)} \varphi(\tau_{q(s)}) \Vert}\right) \big\Vert d_{q(s)}\varphi(q'(s)) \big\Vert ds.
\end{equation*} 
 Similarly, given a discrete representation $((q_i)_i, F_q)$ of $q$, the transport of its representation by a sum of Diracs $\mu_q$, is
\begin{equation*}
   \mu_{\varphi(q)} (\omega) = \sum_{f \in F_q} \Vert \varphi(q_{f^2})-\varphi(q_{f^1}) \Vert ~\omega(c(\varphi(q)_f),\vec{t}(\varphi(q)_f))
\end{equation*}
where $c(\varphi(q)_f) = \dfrac{\varphi(q_{f^1}) + \varphi(q_{f^2})}{2} $ and $\vec{t}(\varphi(q)_f) = \dfrac{ \varphi(q_{f^2}) - \varphi(q_{f^1})  }{\Vert \varphi(q_{f^2}) - \varphi(q_{f^1}) \Vert}$.
\end{example}

\subsubsection{RKHS metric on varifolds }

The space of varifolds is not canonically equipped with a tractable metric. This issue can be addressed by considering the inner product associated with a Reproducing Kernel Hilbert Space (RKHS)  \cite{aronszajn50reproducing} continuously embedded in $C_0^0(\R^d \times \mathbb{S}^{d-1},\R)$, which leads to a simple and computationally practical expression for discrete curves. A RKHS $H \hookrightarrow C_0^0(\R^d \times \mathbb{S}^{d-1},\R)$ can be defined by the completion of the vector space spanned by the fundamental functions
\begin{equation*}
    k_{H}((x,\vec{t}),\cdot) : \app{\R^d \times \mathbb{S}^{d-1}}{\R}{(x',\vec{t}')}{k_{H}((x,\vec{t}),(x',\vec{t}'))} \end{equation*} \\
where $k_{H}$ is a positive kernel \cite{glaunes2005transport}. For $(x,V)\in \R^d \times \mathbb{S}^{d-1}$,  the reproducing property of the kernel states that the Dirac measure $\updelta_{(x,V)} : \omega \in H \mapsto \omega(x,V) \in \R$ is mapped to the function $ k_{H}((x,V),\cdot)$  thanks to the Riesz isomorphism $K_{H} : H' \rightarrow H$. Then, the inner product between Dirac measures is defined by, for every $x,x' \in \R^d$ and $V,V' \in \mathbb{S}^{d-1}$,
\begin{align}\label{inner_product_varifold}
    \langle \updelta_{(x,V)}, \updelta_{(x',V')} \rangle_{H'} &=  \langle K_H \updelta_{(x,V)}, K_H \updelta_{(x',V')} \rangle_{H}\\
    &= \langle k_{H}((x,V),\cdot), k_{H}((x',V'),\cdot) \rangle_{H} \notag \\
    &= k_{H}((x,V),(x',V')) \notag
\end{align}
and it can be extended to any varifolds associated with curves $\mu_{q_a}, \mu_{q_b} \in H'$ by bilinearity and continuity of the inner product \cite{Charon_2013,glaunes2005transport}. 
The definition of a RKHS on $C_0^0(\R^d \times \mathbb{S}^{d-1},\R)$ requires the construction of a kernel on the product space $\R^d \times \mathbb{S}^{d-1}$. Such a kernel can be built by considering the tensor product of a positive kernel $k_E$ on positions and a positive kernel $k_T$ on tangents
\begin{equation*}\label{eq: product varifold kernel}
    k_{H}((x,V),(x',V'))=k_E(x,x')k_T(V,V').
\end{equation*}
If $k_T$ is continuous and $k_E$ is continuous, bounded and for all $x \in \R^d$ the function $k_E(x,\cdot)$ vanishes at infinity, then the RKHS associated with $k_E \otimes k_T$ is continuously embedded into $C_0^0(\R^d \times \mathbb{S}^{d-1},\R)$. The kernel $k_T$ on $\mathbb{S}^{d-1}$  can be induced by the restriction of a positive kernel on  the space of linear mapping from $\R^d$ to $\R^d$, denoted $\mathcal{L}(\R^d)$, by identifying $V \in \mathbb{S}^{d-1}$ with the orthogonal projection on $V$ denoted $p_V \in \mathcal{L}(\R^d)$ \cite{Charon_2013}.

\begin{example}[RKHS inner product between varifolds \cite{Kaltenmark_2017_CVPR}]

    Let $H \hookrightarrow C_0^0(\R^d \times \mathbb{S}^{d-1},\R)$ be a RKHS generated by a kernel $k_{H} = k_E \otimes k_T$. For $q_a, q_b \in Q$, the inner product between the varifolds $\tilde{\mu}_{q_a}$ and $\tilde{\mu}_{q_b}$, is given by 
    \begin{equation*}
        \langle \tilde{\mu}_{q_a} , \tilde{\mu}_{q_b} \rangle_{H'} = \int_I \int_I  k_E \left(q_a(s),q_b(t)\right) k_T \left(\tau_{q_a}(s),\tau_{q_b}(t)\right) \Vert q_a'(s) \Vert \Vert q_b'(t) \Vert ds \, dt
    \end{equation*}
and between their approximation $\mu_{q_a}$ and $\mu_{q_b}$, by
\begin{equation*}
    \langle \mu_{q_a} , \mu_{q_b} \rangle_{H'} = \sum_{\substack{f \in F_{q_a} \\ g \in F_{q_b}}}\ell_{a,f} \, \ell_{b,g} \, k_E(c_{a,f},c_{b,g}) \,k_T(\vec{t}_{a,f},\vec{t'}_{b,g})
\end{equation*}.
\end{example}

The inner product on $H'$ induces only a pseudo-distance on the space of varifolds $C_0^0(\R^d \times \mathbb{S}^{d-1},\R)'$ since the dual mapping $i^* : C_0^0(\R^d \times \mathbb{S}^{d-1},\R)' \rightarrow H'$ is not necessarily injective \cite{Charon_2013}. To ensure that $i^*$ is an embedding and consequently that the pseudo-distance is a distance, we can consider kernels $k_{H}$ that satisfy the $C_0$-universal property, which is equivalent to the property of $H$ being dense in $C_0^0(\R^d \times \mathbb{S}^{d-1}, \R)$ \cite{carmeli2008vectorvaluedreproducingkernel}. As an example, the Gaussian kernel $k_{\sigma} : (x,y) \mapsto \exp\left({-\dfrac{\Vert y - x \Vert^2}{\sigma^2}}\right)$, widely used in computations, is a $C_0$-universal kernel on $\R^d$. In the case where $k_H = k_E \otimes k_T$, if $k_E$ is a $C_0$-universal kernel on $\R^d$ and $k_T$ is a Gaussian kernel on $\mathcal{L}(\R^d)$ then $k_{H}$ is a $C_0$-universal kernel. 

\begin{remark}
As noticed in \cite[Section 4.2.3]{glaunes2005transport}, the transport of varifolds by diffeomorphisms, given by $(\varphi_* \mu)(\omega)=\mu(\varphi^*\omega)$ for $ \mu \in C_0^0(\R^d \times \mathbb{S}^{d-1},\R)'$, cannot be defined for $ \mu \in H'$ since the transport of test functions $\varphi^* \omega$ might not belong to the RKHS $H$. Indeed, let $H_{\sigma} \hookrightarrow C_0^0(\R^d \times \mathbb{S}^{d-1},\R)$ be a RKHS induced by a Gaussian kernel $k_{\sigma}$ with scale $\sigma>0$ on $\R^d$ and a degenerated kernel $k_T = 1$ on $\mathbb{S}^{d-1}$.  For $\sigma' >0$, the pull-back of  $\omega = k_{\sigma}(0,\cdot)  \in H_{\sigma}$ by the rescaling diffeomorphism $\varphi : x \mapsto \dfrac{\sigma}{\sigma'} x $ is 
$\varphi^* \omega : x \mapsto (\dfrac{\sigma}{\sigma'})^d \exp({-\dfrac{\Vert x \Vert^2}{\sigma^{'2}}}) $ which belongs to $H_{\sigma'}$.
However, $\varphi^* \omega$ does not belong to $H_{\sigma}$ for $\sigma'\leq \dfrac{\sigma}{2}$. Note that this theoretical limitation only applies to general varifold $\mu$ and not to varifolds associated with curves. Indeed, for any curve $q$, the action of a diffeomorphism $\varphi$ on $\tilde{\mu}_q \in H'$ is $\tilde{\mu}_{\varphi(q)}$ which also belongs to $H'$.
\end{remark}

\begin{remark}\label{rem:deg_var}
Historically, varifolds have been introduced in computational anatomy as an extension of the  currents framework \cite{glaunes_vaillant}, which is a tool from geometric measure theory providing a representation of oriented submanifolds. One limitation of the currents presented in \cite{Charon_2013} is that a submanifold $X$ of positive mass may have arbitrary small RKHS norm. This cancellation effect can occur in shapes like sharp spins or tails due to the opposite orientation at nearby points. Even though, this phenomenon may result in some loss of information, it could be useful to represent noisy data. However, RKHS metrics defined on varifolds avoid this cancellation phenomenon by controlling the volume of a submanifold by the $H'$-norm. Indeed, for an RKHS $H$, under standard assumptions on its kernel, a submanifold $X$ with a non-zero 1-dimensional Hausdorff measure induces a non-zero norm for $\mu_X$  \cite[Theorem 4.1]{Charon_2013}.
\end{remark}

Given a varifold $\mu_q \in C_0^0(\R^d \times \mathbb{S}^{d-1},\R)'$ and a RKHS $H \hookrightarrow C_0^0(\R^d\times \mathbb{S}^{d-1},\R)$, the Riesz isomorphism maps the varifold to a scalar-valued function $K_{H} \mu_q \in H$. For a degenerated kernel on the tangents $k_T=1$, its expression is given by \begin{equation} \label{eq:varifold}
    K_{H} \mu_q  : \app{\R^d}{\R}{x}{\sum_{f \in F_q} \ell_f ~ k_E(c_f,x)}. 
\end{equation} 
where $\ell_f=\Vert q_{f^2}-q_{f^1} \Vert$ and $c_f = \dfrac{q_{ f^1} + q_{f^2}}{2}$. The mapping $K_H  \mu_q$ is a representation of the varifold $\mu_q$ as an image in $\R^d$.
This representation assigns an intensity to any point of the ambient space $x \in \R^d$ by convolving the centers of the edges $c_f$ by the kernel $k_E$. As illustrated in \cref{fig:varifold_heart}, for a Gaussian kernel $k_{\sigma}$, the scale parameter $\sigma$ controls the spatial interaction range in the ambient space. An intermediate scale ($\sigma=5$, center) captures effectively the global geometry of the curve, while a small scale ($\sigma=0.5$, left)
strongly localizes the intensity around the curve and a large scale ($\sigma=20$, right) leads to a loss of the geometry due to an excessive smoothing of the intensity.

\begin{figure}[!h]
    \centering
    \includegraphics[width=1\textwidth]{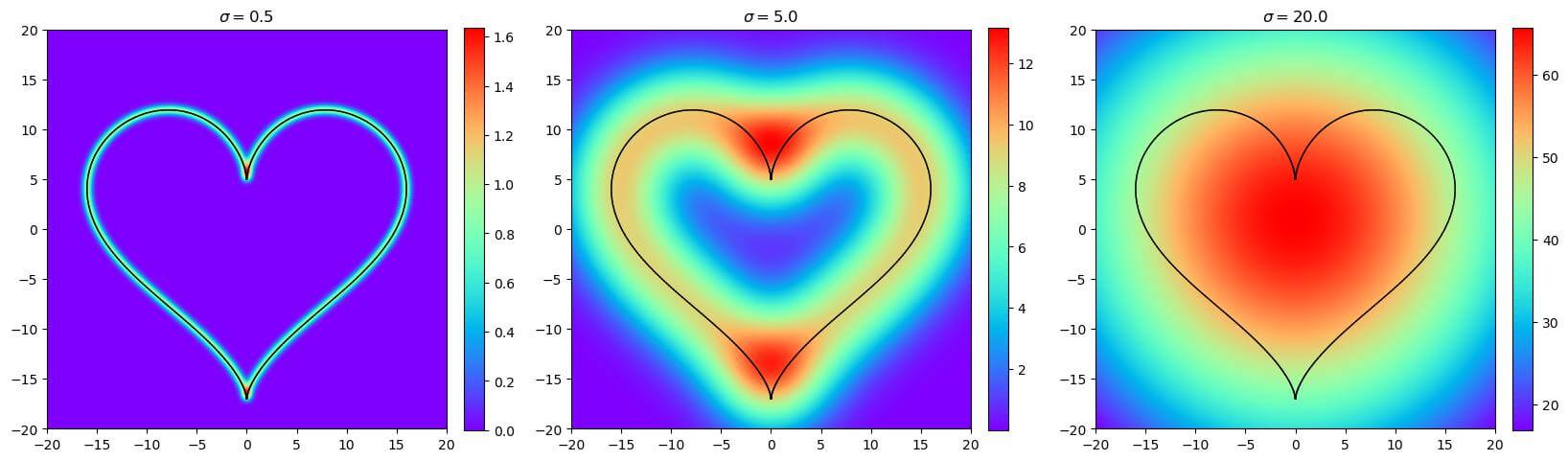}
    \caption{Varifold representation $K_H \mu_q$ (\cref{eq:varifold}) of a heart-shaped curve using a Gaussian kernel on positions with different scales $\sigma \in \{0.5, 5, 20\}$ and a degenerated kernel on tangents.}
    \label{fig:varifold_heart}
\end{figure}

\subsection{First variation of varifolds induced by a vector field}\label{sec:first_var} 

The deformation of a curve $q$ by a diffeomorphism $\varphi$ is defined by the group action $\varphi \cdot q = \varphi \circ q$, which induces an infinitesimal action of vector fields on curves, as later detailed.
In the same way as curves can be represented by varifolds, infinitesimal actions of vector fields on curves can be represented by first variations of varifolds, a notion first introduced in geometric measure theory by Allard \cite{Allard}.
We recall that we focus our study on curves embedded in $\R^d$, even though the results can be generalized to larger families of shapes such as surfaces.

\subsubsection{Definitions and first properties}\label{sec:def_prop_first_var}

Varifolds are defined as continuous linear forms on the space of test functions $C^0_0(\R^d \times \mathbb{S}^{d-1})$. To differentiate these test functions, we recall in \cref{app:diff_submanifolds} some definitions and properties about differential calculus on submanifolds of $\R^d$. In particular, for test function $\omega \in C_0^1(\R^d \times \mathbb{S}^{d-1},\R)$, we denote by $\nabla_x\omega$ and $\nabla_{\tau}\omega$ its gradients with respect to the position and tangent variables.

We remind that for $E, F$ Banach spaces and $U$ an open subset of $E$, the function $f : U \to F$ is called Fréchet differentiable at $x \in U$ if there exists a bounded linear operator $A_x \colon E \to F$ such that $ \lim_{h \to 0} \dfrac{\| f(x+h) - f(x) - A_x(h) \|_F}{\| h \|_E} = 0$. In this case, $A_x$ is called the Fréchet derivative of $f$ at the point $x$.

\begin{proposition}[Fréchet differentiability of varifolds]\label{prop:frechet_varifold}
    Let $I = [0,1]$ or $\mathbb{S}^{1}$ and $Q = \{ q \in C^1(I, \mathbb{R}^d) \mid \inf_{s \in I} \|q'(s)\| > 0 \}$ be the set of immersions. The mapping
    $$\tilde{\mu} : \app{Q}{C_0^2(\mathbb{R}^d \times \mathbb{S}^{d-1}, \mathbb{R})'}{q}{\tilde{\mu}_q := \omega \mapsto \int_I \omega\left(q(s), \dfrac{q'(s)}{\|q'(s)\|}\right) \|q'(s)\| ds}$$
    is continuously Fréchet differentiable.
\end{proposition}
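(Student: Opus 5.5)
We will show that $\tilde{\mu}$ is the composition of smooth maps, and we will write the derivative explicitly. Set $Q$ as an open subset of the Banach space $C^1(I,\R^d)$; it is open because $\inf_s\|q'(s)\|>0$ is preserved under small $C^1$ perturbations. For $q\in Q$ and $h\in C^1(I,\R^d)$, the candidate derivative is obtained by formally differentiating under the integral. Using $\frac{d}{d\epsilon}\|q'+\epsilon h'\| = \langle \tau_q, h'\rangle$ and $\frac{d}{d\epsilon}\tau_{q+\epsilon h} = \frac{(h')^\perp}{\|q'\|}$, where $(h')^\perp = h' - \langle h',\tau_q\rangle\tau_q$, we get
\begin{equation*}
A_q(h)(\omega) = \int_I \Big( \langle \nabla_x\omega(q,\tau_q), h\rangle \|q'\| + \langle \nabla_\tau\omega(q,\tau_q), (h')^\perp\rangle + \omega(q,\tau_q)\langle \tau_q, h'\rangle \Big) ds .
\end{equation*}
Here $\nabla_\tau\omega$ is the tangential gradient on $\mathbb{S}^{d-1}$ from \cref{app:diff_submanifolds}. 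Equivalently, we can extend $\omega$ to a neighbourhood of the sphere, for instance by $0$-homogeneity $\omega(x,u/\|u\|)$, which makes all the computations take place in Euclidean space. The bound $|A_q(h)(\omega)| \le C(q)\|\omega\|_{C^1}\|h\|_{C^1}$ shows that $A_q$ is a bounded linear map $C^1(I,\R^d)\to C_0^2(\R^d\times\mathbb{S}^{d-1},\R)'$.

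The key step is to check that the Fréchet remainder is $o(\|h\|_{C^1})$ uniformly in $\|\omega\|_{C^2}\le 1$. Define $F_\omega(x,u) = \omega(x,u/\|u\|)\|u\|$ on $\R^d\times(\R^d\setminus\{0\})$, so that $\tilde{\mu}_q(\omega) = \int_I F_\omega(q(s),q'(s))\,ds$. Since $\|q'\|\ge m>0$, for $\|h\|_{C^1}\le m/2$ the segment $(q+th, q'+th')$, $t\in[0,1]$, stays in the region $\{\|u\|\ge m/2\}$, where $\|u\|\le \|q'\|_\infty+m$. On this compact range of $u$, the second derivatives of $F_\omega$ are bounded by $C\|\omega\|_{C^2}$, with $C$ depending only on $m$ and $\|q'\|_\infty$; this follows from the chain rule applied to $u\mapsto u/\|u\|$ and $u\mapsto\|u\|$. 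Taylor's formula with integral remainder, applied pointwise in $s$, then gives
\begin{equation*}
|\tilde{\mu}_{q+h}(\omega)-\tilde{\mu}_q(\omega)-A_q(h)(\omega)| \le C\|\omega\|_{C^2}\|h\|_{C^1}^2 .
\end{equation*}
This proves Fréchet differentiability. It is exactly why the target is the dual of $C_0^2$ and not of $C_0^1$: in the dual of $C^1_0$ one would only get $o(\|h\|)$ through a modulus of continuity of $d\omega$, and that modulus is not uniform over the unit ball.

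For continuity of $q\mapsto A_q$ in operator norm, we fix $q$ and take $\tilde q$ with $\|\tilde q-q\|_{C^1}\le m/2$. We then estimate $|A_{\tilde q}(h)(\omega)-A_q(h)(\omega)|$ term by term, again using a mean value bound on the first derivatives of $F_\omega$. The first derivatives of $F_\omega$ are Lipschitz on the compact region above, with constant $C\|\omega\|_{C^2}$. Indeed $A_q(h)(\omega)=\int_I \big(\partial_xF_\omega(q,q')h + \partial_uF_\omega(q,q')h'\big)\,ds$, so this Lipschitz bound yields $\|A_{\tilde q}-A_q\| \le C\|\tilde q-q\|_{C^1}$, which is even local Lipschitz continuity.

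The main obstacle is the uniform control of the second derivatives of $F_\omega$ near the sphere factor, namely handling $\nabla_\tau\omega$ consistently with the submanifold calculus. The homogeneous extension removes this difficulty, provided we check that its $C^2$ norm on $\{m/2\le\|u\|\le M\}$ is controlled by $\|\omega\|_{C^2}$. A further subtlety is that $\omega$ is only required to vanish at infinity. This causes no problem, since $q(I)$ is compact and only uniform bounds on $\omega$ and its derivatives are used.
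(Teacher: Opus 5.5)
Your proposal is correct and follows the paper's proof essentially step for step. You use the same candidate derivative $A_q$, the same auxiliary function $F_\omega(x,u)=\omega(x,u/\|u\|)\|u\|$, the same Taylor integral-remainder bound on $d^2F_\omega$ over $\{\|u\|\ge m/2\}$, and a Lipschitz estimate $\|A_{\tilde q}-A_q\|\le C\|\tilde q-q\|_{C^1}$; deriving that last estimate from Lipschitz bounds on $dF_\omega$ is just a more compact version of the paper's term-by-term computation.
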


\begin{proof}
    The proof is given in \cref{app:proof_frechet}
\end{proof}

By definition, varifolds belong to $C_0^0(\mathbb{R}^d \times \mathbb{S}^{d-1}, \mathbb{R})'$. However, the Fréchet differentiability of the mapping $q \mapsto \tilde{\mu}_q$ requires the test functions to be sufficiently regular. By considering test functions in $C_0^2$, we ensure that $\omega$ is differentiable and its derivatives are controlled by Lipschitz continuity. Although differentiation induces a loss of one degree of regularity, the dual inclusions $(C_0^0)' \subset (C_0^1)' \subset (C_0^2)'$ guarantee that both the varifolds and their differentials belong to $C_0^2(\mathbb{R}^d \times \mathbb{S}^{d-1}, \mathbb{R})'$. In practice, to benefit from RKHS properties as for varifolds, we consider test functions belonging to an RKHS $H \subset C_0^\infty(\mathbb{R}^d \times \mathbb{S}^{d-1}, \mathbb{R})$ generated by a smooth kernel, such as the Gaussian kernel. Consequently, the dual inclusions $(C_0^2)' \subset (C_0^\infty)' \subset H'$ ensure that this higher regularity requirement is naturally satisfied and does not restrict the model.

As detailed in the previous section, a curve $q$ can be represented by a varifold $\mu_q$, and the transport of the curve by a diffeomorphism $\varphi$, defined by $\varphi \circ q$ naturally extends to its associated varifold as $\varphi_* \mu_q = \mu_{\varphi \circ q}$. The infinitesimal action of a vector field $v$ on $q$ is therefore defined as the differential of the mapping $\varphi \mapsto \varphi \circ q$ at $\varphi = \operatorname{id}$ applied to $v$~\cite{arguillere2015generalsettingshapeanalysis}. The varifold counterpart of this infinitesimal action, referred to as the first variation of varifolds, is defined as the differential of the mapping $\varphi \mapsto \mu_{\varphi \circ q}$ at $\varphi = \operatorname{id}$ applied to $v$.

\begin{definition}[First variation of varifolds]\label{def:first_var}
    Let $v \in C_0^2(\R^d,\R^d)$ be a vector field and $q_0 \in Q$ be a curve. The first variation of the varifold $\tilde{\mu}_{q_0}$ induced by $v$ is defined as the Fréchet derivative of the mapping $q \mapsto \tilde{\mu}_q$ evaluated at $q_0$ in the direction of $v \circ q_0$. The first variation is denoted by:
    \begin{equation*}
        \delta \tilde{\mu}_{q_0}(v) := d\tilde{\mu}_{q_0}(v \circ q_0) \in C_0^2(\R^d \times \mathbb{S}^{d-1},\R)'.
    \end{equation*}
\end{definition}

The expression of the first variation is an adaptation to the particular case of curves of \cite[Theorem 4.2]{Charon_2013}.

\begin{proposition}[Expression of the first variation] \label{prop:continuous_first_var}
Let $v \in C_0^2(\R^d,\R^d)$ be a vector field and $q \in Q$ be a curve represented by the varifold $\tilde{\mu}_q$. For $\omega \in C_0^2(\R^d \times \mathbb{S}^{d-1},\R)$, the expression of the first variation is given by
\begin{align*}
    \delta \tilde{\mu}_{q}(v)(\omega) &= \int_I \left( \left\langle \nabla_x \omega ( q(s),\tau_q(s) ), v(q(s)) \right\rangle + \left\langle \nabla_{\tau} \omega ( q(s),\tau_q(s) ), (dv(q(s))\cdot \tau_q(s))^\perp \right\rangle \right) \Vert q'(s) \Vert \, ds  \\
    &+ \int_I \omega ( q(s),\tau_q(s) ) \langle dv(q(s)) \cdot \tau_q(s), \tau_q(s) \rangle \Vert q'(s) \Vert \, ds 
\end{align*}
where $\tau_q(s) = \dfrac{q'(s)}{\Vert q'(s) \Vert}$ denotes the unit tangent vector at the point $q(s)$, and for $w : \R^d \to \R^d$,  $w(q(s))^\perp = w(q(s)) - \langle w(q(s)) ,\tau_q(s) \rangle \tau_q(s)$ denote its normal components with respect to the curve.
\end{proposition}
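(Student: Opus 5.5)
By \cref{prop:frechet_varifold} the map $q \mapsto \tilde{\mu}_q$ is Fréchet differentiable. Its Fréchet derivative therefore coincides with its Gateaux derivative. So, by \cref{def:first_var}, it suffices to compute the directional derivative
\[
\delta \tilde{\mu}_{q}(v)(\omega) = \frac{d}{d\varepsilon}\Big|_{\varepsilon=0} \tilde{\mu}_{q_\varepsilon}(\omega), \qquad q_\varepsilon = q + \varepsilon\, v\circ q,
\]
for a fixed test function $\omega \in C_0^2(\R^d\times\mathbb{S}^{d-1},\R)$. For $\varepsilon$ small, $q_\varepsilon$ stays in $Q$, because $q_\varepsilon' = q' + \varepsilon\, dv(q)\cdot q'$ and $\inf\Vert q'\Vert>0$. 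The plan is to differentiate under the integral sign. This is justified because the integrand is $C^1$ in $(\varepsilon,s)$ on the compact set $I$ and its $\varepsilon$-derivative is bounded uniformly, by the regularity of $\omega$ and of $v\in C^2_0$.

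The pointwise derivative of $s\mapsto \omega(q_\varepsilon(s),\tau_{q_\varepsilon}(s))\Vert q_\varepsilon'(s)\Vert$ at $\varepsilon=0$ splits, by the product and chain rules, into three pieces.
\begin{enumerate}
\item \emph{Position term.} Since $\partial_\varepsilon q_\varepsilon = v(q)$, it equals $\langle \nabla_x\omega(q,\tau_q), v(q)\rangle \Vert q'\Vert$.
\item \emph{Length term.} With $\partial_\varepsilon q_\varepsilon' = dv(q)\cdot q'$, we get $\partial_\varepsilon \Vert q_\varepsilon'\Vert = \langle dv(q)\cdot q', q'\rangle/\Vert q'\Vert = \langle dv(q)\cdot\tau_q,\tau_q\rangle \Vert q'\Vert$. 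This produces the last integral of the statement.
\item \emph{Tangent term.} Differentiating $u\mapsto u/\Vert u\Vert$ gives $\partial_\varepsilon \tau_{q_\varepsilon} = \frac{1}{\Vert q'\Vert}\bigl(w - \langle w,\tau_q\rangle\tau_q\bigr)$ with $w = dv(q)\cdot q'$. This equals $(dv(q)\cdot\tau_q)^\perp$. The contribution is then $\langle \nabla_\tau\omega(q,\tau_q), (dv(q)\cdot\tau_q)^\perp\rangle\Vert q'\Vert$, where $\nabla_\tau\omega$ is the gradient on the sphere, as defined in \cref{app:diff_submanifolds}.
\end{enumerate}
Summing the three pieces and integrating over $I$ gives exactly the formula of the statement.

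The main point needing care is step (3). The derivative of $\omega$ in the tangent variable must be handled intrinsically on $\mathbb{S}^{d-1}$. The variation $\partial_\varepsilon\tau_{q_\varepsilon}$ lies in $T_{\tau_q}\mathbb{S}^{d-1}=\tau_q^\perp$, so pairing it with the Riemannian gradient $\nabla_\tau\omega$ is legitimate. Equivalently, one can pair it with the ambient gradient of any extension of $\omega$, because the normal component drops out. I would invoke the appendix conventions for this. The other step needing care is the domination argument for exchanging derivative and integral. I would handle it by the mean value theorem, bounding the $\varepsilon$-derivative using $\Vert\omega\Vert_{C^1}$, $\Vert v\Vert_{C^1}$, $\sup\Vert q'\Vert$ and $\inf\Vert q'\Vert$.
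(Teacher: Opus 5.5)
Your proposal is correct and takes the same route as the paper: the proof of \cref{prop:frechet_varifold} also computes the directional derivative $\dfrac{d}{dt}\Big\vert_{t=0}\tilde{\mu}_{q+th}(\omega)$ by differentiating under the integral sign, which gives the position, tangent and length terms with the tangent term written as $\langle \nabla_\tau\omega, (I_d-\tau_q\tau_q^T)h'\rangle$, and the formula then follows by substituting $h = v\circ q$ and $h' = \Vert q'\Vert\, dv(q)\cdot\tau_q$. One small correction: the integrand is not $C^1$ in $s$, since $q$ is only $C^1$ and so $q'$ is merely continuous, but this does not affect the argument, because what you actually use (and what holds) is that the $\varepsilon$-derivative exists and is jointly continuous and bounded on $[-\varepsilon_0,\varepsilon_0]\times I$.
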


\begin{proof}
The explicit derivation of this expression follows directly from the computation of the Fréchet differential detailed in the proof of \cref{prop:frechet_varifold}.
\end{proof}

Motivated by computational applications, we primarily rely on the varifold representation $\mu_q$ of a curve $q$ by a weighted sum of Dirac measures, defined in \cref{eq:varifold_discrete}. The following proposition provides the expression of the first variation associated with $\mu_q$. Before stating the result, for a face $f$, we recall the notations $\ell_f=\Vert q_{f^2} - q_{f^1} \Vert$, $c_f = \dfrac{q_{f^1}+q_{f^2}}{2}$ and $\vec{t}_f = \dfrac{q_{f^2}-q_{f^1}}{\Vert q_{f^2}-q_{f^1} \Vert}$. 

\begin{proposition}[Expression of the first variation for weighted sums of Diracs] \label{exam:formula_first_var_discrete}A varifold representation of the curve $q$, for a chosen discretization $((q_i)_i,F_q)$, by a weighted sum of Diracs is given by  $\mu_q : \omega \mapsto \sum_{f \in F_q} \ell_f \omega(c_f,\vec{t}_f)$.   For  $v \in C_0^2(\R^d,\R^d)$ and $\omega \in C_0^2(\R^d \times \mathbb{S} ^{d-1},\R)$, the first variation is expressed by 
    \begin{equation} \label{eq:first_var_discrete}
    \delta \mu_{q}(v)(\omega)= \sum_{f \in F_q}  \ell_f (\delta \ell_f)_v ~ \omega(c_f,\vec{t}_f) + \ell_f ~  \nabla_x \omega(c_f,\vec{t}_f)^{\top}  v_f  + \ell_f   \nabla_T \omega(c_f,\vec{t}_f)^{\top}  (\nabla^\perp v_f)
    \end{equation}
    where $v_f= \dfrac{v(q_{f^1})+v(q_{f^2})}{2}$, $(\delta \ell_f)_v = \langle \dfrac{v(q_{f^2})-v(q_{f^1})}{\ell_f}, \vec{t}_f\rangle $ and $ \nabla^\perp v_f=\dfrac{v(q_{f^2})-v(q_{f^1})}{\ell_f} - \Big \langle \dfrac{v(q_{f^2})-v(q_{f^1})}{\ell_f} , \vec{t}_f\Big \rangle\vec{t}_f$.
\end{proposition}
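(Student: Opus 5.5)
The plan is to differentiate the finite-dimensional map $(q_i)_i \mapsto \mu_q(\omega) = \sum_{f \in F_q} \ell_f\, \omega(c_f,\vec{t}_f)$ directly. Here the discrete analogue of \cref{def:first_var} is the directional derivative of this map in the direction $(v(q_i))_i$. In other words, we set $q_i^\varepsilon = q_i + \varepsilon v(q_i)$, which amounts to the flow $\varphi_\varepsilon = \id + \varepsilon v$ to first order, and compute $\frac{d}{d\varepsilon}\big|_{\varepsilon=0} \mu_{q^\varepsilon}(\omega)$. Since $\omega \in C_0^2$ and all the quantities $\ell_f,c_f,\vec{t}_f$ are smooth functions of the vertices wherever $\ell_f>0$ (edges join distinct points), the sum is differentiable. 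By linearity, it suffices to treat a single edge $f$ and apply the product and chain rules.

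For one edge, write $e_f^\varepsilon = q^\varepsilon_{f^2}-q^\varepsilon_{f^1} = \ell_f \vec{t}_f + \varepsilon\,(v(q_{f^2})-v(q_{f^1}))$. The derivatives of the three geometric quantities are then as follows.
\begin{enumerate}
\item The center is affine in the vertices, so $\frac{d}{d\varepsilon} c_f^\varepsilon = v_f$.
\item Since $\frac{d}{d\varepsilon}\|e\| = \langle \dot e, e/\|e\|\rangle$, the length satisfies $\frac{d}{d\varepsilon}\ell_f^\varepsilon = \langle v(q_{f^2})-v(q_{f^1}),\vec{t}_f\rangle = \ell_f (\delta\ell_f)_v$.
\item The tangent $\vec{t} = e/\|e\|$ has derivative $\frac{1}{\|e\|}(\dot e - \langle \dot e,\vec{t}\rangle \vec{t})$. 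This gives $\frac{d}{d\varepsilon}\vec{t}_f^\varepsilon = \nabla^\perp v_f$, the normal part of the difference quotient.
\end{enumerate}

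The product rule then yields
\begin{equation*}
\frac{d}{d\varepsilon}\Big|_{\varepsilon=0} \ell_f^\varepsilon\, \omega(c_f^\varepsilon,\vec{t}_f^\varepsilon) = \ell_f(\delta\ell_f)_v\,\omega(c_f,\vec{t}_f) + \ell_f\big(\nabla_x\omega(c_f,\vec{t}_f)^\top v_f + \nabla_T\omega(c_f,\vec{t}_f)^\top \nabla^\perp v_f\big).
\end{equation*}
Summing over $f \in F_q$ gives \cref{eq:first_var_discrete}.

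The only delicate point is the derivative of $\omega$ in its sphere variable. The curve $\varepsilon\mapsto \vec{t}_f^\varepsilon$ stays on $\mathbb{S}^{d-1}$, and its velocity $\nabla^\perp v_f$ is tangent to the sphere at $\vec{t}_f$. The chain rule on the submanifold $\R^d\times\mathbb{S}^{d-1}$ therefore applies with the Riemannian gradient $\nabla_T\omega$ recalled in \cref{app:diff_submanifolds}. Equivalently, one may extend $\omega$ to a neighbourhood of the sphere and use the ambient gradient, whose normal component pairs to zero with the tangent velocity.

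As a consistency check, the result can be compared with \cref{prop:continuous_first_var}. On the piecewise linear curve, $(\delta\ell_f)_v$ and $\nabla^\perp v_f$ are finite-difference versions of $\langle dv\cdot\tau,\tau\rangle$ and $(dv\cdot\tau)^\perp$, evaluated at the Dirac location $(c_f,\vec{t}_f)$.
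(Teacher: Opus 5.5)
Your proof is correct. The paper states this proposition without a separate proof, and your edge-by-edge product- and chain-rule differentiation of $\varepsilon\mapsto\mu_{q^\varepsilon}(\omega)$ along the vertex displacements $(v(q_i))_i$ is exactly the discrete counterpart of the differentiation under the integral sign in the proof of \cref{prop:frechet_varifold}: the derivatives of $c_f$, $\ell_f$ and $\vec{t}_f$ give the position, mass and tangent terms. If you want the identity as elements of $C_0^2(\R^d\times\mathbb{S}^{d-1},\R)'$ rather than test function by test function, add one remark: the maps $(q_{f^1},q_{f^2})\mapsto(\ell_f,c_f,\vec{t}_f)$ are smooth where $\ell_f>0$, so the second-order remainder is $O(\varepsilon^2\Vert\omega\Vert_{C_0^2})$ uniformly in $\omega$.
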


\begin{remark} \label{rem:decomp} The first variation in \cref{eq:first_var_discrete} is a combinations of variations of masses $\ell_f (\delta \ell_f)_v \omega(c_f,\vec{t}_f)$, positions $\ell_f \nabla_x \omega(c_f,\vec{t}_f)^{\top} v_f$ and tangents  $\ell_f \nabla_T \omega(c_f,\vec{t}_f)^{\top} (\nabla^{\perp}v_f)$. The interpretation of each different terms will be later studied in \cref{sec:rkhs_metric_first_var}.
\end{remark}

The first variation extends naturally to union of curves, allowing for a decomposition of complex shape.

\begin{proposition}[Additivity of the first variation] \label{prop:additivity_first_var}
    Let $q_A, q_B \in Q$ be two  distinct curves and let $q_{A \cup B}$ denote their union. For any vector field $v \in C_0^2(\mathbb{R}^d, \mathbb{R}^d)$, the first variation is additive:
    \begin{equation*}
        \delta \tilde{\mu}_{q_{A \cup B}}(v) = \delta \tilde{\mu}_{q_A}(v) + \delta \tilde{\mu}_{q_B}(v)
    \end{equation*}
\end{proposition}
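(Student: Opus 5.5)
The argument reduces additivity of the first variation to additivity of the varifold representation, followed by linearity of the Fréchet derivative. I will interpret the union $q_{A\cup B}$ as the curve whose varifold is $\tilde{\mu}_{q_{A\cup B}} := \tilde{\mu}_{q_A} + \tilde{\mu}_{q_B}$. Equivalently, it is the curve parameterized on the disjoint union $I_A \sqcup I_B$ by $q_A$ on $I_A$ and $q_B$ on $I_B$. \Cref{eq:general_varifold} is an integral with respect to $\mathcal{H}^1$; when the supports of $q_A$ and $q_B$ meet only in an $\mathcal{H}^1$-negligible set, this integral over the union splits into the two integrals. For overlapping supports, "union" has to be understood as the disjoint union of parameterized curves, i.e.\ counted with multiplicity; I would state this convention explicitly. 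Either way, for every $\omega \in C_0^2(\R^d\times\mathbb{S}^{d-1},\R)$, splitting the integral in \cref{eq:smooth_varifold} over $I_A \sqcup I_B$ gives $\tilde{\mu}_{q_{A\cup B}}(\omega) = \tilde{\mu}_{q_A}(\omega) + \tilde{\mu}_{q_B}(\omega)$.

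Next I will view the parameter of $q_{A\cup B}$ as a pair $(q_A,q_B)\in Q_A\times Q_B$. The map $(q_A,q_B)\mapsto \tilde{\mu}_{q_A}+\tilde{\mu}_{q_B}$ is the sum of two maps, each depending on only one variable. Each is continuously Fréchet differentiable by \cref{prop:frechet_varifold}, so the sum is too. Its differential at $(q_A,q_B)$ in the direction $(h_A,h_B)$ equals $d\tilde{\mu}_{q_A}(h_A) + d\tilde{\mu}_{q_B}(h_B)$.

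By \cref{def:first_var}, the first variation is obtained by taking the direction $v\circ q_{A\cup B}$. Restricted to the two components, this direction is exactly $(v\circ q_A, v\circ q_B)$, which gives
\begin{equation*}
\delta\tilde{\mu}_{q_{A\cup B}}(v) = d\tilde{\mu}_{q_A}(v\circ q_A) + d\tilde{\mu}_{q_B}(v\circ q_B) = \delta\tilde{\mu}_{q_A}(v)+\delta\tilde{\mu}_{q_B}(v).
\end{equation*}
As a sanity check, I would also verify the identity directly from \cref{prop:continuous_first_var}: the integrand there is local in $s$, so the integral over $I_A\sqcup I_B$ splits term by term. The same splitting works for the discrete formula \cref{eq:first_var_discrete}, where $F_{q_{A\cup B}} = F_{q_A}\sqcup F_{q_B}$.

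The main obstacle is not analytic but definitional. \Cref{prop:frechet_varifold} is stated for a single interval $I$, so $q_{A\cup B}$ does not literally belong to $Q$. I will handle this by running the product-space argument above, or equivalently by rerunning the proof of \cref{prop:frechet_varifold} with $I$ replaced by a finite disjoint union of intervals or circles; that proof only uses integration over a compact one-dimensional parameter domain. I also need to make precise that "distinct" means the parameter domains are disjoint, so that no mass is double-counted or cancelled.
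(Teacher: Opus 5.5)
Your proposal is correct and takes the same route as the paper. The paper's one-line proof invokes additivity of the varifold representation and leaves the passage to the Fr\'echet derivative implicit. Your product-space argument, your treatment of $q_{A\cup B}$ lying outside $Q$, and your multiplicity convention for overlapping supports just make explicit the details that one-line proof glosses over.
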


\begin{proof}
    This directly comes from the additivity of the varifold representation stated in \cite{Kaltenmark_2017_CVPR}[Remark 1].
\end{proof}

Recall from \cref{exam:discretized_curve} that the continuous varifold representation $\tilde{\mu}_q$ can be approximated by a weighted sum of Dirac masses $\mu_q$ for a sufficiently fine discretization. The following proposition states that, for a vector field $v \in C_0^2(\R^d,\R^d)$, the first variation $\delta \tilde{\mu}_q(v)$
can be approximated by $\delta \mu_q(v)$ for a sufficiently fine discretization. This approximation justifies the evaluation of the first variation using $\delta \mu_q(v)$ instead of its continuous counterpart $\delta \tilde{\mu}_q(v)$, providing a more computationally tractable formulation.

\begin{proposition}[Approximation of the first variation of varifolds]\label{prop:approx_first_var}
    Let $q \in Q$  and $((q_i)_i,F_q) $ a discretization of $q$. For any vector field $v \in C_0^2(\mathbb{R}^d,\mathbb{R}^d)$, 
    \begin{equation*}
        \left\Vert \delta \tilde{\mu}_q(v) - \delta \mu_q(v) \right\Vert_{C_0^2(\mathbb{R}^d \times \mathbb{S}^{d-1},\mathbb{R})'} \leq C_q \Vert v \Vert_{C_0^2} \max_{f \in F_q} \ell_f
    \end{equation*}
    where $C_q > 0$ is a constant that depends on the curve and its discretization.
\end{proposition}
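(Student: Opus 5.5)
The plan is to compare the two expressions edge by edge. Using \cref{prop:additivity_first_var}, the continuous first variation $\delta\tilde{\mu}_q(v)$ splits as a sum over edges. Concretely, parameterize $q$ so that each edge $f\in F_q$ of the discretization corresponds to a sub-arc $I_f\subset I$ whose endpoints $s_f^1,s_f^2$ satisfy $q(s_f^i)=q_{f^i}$. Then \cref{prop:continuous_first_var} gives $\delta\tilde{\mu}_q(v)(\omega)=\sum_f \int_{I_f}(\cdots)\Vert q'(s)\Vert ds$. Each of the three integrands (mass, position, tangent terms) is compared with its discrete counterpart in \cref{eq:first_var_discrete}, for a fixed $\omega$ with $\Vert\omega\Vert_{C_0^2}\le 1$. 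Taking the supremum over such $\omega$ then yields the dual norm.

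For each edge the key estimates are the following, where $\omega_{\mathrm{mod}}$ denotes the modulus of continuity of $q'$ on $I$ (uniform by compactness of $I$).
\begin{enumerate}
\item The arc length $\int_{I_f}\Vert q'\Vert$ equals $\ell_f + O(\ell_f\,\omega_{\mathrm{mod}})$, because $q$ is $C^1$ with $\inf\Vert q'\Vert>0$.
\item For $s\in I_f$, $\Vert q(s)-c_f\Vert\le C\ell_f$, and $\Vert\tau_q(s)-\vec t_f\Vert$ is small by the mean value theorem applied to $q'$.
\item For $v\in C_0^2$, the finite difference $\frac{v(q_{f^2})-v(q_{f^1})}{\ell_f}$ differs from $dv(q(s))\cdot\tau_q(s)$ by at most $C\Vert v\Vert_{C^2}\ell_f$ plus $\Vert dv\Vert_\infty\Vert\tau_q(s)-\vec t_f\Vert$. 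In the same way, $v_f$ differs from $v(q(s))$ by at most $\Vert dv\Vert_\infty C\ell_f$.
\end{enumerate}
Since $\omega\in C_0^2$, the functions $\omega$, $\nabla_x\omega$ and $\nabla_\tau\omega$ are Lipschitz with constant at most $\Vert\omega\Vert_{C^2}$. Each integrand difference is therefore bounded by $C\Vert\omega\Vert_{C^2}\Vert v\Vert_{C^2}(\ell_f+\Vert\tau_q-\vec t_f\Vert_{\infty,I_f})$. The projection maps $w\mapsto w^\perp$ relative to $\tau_q(s)$ and to $\vec t_f$ differ by $O(\Vert\tau_q-\vec t_f\Vert)$, which the same bound absorbs.

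Summing over edges, multiplied by the lengths, gives $C\Vert v\Vert_{C_0^2}\,L(q)\,\max_f(\ell_f+\Vert\tau_q-\vec t_f\Vert_{\infty,I_f})$. The main obstacle is converting the tangent discrepancy $\Vert\tau_q-\vec t_f\Vert$ into $O(\ell_f)$. For a merely $C^1$ curve this only tends to $0$ through the modulus of continuity of $q'$, not linearly. I would handle this by allowing the constant $C_q$ to depend on the curve and its discretization, as the statement permits. Concretely, set $\eta_f=\sup_{I_f}\Vert\tau_q-\vec t_f\Vert$; this is finite and bounded by $2$. Then absorb the ratio into the constant via $C_q\ge\sup_f(1+\eta_f/\ell_f)L(q)$, which is finite because $F_q$ is finite. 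If $q$ is $C^2$, $\eta_f\le \Vert\kappa\Vert_\infty\ell_f$, and $C_q$ depends only on the curvature and the length. Collecting constants yields the claimed inequality.
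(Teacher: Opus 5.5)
Your proposal follows the paper's proof essentially step for step. You split the integral over the sub-arcs $I_f$ and compare the mass, position and tangent terms using the Lipschitz continuity of $\omega$, $\nabla_x\omega$, $\nabla_\tau\omega$, $v$ and $dv$, together with a Taylor bound on $\frac{v(q_{f^2})-v(q_{f^1})}{\ell_f}$ and the estimates $\Vert q(s)-c_f\Vert\lesssim\ell_f$, $\Vert\tau_q(s)-\vec{t}_f\Vert\lesssim\ell_f$, $\vert 1-L_f/\ell_f\vert\lesssim\ell_f$. The paper keeps the ratio $C_0=\max_f L_f/\ell_f$ in the constant, and your item 1 really needs the same, since $L_f=\ell_f(1+O(\omega_{\mathrm{mod}}))$ only holds once the mesh is fine.

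The one difference is regularity. The paper tacitly assumes $q\in C^2$ and bounds the tangent deviation by a multiple of $\ell_f$ depending on $\kappa_{\max}$ and $C_0$, which is exactly your $C^2$ remark. Your $C^1$ device of putting $\max_f\eta_f/\ell_f$ into $C_q$ is formally permitted by the wording, but it makes $C_q$ blow up under refinement and so proves no convergence. In fact, with an unrestricted discretization-dependent constant, the inequality already follows from $\Vert\delta\tilde{\mu}_q(v)\Vert,\Vert\delta\mu_q(v)\Vert\le C\Vert v\Vert_{C_0^2}$ after dividing by $\max_f\ell_f$. So the $C^2$ case is where the actual content lies.
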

\begin{proof} 
    See \cref{app:proof_approx}.
\end{proof}

The first variation of varifolds $\delta \tilde{\mu}_q(v)$ quantifies the variation of $\operatorname{im}(q)=\{q(s) \mid s \in I \}$ under the action of the vector field $v$. Because this representation is invariant by reparameterization, it does not capture the action of vector fields that leaves the image of the curve unchanged. For example, for a curve $q$ shaped as a circle and a non-zero rotation vector field $A$, it follows that $\delta \tilde{\mu}_q(A)=0$ since a rotation leaves invariant the circle. The following proposition states that a first variation equals to zero imply that the image of the curve $\operatorname{im}(q)= \{ q(s) \mid s \in I \}$ is invariant under the action of $v$.
\begin{proposition}[Invariance of the curve under the action of a vector field] \label{prop:continuous_invariance}
    Let $q \in C^1(I, \mathbb{R}^d)$ be an immersed curve with a finite number of self-intersections and let $v \in C_0^2(\mathbb{R}^d, \mathbb{R}^d)$ be a vector field. If $\delta \tilde{\mu}_q(v) = 0$, then $\operatorname{im}(q)$ is invariant by the action of $v$.
\end{proposition}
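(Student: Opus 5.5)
We aim to show that $\delta\tilde{\mu}_q(v)=0$ forces $v$ to be tangent to $\operatorname{im}(q)$ at every point. The flow of $v$ then preserves $\operatorname{im}(q)$. The idea is to test the vanishing distribution against well-chosen functions $\omega$ that isolate the position term of \cref{prop:continuous_first_var}.

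\textbf{Step 1: reduce to a neighbourhood of a simple point.} We take test functions depending only on the position, $\omega(x,\tau)=\phi(x)$ with $\phi\in C_c^\infty(\R^d)$. Then $\nabla_\tau\omega=0$, and \cref{prop:continuous_first_var} gives
\begin{equation*}
0=\int_I \Big(\langle\nabla\phi(q(s)),v(q(s))\rangle+\phi(q(s))\langle dv(q(s))\tau_q(s),\tau_q(s)\rangle\Big)\Vert q'(s)\Vert\,ds .
\end{equation*}
Fix $s_0$ such that $q(s_0)$ is not a self-intersection point. There are finitely many self-intersections, so by compactness a small ball $B$ around $q(s_0)$ meets $\operatorname{im}(q)$ only in a single embedded arc $q(J)$. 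We use $\phi$ supported in $B$ and parametrize $q(J)$ by arclength $u\mapsto\gamma(u)$.

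\textbf{Step 2: integrate by parts along the arc.} Decompose $v=\langle v,\gamma'\rangle\gamma'+v^\perp$. For the tangential part, $\langle\nabla\phi,\gamma'\rangle=(\phi\circ\gamma)'$ and $\langle dv\,\gamma',\gamma'\rangle=\tfrac{d}{du}\langle v,\gamma'\rangle-\langle v,\gamma''\rangle$. Integrating by parts, these tangential contributions cancel, up to a curvature term that combines with the normal part. The identity becomes
\begin{equation*}
0=\int_J \langle\nabla\phi(\gamma(u)),v^\perp(\gamma(u))\rangle\,du+\int_J\phi(\gamma(u))\,(\text{terms involving }v^\perp,\gamma'')\,du .
\end{equation*}

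\textbf{Step 3: choose $\phi$ with a prescribed normal gradient.} Since $q$ is only $C^1$, I would avoid $\gamma''$ and argue directly on the identity from Step 1. Choose $\phi(x)=\psi(\pi(x))\,\langle x-\gamma(\pi(x)),n\rangle\,\chi(x)$, where $\pi$ is a local projection onto the arc, $n$ is a constant unit vector and $\chi$ is a cutoff. Equivalently, take $\phi$ vanishing on the arc with normal derivative $\psi\,n^\perp$ along it. Then the $\phi(q)$ term vanishes and we obtain $\int_J\psi\,\langle n^\perp,v^\perp\rangle\,du=0$ for all $\psi$ and $n$. Hence $v^\perp=0$ on the arc.

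The main obstacle is building such $\phi$ of class $C^2$ around a curve that is only $C^1$: the projection is not smooth. To handle this, I would use a $C^\infty$ curve $\tilde\gamma$ that is $C^1$-close to $\gamma$. Set $\phi(x)=\psi(\ldots)\langle x-\tilde\gamma,n\rangle\chi$, and pass to the limit using continuity of $v$ and $dv$. Alternatively, I would write the identity as a distributional statement on the graph of the arc over its tangent line and conclude directly.

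\textbf{Step 4: conclude.} We now know $v(x)\in T_x\operatorname{im}(q)$ for all non-self-intersection points. By continuity this extends to the finitely many remaining points. At a self-intersection point that is a transversal crossing, both tangent conditions force $v=0$ there. Such points are therefore stationary, and each arc is invariant under the flow. For an open curve, endpoints would also need $v$ to vanish or point inward. The vanishing of boundary terms in the integration by parts of Step 2 gives $\langle v,\tau\rangle\phi=0$ at the endpoints, i.e.\ $v=0$ there.

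A vector field that is tangent to the compact $C^1$ set $\operatorname{im}(q)$ and vanishes at its singular points and endpoints has integral curves that stay in $\operatorname{im}(q)$. This follows by uniqueness of ODE solutions, since $v$ is Lipschitz, combined with solving the ODE on the arc in the arclength coordinate. Hence $\varphi_t^v(\operatorname{im}(q))=\operatorname{im}(q)$ for all $t$, which is the claimed invariance.
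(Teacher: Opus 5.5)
Your proposal is correct and follows essentially the same route as the paper. Position-only test functions that vanish on a simple arc and have a prescribed normal gradient isolate $\langle \nabla\omega, v^\perp\rangle$ and force $v^\perp=0$; the boundary term of an integration by parts along the curve then forces the tangential component to vanish at the endpoints. Your constant direction $n$ with a smoothed arc (or graph chart) is actually more careful than the paper, whose nearest-point projection and smooth extension of $v^\perp$ are not available for a merely $C^1$ curve. However, your endpoint step, like the paper's, still tacitly differentiates the unit tangent.
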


\begin{proof}
    The proof is given in \cref{app:continuous_invariance}
\end{proof}

As presented earlier, the first variation can be defined for a continuous varifold representation $\tilde{\mu}_q$, a discretized representation $\hat{\mu}_q$, or an approximation by Diracs $\mu_q$. In this paper, unless otherwise specified, the term first variation refers to the variation of the Dirac approximation, denoted by $\delta \mu_q(v)$.

\subsubsection{RKHS metric on first variations of varifolds} \label{sec:rkhs_metric_first_var}
The RKHS framework developed for varifolds can be adapted to first variations of varifolds by considering a RKHS $H \hookrightarrow C^2_0(\R^d \times \mathbb{S}^{d-1},\R)$ generated by a kernel $k_{H}=k_E \otimes k_T \in C_0^4((\R^d \times \mathbb{S}^{d-1})^2,\R)$. Note that the regularity of the kernel has to be exactly twice larger than the regularity of its associated RKHS, as detailed in \cite[Theorem 2.11]{micheli2013matrixvaluedkernelsshapedeformation}. For the applications and numerical examples presented in this work, the comparison of first variations via the RKHS norm is performed using only the kernel on positions $k_E$, ensuring the resulting expression remain mathematically simple and interpretable. Since the $C_0$-universality property no longer holds for a degenerated kernel on tangents $k_T=1$, we will rather consider, when needed, restriction of varifolds and first variations, initially defined over $\R^d \times \mathbb{S}^{d-1}$, to $\R^d$. 

The image representation of varifolds thanks to Riesz isomorphism can be extended to first variations. Using the expression of $\delta \mu_q(v)$ given in \cref{eq:first_var_discrete} and kernel properties, the first variation can also be represented as an image:

\begin{equation} \label{eq:RKHS_first_var}
    K_{H}\delta \mu_q(v) : x \in \R^d \mapsto  \sum_{f \in F_q} \ell_f (\delta \ell_f)_v ~ k_E(c_f, x) + \ell_f   \nabla_1 k_E(c_f,x)^{\top} v_f  
    \end{equation} 

\begin{remark}
$K_H \delta \mu_q(v)$ is a sum of functions with non-constant sign, contrary to the classic image representation of a varifold $K_H \mu_q : x \mapsto \sum_{f \in F_q} \ell_f k_E(c_f,x)$ which is a sum of non-negative functions. A consequence is that the change of sign will lead to compensatory effect due to summation over edges.
\end{remark}

The dual of the first variation $K_H \delta \mu_q(v)$ can be decomposed into a variation of mass, 
\begin{equation} \label{eq:mass_var}
K_H \delta \mu_q(v)^{\text{mass}} (x)= \sum_{f \in F_q} \ell_f (\delta \ell_f)_v ~ k_E(c_f, x) 
\end{equation}
and a variation of position
\begin{equation} \label{eq:pos_var}
    K_H \delta \mu_q(v)^{\text{pos}}(x) =  \sum_{f \in F_q} \ell_f   \nabla_1 k_E(c_f,x)^{\top} v_f \,.
\end{equation}
This decomposition directly comes from the decomposition of $\mu_q$ in variation of masses $\ell_f$ and positions $c_f$ as evoked in \cref{rem:decomp}.

\cref{fig:decomp_first_var} illustrates this decomposition by showing the first variation of varifolds associated with a segment and induced by scaling vector fields $x \mapsto \dfrac{1}{2}(x-a)$ centered at the left extremity of the segment $a=(-2,0)$ (\cref{fig:scaling1}) and under the segment $a=(0,-2)$ (\cref{fig:scaling2}). The curve and the vector field are represented on the first panel, followed by its associated image representation of the first variation given by \cref{eq:RKHS_first_var}, its decomposition in a variation of mass (\cref{eq:mass_var}) and in a variation of positions (\cref{eq:pos_var}). For the first vector field centered in $a=(-2,0)$ (\cref{fig:scaling1}), the infinitesimal deformation induced by the vector field is a stretching of the segment towards the right. Although the parameterized curve $q$ is modified along its entire support, the first variation only captures the deformation of the geometric shape, identified as $\operatorname{im}(q)$, on the right extremity. In the left and the middle of the segment there is no intensity due to the invariance by reparameterization of the first variation. The variation of mass (third column), which is exactly the same for both vector fields, can be interpreted as a Gaussian smoothing of the mass variation term  $ (\delta \ell_f)_v = \langle \dfrac{v(q_{f^2}) - v(q_{f^1})}{\ell_f} , \dfrac{q_{f^2} - q_{f^1}}{\ell_f} \rangle $ which is constant here, equal to $\dfrac{1}{2}$ along the segment. The variation of position (fourth column) is represented as a global intensity gradient resulting from a sum of local translation gradients oriented from left to right with growing intensity (see \cref{fig:dirac_translation_all} below that illustrates a similar phenomenon). This phenomenon is analogous to a propagating wave, where the movement induces a drop in intensity behind the wave  and a rise in intensity at the wavefront. \cref{fig:scaling2} shows curved patterns in the representation of the variation of position, illustrated in the fourth panel. This phenomemon is due to the Gaussian smoothing which consists in a spatial diffusion of the gradients $\nabla_1k(x,c_f)$ along $v_f$, the evaluations of the vector field on the curve.

\begin{figure}[!h]
\centering
\begin{subfigure}{\textwidth}
    \centering
    \includegraphics[width=\textwidth]{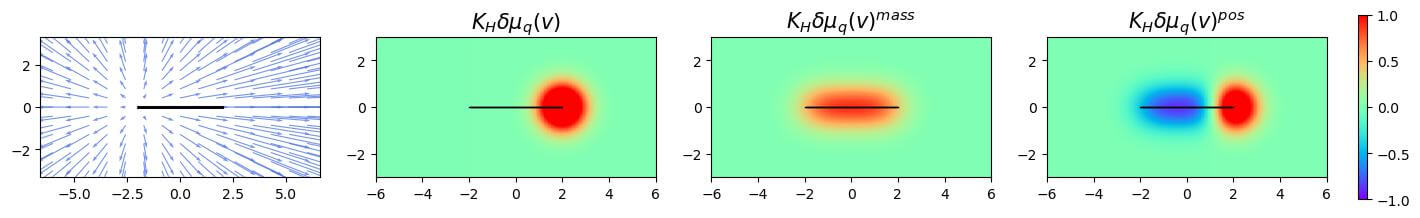}
    \caption{The vector field represented is $x \mapsto \dfrac{1}{2}(x-a)$ which is a scaling centered in $a=(-2,0)$.}
    \label{fig:scaling1}
\end{subfigure}
\vfill
\begin{subfigure}{\textwidth}
    \centering
    \includegraphics[width=\textwidth]{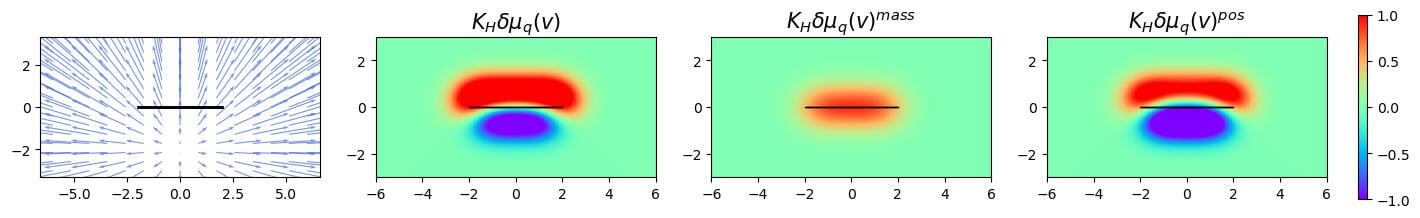}
    \caption{The vector field represented is $x \mapsto \dfrac{1}{2}(x-a)$ which is a scaling centered in $a=(0,-2)$.}
    \label{fig:scaling2}
\end{subfigure}
\caption{Both subfigures illustrate a curve representing a segment in black, a scaling vector field (blue arrows) and the associated first variation with its decomposition in variation of mass and position given by \cref{eq:RKHS_first_var}, \cref{eq:mass_var} and \cref{eq:pos_var}. The RKHS considered is generated by a Gaussian kernel with scale $\sigma=1$. }
\label{fig:decomp_first_var}
\end{figure}

As detailed for varifolds in \cref{inner_product_varifold}, we introduce the inner product between first variations by restricting the space of test functions to a RKHS $H$ induced by a kernel $k_E$. 
Given two vector fields $v,w \in C_0^2(\R^d,\R^d)$ and two curves $q_a$ and $q_b$, the expression of the inner product between their respective first variations in the dual space $H'$ is given by:
\begin{align*} \label{inner_product_first_var}
\langle \delta \mu_{q_a}(v), \delta \mu_{q_b}(w)\rangle_{H'} 
    &= \sum_{\substack{f_a \in F_{q_a} \\ f_b \in F_{q_b}}} \ell_{f_a} \ell_{f_b} \left( (\delta \ell_{f_a})_{v} (\delta \ell_{f_b})_{w} k_E(c_{f_a},c_{f_b})  +   (v_{f_a})^{\top} \nabla^2_{1,2}k_E(c_{f_a},c_{f_b}) w_{f_b} \right) \notag\\
    &+ \sum_{\substack{f_a \in F_{q_a} \\ f_b \in F_{q_b}}}  \ell_{f_a} \ell_{f_b} \left( \delta \ell_{f_a})_{v}  \nabla_1 k_E(c_{f_a},c_{f_b})^\top w_{f_b} + (\delta \ell_{f_b})_{w} \nabla_2 k_E(c_{f_a},c_{f_b})^{\top} v_{f_a} \right) 
\end{align*}

\begin{remark}
As for varifolds, this inner product defines a metric on  $H'$ but defines a metric on $C_0^2(\R^d \times \mathbb{S}^{d-1},\R)'$ only for $C_0$-universal kernels.
\end{remark}

Similarly, this inner product can be split into inner products between mass and position variations using the decomposition given by \cref{eq:mass_var} and \cref{eq:pos_var}. To study these inner products, we focus on the fundamental case of first variations associated with two segments $f$ and $g$ that represent edges of a discretized curve $q$, and two different vector fields $v,w \in C_0^2(\mathbb{R}^d,\mathbb{R}^d)$. We define $\mu_f = \ell_f \updelta_{c_f}$ and $\mu_g = \ell_g \updelta_{c_g}$ as two weighted Dirac measures, where $\ell_f$ and $c_f$ denote the length and the center of the edge $f$ (and analogously for $g$). Then, the first variation of the varifold $\mu_f$ induced by $v$ is 
\begin{eqnarray*}
K_H \delta \mu_f(v) &=& \ell_f (\delta \ell_f)_v  k_E(c_f,\cdot) + \ell_f  \nabla_1 k_E(c_f,\cdot)^{\top} v_f \\
&=& K_H \delta \mu_f(v)^{\text{mass}} + K_H \delta \mu_f(v)^{\text{pos}}.
\end{eqnarray*}

In what follows, we assume that the RKHS inner products are defined using a Gaussian kernel with scale $\sigma$, given by $k_{\sigma}(x,y)= \exp(-\dfrac{\Vert y - x\Vert^2 }{\sigma^2})$. Consequently, this kernel and its partial derivatives vanish when the segments $f$ and $g$ are very far apart. In other words, for $\Vert c_f - c_g \Vert \gg \sigma$, $\delta \mu_f(v)$ and $\delta \mu_g(v)$ become almost orthogonal. In the remainder of this section, we assume $\sigma$ is chosen large enough to avoid this phenomenon.
\paragraph{Inner product between mass variations} The inner product is given by
\begin{equation} 
\langle \delta \mu_f(v)^{\text{mass}} , \delta \mu_g(w)^{\text{mass}} \rangle_{H'} = \ell_f \ell_g (\delta\ell_f)_v (\delta \ell_g)_w   k_E(c_f,c_g)
\end{equation}
This inner product is characterized by $(\delta \ell_f)_v$ and $(\delta \ell_g)_w$ representing length variations of the segments under the action of vector fields. Indeed, when $(\delta \ell_f)_v = \langle  \dfrac{v(q_{f^2}) - v(q_{f^1})}{\ell_f},\vec{t}_f\rangle < 0$, the segment $f$ shrinks, which results in a local mass decrease. So, the inner product is strictly positive when both vector fields induce the same local mass change (positive or negative). Note that the mass variation $ \delta \mu_f(v)^{\text{mass}} $ is constant equal to zero when $v(q_{f^2})-v(q_{f^1})$ is orthogonal to the tangent $\vec{t}_f$, in particular if $v$ is locally constant on $f$, which corresponds to a translation of the segment $f$.

\paragraph{Inner product between positions variations}\label{par:inner_prod_pos_var}

Recall the notation $v_f = \dfrac{v(q_{f^1}) + v(q_{f^2})}{2}$.
The variation of position of a particle $ K_H \delta \mu_f(v)^{pos} = \ell_f \nabla_1 k_E(c_f,\cdot)^\top v_f$ is illustrated in the first panel of \cref{fig:dirac_translation_ver} and \cref{fig:dirac_translation_hor} for a Gaussian kernel. Using these notations, the inner product between position variations is given by
\begin{equation}\label{eq:inn_prod_pos}
\langle \delta \mu_f(v)^{\text{pos}} , \delta \mu_g(w)^{\text{pos}} \rangle_{H'} = \ell_f \ell_g v_f^{\top} \nabla^2_{1,2} k_E(c_f,c_g) w_g \, .
\end{equation}
In particular, for a Gaussian kernel, the Hessian matrix is
\begin{equation}
    \nabla_{1,2}^2 k_{\sigma}(c_f,c_g) = \dfrac{2}{\sigma^2}\exp\left({-\dfrac{ \Vert c_g-c_f \Vert^2}{\sigma^2}}\right) \left(I_d - \dfrac{2}{\sigma^2} (c_g-c_f)(c_g-c_f)^{\top}\right) 
\end{equation}
Its eigenvalues are $\lambda_{f,g}^{\perp} =\dfrac{2}{\sigma^2} \exp\left({-\dfrac{\Vert c_g - c_f \Vert^2}{\sigma^2}}\right)$ and $\lambda_{f,g}^{//} = \lambda_{f,g}^{\perp} (1-2\dfrac{\Vert c_g - c_f \Vert^2}{\sigma^2})$ which correspond respectively to the $(d-1)$-dimensional orthogonal complement  $(c_g-c_f)^{\perp}$ and the one-dimensional eigenspace  $\operatorname{span}(c_g - c_f)$. Consequently, the vectors $v_f$ and $w_g$ can be decomposed along these eigenspaces, namely $v_f = v_f^{//} + v_f^{\perp}$. The expression of the inner product then simplifies to 
\begin{equation*} 
\langle \delta \mu_f(v)^{\text{pos}} , \delta \mu_g(w)^{\text{pos}} \rangle_{H'} = \ell_f \ell_g (\lambda_{f,g}^{\perp}  \langle v_f^{\perp},w_g^{\perp} \rangle + \lambda_{f,g}^{//} \langle v_f^{//},w_g^{//}\rangle) \, .
\end{equation*}
The eigenvalues are bounded by $(-0.45\dfrac{2}{\sigma^2} \approx)-\dfrac{4 \exp\left({-\dfrac{3}{2}}\right)}{\sigma^2} \leq \lambda_{f,g}^{//} \leq \lambda_{f,g}^{\perp} \leq \dfrac{2}{\sigma^2}$. In particular, for $\Vert c_g - c_f \Vert < \sigma $, the orthogonal eigenvalue is larger in magnitude than the radial one, whereas the opposite holds for $\Vert c_g - c_f\Vert > \sigma$.

To better interpret this expression, we focus on the norm of the first variation associated  with both particles in the particular case where the vector fields $v$ and $w$ are constant to $u \in \R^d$. In this setting, the norm is
\begin{eqnarray}\label{eq:norm_first_var_sum}
\Vert \delta \mu_f(u)^{\text{pos}} + \delta \mu_g(u)^{\text{pos}} \Vert^2_{H'} &=& \Vert \delta \mu_f(u)^{\text{pos}} \Vert^2 + \Vert \delta \mu_g(u)^{\text{pos}} \Vert^2 \\
&+& \dfrac{4\Vert u \Vert^2}{\sigma^2} \ell_f \ell_g \exp\left({-\dfrac{\Vert c_g - c_f\Vert^2}{\sigma^2}}\right)  \left ( 1 - \dfrac{2}{\sigma^2} \left\langle \dfrac{u}{\Vert u \Vert}, c_g - c_f \right\rangle^2  \right) \notag
\end{eqnarray}
Since $\delta \mu_f(v) + \delta \mu_g(v) = \delta (\mu_f + \mu_g)(v) $, the norm can be interpreted as a measure of the infinitesimal displacement of the particles $c_f$ and $c_g$ together by the translation $u$.

\begin{figure}[!h]
    \centering

    \begin{subfigure}{0.7\linewidth}
        \centering
        \includegraphics[width=\linewidth]{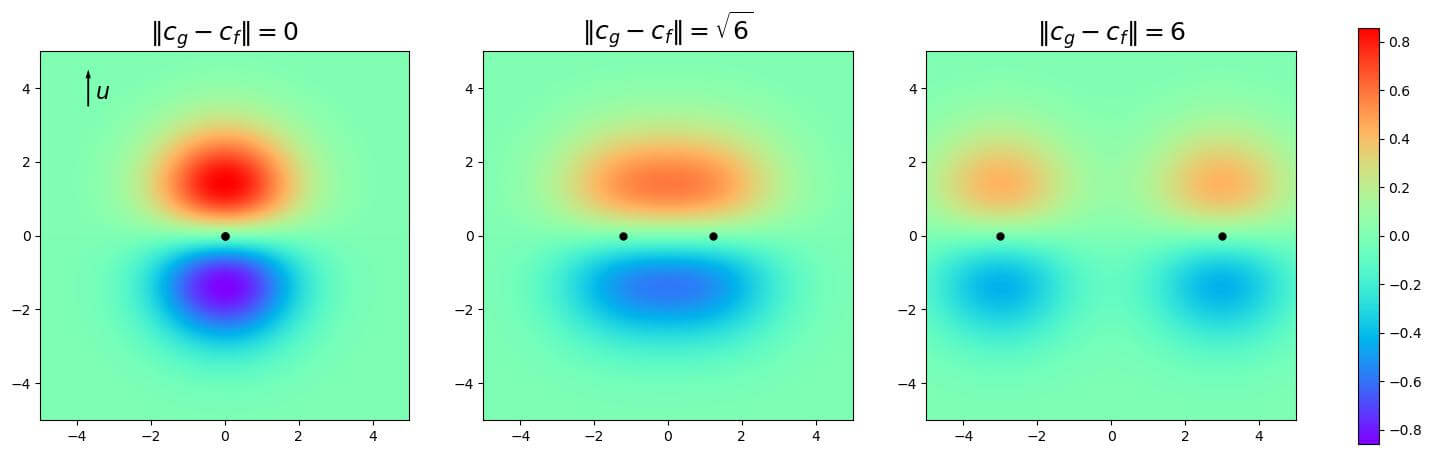}
        \caption{Representation of the first variation $K_H \delta \mu_{fg}(u)$ for $u=[0,1]$ and different distances between particles.}
        \label{fig:dirac_translation_ver}
    \end{subfigure}
    \hfill
    \begin{subfigure}{0.29\linewidth}
        \centering
        \includegraphics[width=1.2\linewidth]{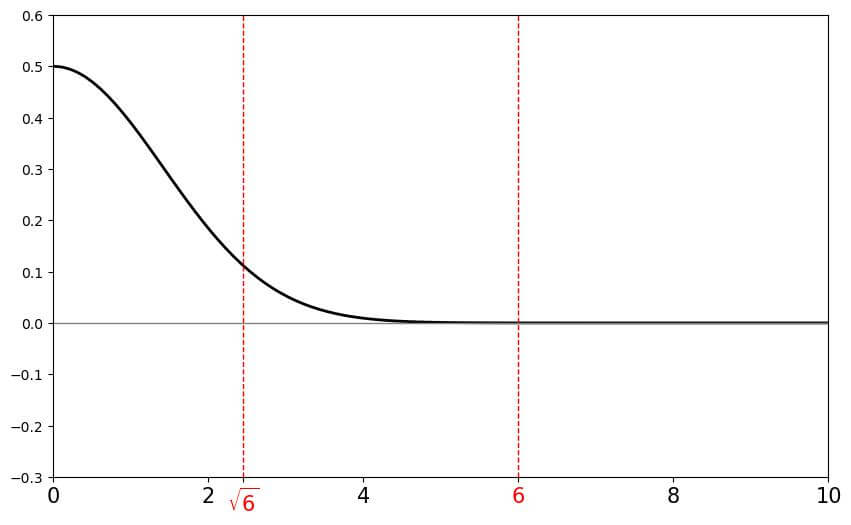}
        \caption{Representation of the mapping $x \mapsto \dfrac{2}{\sigma^2} \exp\left({-\dfrac{x^2}{\sigma^2} }\right)$ for $\sigma=2$.}
        \label{fig:dirac_translation_ver2}
    \end{subfigure}

    \vspace{1em} 
    
    \begin{subfigure}{0.7\linewidth}
        \centering
        \includegraphics[width=\linewidth]{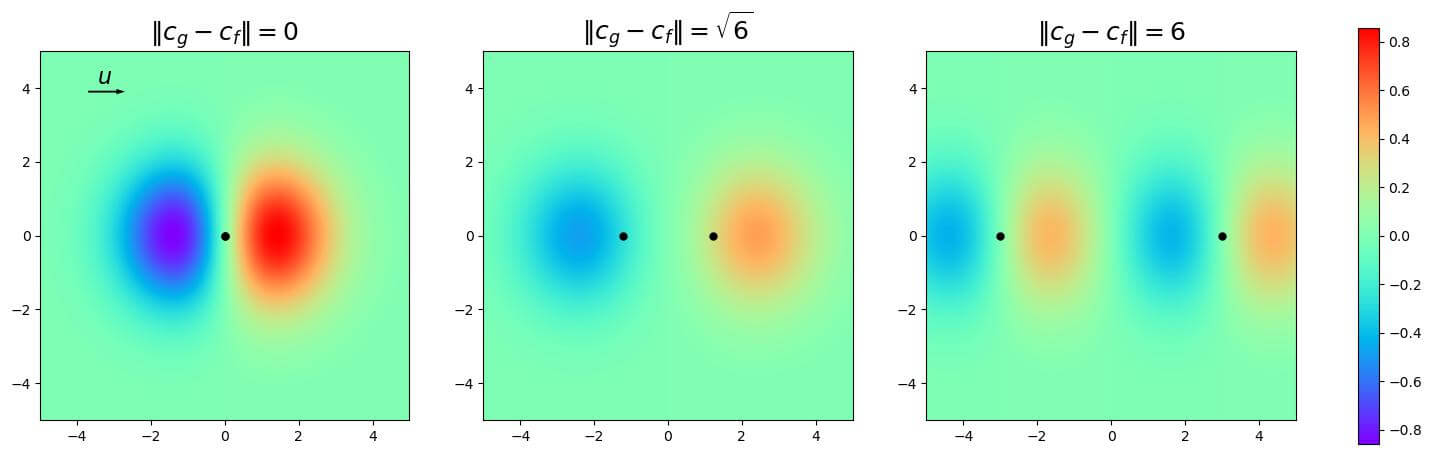}
        \caption{Representation of the first variation $K_H \delta \mu_{fg}(u)$ for $u=[1,0]$ and different distances between particles.}
        \label{fig:dirac_translation_hor}
    \end{subfigure}
    \hfill
    \begin{subfigure}{0.29\linewidth}
        \centering
        \includegraphics[width=1.2\linewidth]{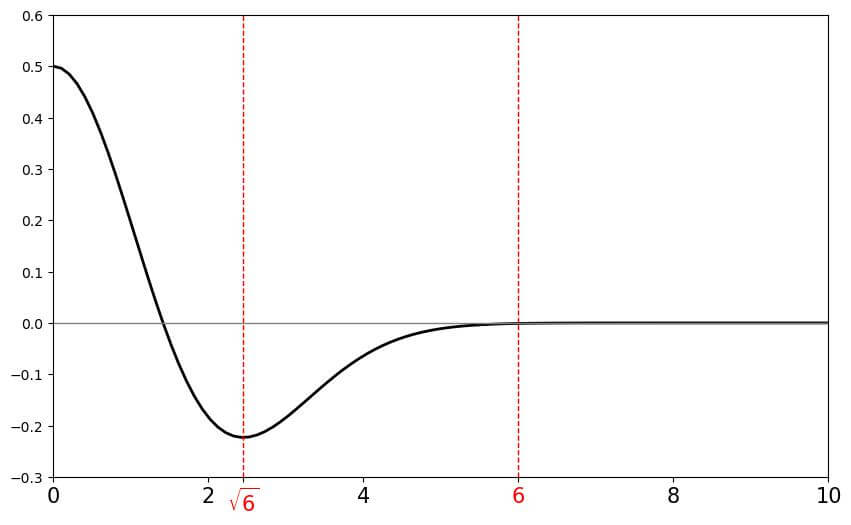}
        \caption{Representation of the mapping $x \mapsto \dfrac{2}{\sigma^2} \exp\left({-\dfrac{x^2}{\sigma^2}}\right)(1-\dfrac{2}{\sigma^2}x^2) $ for $\sigma=2$.}
        \label{fig:dirac_translation_hor2}
    \end{subfigure}

    \caption{Representation of the first variation $K_H (\delta \mu_{f}(u) + \delta \mu_g(u))$ for $u=[0,1]$ (first row) and $u=[1,0]$ (second row). The black points represent the particles $f$ and $g$ respectively located at $c_f$ and $c_g$ with masses $\ell_f=\ell_g=1$. The plots on the right represent the inner product $\langle \delta \mu_f(u)^{\text{pos}},\delta \mu_g(u)^{\text{pos}} \rangle$ with respect to the norm $\Vert c_f - c_g \Vert$. The RKHS is induced by a Gaussian kernel with scale $\sigma=2$. The different values of $\Vert c_g - c_f \Vert \in \{ 0, \sqrt{6},6\}$ corresponds to the case where the particles collapse, to the minimum of the inner product for the orthogonal case and when particles are far apart.
    }
    \label{fig:dirac_translation_all}
\end{figure}

\cref{fig:dirac_translation_all} illustrates the first variation of varifolds associated with both particles $K_H \left( \delta \mu_{f}(u) + \delta \mu_g(u) \right)$ on the left panel and the inner product $\langle \delta \mu_f(u)^{\text{pos}} , \delta \mu_g(u)^{\text{pos}} \rangle$ on the right panel in the particular case of $u=[0,1]$ (\cref{fig:dirac_translation_all}\subref{fig:dirac_translation_ver}) and $u=[1,0]$ (\cref{fig:dirac_translation_all}\subref{fig:dirac_translation_hor}), respectively orthogonal and tangential  to $c_g - c_f$.
On one hand, for $u=[0,1]\in (c_g-c_f)^{\perp}$, illustrated in \cref{fig:dirac_translation_all}\subref{fig:dirac_translation_ver}, the inner product simplifies to 
\begin{equation*}
\langle \delta \mu_f(u)^{\text{pos}} , \delta \mu_g(u)^{\text{pos}} \rangle_{H'} = \dfrac{2}{\sigma^2} \ell_f \ell_g \exp\left({-\dfrac{\Vert c_g - c_f\Vert^2}{\sigma^2}}\right) 
\end{equation*}
This term strictly decreases with the distance  $\Vert c_g-c_f \Vert$. It reaches its maximal value when the particles collapse,  behaving as a single Dirac measure of weight $\ell_f + \ell_g$, and vanishes as the particles move far apart which corresponds to the case of two independent Dirac measures.
Using \cref{eq:norm_first_var_sum}, it follows that $\Vert \delta \mu_f(u) + \delta \mu_g(u) \Vert^2 > \Vert \delta \mu_f(u) \Vert^2 + \Vert \delta \mu_g(u) \Vert^2$. This inequality shows that, for $u \in (c_g - c_f)^\perp$, translating close particles yields a much larger first variation than translating the same particles when they are far apart.
\noindent On the other hand, 
for $u=[1,0] \in \operatorname{span}(c_g-c_f)$,  the inner product reads
\begin{equation*}
    \langle \delta \mu_f(u)^{\text{pos}} , \delta \mu_g(u)^{\text{pos}} \rangle_{H'} = \dfrac{2 \Vert u \Vert^2}{\sigma^2}\ell_f \ell_g  \exp\left({-\dfrac{ \Vert c_g - c_f \Vert^2}{\sigma^2}}\right)  (1 - \dfrac{2}{\sigma^2} \Vert c_g - c_f \Vert^2)
\end{equation*} 
    As illustrated in  \cref{fig:dirac_translation_all}\subref{fig:dirac_translation_hor}, when the two Diracs coincide or are very far apart, the behavior of the inner product is similar to the orthogonal case. However, in this case, the minimum of the inner product is $ -2 \exp\left({-\dfrac{3}{2}}\right)( \approx -0.45)$ reached for $\Vert c_g - c_f \Vert = \sqrt{\dfrac{3}{2}}\sigma$ which is lower than the minimum in the orthogonal case due to compensation effect caused by the interaction term.  Using \cref{eq:norm_first_var_sum}, for $\Vert c_g - c_f \Vert > \dfrac{\sigma}{\sqrt{2}}$, it follows that $\Vert \delta \mu_f(u) + \delta \mu_g(u) \Vert^2 < \Vert \delta \mu_f(u) \Vert^2 + \Vert \delta \mu_g(u) \Vert^2$. This indicates that translating close particles generates a smaller first variation than translating particles that are far apart. This phenomenon corresponds to the non-positive part for $\Vert c_g - c_f\Vert > \sqrt{2}$, in the graph shown in \cref{fig:dirac_translation_hor2}. Indeed, since $u \in \operatorname{span}(c_g-c_f)$, the translation displaces both particles along their common axis and tend to substitute one particle with the other, leaving the overall shape, constituted of both particles, minimally perturbed. Conversely, the opposite holds for $\Vert c_g - c_f \Vert < \dfrac{\sigma}{\sqrt{2}}$ since the particles are considered as a single particle with a larger mass to displace.

\paragraph{Inner product between mass and position variations} The inner product is given by

\begin{equation}
\langle \delta \mu_f(v)^{\text{pos}} , \delta \mu_g(w)^{\text{mass}} \rangle_{H'} = \ell_f (\delta \ell_g)_v \nabla_1 k_E(c_f,c_g)^{\top} v_f
\end{equation}
and for a Gaussian kernel, the expression simplifies to 
\begin{equation*}
\langle \delta \mu_f(v)^{\text{pos}} , \delta \mu_g(w)^{\text{mass}} \rangle_{H'} = \dfrac{2}{\sigma^2} \ell_f (\delta \ell_g)_v \exp\left({-\dfrac{\Vert c_g - c_f\Vert^2}{\sigma^2}}\right)  \langle v_f,c_g - c_f\rangle \, .
\end{equation*}
The two main terms involved in this inner product are $(\delta \ell_g)_v$ and $\langle v_f, c_g-c_f\rangle $. The first one corresponds to a variation of mass, that corresponds to an increase of mass when positive and a decrease when negative as discussed earlier. The second term $\langle v_f , c_g - c_f \rangle $ captures the displacement of the particle $f$ relatively to $g$ under the action of the vector field $v$. Therefore, $\delta \mu_f(v)^{\text{pos}}$ and $\delta \mu_g(w)^{\text{mass}}$ are orthogonal when $v$ does not affect the mass of $g$, i.e $(\delta \ell_g)_v = 0$ or when $v_f$ is orthogonal to the segment $c_g-c_f$.

\paragraph{Influence of the scale for Gaussian kernel}
The study of these inner products shows that, for a Gaussian kernel, the scale parameter $\sigma$ acts as a balance between mass and position variations. For small scales, the metric perceives the curve as a collection of distinct particles that do not interact. Consequently, the inner product is highly sensitive to local displacement of these particles, leading the position variation to dominate the inner product. Conversely, for large scales, spatial positions are weakly discriminated, meaning that the curve is perceived by the kernel almost as a single particle. Therefore, the mass variation becomes the predominant term in the inner product.

\subsubsection{The example of translations}
\label{sec:example_translations}

Let $W$ be the space of translation vector fields in $\R^d$, that is  $W \simeq \R^d$.
We first recall the notations introduced previously. A curve $q$ can be discretized by considering a collection of landmarks $(q_i)_{i=1,...n}$ along with a set of edges $F_q \subset   \{ 1,...,n \}^2$. For any edge $f \in F_q$, we denote its length by $\ell_{f}=\Vert q_{f^2} - q_{f^1} \Vert$, its center by $c_f=\dfrac{q_{f^2} + q_{f^1}}{2}$ and the average action of a vector field $v$ on the edge $f$ by $v_f =  \dfrac{v(q_{f^1})+v(q_{f^2})}{2}$. Considering these notations, the expression of the first variation, given in \cref{eq:first_var_discrete}, for a translation vector field $u \in \R^d$ simplifies to
\begin{equation} \label{first_var_trans}
    \delta \mu_q(u) : \omega \mapsto  \sum_{f \in F_q} \ell_f  \nabla \omega(c_f)^{\top} u \, .
\end{equation}

\begin{remark}
The first variation induces by a translation involves only variations of position and no variations of mass since a translation leaves the mass invariant, that is $\delta \mu_q(u) = \delta \mu_q^{pos}(u)$.
\end{remark}

\noindent The expression of its image representation using a RKHS $H$ generated by a kernel $k_E$ is
\begin{equation}
    K_H \delta \mu_q(u) : x \mapsto \sum_{f \in F_q} \ell_f  \nabla_1 k_E(c_f,x)^{\top} u \, .
\end{equation}
\cref{fig:triangle} illustrates the image representation of the first variation associated with a triangle $q$ and induced by translations $ u_1 = [0,-1] $ and $u_2=\dfrac{1}{\sqrt{2}}[1,1]$. The RKHS $H$ is generated by a Gaussian kernel with scale $\sigma \in \{1,3\}$. For both translations $u_1$ and $u_2$, the case $\sigma=3$ shows that the triangle can be almost identified to a single Dirac. Therefore, the most interesting case to study is the $\sigma=1$. For the translation $u_1=[0,-1]$ shown in \cref{fig:triangle_bas}, along the vertical face of the triangle, the intensity nearly vanishes, excluding edges effects. Indeed, the infinitesimal translation acts tangentially to this face so it acts as a local reparameterization and as explained previously, the first variation is invariant by reparameterization, leading to zero local variation. In the neighborhood of the two other faces, the translation induces a gradient of intensity oriented in the direction of the translation. This gradient corresponds to a small bump arising from the derivative of the Gaussian kernel, reflecting the variation of position of these points. \cref{fig:triangle_diag} illustrates the first variation induced by a translation $u_2=\dfrac{1}{\sqrt{2}}[1,1]$ which has the exact same norm as $u_1$. In this case, the representation of the first variation has higher intensity since the translation vector $u_2$ is not parallel to any of the triangle's faces which implies there is no reparameterization phenomenon. Because every edges is deformed by the translation $u_2$, it leads to a RKHS norm $\Vert \delta \mu_q(u_2) \Vert$ larger than  $\Vert \delta \mu_q(u_1) \Vert$.
\begin{figure}[!h]
    \centering
    \begin{subfigure}{\linewidth}
        \centering
        \includegraphics[width=0.8\linewidth]{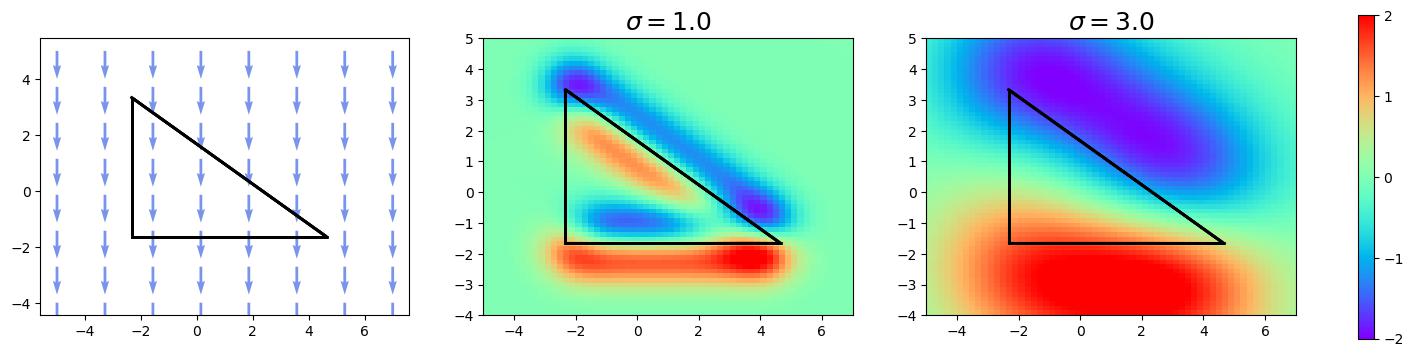}
        \caption{First variation of the varifold associated with the translation $u=[0,-1]$. For $\sigma=1$, the RKHS norm is $\Vert \delta \mu_q(u_1) \Vert = 43.6$}
        \label{fig:triangle_bas}
    \end{subfigure}
    
    \vspace{1.5em} 
    
    \begin{subfigure}{\linewidth}
        \centering
        \includegraphics[width=0.8\linewidth]{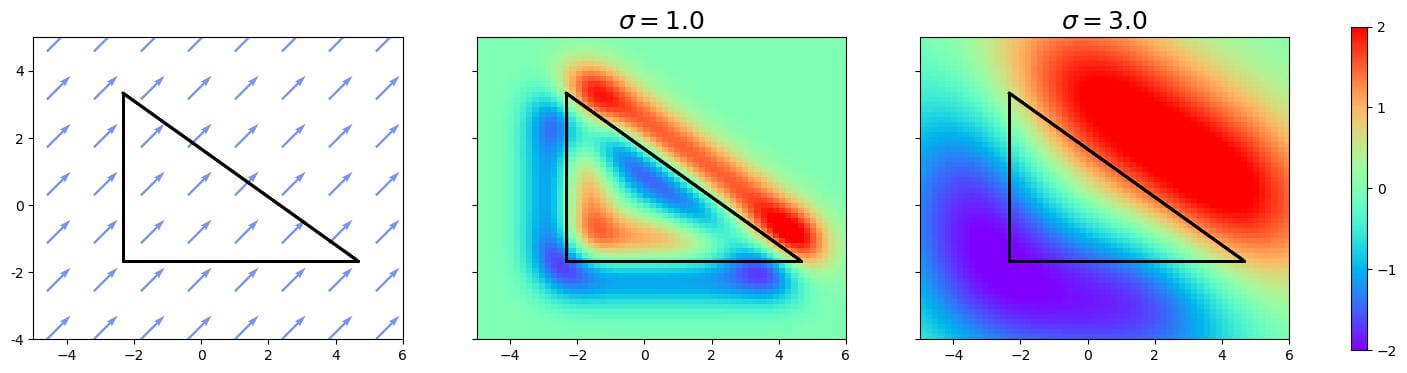}
        \caption{First variation of the varifold associated with the translation $u=\dfrac{1}{\sqrt{2}}[1,1]$.  For $\sigma=1$, the RKHS norm is $\Vert \delta \mu_q(u_2) \Vert = 48$}
        \label{fig:triangle_diag}
    \end{subfigure}
    
    \caption{Representation of the first variation of the varifold associated with a triangle and different translation vector fields. The RKHS is induced by a Gaussian kernel with scale $\sigma \in \{1, 3\}$.}
    \label{fig:triangle}
\end{figure}

\section{Decoupling actions of vectors fields}\label{sec:correlation}
 
We first remind that we only present results for curves embedded in $\R^d$, even though they can be generalized to any rectifiable submanifolds embedded in $\R^d$.

In the large deformation framework \cite{LDDMM}, given a RKHS of vector fields $V\hookrightarrow C_0^2(\R^d,\R^d)$ and $v \in L^2([0,1],V)$ a time-varying vector field, a deformed curve can be generated by solving the ODE $\dot{q}_t = v_t \circ q_t$ where $(v,q) \mapsto v \circ q$ is the infinitesimal action of $V$ on the space of curves. This setting can be extended by considering sum of different spaces of vector fields. For instance, for $W \hookrightarrow C_0^2(\R^d,\R^d)$ another RKHS, the sum $V+W$ is also an RKHS and the evolution of the deformed curve generated by $(v,w) \in L^2([0,1],V \times W)$ is given by the evolution equation $\dot{q}_t = v_t \circ q_t + w_t \circ q_t$. Note that we can also define more complex deformations by assuming that $W$ depends on the state of the curve $q$, as is the case in the modular framework \cite{gris_module}. However,  it is often difficult to clearly distinguish the specific contribution of each mode of deformation, since a part of the infinitesimal deformation induced by a vector field $v \in V$ might be replicated by the infinitesimal deformation induced by  another vector field $w \in W$. To perform meaningful statistics on these different deformations, it is essential to ensure that they are disentangled. To address this issue, we introduce a coupling score that quantifies this replication phenomenon. A core point of our approach is that we compare these infinitesimal deformations through the first variations, rather than  directly comparing the infinitesimal actions or the vector fields themselves.
\begin{figure}[!h]
    \centering
    \begin{subfigure}{\linewidth}
        \centering
        \includegraphics[width=\linewidth]{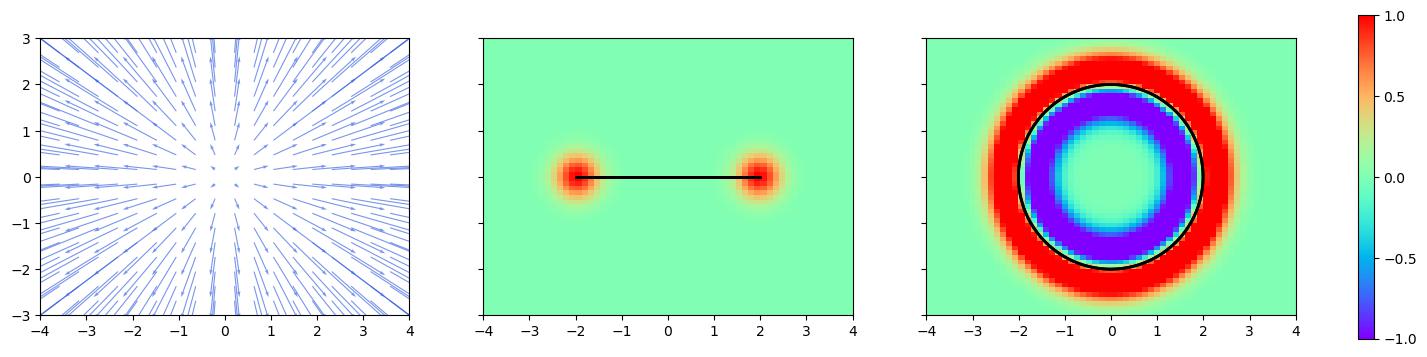}
        \caption{Scaling vector field and its induced first variation of varifolds associated with a segment and a circle}
        \label{fig:scale_vfield}
    \end{subfigure}

    \vspace{1em} 
    
    \begin{subfigure}{\linewidth}
        \centering
        \includegraphics[width=\linewidth]{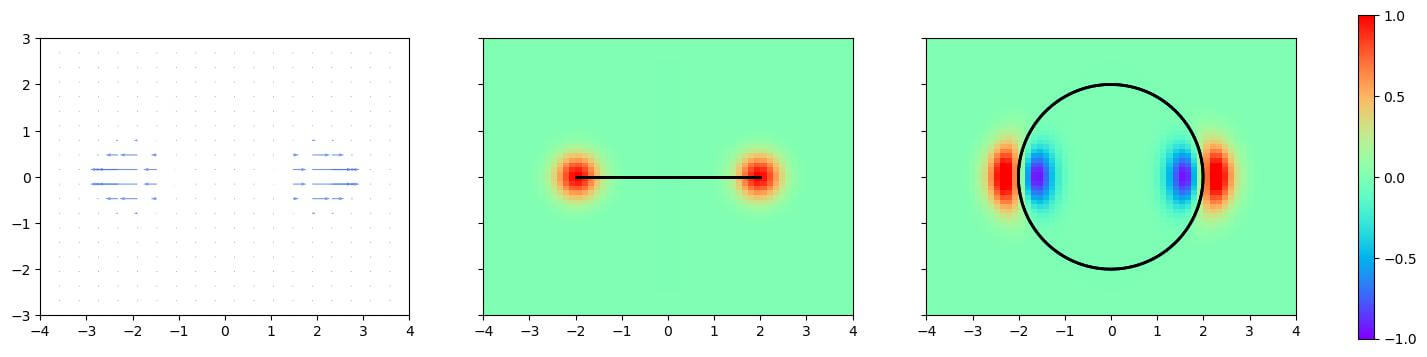}
        \caption{Horizontal stretching vector field and its induced first variation of varifolds associated with a segment and a circle}
        \label{fig:stretch_vfield}
    \end{subfigure}

\caption{Comparison of the first variations of varifolds induced by two vector fields (horizontal strectching and scaling) on two curves (segment and circle). This figure illustrates that different vector fields may induce different or similar first variation of varifolds depending on the curve.}
\label{fig:rod_circle_comparison_variations}
\end{figure}

\cref{fig:rod_circle_comparison_variations} compares the first variations induced by two different vector fields, a scaling and a stretching, on two distinct curves, a segment and a circle. As shown in the second column, both vector fields induce the exact same first variation on the segment. This occurs because the first variation only captures variations of the image of the curve $\operatorname{im}(q)$ and because the scaling is centered on the segment, it acts exactly as an horizontal stretching. In contrast, the third column shows that these two vector fields yield different first variations on the circle, as their respective action induce different variations on $\operatorname{im}(q)$.

\subsection{Coupling score}

Let $V$ and $W$ be two RKHS of vector fields continuously embedded in $C_0^2(\R^d,\R^d)$.
For a curve $q$ and a vector field $v \in V$, the coupling score, defined in the next proposition, measures how the infinitesimal action of $v$ on $q$ can be replicated by the infinitesimal action of an element from $W$ on $q$. Since the proposed coupling score is intended to act as a penalization term improving an existing registration model, the RKHS deformation spaces $V$ and $W$ are considered fixed by the base model and are not treated as tunable parameters in this section.

\begin{definition}[Coupling score]  \label{corr_level}  Let $H \hookrightarrow C_0^2(\R^d \times \mathbb{S}^{d-1},\R)$ be a RKHS of varifolds generated by a kernel $k_{H}=k_E \otimes k_T \in C_0^4((\R^d \times \mathbb{S}^{d-1})^2,\R)$ and let $V$ and $W$ be two RKHS of vector fields continuously embedded in $C_0^2(\R^d,\R^d)$.
The coupling score between a vector field $v \in V$ and the space of vector fields $W$ with respect to a curve $q$ is defined by
\begin{equation}
    C_q(v,W)=\Vert w^* \Vert_{W}^2    
\end{equation}
where 
\begin{equation} \label{min_corr}
    w^* = \operatorname{argmin}_{w \in W} \left\Vert\delta \mu_q(w) - \delta \mu_q(v)  \right\Vert_{H'}^2 + \lambda \Vert w \Vert_{W}^2
\end{equation}
for $\lambda > 0$.
\end{definition}

The existence and uniqueness of the minimizer will be later established in \cref{prop:min_w}. In most applications, the space of vector fields $W$ is finite-dimensional, which implies that $\delta \mu_q(W)$ is closed and therefore ensures the existence of a minimum even without regularization $(\lambda=0)$. The uniqueness of this minimizer is guaranteed provided the linear map $\delta \mu_q$ is injective. This injectivity is generally satisfied except in very specific degenerate cases, such as curves that are invariant under the action of a non-zero vector field in $W$ (e.g., a circle under rotations). For $\lambda = 0$, the optimal vector field $w^*$ is the unique element in $W$ whose associated first variation $\delta \mu_q(w^*)$ is the orthogonal projection of $\delta \mu_q(v)$ onto $\delta \mu_q(W)$ with respect to the varifold RKHS norm. In other words, $w^*$ can be interpreted as the vector field from $W$ that best reproduces the action of $v$ on $q$ via their first variation of varifolds with respect to the norm $\| \cdot \|_{H'}$. Note that the projection $\delta \mu_q(w^*)$, and consequently the coupling score, depends on the choice of $H$. For example, if the varifold RKHS $H$ is generated by a Gaussian kernel, the choice of the scale $\sigma$ significantly influences the coupling score, as discussed in \cref{sec:rkhs_metric_first_var}.

The first variation of varifolds was initially introduced in \cref{def:first_var} as a continuous linear form for a fixed vector field $\delta \mu_q(v) : \omega \mapsto \delta \mu_q(v)(\omega)$ to represent infinitesimal action of vector fields on curves. In the context of coupling, we introduce the first variation operator $\delta \mu_q : v \mapsto \delta \mu_q(v)$ that helps to define a metric on the space of vector fields as detailed later in \cref{sec:stat_to_dyn}. We first prove the continuity of the first variation operator in order to show the existence of the minimizer.

\begin{proposition}[Continuity of $\delta \mu_q$] \label{prop:continuity_first_var}
The first variation of varifolds operator
$$\delta \mu_q : \app{V}{ C_0^2(\R^d \times \mathbb{S}^{d-1},\R)'}{v}{\delta \mu_q(v)}$$ is linear and continuous.

\end{proposition}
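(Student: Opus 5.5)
Linearity follows directly from the explicit formula \cref{eq:first_var_discrete} of \cref{exam:formula_first_var_discrete}. For fixed $q$ and fixed test function $\omega$, every quantity depending on $v$ is linear in the finitely many evaluations $v(q_{f^1}), v(q_{f^2})$. This covers $v_f$, $(\delta \ell_f)_v$ and $\nabla^\perp v_f$ (the latter being the composition of the linear map $v \mapsto (v(q_{f^2})-v(q_{f^1}))/\ell_f$ with the fixed orthogonal projection onto $\vec{t}_f^{\perp}$). Hence $v \mapsto \delta\mu_q(v)(\omega)$ is linear for every $\omega$, and therefore $\delta\mu_q$ is linear as a map into the dual space. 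Alternatively, for the continuous representation $\tilde\mu_q$, linearity follows from \cref{def:first_var}: $v \mapsto v\circ q$ is linear and $d\tilde\mu_q$ is a linear operator by \cref{prop:frechet_varifold}.

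For continuity I would factor $\delta\mu_q$ as the composition of the inclusion $V \hookrightarrow C_0^2(\R^d,\R^d)$, which is continuous by assumption, with the map $C_0^2(\R^d,\R^d) \to C_0^2(\R^d\times\mathbb{S}^{d-1},\R)'$, $v \mapsto \delta\mu_q(v)$. It therefore suffices to bound the latter. Fix $\omega$ with $\Vert\omega\Vert_{C_0^2}\le 1$, so that $|\omega|$, $\Vert\nabla_x\omega\Vert$ and $\Vert\nabla_T\omega\Vert$ are all bounded by $1$. Each term of \cref{eq:first_var_discrete} is then controlled using
\[
\Vert v_f\Vert \le \Vert v\Vert_\infty, \qquad |(\delta\ell_f)_v| \le \frac{2\Vert v\Vert_\infty}{\ell_f}, \qquad \Vert\nabla^\perp v_f\Vert \le \frac{2\Vert v\Vert_\infty}{\ell_f},
\]
which gives
\[
|\delta\mu_q(v)(\omega)| \le \sum_{f\in F_q}\big(4+\ell_f\big)\Vert v\Vert_\infty \le C_q \Vert v\Vert_{C_0^2}.
\]
The constant $C_q$ depends only on the (finite) discretization, since the edges have positive length because the points are distinct. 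Taking the supremum over $\omega$ yields $\Vert\delta\mu_q(v)\Vert_{(C_0^2)'} \le C_q\Vert v\Vert_{C_0^2} \le C_q c_V \Vert v\Vert_V$, where $c_V$ is the embedding constant of $V$. A sharper but less discretization-dependent bound uses the mean value inequality, $\Vert v(q_{f^2})-v(q_{f^1})\Vert \le \Vert dv\Vert_\infty \ell_f$, which removes the $1/\ell_f$ factors.

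For the continuous version $\tilde\mu_q$ I would argue analogously with \cref{prop:continuous_first_var}. The integrand is bounded by $(\Vert v\Vert_\infty + 2\Vert dv\Vert_\infty)\Vert q'(s)\Vert$, so $\Vert\delta\tilde\mu_q(v)\Vert \le 3\,\mathrm{length}(q)\Vert v\Vert_{C_0^2}$. This case could also be obtained abstractly from the boundedness of the Fréchet derivative $d\tilde\mu_q$ and of the map $v\mapsto v\circ q$ from $C_0^2$ to $C^1(I,\R^d)$, where $\Vert (v\circ q)'\Vert_\infty \le \Vert dv\Vert_\infty\Vert q'\Vert_\infty$.

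There is no real obstacle. The only point needing care is making sure the test-function norm $\Vert\omega\Vert_{C_0^2}$ controls $\nabla_T\omega$ on the sphere, which is the differential calculus on submanifolds recalled in \cref{app:diff_submanifolds}.
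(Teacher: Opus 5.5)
Your proposal is correct and follows the paper's proof: both bound the three terms of the discrete formula \cref{eq:first_var_discrete} termwise using $\Vert\omega\Vert_{C_0^2}\le 1$, cancel the $1/\ell_f$ factors against the weights $\ell_f$ to get a constant of the form $\sum_{f}(c+\ell_f)$, and conclude via the embedding $V\hookrightarrow C_0^2(\R^d,\R^d)$ (your constant $4$ is slightly sharper than the paper's $6$). Your two extra remarks go slightly beyond what the paper writes out and are consistent with it: the mean-value bound gives a constant controlled by the polygonal length rather than the number of edges, and you also treat the continuous case $\delta\tilde\mu_q$.
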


\begin{proof}
    The proof is given in \cref{app:proof_continuity_first_var}.
\end{proof}

\begin{corollary}[Continuity of $\delta \mu_q$ in the RKHS topology]
 Let $q \in Q$ be a curve. The first variation of varifolds operator
$$\delta \mu_q : \app{V}{H'}{v}{\delta \mu_q(v)}$$ is linear and continuous.

\end{corollary}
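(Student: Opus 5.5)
The plan is to factor the operator through the previous proposition. We write $\delta \mu_q : V \to H'$ as a composition
\[
V \xrightarrow{\ \delta \mu_q\ } C_0^2(\R^d \times \mathbb{S}^{d-1},\R)' \xrightarrow{\ i^*\ } H',
\]
where $i : H \hookrightarrow C_0^2(\R^d \times \mathbb{S}^{d-1},\R)$ is the continuous embedding assumed in the definition of the coupling score. Here $i^*$ is its dual (restriction) map, $i^*(\mu) = \mu \circ i$.

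Linearity is immediate. It follows either from \cref{prop:continuity_first_var} or directly from the explicit formula in \cref{exam:formula_first_var_discrete}. That formula is linear in $v$ through the quantities $(\delta \ell_f)_v$, $v_f$ and $\nabla^\perp v_f$, each of which is a linear combination of the point evaluations $v(q_{f^1})$ and $v(q_{f^2})$.

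For continuity, the first step is to show that $i^*$ is bounded. Since $H \hookrightarrow C_0^2$ continuously, there exists $c>0$ with $\Vert \omega \Vert_{C_0^2} \le c \Vert \omega \Vert_H$ for all $\omega \in H$. Hence, for $\mu \in (C_0^2)'$,
\[
\Vert i^*\mu \Vert_{H'} = \sup_{\Vert \omega \Vert_H \le 1} |\mu(\omega)| \le \sup_{\Vert \omega \Vert_{C_0^2} \le c} |\mu(\omega)| = c\,\Vert \mu \Vert_{(C_0^2)'}.
\]
Combining this with the bound $\Vert \delta\mu_q(v)\Vert_{(C_0^2)'} \le C \Vert v\Vert_V$ from \cref{prop:continuity_first_var} gives $\Vert \delta\mu_q(v)\Vert_{H'} \le cC\Vert v \Vert_V$.

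The only point requiring care is that the element of $H'$ called $\delta\mu_q(v)$ in the statement really is $i^*(\delta\mu_q(v))$, that is, the restriction of the test functions to $H$. This holds by the way the RKHS inner product on first variations was introduced in \cref{sec:rkhs_metric_first_var}.

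As a sanity check avoiding \cref{prop:continuity_first_var}, one could also bound the terms of \cref{eq:first_var_discrete} directly. For the discrete representation, $|(\delta\ell_f)_v|$, $\Vert v_f\Vert$ and $\Vert \nabla^\perp v_f\Vert$ are all at most $(2/\ell_f)\Vert v\Vert_\infty$ or $\Vert v\Vert_\infty$. Also, $\Vert v \Vert_\infty \le c_V \Vert v\Vert_V$ by the embedding $V \hookrightarrow C_0^2$. Finally, $|\omega|$, $\Vert\nabla_x\omega\Vert$ and $\Vert\nabla_T\omega\Vert$ are bounded by $c\Vert\omega\Vert_H$. Summing over the finitely many edges gives the bound explicitly.

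I do not expect a genuine obstacle here. The main subtlety is the identification of $\delta\mu_q(v)$ as an element of $H'$ through $i^*$.
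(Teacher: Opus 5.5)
Your proposal is correct and follows essentially the same route as the paper: you compose the bounded map $V \to C_0^2(\R^d \times \mathbb{S}^{d-1},\R)'$ from \cref{prop:continuity_first_var} with the restriction map dual to the continuous embedding $H \hookrightarrow C_0^2$, and you correctly bound its norm by the embedding constant, a step the paper leaves implicit. Your direct edge-by-edge estimate is a redundant but valid alternative; it reproduces the proof of that proposition with test functions taken in $H$.
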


\begin{proof}
    \cref{prop:continuity_first_var} states that $\delta \mu_q : V \to C_0^2(\R^d \times \mathbb{S}^{d-1},\R)'$ is continuous. Since $H$ is continuously embedded in $C_0^2(\R^d \times \mathbb{S}^{d-1},\R)$, the result follows.
\end{proof}
\begin{proposition} \label{prop:min_w}
Let $H$ be a RKHS of varifolds. Given a curve $q$, a vector field $v \in V$ and a regularization parameter $\lambda >0$, the functional 
\begin{equation} \label{min_functional}
    J_v : \app{W}{\R}{w} { \Vert \delta \mu_q(w) - \delta \mu_q(v) \Vert^2_{H'} + \lambda \Vert w \Vert^2_W}
\end{equation}
admits a unique minimizer in $W$ given by
    $$ w^* = ( \Lambda_q \delta \mu^W_q + \lambda \id_W)^{-1} \Lambda_q \delta \mu^V_q(v)$$
where $\delta \mu_q^V \in \mathcal{L}(V,H')$ and $\delta \mu_q^W \in \mathcal{L}(W,H')$ denote the restrictions of $\delta \mu_q$ to $V$ and $W$ respectively, and $\Lambda_q \eqdef K_W (K_H \delta \mu_q^W)^* \in \mathcal{L}(H',W)$.
\end{proposition}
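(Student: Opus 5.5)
This is a Tikhonov-regularized least-squares problem in a Hilbert space, so I will argue by strict convexity and coercivity and then read off the minimizer from the first-order condition. Set $A \eqdef \delta \mu_q^W \in \mathcal{L}(W,H')$ and $b \eqdef \delta\mu_q^V(v) \in H'$. Both are well defined: continuity of $A$ is given by the corollary above, applied with $W$ in place of $V$ since $W \hookrightarrow C_0^2(\R^d,\R^d)$ as well. The functional then reads $J_v(w) = \Vert Aw - b\Vert_{H'}^2 + \lambda \Vert w\Vert_W^2$.

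\textbf{Existence and uniqueness.} $J_v$ is continuous on $W$. It is $\lambda$-strongly convex, because $w\mapsto \Vert Aw-b\Vert^2$ is convex and $\lambda\Vert w\Vert_W^2$ is strongly convex. It is coercive, since $J_v(w)\ge \lambda\Vert w\Vert_W^2$. A continuous, strongly convex, coercive functional on a Hilbert space has a unique minimizer. One way to see this is that a minimizing sequence is bounded, so it has a weakly convergent subsequence, and convex continuous functionals are weakly lower semicontinuous. Alternatively, I will get uniqueness and existence at once from the explicit formula below.

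\textbf{Characterization.} $J_v$ is Fréchet differentiable, with
\[
dJ_v(w)\cdot h = 2\langle Aw-b, Ah\rangle_{H'} + 2\lambda\langle w,h\rangle_W .
\]
I then rewrite $\langle Aw-b,Ah\rangle_{H'}$ as an inner product in $W$, and this is where the operator $\Lambda_q$ enters. The $H'$ inner product is $\langle \alpha,\beta\rangle_{H'} = \beta(K_H\alpha)$, that is, the pairing of $\beta\in H'$ with $K_H\alpha \in H$. Hence
\[
\langle \alpha, Ah\rangle_{H'} = (K_H A h)(K_H\alpha)\text{-pairing} = \langle K_H\alpha, K_H A h\rangle_H = \big((K_HA)^*K_H\alpha\big)(h),
\]
where $(K_HA)^* : H'' \simeq H \to W'$. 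Identifying $W'$ with $W$ through $K_W$, this equals $\langle K_W (K_HA)^*\alpha, h\rangle_W = \langle \Lambda_q \alpha,h\rangle_W$.

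I expect the main obstacle to be the bookkeeping of these identifications, in particular interpreting $(K_H\delta\mu_q^W)^*$ consistently with the convention $H''\simeq H$ (or $H'\simeq H$). Concretely, $\Lambda_q$ is just the Hilbert adjoint $A^\dagger : H'\to W$ of $A$, and once that is established the rest is routine.

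\textbf{Solving.} Setting the derivative to zero gives the normal equation $(\Lambda_q A + \lambda \id_W) w = \Lambda_q b$. Since $\Lambda_q A = A^\dagger A$ is self-adjoint and positive, the operator $\Lambda_qA+\lambda\id_W$ satisfies
\[
\langle(\Lambda_qA+\lambda \id_W)w,w\rangle_W \ge \lambda\Vert w\Vert_W^2 .
\]
It is therefore bounded, injective with closed range, and its range has trivial orthogonal complement by self-adjointness. So it is invertible, and by Lax--Milgram its inverse is bounded with norm at most $1/\lambda$. This shows that the normal equation has exactly one solution, $w^* = (\Lambda_q\delta\mu_q^W+\lambda\id_W)^{-1}\Lambda_q\delta\mu_q^V(v)$. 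Finally, by convexity a critical point is a global minimizer, and by strict convexity it is the only one, which proves the proposition.
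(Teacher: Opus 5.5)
Your proof is correct and follows essentially the paper's route. The paper also gets existence from a minimizing sequence and weak lower semicontinuity; you note it also follows directly from solvability of the normal equation plus convexity. Uniqueness comes from strict convexity. The first-order condition is rewritten through $K_H$, the transpose $(K_H\delta\mu_q^W)^*$ and $K_W$ to get $(\Lambda_q\delta\mu_q^W+\lambda\id_W)w=\Lambda_q\delta\mu_q^V(v)$. Invertibility then comes from positivity plus $\lambda>0$, and your coercivity/Lax--Milgram argument is tighter than the paper's ``strictly positive-definite, therefore invertible''.

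One small slip: in your intermediate step $(K_HA)^*$ should act on $\alpha$, not on $K_H\alpha$. With the Banach transpose $(K_HA)^*:H'\to W'$, the correct chain is $\langle\alpha,Ah\rangle_{H'}=(\alpha\mid K_HAh)_{H',H}=((K_HA)^*\alpha\mid h)_{W',W}$. This is exactly what your final identification $\Lambda_q=A^\dagger$ uses.
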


\begin{proof}
    First, $\inf_{w \in W} J_v(w)$ exists since for all $w \in W$, $J_v(w) \geq 0$. Consequently, there exists a minimizing sequence $(w_n)_n$ in $W$ such that $J_v(w_n) \rightarrow \inf_{w \in W} J_v(w)$. Since $(J_v(w_n))_n$ is a converging sequence, $(w_n)_n$ is bounded. Then, up to an extraction, we can assume that there exists a subsequence $(w_{n_k})_k$ converging weakly in $W$. By denoting this weak limit by $w^\infty \in W$, we have $\Vert w^\infty \Vert_W \leq \liminf_{k \to \infty} \Vert w_{n_k} \Vert_W$. Therefore, $\delta \mu_q (w_{n_k})$ weakly converges to $\delta \mu_q (w^\infty)$ in $H'$, which implies $\Vert \delta \mu_q (w^\infty) \Vert_{H'} \leq \liminf_{k \to \infty} \Vert \delta \mu_q (w_{n_k}) \Vert_{H'}$ and $\langle \delta \mu_q (w_{n_k}), \delta \mu_q (v) \rangle_{H'} \to \langle \delta \mu_q (w^\infty), \delta \mu_q (v) \rangle_{H'}$. We conclude that
    \begin{eqnarray*}
        J_v(w^\infty) & = & \Vert \delta \mu_q (w^\infty) \Vert^2_{H'}  - 2 \langle \delta \mu_q (w^\infty), \delta \mu_q (v) \rangle_{H'} + \Vert \delta \mu_q (v) \Vert^2_{H'} + \lambda \Vert w^\infty \Vert^2_W \\
        & \leq & \liminf_{k \to \infty} \Vert \delta \mu_q (w_{n_k}) \Vert^2_{H'}  - \lim_{k \to \infty} 2 \langle \delta \mu_q (w_{n_k}), \delta \mu_q (v) \rangle_{H'} + \Vert \delta \mu_q (v) \Vert^2_{H'} + \lambda \liminf_{k \to \infty} \Vert w_{n_k} \Vert^2_W \\
        & \leq & \liminf_{k \to \infty} J_v (w_{n_k})\\
        & = & \lim_{n \to \infty} J_v(w_n) = \inf_{w \in W} J_v(w)
    \end{eqnarray*}
    so that $w^\infty$ is a minimizer of $J_v$ on $W$. Since $J_v$ is strictly convex, this minimizer is unique. 
    
    Because $J_v$ is differentiable, this unique minimum is determined by the first-order optimality condition $dJ_v(w)(w_0) = 0$ for all $w_0 \in W$, which gives:
    \begin{align*}
        0&=\langle \delta \mu_q^V(v) - \delta \mu_q^W(w), \delta \mu_q^W(w_0) \rangle_{H'} - \lambda \langle w, w_0 \rangle_W \\
       &= \left( \delta \mu_q^V(v) - \delta \mu_q^W(w) \mid K_H \delta \mu_q^W(w_0) \right)_{H',H} - \lambda \langle w, w_0 \rangle_W   \\
        &=\left( (K_H \delta \mu_q^W)^* \left( \delta \mu_q^V(v) - \delta \mu_q^W(w) \right) \mid w_0 \right)_{W',W} - \lambda \langle w, w_0 \rangle_W  \\
       &= \langle K_W (K_H \delta \mu_q^W)^* \left( \delta \mu_q^V(v) - \delta \mu_q^W(w) \right) , w_0 \rangle_W - \lambda \langle w, w_0 \rangle_W .
    \end{align*}
    Since this equality holds for any $w_0 \in W$, it follows that
    \begin{equation*}
        K_W (K_H \delta \mu_q^W)^* \big( \delta \mu_q^V(v) - \delta \mu_q^W(w) \big) - \lambda w = 0.
    \end{equation*}
    By substituting $\Lambda_q \eqdef K_W (K_H \delta \mu_q^W)^*$ and moving the term in $w$ to the other side, we obtain the equation:
    \begin{equation*}
        \big( \Lambda_q \delta \mu_q^W + \lambda \operatorname{id}_W \big) w = \Lambda_q \delta \mu_q^V(v).
    \end{equation*}
    Since, for any $w \in W$, the definition of the adjoint and Riesz isomorphisms gives $\langle \Lambda_q \delta \mu_q^W w, w \rangle_W = \left( \delta \mu_q^W w \mid K_H \delta \mu_q^W w\right)_{H',H} = \Vert \delta \mu_q^W w \Vert_{H'}^2 \ge 0$, the operator  $\Lambda_q \delta \mu_q^W$ is positive-semidefinite. Moreover, since $\lambda > 0$, the operator on the left-hand side is strictly positive-definite and therefore invertible on $W$. Multiplying by its inverse gives the final explicit expression for $w^*$.
\end{proof}

\begin{remark}
    When the functional $J_v$ is strongly regularized ($\lambda \rightarrow \infty$) the minimizer vanishes, that is  $w^*=0$. In absence of regularization ($\lambda \rightarrow 0$), assuming $\delta \mu_q^W$ is injective, the solution becomes $w^* = (\delta \mu_q^W)^{-1} \delta \mu_q^V(v)$. In particular, if $V=W$, then $w^*=v$ and we retrieve that the optimal vector field from $W$ which best reproduces the action of $v \in W$ is $v$ itself.
\end{remark}

\begin{remark} The linearity of the minimizer $w^*$ with respect to $v$ implies that the coupling score 
   $C_q(v,W)$ is quadratic in $v$.
\end{remark}

While \cref{prop:min_w} establishes the minimizer of $J_v$ in a general setting, \cref{prop:closed_form_min_J} below derives its closed-form expression for a discretized curve in $\R^d$, which is the case we are interested in for numerical applications. The Representer Theorem guarantees that this optimal vector field belongs to the finite-dimensional subspace defined in \cref{def:subspace_w}. To characterize this subspace, let $(e_1, \dots , e_d)$ be the canonical basis of $\R^d$ and define the continuous linear evaluation functional $\delta_x^e : w \in W \mapsto \langle w(x), e \rangle$ for $(x, e) \in \R^d \times \R^d$.
\begin{definition} \label{def:subspace_w}
Let $q = (q_i)_{1 \leq i \leq N}$ be a discretized curve in $\R^d$. We define $\Theta_q \subset [| 1, N |] \times [| 1, d |]$ such that the set $\{ w_{i,j} \mid (i,j) \in \Theta_q \}$, where $w_{i,j} := K_W \delta_{q_i}^{e_j}$, forms a maximal linearly independent subset of $\{w_{i,j} \mid 1 \leq i \leq N \,, 1 \leq j \leq d \} \subset W$.
\end{definition}
\begin{example}
    If $W$ is the space of global translations in $\R^d$, then we can set $\Theta_q = \{1\} \times [|1, d |]$. 
\end{example}

\begin{example}
    If $W$ is a Gaussian RKHS, and if the vertices of $q$ are all distinct, then $\Theta_q = [|1, N |] \times [|1, d |]$.
\end{example}

\begin{proposition} \label{prop:closed_form_min_J}
Let $q = (q_i)_{1 \leq i \leq N}$ be a discretized curve in $\R^d$. Using the basis elements $w_{i,j}$ defined above, we construct the column vector 
$$Y = \left( \langle \delta \mu_q (v),  \delta \mu_q (w_{i,j}) \rangle_{H'} \right)_{(i,j) \in \Theta_q} \in \R^{\# \Theta_q}$$ 
and the matrix 
$$S = \left( \langle  (K_W (K_{H} \delta \mu_q)^\ast \delta \mu_q  + \lambda I) w_{i,j}, w_{i',j'} \rangle_{W} \right)_{(i,j) , (i', j') \in \Theta_q} \in \R^{\# \Theta_q \times \# \Theta_q} \,.$$
Then the minimizer of $J_v$ is given by 
$$w^* = \sum_{(i,j) \in \Theta_q} \beta_{i,j} w_{i,j}$$ 
where $\beta = S^{-1}Y$.
\end{proposition}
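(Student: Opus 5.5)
The plan is to show that the unique minimizer $w^*$ from \cref{prop:min_w} lies in the finite-dimensional subspace $W_q := \operatorname{span}\{w_{i,j} \mid (i,j) \in \Theta_q\}$. Once that is known, the characterization of $w^*$ from \cref{prop:min_w} reduces to a finite linear system.

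\textbf{Step 1: the first variation only sees the values of $w$ at the vertices.} By \cref{exam:formula_first_var_discrete}, $\delta\mu_q(w)$ depends on $w$ only through the vectors $w(q_{f^1}), w(q_{f^2})$, i.e.\ through the evaluations $\langle w(q_i), e_j\rangle = \delta_{q_i}^{e_j}(w) = \langle w_{i,j}, w\rangle_W$. Decompose $W = W_q \oplus W_q^\perp$; by maximality of $\Theta_q$ in \cref{def:subspace_w}, $W_q$ is also the span of all the $w_{i,j}$. Hence $w^\perp \in W_q^\perp$ vanishes at every vertex, so $\delta\mu_q(w^\perp) = 0$. Writing $w = w_\parallel + w^\perp$, linearity gives $J_v(w) = \Vert \delta\mu_q(w_\parallel) - \delta\mu_q(v)\Vert_{H'}^2 + \lambda\Vert w_\parallel\Vert_W^2 + \lambda\Vert w^\perp\Vert_W^2 \ge J_v(w_\parallel)$. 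Since the minimizer is unique, $w^* = w_\parallel \in W_q$, so $w^* = \sum_{(i,j)\in\Theta_q} \beta_{i,j} w_{i,j}$.

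\textbf{Step 2: the linear system.} Use the optimality equation from the proof of \cref{prop:min_w}, namely $\langle \delta\mu_q(v) - \delta\mu_q(w), \delta\mu_q(w_0)\rangle_{H'} = \lambda\langle w, w_0\rangle_W$ for all $w_0\in W$. Test it with $w_0 = w_{i',j'}$ and insert the expansion of $w^*$. By the adjoint identity also established in that proof, $\langle \delta\mu_q(w), \delta\mu_q(w_0)\rangle_{H'} = \langle K_W (K_H\delta\mu_q)^*\delta\mu_q\, w, w_0\rangle_W$. This yields $\sum_{(i,j)} \beta_{i,j} S_{(i,j),(i',j')} = Y_{(i',j')}$, that is, $S\beta = Y$ ($S$ is symmetric).

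\textbf{Step 3: invertibility of $S$.} This is the only point requiring care. $S$ is the Gram matrix of the family $(w_{i,j})_{(i,j)\in\Theta_q}$ for the bilinear form $\langle (\Lambda_q\delta\mu^W_q + \lambda\id_W) \cdot, \cdot\rangle_W$. For nonzero $\beta$,
\[
\beta^\top S\beta = \Big\Vert \delta\mu_q\Big(\textstyle\sum \beta_{i,j} w_{i,j}\Big)\Big\Vert_{H'}^2 + \lambda\Big\Vert \textstyle\sum\beta_{i,j}w_{i,j}\Big\Vert_W^2 > 0,
\]
because $\lambda>0$ and the $w_{i,j}$ with $(i,j)\in\Theta_q$ are linearly independent. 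So $S$ is positive definite, and $\beta = S^{-1}Y$, which completes the proof.

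The main subtlety is Step 1: it must be checked that the formula of \cref{exam:formula_first_var_discrete} really involves only vertex values of $w$ and not its derivatives. This holds for the Dirac approximation $\mu_q$, but would fail for the continuous $\tilde\mu_q$.
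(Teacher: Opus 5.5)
Your proof is correct and follows the paper's argument: you reduce to $\operatorname{span}\{w_{i,j}\}$ via the representer theorem, obtain $S\beta = Y$, and show $S$ is positive definite from $X^\top S X = \Vert \delta\mu_q(w_X)\Vert_{H'}^2 + \lambda \Vert w_X\Vert_W^2$ together with the linear independence of the $w_{i,j}$. The differences are cosmetic. You prove the representer step explicitly through $W = W_q \oplus W_q^\perp$, which correctly accounts for the $\lambda\Vert w\Vert_W^2$ term that the paper's phrase ``$J_v$ only depends on the evaluations'' glosses over. You also derive $S\beta = Y$ by testing the first-order condition from \cref{prop:min_w}, whereas the paper minimizes the reduced quadratic $\tilde J(\beta)$.
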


\begin{proof}
    The proof is given in \cref{app:proof_closed_form}.
\end{proof}

The minimizer is expressed as a linear combination of images of Dirac measures $\delta^{e_j}_{q_k}$ under the mapping $K_W$. Computing this minimum requires inverting a matrix of dimension $(N \times d)^2$ at most. As shown in the previous examples, this dimension strongly depends on the studied space of vector fields $W$ and its associated Riesz isomorphism. While matrix inversion remains computationally tractable for low-dimensional applications, such as isometries or scalings, it becomes expensive for richer spaces $W$. In these scenarios, it is more efficient to directly solve the minimization problem formulated in \cref{min_functional}. The study of the numerical stability and computational efficiency of the numerical methods is out of scope of this work and is left for future works.
\begin{example}
    If $W$ is the space of global translations in $\R^d$, we can fix a single point $q_1$ and set $\Theta_q = \{1\} \times [|1, d|]$. 
    The associated basis elements $e_j = K_W \delta_{q_1}^{e_j}$ are simply the constant vector fields pointing along the canonical axes of $\R^d$. The problem reduces to solving a $d \times d$ linear system $S \beta = Y$.
    The j-th component of the vector $Y$ in $\R^d$ is:
    \begin{equation*}
        Y_j = \langle \delta \mu_q(v), \delta \mu_q(e_j) \rangle_{H'}
    \end{equation*}
    and $S$ is a $d \times d$ symmetric matrix whose entries are:
    \begin{equation*}
        S_{j,j'} = \langle \delta \mu_q(e_j), \delta \mu_q(e_{j'}) \rangle_{H'} + \lambda \updelta_{j,j'}
    \end{equation*}

    The optimal translation vector is then given by $\beta = S^{-1}Y$, which only requires inverting a $d \times d$ matrix. This example is further detailed in \cref{sec:one-dim-space}.
\end{example}

\begin{example}\label{exam:closed_form_dim1}
    If $\dim(W) = 1$, the subset $\Theta_q$ reduces to a singleton $\{(k_0, j_0)\}$. Let $w_0 = K_W \delta_{q_{k_0}}^{e_{j_0}}$ be the associated basis vector. In this particular case, $S$ is the sum of two squared norms $S = \Vert \delta \mu_q(w_0) \Vert_{H'}^2 + \lambda \Vert w_0 \Vert_{W}^2$
    and  $Y = \langle \delta \mu_q(v), \delta \mu_q(w_0) \rangle_{H'}$. 
    The minimizer of $J_v$ is then given directly by:
    \begin{equation*}
        w^* = \left( \dfrac{\langle \delta \mu_q(v), \delta \mu_q(w_0) \rangle_{H'}}{\Vert \delta \mu_q(w_0) \Vert_{H'}^2 + \lambda \Vert w_0 \Vert_{W}^2} \right) w_0
    \end{equation*}
    This example is further detailed in \cref{sec:translation}
\end{example}

\subsection{Applications to finite-dimensional spaces of vector fields}

In this section, we analyze the coupling score between a reference vector field $v \in V$ and various finite-dimensional spaces of vector fields $W$. For the sake of clarity, we simplify the framework, as done in the previous sections, by restricting the definition of first variation to test functions on $\R^d$ rather than $\R^d \times \mathbb{S}^{d-1}$. The examples discussed in this section are illustrated using a Gaussian RKHS with kernel $k_{\sigma}(x,y) = \exp\left({-\dfrac{\Vert y - x \Vert^2}{\sigma^2}}\right)$ although all the results presented hold for any choice of kernels. We remind that the dual of the first variation of varifolds $K_H \delta \mu_q(v)$ induced by a vector field $v \in V$ can be decomposed as a variation of mass and a variation of position
\begin{eqnarray}\label{eq:first_var_decomp}
    K_H \delta \mu_q(v) &=& 
    \sum_{f \in F_q} \ell_f (\delta \ell_f)_v \, k_E(c_f,\cdot) +
        \ell_f  \nabla_1 k_E(c_f,\cdot)^{\top} v_f 
       \\
        &=& K_H \delta \mu_q^{\text{mass}}(v) + K_H \delta \mu_q^{\text{pos}}(v) \notag
\end{eqnarray}
where $\ell_f = \Vert q_{f^2} - q_{f^1} \Vert$ and $(\delta \ell_f)_v = \langle \dfrac{ v(q_{f_2}) - v(q_{f_1}) }{\ell_f} ,\dfrac{ q_{f_2} - q_{f_1} }{\ell_f} \rangle$.
We also remind that the coupling score is defined by 
    $C_q(v,W)=\Vert w^* \Vert_{W}^2$
where the optimal vector field is the solution of the minimization problem
$
    w^* = \operatorname{argmin}_{w \in W} \left\Vert\delta \mu_q(w) - \delta \mu_q(v)  \right\Vert_{H'}^2 + \lambda \Vert w \Vert_{W}^2$.
    
\subsubsection{The example of translations}\label{sec:translation}

We consider the framework presented in \cref{sec:example_translations} where $W \simeq \R^d$ is the space of translation vector fields in $\R^d$. In the case where the vector field $w$ is a global translation constant to $u \in \R^d$, the dual first variation, given by \cref{eq:first_var_decomp}, simplifies to 
\begin{equation} 
   K_H \delta \mu_q(u) = \sum_{f \in F_q} \ell_f  \nabla k(c_f, \cdot)^{\top} u \, .
\end{equation}
The coupling score between a vector field $v \in V$ and the space of translations $\R^d$ quantifies the part of the infinitesimal deformation generated by $v$ that could equivalently result from translating the curve. This score is given by
$$ C_q(v,\R^d) = \Vert u^* \Vert^2$$
where
$u^*$ minimizes the functional $J_v$ from \cref{min_functional}. This functional in the case of translations becomes
\begin{eqnarray}
    J_v(u)&=& \Vert \delta \mu_q(u) - \delta \mu_q (v) \Vert^2_{H'} + \lambda \Vert u \Vert^2 \\
    &=&  u^{\top} (A+\lambda I_d) u- 2b^{\top} u+ c  \notag
\end{eqnarray}
where $c=\Vert \delta \mu_q(v) \Vert^2_{H'}$ is a constant independent of $u$, and the matrix $A$ and the vector $b$ are defined by
\begin{equation}\label{eq:cst}
\left\{
\begin{aligned}
    A &= \sum_{f,g \in F_q} \ell_f \ell_g A_{f,g}  \\
    b &= \sum_{f,g \in F_q} \ell_f \ell_g (\delta \ell_f)_v \nabla_1 k_E(c_f,c_g) + \ell_f \ell_g  A_{f,g} v_f
\end{aligned}
\right.
\end{equation}
with $A_{f,g} = \nabla_{1,2}^2 k_E(c_f,c_g)$.
Although the matrices $A_{f,g}+\lambda I_d$ are not necessarily positive definite (as discussed in \cref{par:inner_prod_pos_var}, \textit{Inner product between position variations}), the matrix $A+\lambda I_d$ is strictly positive definite. Indeed, for  any $X \in \R^d$, we have $ X^{\top}(A+\lambda I_d)X=\Vert \delta \mu_q(X) \Vert_{H'}^2 + \lambda \Vert X \Vert^2$. So $A + \lambda I_d$ is invertible and consequently the functional $J_v$ admits an unique minimum reached for 
\begin{equation}\label{eq:ustar}
    u^*=(A +\lambda I_d)^{-1}b
\end{equation}
We briefly recall the study of the Hessian $A_{f,g}$, which has been detailed for a Gaussian kernel $k_{\sigma}$ in the paragraph \textit{Inner product between position variations} in \cref{par:inner_prod_pos_var}. For such a kernel, the eigenspaces of $A_{f,g}$ are $\operatorname{span}(c_g - c_f)$ and $(c_g - c_f)^{\perp}$ corresponding to the eigenvalues $\lambda_{f,g}^{//}$ and $\lambda_{f,g}^{\perp}$.  Consequently, applying the mixed Hessian $\nabla_{1,2}^2 k_E(c_f,c_g) $ to a vector $a \in \R^d$ decomposes the vector into its radial and orthogonal components $\nabla_{1,2} k_{\sigma}(c_f,c_g)a = \lambda_{f,g}^{//} a^{//} + \lambda_{f,g}^{\perp} a^{\perp} $. 

The second term $b$ in \cref{eq:cst} can be decomposed into a variation of position  
\begin{equation}
    b_{\text{pos}} = \sum_{f,g \in F_q} \ell_f \ell_g  \nabla_{1,2}^2 k_E(c_f,c_g) v_f
\end{equation} and a variation of mass 
\begin{equation} b_{mass}=\sum_{f,g \in F_q} \ell_f \ell_g (\delta \ell_f)_v \nabla_1 k_E(c_f,c_g) \, .\end{equation}
To simplify the interpretation, we suppose $\lambda = 0$. First, if there is no variation of mass, such as in the case of translations or rotations, the optimal translation vector \cref{eq:ustar} simplifies to
\begin{equation}
    u^* = \left(\sum_{f,g \in F_q} \ell_f \ell_g A_{f,g} \right)^{-1}  \left( \sum_{f,g \in F_q}  \ell_f \ell_g  A_{f,g} v_f \right)
\end{equation}

When there is some mass variations, the term $b_{\text{mass}}$ serves as a geometric correction to $b_{\text{pos}}$. Specifically, any edge $f$ experiencing a local mass change due to $v$, i.e $(\delta \ell_f)_v \ne 0$, acts as a source of attraction or repulsion for every other edges $g$ along the axis connecting their centers. An edge gaining mass $((\delta \ell_f)_v > 0 )$ behaves as an attractor, pulling other edges toward it, whereas an edge losing mass ($(\delta \ell_f)_v < 0) $, acts as a repeller, pushing them away. Consequently, $b_{\text{mass}}$ balances the translation $u^*$ to account for these local expansions and shrinkings.

\begin{example} 
    Let $q: s \in [0,1] \mapsto (Ls,0)$ be a horizontal segment of length $L$, and let $v \in C^2(\R^d,\R^d)$ be a vector field inducing an infinitesimal horizontal stretching on the right endpoint of the segment. That is, for every $s \in [0,1]$, the vertical component is $v(q(s))_y= 0$, and the horizontal boundary conditions are $v(q(0))_x=0$ and $v(q(1))_x=a \in \R$. The optimal translation is then given by:
    \begin{equation}\label{eq:exam_ustar}
        u^* = \left(\dfrac{a}{2} \dfrac{1-\exp\left({-\dfrac{L^2}{\sigma^2}}\right)}{\dfrac{\lambda}{2} + 1-\exp\left({-\dfrac{L^2}{\sigma^2}}\right)}, 0 \right).
    \end{equation} 

    The barycenter of the initial segment $q$ is $\Omega = \left(\dfrac{L}{2},0\right)$, and the infinitesimal deformation of the shape by the vector field $v$ transports the barycenter to $\Omega_v = \left(\dfrac{L}{2}+\dfrac{a}{2},0\right)$. However, the transport of $\Omega$ by the translation $u^*$ leads to a new barycenter located at:
    \begin{equation*}
        \Omega_{u^*} = \left(\dfrac{L}{2} + \dfrac{a}{2}\dfrac{1-\exp\left({-\dfrac{L^2}{\sigma^2}}\right)}{\dfrac{\lambda}{2}+1-\exp\left({-\dfrac{L^2}{\sigma^2}}\right)}, 0 \right).
    \end{equation*}

    In the degenerate case where $\sigma \to \infty$, the optimal translation becomes $u^*=0$ and the barycenter remains unchanged. Note that as $\lambda \rightarrow 0$, the optimal translation $u^*$ exactly aligns the barycenters $\Omega_v$ and $\Omega_{u^*}$. For $\lambda >0$, this alignment is relaxed, highlighting the trade-off between minimizing the norm of the optimal translation and matching the barycenters. 
    
  This example echoes the behavior observed in \cref{fig:scaling1}, which illustrated a scaling centered at the left endpoint of the segment. Indeed, the first variation induced by that scaling vector field is similar to the one produced by stretching the right extremity, as presented here.
\end{example}

\begin{example}
 Assume that the curve $q$ is a horizontal segment and $v$ is a vector field such that $v \circ q$ is orthogonal to $q$. For an unregularized projection ($\lambda = 0$) and using a Gaussian kernel $k_\sigma$, the optimal translation $u^*$ reduces to:
    \begin{equation*} 
        u^* = \dfrac{\sum_{f,g \in F_q} \ell_f \ell_g k_{\sigma}(c_f,c_g)v_f}{\sum_{f,g \in F_q} \ell_f \ell_g k_{\sigma}(c_f,c_g)}.
    \end{equation*}
    
   This result highlights that $u^*$ is a weighted average of $v_f$. Because the weights depend on the neighborhood interactions captured by $k_\sigma$, a deformation applied at the center of the segment will have a greater influence on the global translation than a deformation with same intensity located at its extremities.
\end{example}

\subsubsection{One-dimensional spaces}\label{sec:one-dim-space}

In some applications, rather than defining a complex, high-dimensional space $W$ to use in the coupling score $\operatorname{Corr}_q(\cdot,W)$, it might be advantageous to adopt an iterative approach. Following an initial registration step, a specific misalignment can be isolated and encoded into a vector field $w_0$ , as illustrated in \cref{sec:exp_match_pop} and \cref{sec:exp_reg_it}, leading to the definition of the one-dimensional space $W = \operatorname{span}(w_0)$. This specific setting is of practical interest, as it serves as a core component for iteratively refining a registration.

Recall that when $W$ is a one-dimensional space of vector fields, that is $W \simeq \R$, then \cref{exam:closed_form_dim1} states that for $w_0 \in W$ such that $\Vert w_0 \Vert_W = 1$, the optimal vector field $w^*=\alpha^* w_0$ that defines the coupling score is given by
\begin{equation}\label{eq:alpha_star}
    \alpha^* =\dfrac{\Big\langle \delta \mu_q(v), \delta \mu_q(w_0)  \Big\rangle}{\left\Vert \delta \mu_q(w_0)  \right\Vert^2 + \lambda}
\end{equation}

\begin{figure}[!h]
    \centering
    \begin{subfigure}{\linewidth}
        \centering
        \includegraphics[width=\linewidth]{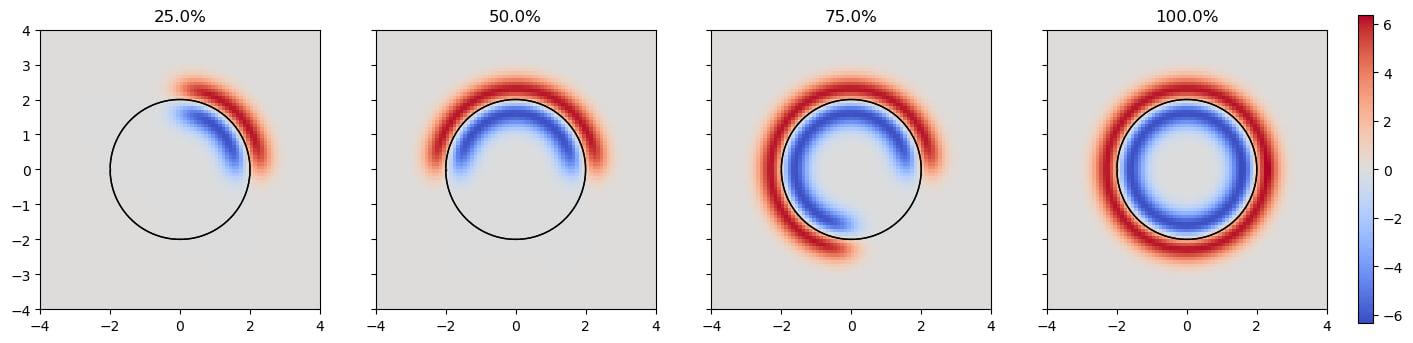}
        \caption{Local scalings applied to 25, 50, 75 and 100\% of the circle.}
        \label{fig:circle_prop}
    \end{subfigure}

    \vspace{1em}
    
    \begin{subfigure}{\linewidth}
        \centering
        \includegraphics[width=\linewidth]{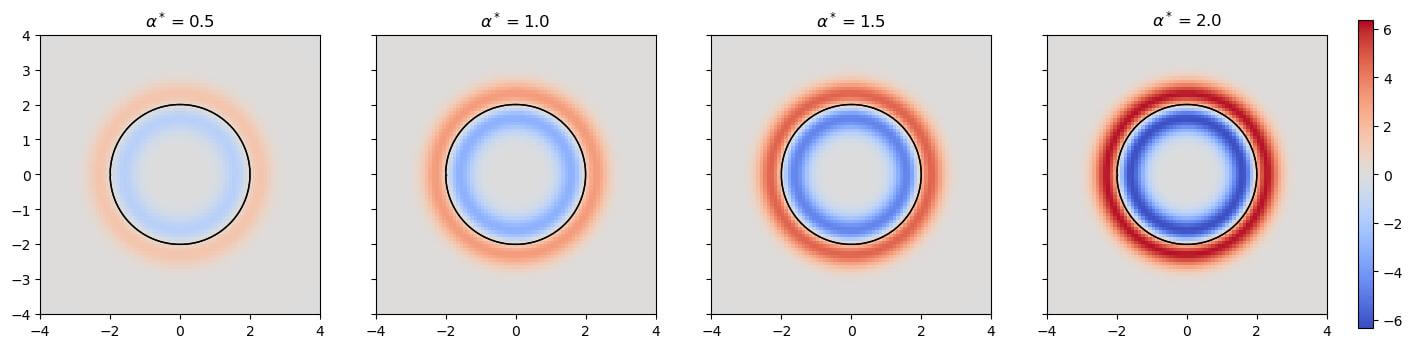}
        \caption{Corresponding optimal global scalings.}
        \label{fig:circle_opti}
    \end{subfigure}
    \caption{The first row represents first variation of a circle, induced by a vector field $v$ that acts as a scaling of factor 2 on 25, 50, 75 and 100\% of the circle, respectively. The second row illustrates the corresponding first variation induced by the optimal scaling vector field $x \mapsto \alpha^* x$ where $\alpha^*$ is given by \cref{eq:alpha_star}. This optimal vector is the scaling that best reproduces the action of the corresponding $v$ from the first row. The RKHS considered is defined by a Gaussian kernel with scale $\sigma=0.5$ and the regularization parameter is set to $\lambda=0$.}
\label{fig:scaling_circle}
\end{figure}

A standard example of a one-dimensional space is the space of scalings $W = \{ w : x \in \mathbb{R}^d \mapsto \alpha x \mid \alpha \in \mathbb{R} \}$ with $w_0 : x \mapsto x$. \cref{fig:scaling_circle} illustrates the first variation of a circle induced by four different vector fields $v$ that act as a scaling of factor $2$ on $25, 50, 75$, and $100\%$ of the circle. Since we set $\lambda=0$, when $v$ acts on the entire circle, the optimal scaling vector field that reproduces the action $v$ is exactly a scaling of factor $2$, as illustrated in the rightmost subfigures of both rows. However, when we restrict the vector field $v$ to a proportion $p$ of the circle, the factor of the optimal vector field diminishes linearly to $\alpha^* = 2p$. This illustrates that the coupling score effectively measures the proportion to which $v$ behaves like a scaling. This paves the way for future works aimed at recovering a deformation even in cases of partial correspondence.

\section{From static to dynamic}\label{sec:stat_to_dyn}
We remind that $V,W$ denote two RKHS continuously embedded in $C_0^{2}(\mathbb{R}^d, \mathbb{R}^d)$. In the previous subsection, we introduced the coupling score in a static framework, that is computing $C_q(v^{(0)},W)$ for a given $v^{(0)} \in V$ and $W$. Since registration frameworks rely on time-varying vector fields, there is a natural motivation to develop dynamic methods for decoupling deformations over time. To build intuition on how the coupling score can be extended to dynamic registration, this section first introduces an intermediate model designed to clarify the decoupling process.

\subsection{Static decoupling of vector fields }

Given two initial vector fields $v^{(0)}$ and $w^{(0)}$, we seek an optimal decomposition $(v^*,w^*)$ where $v^*$ captures the residual deformation of $v^{(0)}$ that cannot be reproduced by $W$, while $w^*$ captures both the original $w^{(0)}$ and the deformation from $v^{(0)}$ than can be reproduced by $W$. This decoupling process relies on two main principles. First, the decoupling is performed by minimizing the functional $(v,w) \mapsto \dfrac{1}{2} \left(\|v\|_V^2 + \|w\|_W^2 + C_q(v, W)\right)$. Second, we require the optimal vector fields $(v^*,w^*)$ to generate the same first variations as the original fields $(v^{(0)},w^{(0)})$, that is $\delta \mu_q(v^*+w^*) = \delta \mu_q(v^{(0)}+w^{(0)})$ . This constraint is less restrictive than the pointwise equality $(v^* + w^*) \circ q = (v^{(0)} + w^{(0)}) \circ q$  since first variations are invariant to reparameterizations of the curve and provide more flexibility to the decoupling problem while ensuring the total deformation remains preserved. Note that by linearity of $v \mapsto \delta \mu_q(v)$ established in \cref{prop:continuity_first_var}, we have $\delta \mu_q(v+w) = \delta \mu_q(v) + \delta \mu_q(w)$. Consequently, we define the following minimization problem
\begin{equation} \label{eq:stat_dyn}
    (v^*, w^*) = \argmin_{(v,w) \in V \times W} \left\{  \dfrac{1}{2} \left(\|v\|_V^2 + \|w\|_W^2 + C_q(v, W)\right) \mid \delta \mu_q (v) + \delta \mu_q (w)  =  \delta\mu_q (v_0) + \delta \mu_q(w_0) \right\} \, .
\end{equation} 
Because the constraint $\delta \mu_q(v^*) + \delta \mu_q(w^*) = \delta \mu_q(v^{(0)}) + \delta \mu_q(w^{(0)})$ is satisfied, the vector fields $(v^*,w^*)$ reproduce the same first variation as $(v^{(0)},w^{(0)})$. Moreover, $v^*$ is effectively decoupled from $W$, as $(v^*,w^*)$ minimize the energy 
$ E(v,w)=\dfrac{1}{2} \left(\|v\|_V^2 + \|w\|_W^2 + C_q(v, W)\right)
$ which penalizes the coupling between $v$ and $W$.

\begin{figure}[!h]
    \centering
    \begin{subfigure}{\linewidth}
        \centering
        \includegraphics[width=\linewidth]{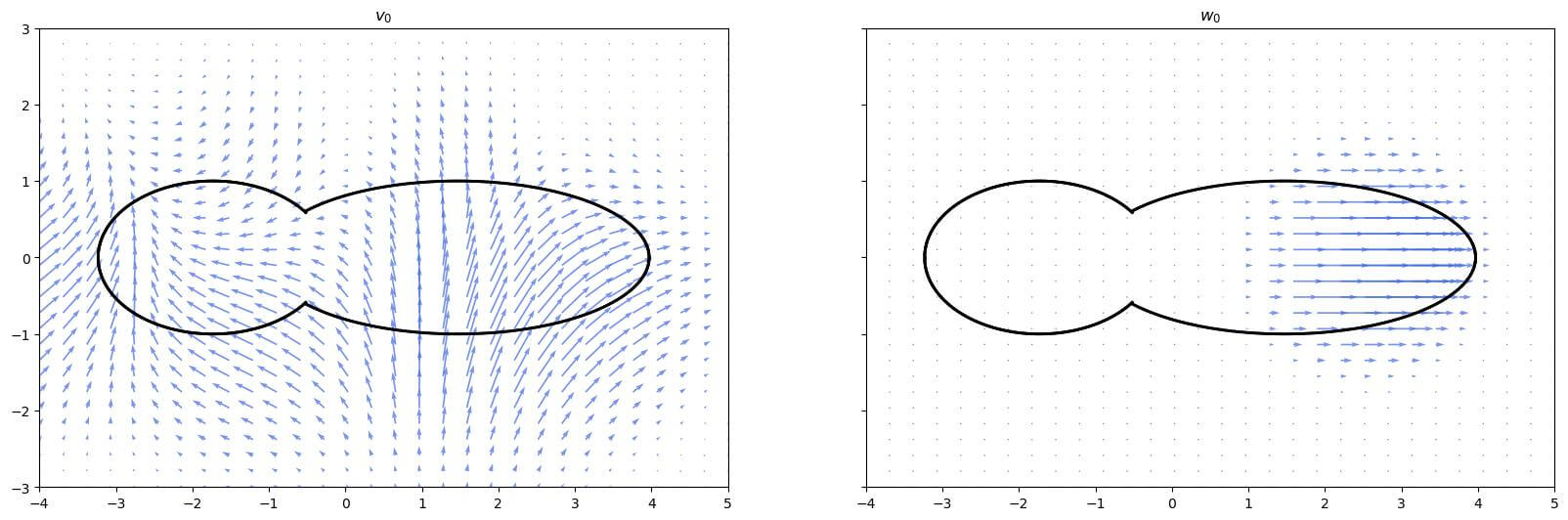}
        \caption{Initial vector fields $v^{(0)}$ and $w^{(0)} : x \mapsto k_{\sigma}(x,z)a$ where $z=[2.5,0]$ is the center of the local translation and $a = [1,0]$ its direction. The kernel $k_{\sigma}$ is a Gaussian kernel with scale $\sigma=1$}
        \label{fig:vfield0}
    \end{subfigure}

    \vspace{1em} 
    
    \begin{subfigure}{\linewidth}
        \centering
        \includegraphics[width=\linewidth]{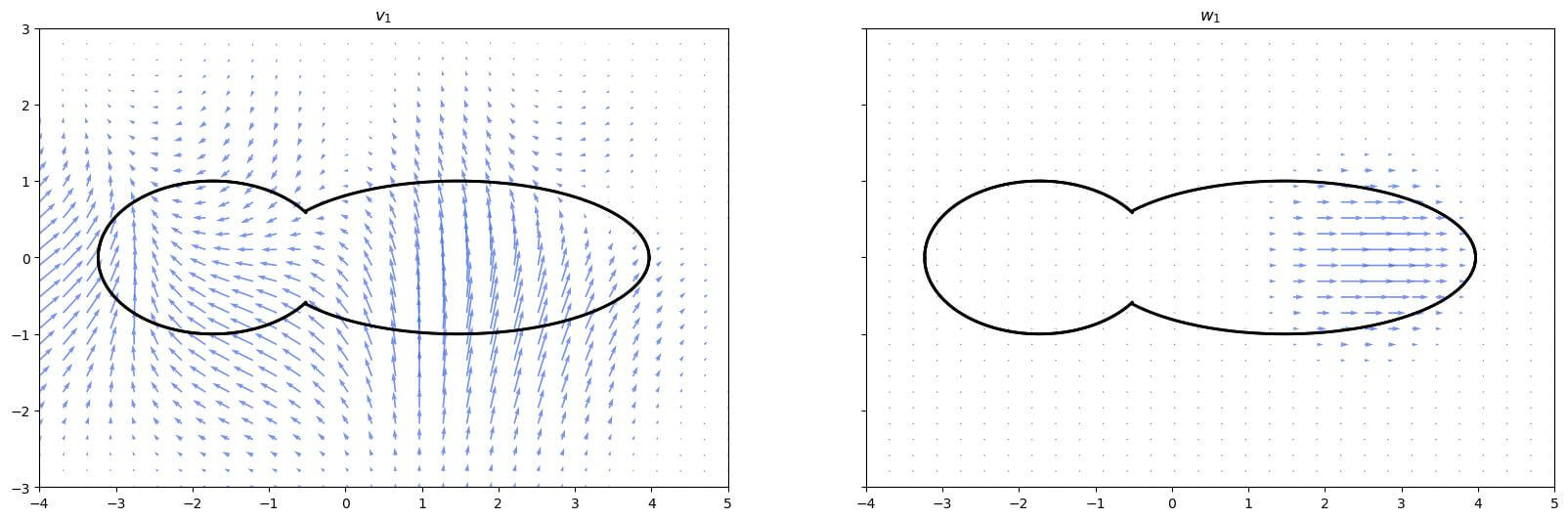}
        \caption{Vector fields $(v^*,w^*)$ obtained after optimization of \cref{eq:stat_dyn} }
        \label{fig:vfield1}
    \end{subfigure}
    \caption{The first row represent the initial vector field $v^{(0)}$ and the vector field $w^{(0)}$ that spans the space of local translations $W$ that we want to decouple from. The second row shows the vector fields $v^*$ and $w^*$ after optimization of \cref{eq:stat_dyn}. The vector field $v$ belongs to a Gaussian RKHS with scale $\sigma_V=1.5$ and the first variation are also computed using a Gaussian kernel with scale $\sigma_H=0.5$.}
\label{fig:vfield_stat_to_dyn}
\end{figure}

\cref{fig:vfield_stat_to_dyn} illustrates this decoupling process with the example of a curve $q$ and a vector field $v^{(0)}$ that we want to decouple from the space of vector fields generating local translations centered in $z=[2.5,0]$ and of direction $a=[-1,0]$ defined by $W_{z,a} = \{ w_{\alpha} : x \mapsto \alpha k_{\sigma}(z,x)a  \mid \alpha \in \R \} $ with $w_0 : x \mapsto k(z,x) a$ and $k$ a Gaussian kernel of scale $\sigma=1$. This is an example of a deformation module \cite{gris_module}. The optimal solution of \cref{eq:stat_dyn}, represented in \cref{fig:vfield1}, shows that the initial vector field $v^{(0)}$ has been locally projected onto the orthogonal complement of $a=[1,0]$. This results in a vector field $v^*$ that is decoupled from $W$, i.e. vertical on the right side of the shape, while remaining unchanged elsewhere.

\subsection{Constrained registration problem}\label{sec:cons_reg_prob}

Let $I=[0,1]$ or $ \mathbb{S}^{1}$. Recall that $Q =\{ q \in C^{1}(I,\mathbb{R}^d) \mid \Vert q'(s) \Vert \ne 0 \}$ is the space of curves already defined in \cref{def:curves}. In particular, $Q$ is a shape space in the sense of Arguillere \cite{arguillere2014shapedeformationanalysisoptimal} (see \cite[Section 3.2]{mouhli2025} for more details). Let $q_S \in Q$ be a template curve. 

The motivation for the coupling score introduced earlier is its application to registration problems. A standard model to perform registration is the Large Deformation Diffeomorphic Metric Mapping (LDDMM) framework \cite{LDDMM}, which we briefly recall hereafter. Given a source $q_S$ and a target curve $q_T$, the registration problem consists of finding an optimal diffeomorphism that matches both curves. This search is reformulated as finding an optimal time-varying vector field $v \in L^2([0,1],V)$, where $V$ is a fixed RKHS, that generates a flow of diffeomorphisms via the ODE:
\begin{equation*}
       \left\{
    \begin{array}{ll}
        \dot{\varphi}_t &=  v_t \circ \varphi_t \\
        \varphi_0 &= \id
    \end{array} 
    \right.\notag
\end{equation*} 
Under this flow, the trajectory of the deformed curve $q_t= \varphi_t \circ q_S$ is derived from the infinitesimal action of vector fields $\dot{q}_t= v_t \circ q_t$.
In particular, the deformed curve $t \mapsto q_t$ is absolutely continuous and its associated varifold trajectory $t \mapsto \mu_{q_t}$ is also absolutely continuous as stated in the next proposition. We recall that for a given Banach space $E$, we denote by $AC^2([a,b], E)$ the space of continuous curves $f \colon [a,b] \to E$ for which there exists a function $F \in L^2([a,b], E)$ satisfying $f(t) = f(a) + \int_a^t F(s) \, ds$. This is equivalent to stating that $f$ is differentiable almost everywhere with a derivative $f' \in L^2([a,b], E)$.

\begin{proposition}[$AC^2$ regularity of varifolds]\label{prop:evol_eq_first_var} 
    Let $V \hookrightarrow C_0^2(\R^d,\R^d)$ be a RKHS of vector fields and $q_S \in Q$.  For $v \in L^2([0,1],V)$, let denote $t \mapsto q_t$ the trajectory of curve generated by $\dot{q}_t=v_t\circ q_t$, $q_0=q_S$. The associated varifold trajectory $t \mapsto \tilde{\mu}_{q_t}$ belongs to $AC^2([0,1],C^2_0(\mathbb{R}^d \times \mathbb{S}^{d-1},\mathbb{R})')$.
\end{proposition}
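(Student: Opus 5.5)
We will write $t\mapsto \tilde\mu_{q_t}$ as a composition of the curve trajectory $t\mapsto q_t\in Q$ with the map $q\mapsto\tilde\mu_q$, and apply a chain rule for absolutely continuous curves. The argument has three steps.

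\textbf{Step 1: $AC^2$ regularity of the curve trajectory.} The first step is to show that $t\mapsto q_t$ belongs to $AC^2([0,1],C^1(I,\R^d))$ and stays in a compact subset of $Q$. Since $V\hookrightarrow C_0^2(\R^d,\R^d)$, there is a constant $c_V$ with $\Vert v_t\Vert_{C^2_0}\le c_V\Vert v_t\Vert_V$, and $t\mapsto \Vert v_t\Vert_V$ is in $L^2\subset L^1$. Classical flow theory then gives a flow $\varphi_t$ of $C^1$ diffeomorphisms with $q_t=\varphi_t\circ q_S$. By Grönwall's lemma, $\Vert d\varphi_t\Vert_\infty$ and $\Vert d\varphi_t^{-1}\Vert_\infty$ are bounded uniformly in $t$. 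Hence $\inf_s\Vert q_t'(s)\Vert\ge c>0$ uniformly in $t$, and $\{q_t\}$ is bounded in $C^1$.

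The derivative is $\dot q_t=v_t\circ q_t$, whose spatial derivative is $dv_t(q_t)\,q_t'$. Therefore
\[
\Vert \dot q_t\Vert_{C^1}\le C\Vert v_t\Vert_{C^1_0}\le C'\Vert v_t\Vert_V\in L^2 .
\]
The identity $q_t=q_S+\int_0^t v_s\circ q_s\,ds$ should be checked as a Bochner integral in $C^1(I,\R^d)$. Measurability of $s\mapsto v_s\circ q_s$ follows from the measurability of $v$ and the continuity of $q$.

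\textbf{Step 2: Lipschitz control of the varifold map.} By \cref{prop:frechet_varifold}, $\tilde\mu$ is $C^1$ Fréchet from $Q$ to $C_0^2(\R^d\times\mathbb{S}^{d-1},\R)'$. We need its differential to be bounded on the region visited by the trajectory. From the explicit formula in \cref{prop:continuous_first_var},
\[
\Vert d\tilde\mu_q(h)\Vert\le C(\Vert q\Vert_{C^1},\inf\Vert q'\Vert)\,\Vert h\Vert_{C^1}.
\]
We will verify this directly from that formula, differentiating $\omega(q,q'/\Vert q'\Vert)\Vert q'\Vert$ in $q$. The estimate holds uniformly on sets where $\Vert q'\Vert$ is bounded above and away from zero, which by Step 1 contains the whole trajectory. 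It follows that $\tilde\mu$ is Lipschitz on a neighbourhood of the (convex hull of nearby) trajectory in $C^1$.

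\textbf{Step 3: Chain rule.} Set $F(t)=d\tilde\mu_{q_t}(\dot q_t)=\delta\tilde\mu_{q_t}(v_t)$. This is measurable, as a composition of the continuous map $(q,h)\mapsto d\tilde\mu_q(h)$ with measurable maps. It is also in $L^2$, since $\Vert F(t)\Vert\le C\Vert v_t\Vert_V$.

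It remains to prove $\tilde\mu_{q_t}=\tilde\mu_{q_S}+\int_0^t F$. The plan is to use the fundamental theorem of calculus for the $C^1$ map $\tilde\mu$ composed with an absolutely continuous curve. Test against a fixed $\omega$: the scalar function $t\mapsto\tilde\mu_{q_t}(\omega)$ is absolutely continuous by the Lipschitz estimate of Step 2, and at a.e.\ $t$ (Lebesgue points of $\dot q$) its derivative is $F(t)(\omega)$. Since the Bochner integral commutes with evaluation at $\omega$, the identity holds in the dual space.

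The main obstacle is this final step: making the chain rule rigorous when $q$ is only $AC^2$, not $C^1$, in time. It requires a.e.\ differentiability of $q$ in the Banach space $C^1(I,\R^d)$. This holds because $q$ is an indefinite Bochner integral, so Lebesgue's differentiation theorem applies.
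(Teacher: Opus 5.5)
Your proof is correct and follows the same route as the paper. The paper gets $t\mapsto q_t\in AC^2([0,1],Q)$ from \cite[Theorem 3.5]{pierron2024}, uses the $C^1$ regularity of $q\mapsto\tilde\mu_q$ from \cref{prop:frechet_varifold}, and concludes with the $AC^2$ chain rule \cite[Lemma 3.18]{glockner2015}; you prove the first and third ingredients by hand, via flow and Gr\"onwall estimates for Step 1 and scalar testing plus Lebesgue differentiation for the chain rule. One step should be tightened: apply the Lipschitz bound of Step 2 on small $C^1$ balls around each $q_{t_0}$, which are convex and stay inside $Q$, rather than on a convex hull of the trajectory; compactness of $\{q_t\}$ then gives the absolute continuity of $t\mapsto\tilde\mu_{q_t}(\omega)$.
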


\begin{proof}
    Since the mapping $t \mapsto q_t$ belongs to $AC^2([0,1],Q)$ \cite[Theorem 3.5]{pierron2024} and $q \mapsto \tilde{\mu}_q$ has $C^1$-regularity by \cref{prop:frechet_varifold}, this is a direct consequence of \cite[Lemma 3.18]{glockner2015}
\end{proof}

The registration problem can be expressed as the following energy minimization problem:
\begin{align}\label{eq:opt_cont}
   \min_{v \in L^2([0,1],V)} J(v)&=\int_0^1 \dfrac{1}{2} \Vert v_t \Vert_V^2 dt+\mathcal{D}(q_1) \\
    \text{ s.t } & \quad      \left\{
    \begin{array}{ll}
        \dot{q}_t &=  v_t \circ q_t \\
        q_0&=q_S
    \end{array} 
    \right.\notag
\end{align}
where $\mathcal{D} : Q \rightarrow \R$ is a data attachment term measuring the discrepancy between the final deformed curve $q_1=\varphi_1\circ q_S$ and the target $q_T$. The case where an exact constraint $C : Q \to \mathcal{L}(V,\R)$ is enforced in this minimization problem, that is $C_{q_t}(v_t)=0$, has been studied in \cite{arguillere2014shapedeformationanalysisoptimal}. 

This framework can be extended to the combination of multiple types of deformation by considering different spaces of vector fields. We will particularly focus on the example of a two types of deformation model, whose associated registration problem is
\begin{align}\label{eq:LDDMM_2_vfield}
    \inf_{v \times w \in L^2([0,1],V \times W)} J_1(v,w) &= \dfrac{1}{2}\int_0^1  \Vert v_t \Vert_V^2 + \Vert w_t \Vert_W^2 \, dt + \mathcal{D}(q_1) \\
    \text{s.t. } & \quad \begin{cases}
        \dot{q}_t = v_t \circ q_t + w_t \circ q_t \\
        q_0 = q_S
    \end{cases} \notag
\end{align}

To decouple $v$ from the space of vector fields $W$, the coupling score will be added as a penalization in the functional $J_1$. The goal of this decoupling is to obtain more accurate statistics on the contribution of each mode of deformation in the total deformation and a better final matching. This defines a new minimization problem:
\begin{equation} \label{eq:min_pb_constrained}
\inf_{(v, w) \in L^2([0,1],V \times W)} J_2(v,w) =\dfrac{1}{2} \int_0^1   \| v_t \|_V^2 + \| w_t \|_W^2 + C_{q_t}(v_t,W) \, dt + \mathcal{D}(q_1)
\end{equation}
subject to the evolution equation $\dot{q}_t = (v_t + w_t) \circ q_t$ and the initial condition $q_0 = q_S$. 

This approach can be generalized to spaces of vector fields that depend dynamically on the state of the curve at time $t$, as in the deformation modular framework \cite{gris_module}. In this setting, the curve is deformed via the evolution equation $\dot{q}_t = (v_t + w_t) \circ q_t$, where $w_t \in W_{q_t}$, and the coupling score becomes $C_{q_t}(v_t, W_{q_t})$. However, in this paper, we restrict our study to the standard setting with static vector field spaces.

\begin{proposition} \label{prop:existence_minimizer}
Assume that the data attachment term $q \in Q \mapsto \mathcal{D}(q)$ is non-negative and lower semi-continuous. Then, $J_2$ admits at least one global minimizer $(v^*, w^*) \in L^2([0,1], V \times W)$.
\end{proposition}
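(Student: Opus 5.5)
We argue by the direct method of the calculus of variations, following the classical existence proof for LDDMM-type problems (as in \cite{arguillere2014shapedeformationanalysisoptimal,pierron2024}). Since $J_2 \geq 0$ (the coupling score $C_{q_t}(v_t,W)=\Vert w^*_t\Vert_W^2$ and $\mathcal{D}$ are non-negative) and $J_2(0,0)=\mathcal{D}(q_S)<\infty$, the infimum is finite. Take a minimizing sequence $(v^n,w^n)$. The bound $\frac12\int_0^1 \Vert v^n_t\Vert_V^2+\Vert w^n_t\Vert_W^2\,dt \leq J_2(v^n,w^n)$ shows that $(v^n,w^n)$ is bounded in the Hilbert space $L^2([0,1],V\times W)$. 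Up to extraction, it therefore converges weakly to some $(v^\infty,w^\infty)$.

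The first key step is the convergence of trajectories. Since $V+W\hookrightarrow C_0^2(\R^d,\R^d)$ continuously, $u^n=v^n+w^n$ converges weakly in $L^2([0,1],V+W)$. The standard compactness result for flows (e.g.\ \cite[Theorem 3.5]{pierron2024} or Younes' results on weak continuity of the flow map) then gives uniform convergence of the flows $\varphi^n_t\to\varphi^\infty_t$ in $C^1$ on compacts, uniformly in $t$. Consequently $q^n_t\to q^\infty_t$ in $Q$, i.e.\ in $C^1(I,\R^d)$, uniformly in $t$, with $\inf\Vert (q^n_t)'\Vert$ uniformly bounded below. In particular $q^n_1\to q^\infty_1$ in $Q$, and lower semicontinuity of $\mathcal{D}$ gives $\mathcal{D}(q^\infty_1)\leq\liminf \mathcal{D}(q^n_1)$. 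The kinetic terms are weakly lower semicontinuous as squared norms.

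The main obstacle is the coupling term $\int_0^1 C_{q_t}(v_t,W)\,dt$. Here the state $q_t^n$ converges strongly while $v^n$ converges only weakly, and $C$ is quadratic in $v$. By \cref{prop:min_w} we can write $C_{q}(v,W)=\Vert R_q v\Vert_W^2$ with $R_q=(\Lambda_q\delta\mu_q^W+\lambda\id_W)^{-1}\Lambda_q\delta\mu^V_q \in\mathcal{L}(V,W)$. The plan proceeds in three steps.
\begin{itemize}
\item First, show that $q\mapsto R_q$ is continuous from $Q$ into $\mathcal{L}(V,W)$ in operator norm. This should follow from the continuity of $q\mapsto\delta\mu_q\in\mathcal{L}(C_0^2,(C_0^2)')$, which can be obtained from the estimates in the proof of \cref{prop:continuity_first_var} together with the uniform lower bound on $\Vert q'\Vert$. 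Continuity of inversion of the uniformly coercive operators $\Lambda_q\delta\mu^W_q+\lambda\id_W$ (coercivity constant $\geq\lambda$) then yields the continuity of $R_q$.
\item Second, deduce that $t\mapsto R_{q^n_t}$ converges to $R_{q^\infty_t}$ uniformly in $t$, with uniformly bounded norms. Hence $R_{q^n_\cdot}v^n - R_{q^\infty_\cdot}v^n\to0$ strongly in $L^2([0,1],W)$.
\item Third, observe that $v\mapsto R_{q^\infty_\cdot}v$ is a bounded linear map $L^2([0,1],V)\to L^2([0,1],W)$, so $R_{q^\infty_\cdot}v^n\rightharpoonup R_{q^\infty_\cdot}v^\infty$ weakly.
\end{itemize}
Combining these, $R_{q^n_\cdot}v^n\rightharpoonup R_{q^\infty_\cdot}v^\infty$ weakly in $L^2([0,1],W)$, and weak lower semicontinuity of the norm gives $\int_0^1 C_{q^\infty_t}(v^\infty_t,W)\,dt\leq\liminf\int_0^1 C_{q^n_t}(v^n_t,W)\,dt$. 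Measurability of $t\mapsto C_{q_t}(v_t,W)$ follows from the same continuity.

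Summing the three lower semicontinuity inequalities, we obtain $J_2(v^\infty,w^\infty)\leq\liminf J_2(v^n,w^n)=\inf J_2$. Since $(v^\infty,w^\infty)$ is admissible, with its trajectory $q^\infty$ solving the ODE by the flow convergence, it is a global minimizer. The delicate technical point is the operator-norm continuity of $q\mapsto\delta\mu_q$ in the first step. If operator-norm continuity fails, we would fall back on strong pointwise continuity combined with compactness of $\delta\mu_q$ restricted to the RKHS $V$, which should hold because $V\hookrightarrow C_0^2$ and evaluation involves only values and first derivatives on the compact set $q(I)$.
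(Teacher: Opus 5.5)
Your proposal is correct and takes essentially the same route as the paper: a bounded minimizing sequence with a weakly convergent subsequence, $C^1$ convergence of the trajectories uniformly in $t$ (the paper invokes \cite[Theorem 1]{arguillere2014shapedeformationanalysisoptimal}), operator-norm continuity of $q\mapsto L_q=(\Lambda_q\delta\mu_q^W+\lambda\id_W)^{-1}\Lambda_q\delta\mu_q^V$ made uniform in $t$ by a compactness argument, the same two-term splitting giving $L_{q^{n}}(v^{n})\rightharpoonup L_{q^*}(v^*)$ in $L^2([0,1],W)$, and weak lower semicontinuity of the norm. The one step you left as a plan, operator-norm continuity of $q\mapsto\delta\tilde{\mu}_q\in\mathcal{L}(V,H')$, is exactly \cref{lemma:continuity_varifold_continuous}; it follows from direct Lipschitz estimates on $\omega,\nabla\omega,v,dv$ and on $y\mapsto y/\Vert y\Vert$ away from $0$, whereas the bounds in \cref{prop:continuity_first_var} only handle a fixed $q$, so your fallback is unnecessary.
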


\begin{proof}
The proof is given in \cref{app:proof_existence_minimizer}.
\end{proof}

The following proposition establishes a connection between the dynamic formulation \cref{eq:min_pb_constrained} and its static counterpart \cref{eq:stat_dyn}.

\begin{proposition}[Relation between minimizers in static and dynamic setting]\label{prop:static_to_dynamic} Assume that the data attachment $\mathcal{D}$ is invariant by reparameterization.
    Let $(v^*, w^*) \in L^2([0,1], V \times W)$ be a global minimizer of $J_2$, and let $q^*$ be the associated optimal trajectory starting from $q_0 = q_S$. For almost every $t \in [0,1]$, the pair $(v_t^*, w_t^*)$ is a solution to the static minimization problem:
    \begin{equation*} 
        \min_{(v,w) \in V \times W} \dfrac{1}{2} \left( \|v\|_V^2 + \|w\|_W^2 + C_{q_t^*}(v, W) \right)
    \end{equation*}
    subject to the constraint on the first variations:
    \begin{equation*}
        \delta \mu_{q_t^*} (v) + \delta \mu_{q_t^*} (w) = \delta \mu_{q_t^*} (v_t^*) + \delta \mu_{q_t^*} (w_t^*).
    \end{equation*}
\end{proposition}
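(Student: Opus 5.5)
We argue by contradiction, replacing the optimal pair on a set of times of positive measure by pointwise static minimizers. The key point is that this replacement must leave the data attachment term unchanged.

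\textbf{Step 1: the static problem has a measurable family of minimizers.} Fix $t$. The admissible set
\[
\{(v,w) \mid \delta \mu_{q_t^*}(v)+\delta \mu_{q_t^*}(w)=\delta \mu_{q_t^*}(v_t^*)+\delta \mu_{q_t^*}(w_t^*)\}
\]
is a closed affine subspace of $V\times W$, by continuity of $\delta\mu_q$ (\cref{prop:continuity_first_var}). The functional $E_t(v,w)=\frac12(\|v\|^2+\|w\|^2+C_{q_t^*}(v,W))$ is a continuous quadratic form. This follows from \cref{prop:min_w}: the map $v\mapsto w^*$ is the bounded linear operator $(\Lambda_q\delta\mu_q^W+\lambda\id)^{-1}\Lambda_q\delta\mu_q^V$. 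The form is coercive, so $E_t$ is strictly convex and admits a unique minimizer $(\bar v_t,\bar w_t)$. This minimizer is the orthogonal projection of $0$ onto the affine subspace for the Hilbert norm induced by $E_t$. It therefore depends measurably on $t$, because all operators depend continuously on $q_t^*$ and $t\mapsto q_t^*$ is continuous. Moreover $E_t(\bar v_t,\bar w_t)\le E_t(v_t^*,w_t^*)$, which gives $(\bar v,\bar w)\in L^2$.

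\textbf{Step 2: the key claim.} Any $(v,w)\in L^2([0,1],V\times W)$ satisfying $\delta\mu_{q_t^*}(v_t+w_t)=\delta\mu_{q_t^*}(v_t^*+w_t^*)$ for a.e.\ $t$ generates a trajectory $\tilde q_t$ whose image coincides with that of $q_t^*$. Hence $\mathcal D(\tilde q_1)=\mathcal D(q_1^*)$ by reparameterization invariance.

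I would prove this by showing that $z_t=(v_t+w_t-v_t^*-w_t^*)\circ q_t^*$ is tangent to $q_t^*$. Since $\delta\mu_{q_t^*}(z)=0$, \cref{prop:continuous_invariance} gives that $\operatorname{im}(q_t^*)$ is invariant, i.e.\ the velocity difference is tangential. One then looks for $\tilde q_t=q_t^*\circ\psi_t$ with $\psi_t$ a time-dependent reparameterization solving $\dot\psi_t=\langle z_t(\psi_t),\tau\rangle/\|{q_t^*}'(\psi_t)\|$. This is an ODE on $I$ with $L^2$-in-time, Lipschitz-in-space coefficients, since $v,w\in C^2_0$. A direct computation gives
\[
\partial_t(q_t^*\circ\psi_t)=(v_t^*+w_t^*)\circ q_t^*\circ\psi_t+z_t\circ\psi_t=(v_t+w_t)\circ \tilde q_t.
\]
By uniqueness of solutions, $\tilde q_t$ is the trajectory generated by $(v,w)$.

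\textbf{Step 3: conclusion.} Suppose that on a set $T$ of positive measure $E_t(\bar v_t,\bar w_t)<E_t(v_t^*,w_t^*)$. Define $(v,w)=(\bar v,\bar w)$ on $T$ and $(v^*,w^*)$ elsewhere. This pair is admissible in $L^2$ and satisfies the constraint for all $t$. By Step 2 its terminal cost equals that of the optimal pair. The coupling term in $J_2$ is evaluated at $\tilde q_t=q_t^*\circ\psi_t$. Because first variations are reparameterization invariant, $C_{\tilde q_t}=C_{q_t^*}$, and the same holds for the constraint. Hence $J_2(v,w)<J_2(v^*,w^*)$, contradicting global minimality.

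\textbf{Main obstacle.} The delicate point is Step 2. \cref{prop:continuous_invariance} gives invariance of the image, and we must convert this into pointwise tangency of $z_t$ almost everywhere on the curve. Finitely many self-intersections must be excluded, and the tangency must be measurable in $t$. Only then is the reparameterization ODE well posed. I would first prove tangency directly from the formula of \cref{prop:continuous_first_var} when $\omega$ is localized near a point of the curve. I would fall back on \cref{prop:continuous_invariance} only for the global statement.
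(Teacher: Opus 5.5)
Your proposal is correct and follows the paper's proof. You argue by contradiction, replace $(v^*,w^*)$ on a set of positive measure by constraint-satisfying competitors of lower static cost, use \cref{prop:continuous_invariance} to show the velocity difference is tangential (so the new trajectory reparameterizes $q^*$), and conclude from reparameterization invariance of $\mathcal{D}$ and of $C_q$ that $J_2$ strictly decreases. Your Steps 1--2 are more careful than the paper, which picks competitors pointwise without checking measurability or integrability and simply asserts that the images coincide; two small repairs remain: continuity of $q\mapsto\delta\mu_q$ does not by itself make the projection onto the constraint's kernel continuous, so get measurability from a penalization limit or a measurable selection instead, and the Lipschitz bound for your $\psi$-ODE needs $q_S\in C^2$ (or a Peano-existence plus ambient-uniqueness argument).
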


\begin{proof}
    The proof is given in \cref{app:proof_static_to_dynamic}.
\end{proof}

\section{Experiments}\label{sec:experiments}

This section presents various numerical experiments demonstrating the utility of the coupling score to overcome specific issues in registration problems. Specifically, we illustrate how it can be used to prevent or enforce specific types of deformations, as well as to iteratively correct a matching. Depending on the experiment, the registration is performed using either a mixture of rotation and unstructured deformations (\cref{sec:exp_rot}) or unstructured deformations only (\cref{sec:exp_match_pop} and \cref{sec:exp_reg_it}). The different models are based on the standard large deformation framework \cite{LDDMM} which are briefly recalled in \cref{sec:cons_reg_prob}. 

To perform the registration while achieving decoupling, we opt for a geodesic shooting strategy. A brief reminder about geodesic shooting in the large deformation framework is provided in \cref{sec:geodesic_shooting}. We present the strategy for a standard single-deformation model, noting that the approach for the two-mode deformation model is analogous. We first consider the standard registration problem given by
\begin{align}\label{eq:opt_contt}
   \min_{v \in L^2([0,1],V)} J(v)&=\int_0^1 \dfrac{1}{2} \Vert v_t \Vert_V^2 dt+\mathcal{D}(q_1) \\
    \text{ s.t } & \quad      \left\{
    \begin{array}{ll}
        \dot{q}_t &=  v_t \circ q_t \\
        q_0&=q_S.
    \end{array} 
    \right.\notag
\end{align}
Denoting $\xi_q(v) = v \circ q$ as the infinitesimal action of $v$ on $q$, the Pontryagin Maximum Principle (PMP) \cite{100years} yields the following geodesic equations:
\begin{equation*}
    \begin{cases}
        \dot{q}_t &= v_t \circ q_t \\
        \dot{p}_t &= - (\partial_q \xi_{q_t}(v_t))^* p_t \\
        v_t &= K_V \xi_{q_t}^* p_t.
    \end{cases}
\end{equation*}
Consequently, the minimization problem \eqref{eq:opt_contt} can be expressed as an optimization over the initial covector $p_0$:
\begin{align}
\label{eq:geod_shoot_unconstrained}
   \min_{p_0 \in T_{q_0}^*Q} E_0(p_0) &= \dfrac{1}{2} \Vert v_0 \Vert_V^2 + \mathcal{D}(q_1) \\
   \text{s.t.} \quad & 
    \begin{cases}
        \dot{q}_t &= v_t \circ q_t \\
        \dot{p}_t &= - (\partial_q \xi_{q_t}(v_t))^* p_t \\
        v_t &= K_V \xi_{q_t}^* p_t
    \end{cases} \notag 
\end{align}
Finally, to decouple the deformation generated by $v$ from another subspace of vector fields $W$, we add the coupling score to the objective functional, leading to the following constrained minimization problem:
\begin{align}
\label{eq:geod_shoot_constrained}
   \min_{p_0 \in T_{q_0}^*Q} E_1(p_0) &= \dfrac{1}{2} \Vert  v_0 \Vert_V^2 + C_{q_0}( v_0 ,W) + \mathcal{D}(q_1) \\
   \text{s.t.} \quad & 
    \begin{cases}
        \dot{q}_t &= v_t \circ q_t \\
        \dot{p}_t &= - (\partial_q \xi_{q_t}(v_t))^* p_t \\
        v_t &= K_V \xi_{q_t}^* p_t
    \end{cases} \notag 
\end{align}

In summary, our optimization strategy relies on geodesic equations derived from the standard problem \eqref{eq:opt_contt}, which leads to the geodesic shooting framework \eqref{eq:geod_shoot_unconstrained}. The coupling score is then introduced within this setting as a penalty evaluated at $t=0$, leading to the minimization problem  \eqref{eq:geod_shoot_constrained}.
Introducing the penalty in \eqref{eq:geod_shoot_unconstrained}, rather than in the initial minimization problem \eqref{eq:opt_contt}, provides significant flexibility. It allows us to preserve the exact same Hamiltonian dynamics associated with the standard registration, simply incorporating this score into the objective function. If the coupling score were instead included in \eqref{eq:opt_contt}, the PMP would yield Hamiltonian equations dependent on this score, requiring the derivation of a new constrained Hamiltonian. Consequently, changing the base registration model would entail the explicit derivation of the new underlying constrained geodesic equations. In contrast, our approach allows us to directly leverage the existing geodesic equations associated with any given model.
While evaluating the constraint only at the initial time $t=0$ keeps the framework simple, our numerical experiments will demonstrate that this initial penalty is sufficient to achieve the desired decoupling.

Developing constrained registration frameworks adapted for decoupling, for instance, where the geodesic equations directly integrate the coupling score following the approach in \cite[Section 3]{arguillere2014shapedeformationanalysisoptimal}, or models with distinct covectors representing the different modes of deformation, falls beyond the scope of this paper and is left for future work.

In the following experiments, the coupling score is computed using a varifold RKHS generated by a Gaussian kernel $k_{\sigma_c}(x,y)=\exp({-\dfrac{\Vert y - x \Vert^2}{\sigma_c^2}})$ on positions only. For the data attachment term $\mathcal{D}(q_1)$, we consider a varifold metric which is the combination of a Gaussian kernel on positions and a tangential kernel $k_{\vec{t}}(\vec{t}_x,\vec{t}_y) = \langle \vec{t}_x,\vec{t}_y\rangle^2$.

All experiments are reproducible, and the source code is publicly available on GitHub\footnote{\url{https://github.com/rayanemouhli/varifold-score-decoupling-deformations}}.

\subsection{Decoupling from rotations}\label{sec:exp_rot}

This first example illustrates the application of the coupling score within a model combining two types of deformations. The score is used to prevent that the actions of both vector fields on the curve overlap. The objective is to get a better matching and more accurate statistics on the contribution of each deformation mode.
To illustrate this use case, we consider the matching of a source star with a target star defined as the source star that has been rotated with a stretched branch as shown in \cref{fig:source_target_etoiles}.
\begin{figure}[!h]
    \centering
    \includegraphics[width=0.6\linewidth]{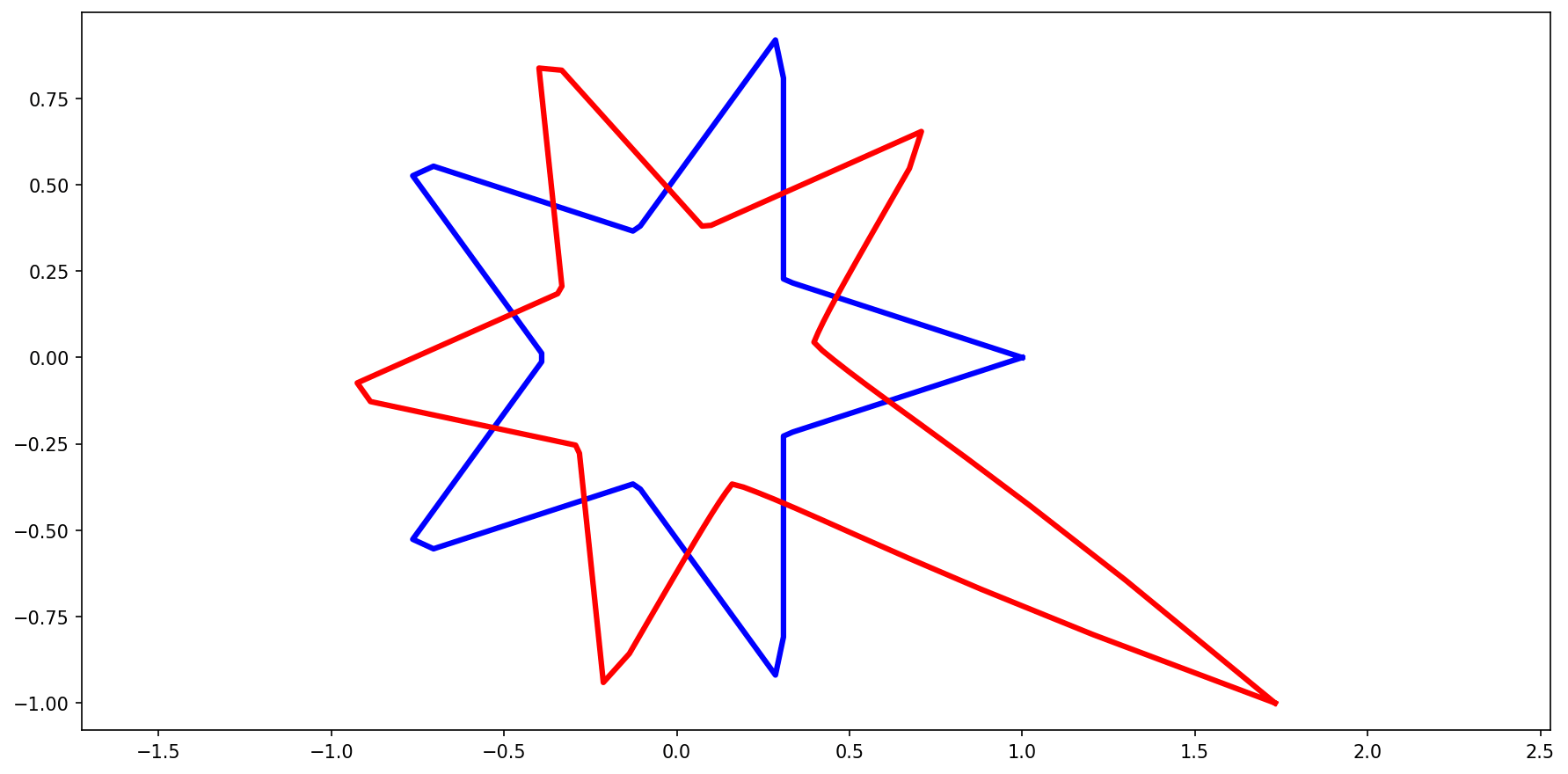}
    \caption{The source and target curves are respectively represented in blue and red. The target is the rotated source with a stretched branch.}
    \label{fig:source_target_etoiles}
\end{figure}

To perform this matching, we first consider a registration model, without coupling score, presented in \cite[Section 3.3]{mouhli2025} where  $\operatorname{SO}_d \ltimes \Diff_{C^k_0}(\R^d)$ is the group of deformations consisting in rotations and diffeomorphisms. The associated space of vector fields is $\operatorname{Skew}_d \oplus V$ where $V$ is a Gaussian RKHS with scale $\sigma_V=0.5$. The registration can be expressed as the following minimization problem
\begin{eqnarray}
    \label{energy_isom}
    \inf_{(A,v) \in L^2([0,1],\operatorname{Skew}_d \oplus V)} && \dfrac{1}{2} \int_0^1  \Vert A_t \Vert^2 +  \Vert v_t \Vert^2_V \, dt + \mathcal{D}(q_1). \\
     \text{ s.t }& &     \left\{
        \begin{array}{l}
            \dot{R}_t = A_t R_t \\
            \dot{q}_t = v_t \circ q_t + A_t q_t  \\
            (R_0,q_0) = (I_d,q_S)        
        \end{array} \notag
        \right.
\end{eqnarray}

The optimization is performed using geodesic shooting as presented in \cref{sec:geodesic_shooting}. While \cite{mouhli2025} proposes a method to decouple rotations and diffeomorphisms using reduction theory, in this experiment we rather perform decoupling by using the coupling score. Our proposed method has the advantage of being applicable to any type of deformation, contrary to the cited article which is designed for rotations and translations only.

\begin{figure}[!h]
    \centering
    \includegraphics[width=\linewidth]{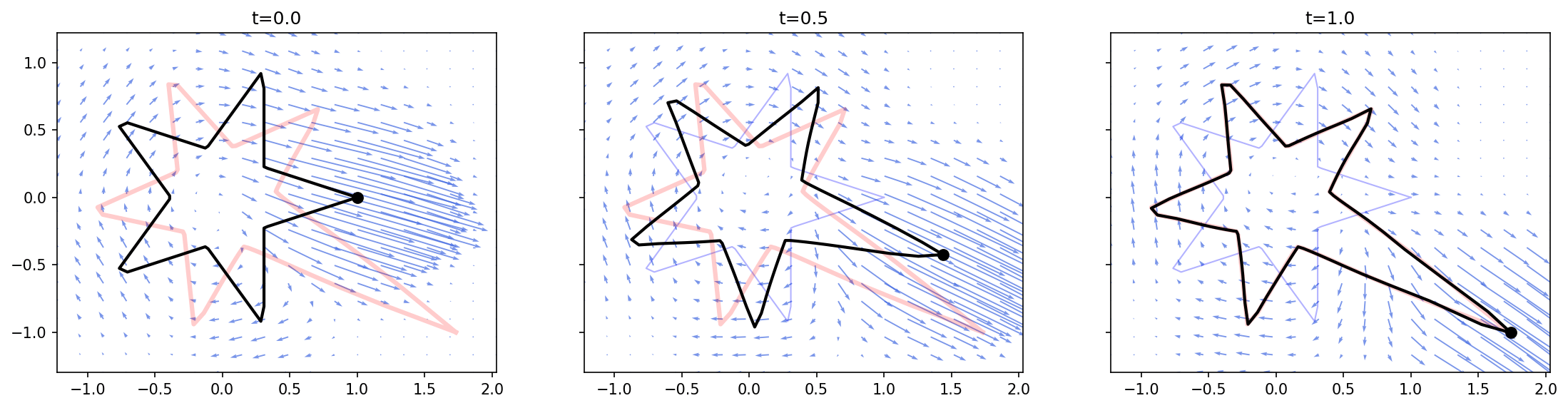}
    \caption{Geodesic associated with the first registration after optimization. The template, target and deformed curves are represented in blue, red and black respectively. The black dot serves only for illustration purposes and does not participate in the registration. The template is deformed by both rotation and diffeomorphism, although the blue arrows only represent the vector field associated with the diffeomorphism part. The RKHS of vector fields $V$ is generated by a Gaussian kernel with scale $\sigma_V=0.5$. The Gaussian kernel associated with the varifold data attachment term has a scale $\sigma=0.5$. }
    \label{fig:run_no_corr_etoiles}
\end{figure}

The objective of this registration is to ensure that the unstructured component only performs the stretching of the branch, while the rotation component handles the global rotation of the star. Even though the source is well-matched to the target, the result of the minimization problem \eqref{energy_isom} presented in \cref{fig:run_no_corr_etoiles} shows that the vector field $v$ associated with the unstructured diffeomorphism (in blue) contributes strongly to the rotation of the curve. To overcome this issue, we introduce the coupling score at initial time $t=0$, as a penalty term in the minimization problem \eqref{energy_isom}, to prevent the vector field $v$ from acting as a rotation.

Therefore, we define a new minimization problem:
\begin{eqnarray}
    \label{energy_isom2}
    \inf_{(A,v) \in L^2([0,1],\operatorname{Skew}_d \oplus V)} && \dfrac{1}{2} \int_0^1 \Vert A_t \Vert^2 + \Vert v_t \Vert^2_V \, dt +  C_{q_0}(v_0,\operatorname{Skew}_d) + \mathcal{D}(q_1) \\
     \text{ s.t }& &     \left\{
        \begin{array}{l}
            \dot{R}_t = A_t R_t \\
            \dot{q}_t =  v_t \circ q_t + A_t q_t \\
            (R_0,q_0) = (I_d,q_S)        
        \end{array} 
        \right.\notag
\end{eqnarray}

\begin{figure}[!h]
    \centering
    \includegraphics[width=\linewidth]{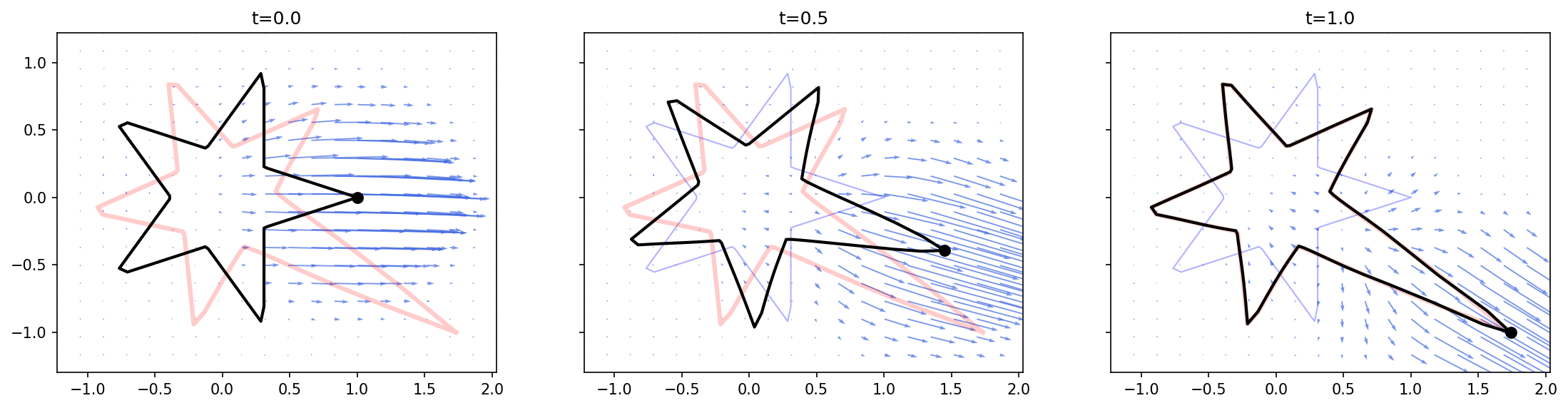}
    \caption{Geodesic associated with the second registration after optimization. The parameters are the same as in \cref{fig:run_no_corr_etoiles}. Contrary to the previous experiment, the unstructured vector field $v$ (in blue) does not perform rotation anymore and contributes only to the stretching of the branch.}
    \label{fig:run_corr_etoiles}
\end{figure}

 \cref{fig:run_corr_etoiles} shows the geodesic of the new matching after optimization of \eqref{energy_isom2}. Minimizing the coupling score successfully constrains the vector field $v$ to be decoupled from the space of rotation vector fields $\operatorname{Skew}_d$, ensuring it performs only the stretching of the branch.

\FloatBarrier
\subsection{Matching of a population}\label{sec:exp_match_pop}
The purpose of this experiment is to mimic the matching of a population of leaves. In this context, the source curve is a common template that must be matched to various targets. In the previous example, the coupling score was used to prevent the unstructured vector field $v$ from acting as a rotation. Here, we proceed to the opposite, that is enforcing the unstructured vector field to mimic the action of a specific space of vector fields. 
\begin{figure}[!h]
    \centering
    \includegraphics[width=\linewidth]{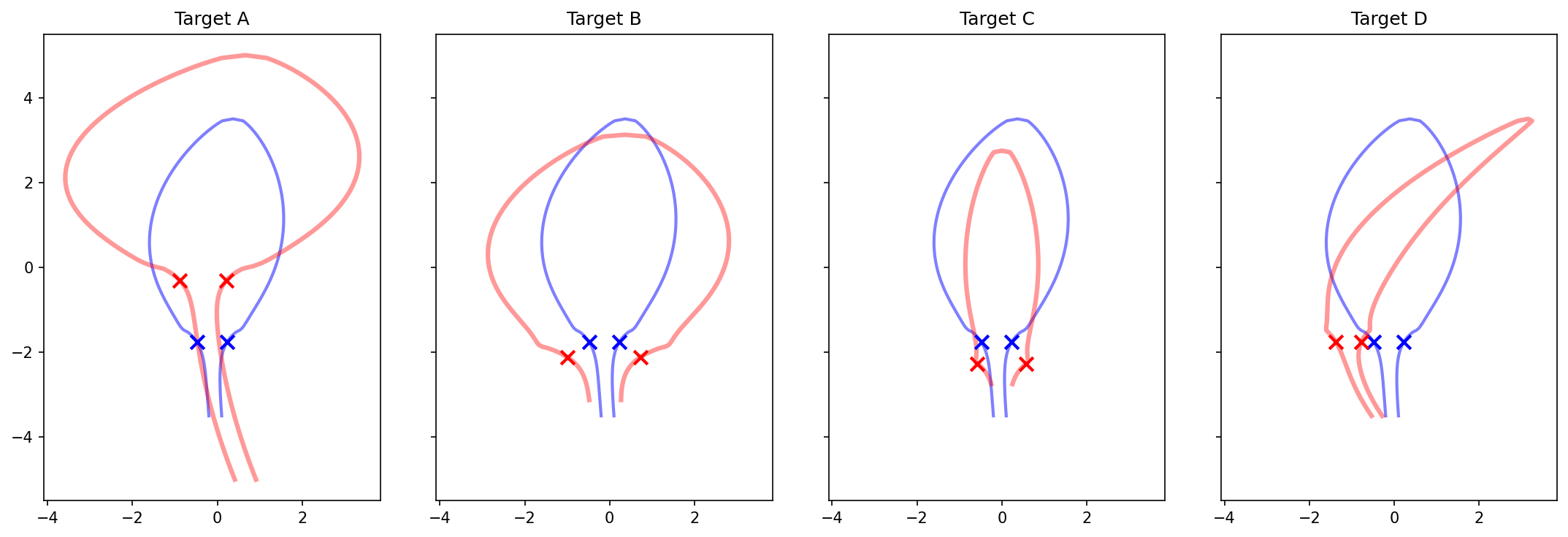}
    \caption{The template and the targets are respectively represented in blue and red. The crosses represent the boundary between the rod and the leaf. They serve only for illustration purposes and do not contribute to the registration.}
    \label{fig:source_target_leaves}
\end{figure}
We generated a toy dataset, presented in \cref{fig:source_target_leaves}, consisting of a single template (blue) and four targets (red). The crosses represent the junctions between the rod and the leaf. They serve only for illustrative purposes in the following experiments and do not contribute to the registration process since our model aims to bypass the need for prior segmentation.

We first perform four independent registrations mapping the template $q_S$ to each of the four targets $q_X \in \{q_A,q_B,q_C,q_D\}$, using identical parameters. For each target, we solve a distinct optimization problem driven by a data attachment term $\mathcal{D}_X$ measuring the discrepancy with $q_X$. For a given target $q_X$, the registration is formulated as the following minimization problem:
\begin{align}\label{eq:standard_model}
   \min_{v \in L^2([0,1],V)} & \int_0^1 \dfrac{1}{2} \Vert v_t \Vert_V^2 \, dt + \mathcal{D}_X(q_1) \\
    \text{s.t. } & \quad \begin{cases}
        \dot{q}_t &=  v_t \circ q_t \\
        q_0 &= q_S
    \end{cases} \notag
\end{align}

\begin{figure}[!h]
    \centering
    \includegraphics[width=\linewidth]{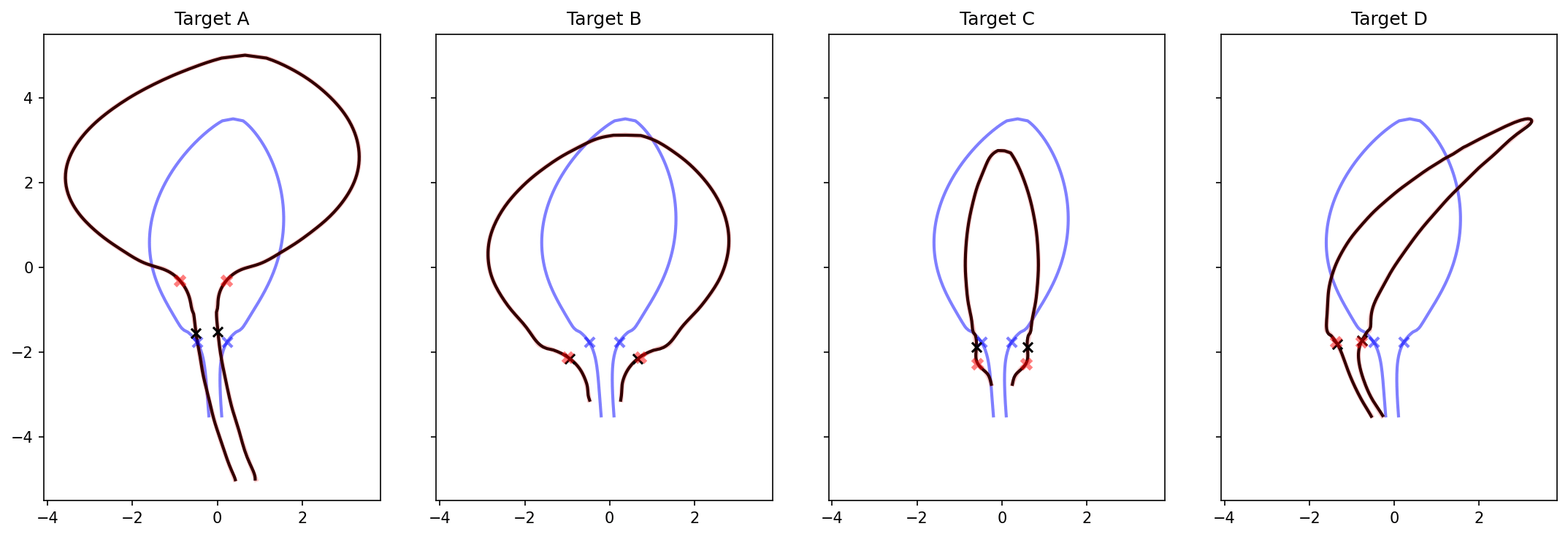}
    \caption{Results of the minimization problem \eqref{eq:standard_model} after optimization. The template, targets and deformed curves are respectively represented in blue, red and black. The RKHS of vector fields $V$ is induced by a sum of Gaussian kernels with scale $\sigma_V \in \{0.5,2\}$. The kernel associated with the varifold data attachment term is also a sum of Gaussian kernels with scale $\sigma \in \{0.5,2\}$. }
    \label{fig:run1_leaves}
\end{figure}

The result of the matching after optimization is presented in \cref{fig:run1_leaves}. The matching of targets B and D is successful, as the curves are correctly aligned and the rod/leaf segmentation is respected. However, the matching of targets A and C is unsuccessful due to a failure in transporting the segmentation. Indeed, the geodesic associated with target A transforms the bottom part of the leaf into the top of the rod, and the geodesic associated with target C transforms the top of the rod into the bottom of the leaf.  These registrations fail because the standard model \eqref{eq:standard_model} finds it cheaper to shrink or stretch parts of the leave into one another rather than performing a translation of the junction. 

We now assume that the position of the junction of the template is known, but the positions of the junctions on the multiple targets are not. To correct this segmentation issue, we introduce a new minimization problem where the vector field $v$ is constrained to act as a translation in a localized area around the junction. This constraint forces the junction to translate towards the bottom or the top as needed. We define the space of vector fields representing the phenomenon we wish to enforce by $W_0 = \operatorname{span}(w_0)$ with $w_0 = u \mathds{1}_D $, where $D$ is an area around the junction of the template that we chose once and for every targets, and $u$ is a vertical translation. It is worth noting that the exact definition of the region $D$ is not a sensitive parameter as long as it encompasses the relevant portion of the curve we are interested in. Indeed, because the optimization functional depends on $w_0$ exclusively through its induced first variation (i.e., the evaluation of the vector field along the curve), the precise geometry of $D$ has a negligible influence on the registration. The coupling score between $v_0$ and the one-dimensional space $W_0$ is given by $C_{q_0}(v_0,W_0)=\vert \alpha ^* \vert^2$ where $u^*=\alpha^*u \mathds{1}_D$ represents the optimal local translation that best mimics the action of $v_0$ on the curve. To ensure that the translation occurs in the desired direction, rather than directly using the coupling score $C_{q_0}(v_0,W_0)=\vert \alpha ^* \vert^2$, we instead employ the oriented score $\max(0,\alpha^*)$. This oriented score is chosen to account for the directionality of the deformations. Depending on the target, the leaf needs to be translated either upwards or downwards. For example, registering to target C requires a downward translation, as this target represents an extreme case where the rod is almost entirely absent. Conversely, for target A, mapping the bottom of the leaf onto the top of the rod requires the template's junction to be translated upwards.  Then, the new matching problem is

\begin{align}\label{eq:pb_leaf_corr}
   \min_{v \in L^2([0,1],V)} & \int_0^1 \dfrac{1}{2} \Vert v_t \Vert_V^2 \, dt + \dfrac{1}{1+\max(0,\alpha^*)} + \mathcal{D}_X(q_1) \\
    \text{s.t. } & \quad \begin{cases}
        \dot{q}_t &= v_t \circ q_t \\
        q_0 &= q_S
    \end{cases} \notag
\end{align}

Since the coupling score $C_{q_t}(v_0,W_0)$ aims to prevent $v_0$ from acting like elements of $W_0$, by minimizing its inverse we enforce the desired behavior.

\begin{figure}[!h]
    \centering
    \begin{subfigure}[t]{0.45\textwidth}
        \vspace{0pt}
        \centering
        \includegraphics[width=\linewidth]{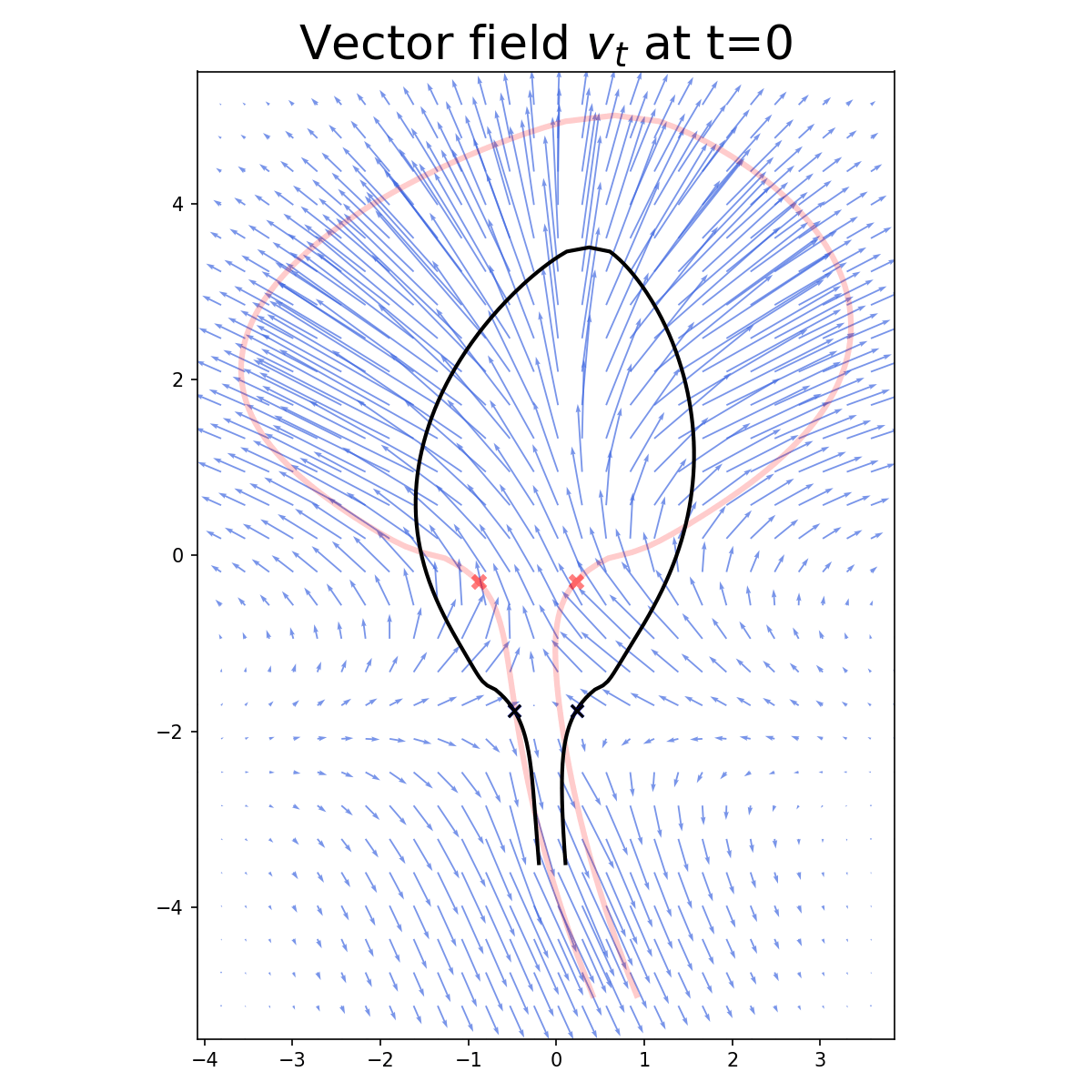}
        \caption{Vector field $v_{t=0}$ associated with the first registration (witout coupling score) after optimization.}
        \label{fig:vfield_first_run}
    \end{subfigure}
    \hspace{0.5cm} 
    \begin{subfigure}[t]{0.45\textwidth}
        \vspace{0pt}
        \centering
        \includegraphics[width=\linewidth]{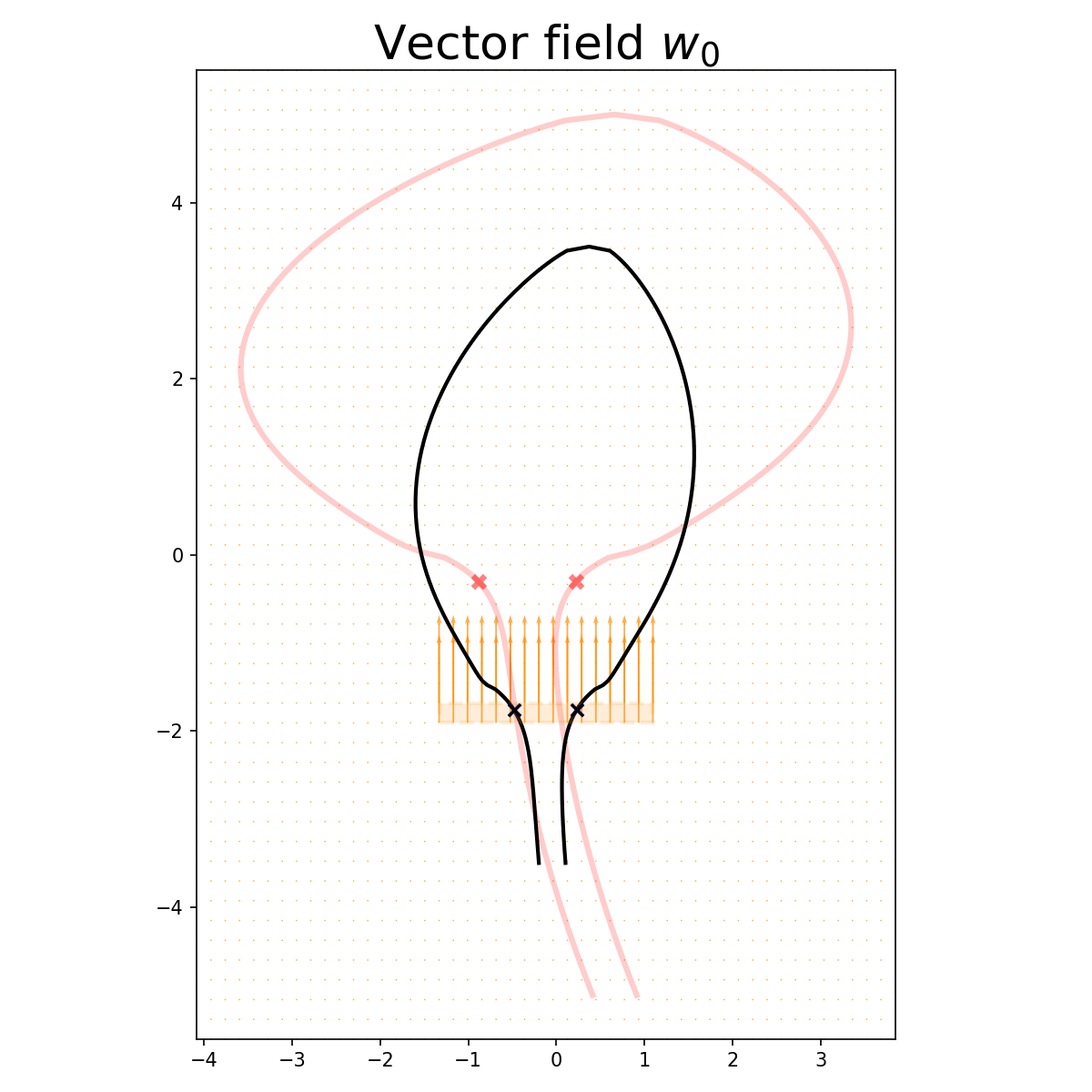}   
        \caption{The area $D$ localized around the template's junction is represented by an orange box and the vector field $w_0 = [0,1]\mathds{1}_D$ is represented by orange arrows.}
        \label{fig:wfield}
    \end{subfigure}
    
    \vspace{0.5cm} 
    
    \begin{subfigure}[t]{0.45\textwidth}
        \vspace{0pt} 
        \centering
        \includegraphics[width=\linewidth]{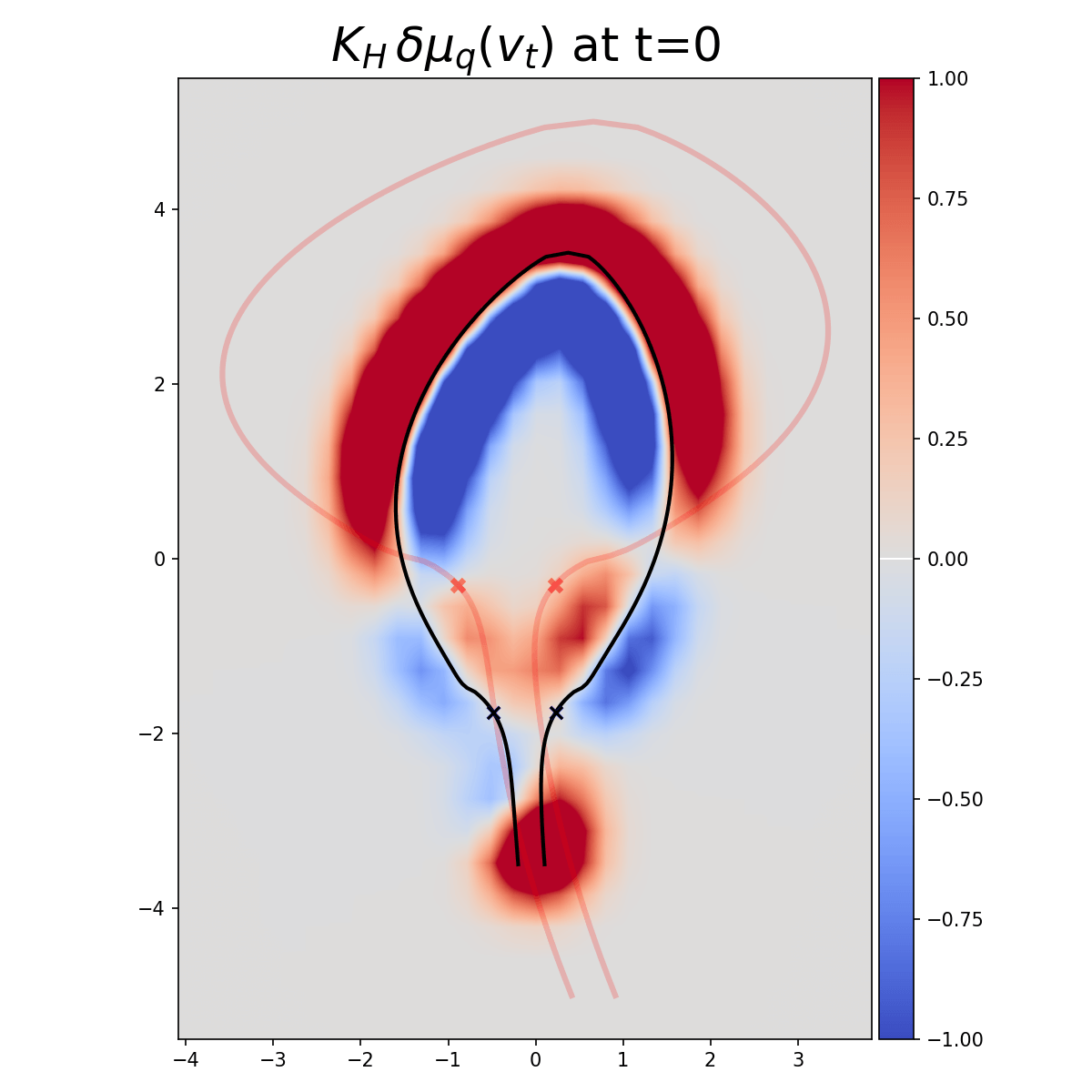}  
        \caption{Representation of the first variation of varifold associated with the template curve and induced by the vector field $v_{t=0}$.}
        \label{fig:heatmap_vfield_corr_leafA}
    \end{subfigure}
    \hspace{0.5cm}
    \begin{subfigure}[t]{0.45\textwidth}
        \vspace{0pt} 
        \centering
        \includegraphics[width=\linewidth]{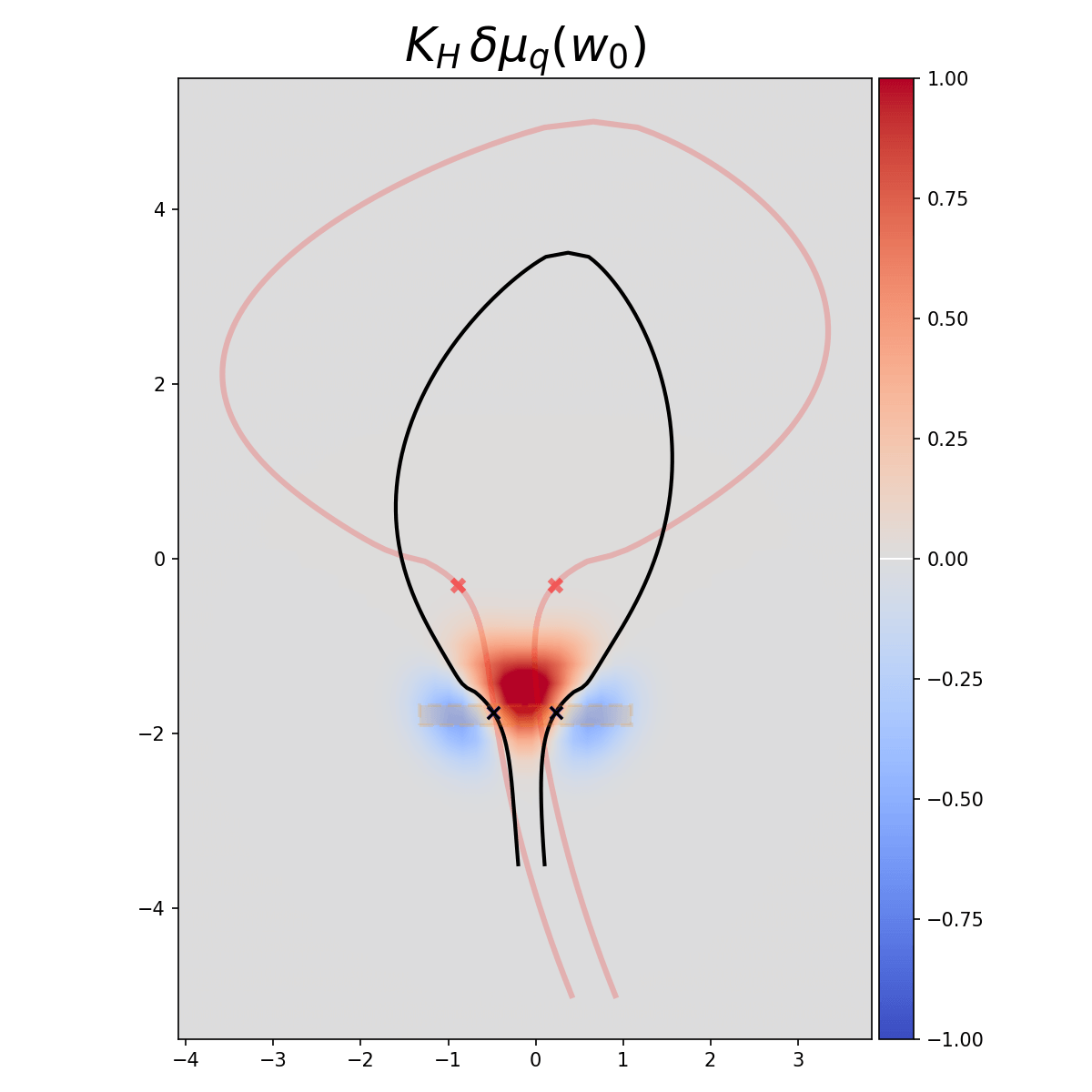}    
        \caption{Representation of the first variation of varifold associated with the template curve and induced by the vector field $w_0$. The area $D$ localized around the template's junction is represented by an orange box.}
        \label{fig:heatmap_w_leafA}
    \end{subfigure}
    \caption{These figures are associated with the matching to target A. The vector fields $v_{t=0}$ and $w_0$ are represented alongside their induced first variation of varifolds associated with the template curve $q_S$. The target is represented in red, and the deformed curve at $t=0$ (which corresponds to the template) is in black. The first variations are represented using a Gaussian kernel with scale $\sigma_H = 0.5$}
    \label{fig:vfield_heatmap_A}
\end{figure}

 \cref{fig:vfield_heatmap_A} shows illustrations associated with the registration to target A. The left column displays the initial vector field $v_0$ obtained after the first unconstrained optimization \eqref{eq:standard_model}, together with the representation of its first variation associated with the template curve. We recall that the image representation of the first variation is given by
    \begin{equation*} 
    K_{H}\delta \mu_q(v) : x \in \R^d \mapsto  \sum_{f \in F_q} \ell_f (\delta \ell_f)_v ~ k_E(c_f, x) + \ell_f   \nabla_1 k_E(c_f,x)^{\top} v_f  
    \end{equation*} 
where $\ell_f = \Vert q_{f^2}- q_{f^1} \Vert$ and $(\delta \ell_f)_v = \langle \dfrac{v(q_{f^2})-v(q_{f^1})}{\ell_f},\dfrac{q_{f^2}-q_{f^1}}{\ell_f} \rangle$. 
\cref{fig:heatmap_vfield_corr_leafA} shows that, just above the template's junction the bottom of the leaf collapses to transform into a part of the rod. The right column represents the vector field $w_0=[0,1] \mathds{1}_D$ which defines the space $W_0$ in the coupling score. The high intensity observed in the first variation representation in \cref{fig:heatmap_w_leafA} is strongly concentrated at the top of the junction, corresponding to the translation we wish to reproduce.

\begin{figure}[!h]
    \centering
    \includegraphics[width=\linewidth]{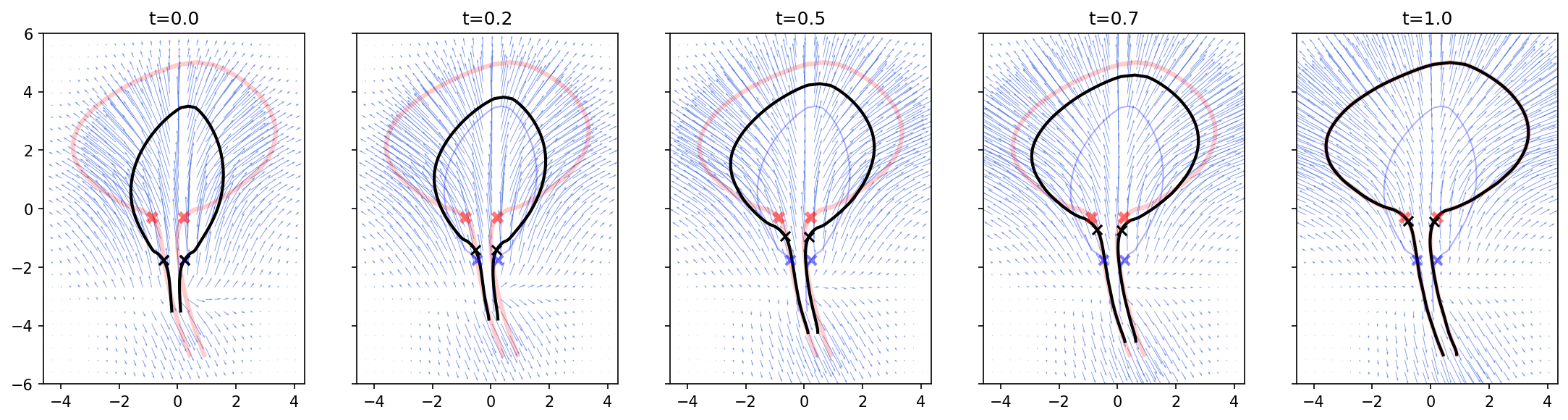}
    \caption{Geodesic associated with the target A after optimization of the constrained registration given in \eqref{eq:pb_leaf_corr}.}
    \label{fig:run_leafA_corrected}
\end{figure}

The geodesic associated with the constrained problem \eqref{eq:pb_leaf_corr} after optimization is presented in \cref{fig:run_leafA_corrected}. First, the addition of the inverse of the coupling score to the functional did not alter the global matching, which remains successful. More importantly, this second registration correctly matched the black crosses  (representing the junction of the deformed leaf) with the junction of the target, an alignment that the initial registration in \cref{fig:run1_leaves} failed to achieve.

The same process can be applied to correct the junction matching for target C. As seen previously in  \cref{fig:run1_leaves}, the matching failed because the top of the rod transformed into the bottom of the leaf, which is the exact opposite of target A. Therefore, the vector field $w_0$ inducing the space $W_0$ that $v$ must mimic is set to $w_0=[0,-1] \mathds{1}_D$ to enforce a downward translation of the junction. After optimization of \eqref{eq:pb_leaf_corr}, the obtained geodesic is presented in \cref{fig:run_leafA_corrected}, showing that both the global leaf and the junctions are successfully matched.

\begin{figure}[!h]
    \centering
    \includegraphics[width=\linewidth]{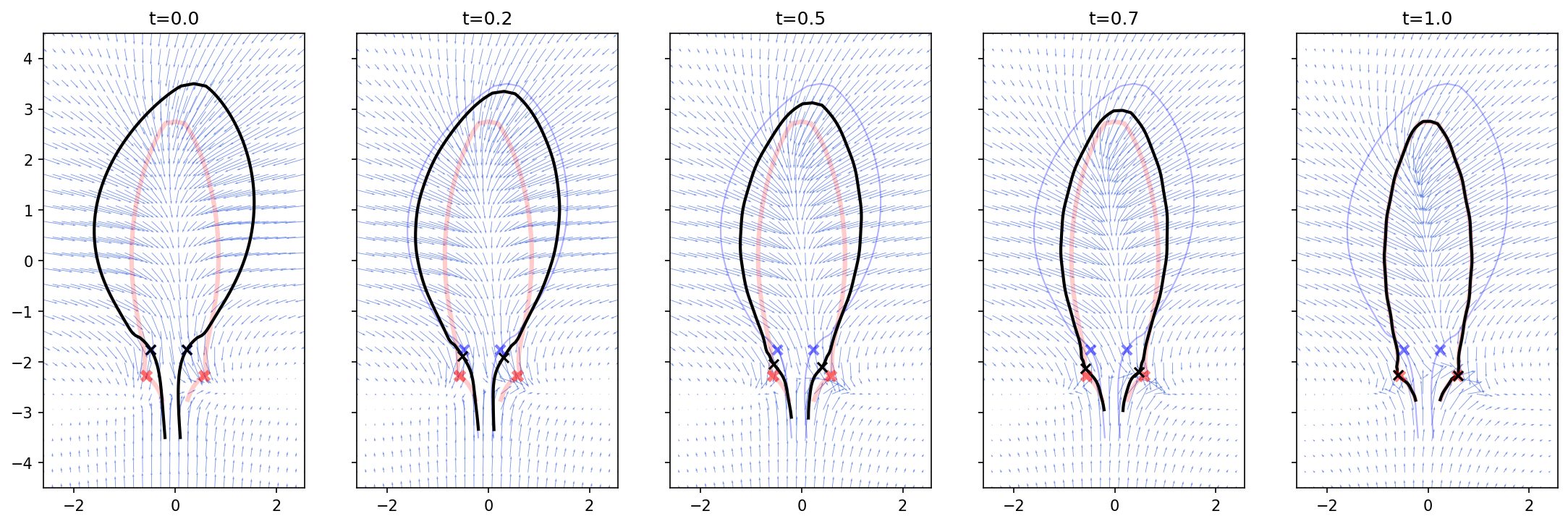}
    \caption{Geodesic associated with the target C after optimization of the constrained registration given in \eqref{eq:pb_leaf_corr}.}
    \label{fig:run_leafC_corrected}
\end{figure}

\FloatBarrier
\subsection{Correct a registration iteratively}\label{sec:exp_reg_it}

The previous sections demonstrated how the coupling score can enforce or prevent specific deformations when morphological priors are available. However, such explicit modeling is often unrealistic when dealing with complex vector fields or when prior knowledge is lacking. To address these scenarios, we introduce an iterative registration strategy. If an initial unconstrained registration yields an undesirable outcome, we extract the problematic vector field and embed it into the coupling score. Then, this score is used as a penalty term in a second registration in order to decouple the new deformation from the unsuccessful one produced during the first matching attempt.

\begin{figure}[!h]
    \centering
    \includegraphics[width=0.5\linewidth]{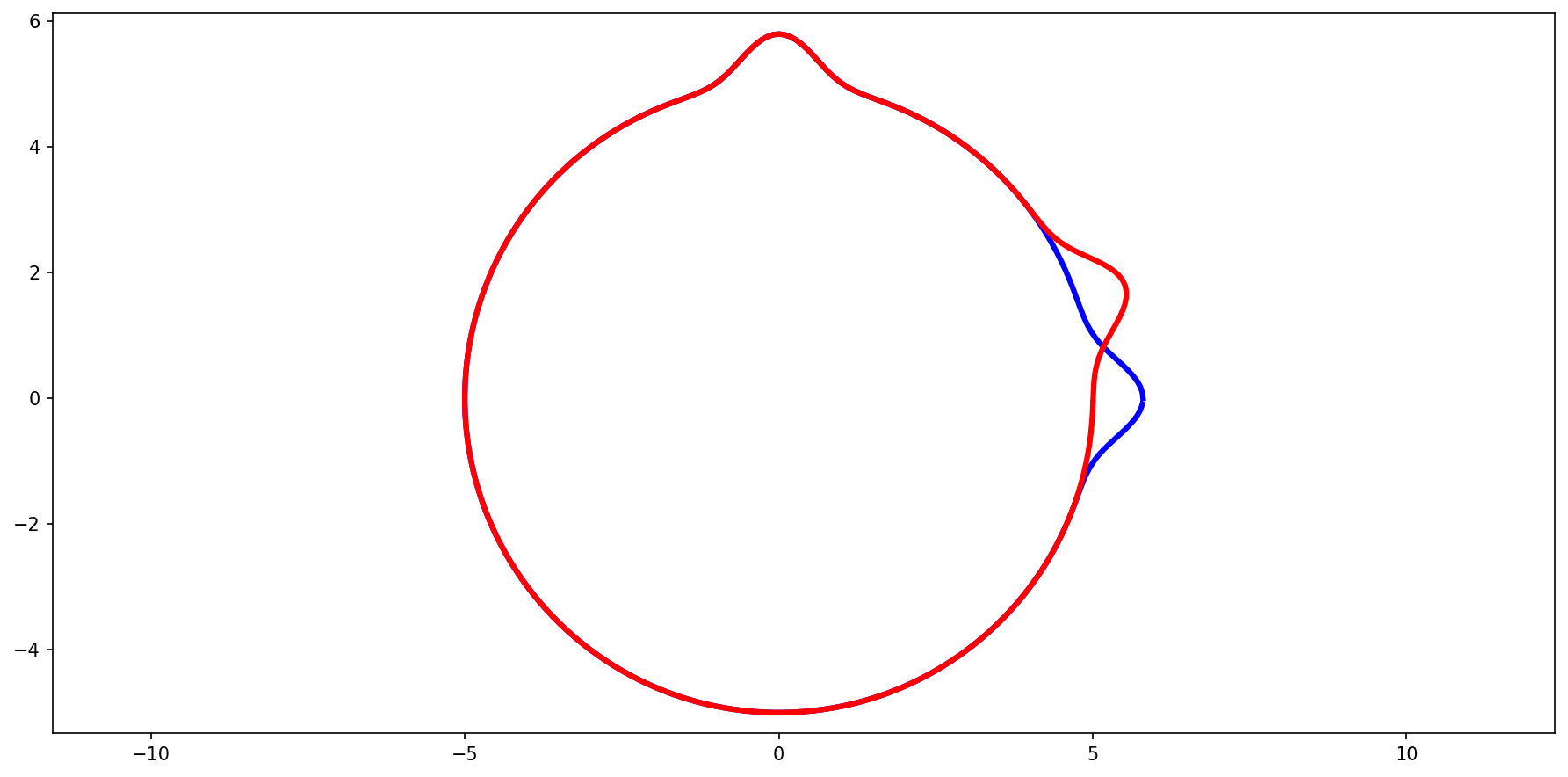}
    \caption{Source (blue) and target (red) curve }
    \label{fig:source_target_bosses}
\end{figure}

For this experiment, the source and target curves are shown in \cref{fig:source_target_bosses}. They share a bump at $\dfrac{\pi}{2}$, but their second bump differs. The expected behavior is for the rightmost source bump to be naturally transported onto the target bump. The registration is performed using standard LDDMM geodesic shooting where the deformation is induced by a RKHS $V$ whose kernel is a sum of Gaussian kernels with scales $\sigma_V \in \{0.5,2 \}$. We consider a varifold data attachment term which is also a sum of Gaussian kernels with scales $\sigma \in \{0.5,2\}$. The minimization problem associated with this registration is given in \eqref{eq:standard_model}.

\begin{figure}[!h]
    \centering
    \includegraphics[width=\linewidth]{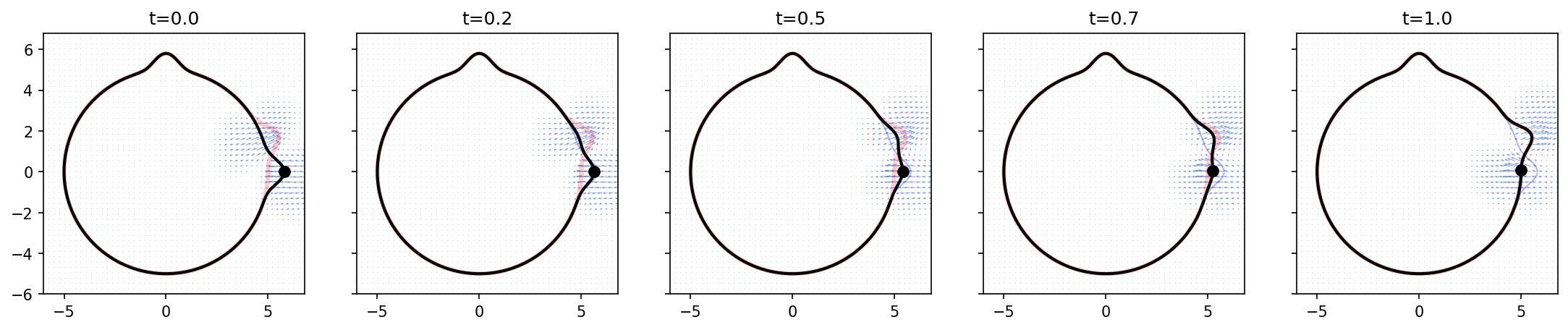}
    \caption{Geodesic of the initial registration problem defined by \eqref{eq:standard_model}. The source, target and geodesics are respectively represented in blue, red and black. The vector field $v_t$ is illustrated in blue at each time step. The black dot at the top of the right bump serves only for illustrative purposes and does not contribute to the matching process. }
    \label{fig:run1_bosses}
\end{figure}

\cref{fig:run1_bosses} shows the result of this matching. The black dot on the figure only serves to illustrate the displacement of the bump and do not contribute to the matching. We observe that the geodesic (in black) shrinks the right bump and  creates an entirely new one to match the target, which is energetically cheaper for the unstructured registration model than transporting the existing bump.

\begin{figure}[!h]
    \centering
 \begin{subfigure}{0.35\textwidth}
        \centering
         \includegraphics[width=\linewidth]{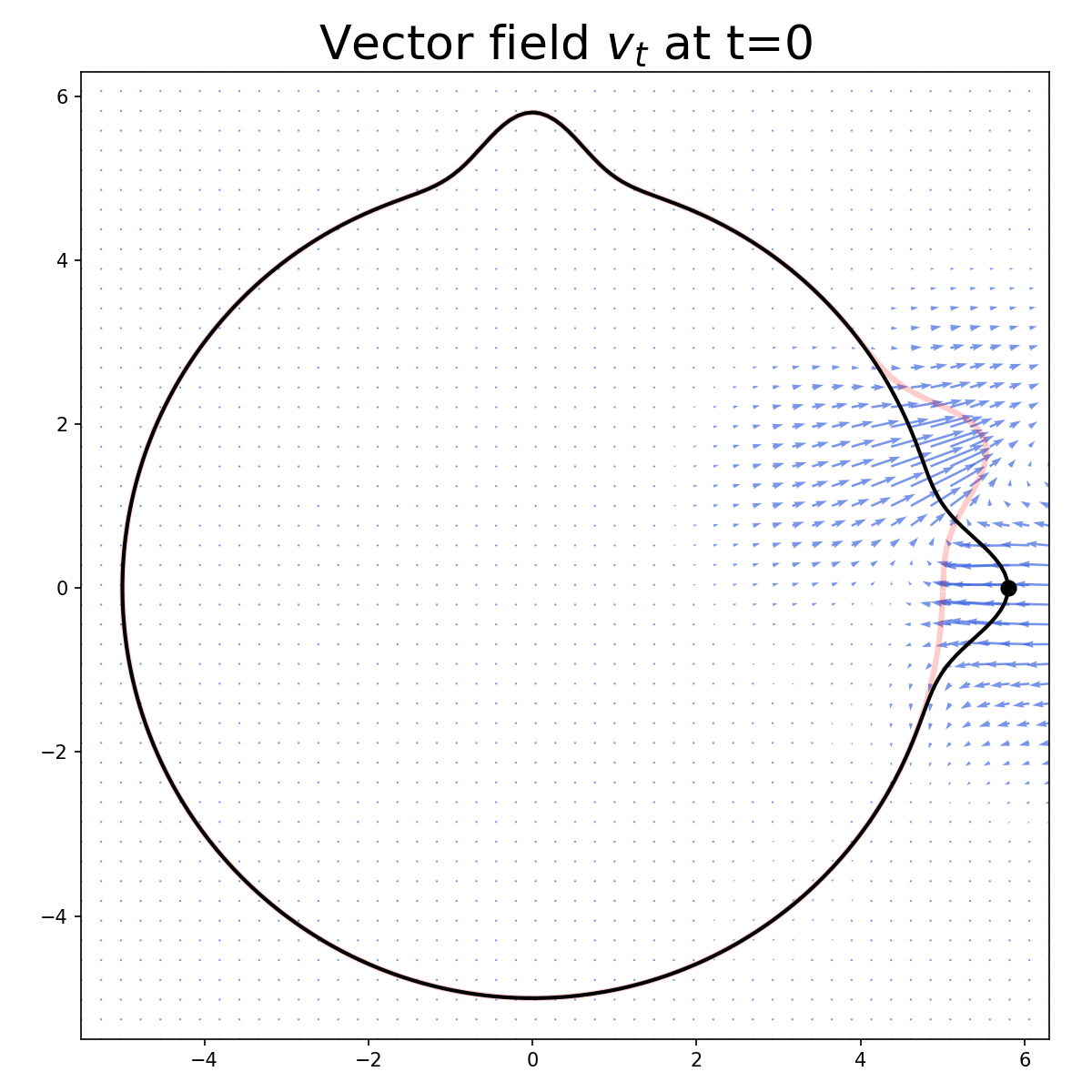}
        \label{fig:vfield_run1_bosses}
    \end{subfigure}
            \hspace{0.5cm} 
    \begin{subfigure}{0.35\textwidth}
        \centering
        \includegraphics[width=\linewidth]{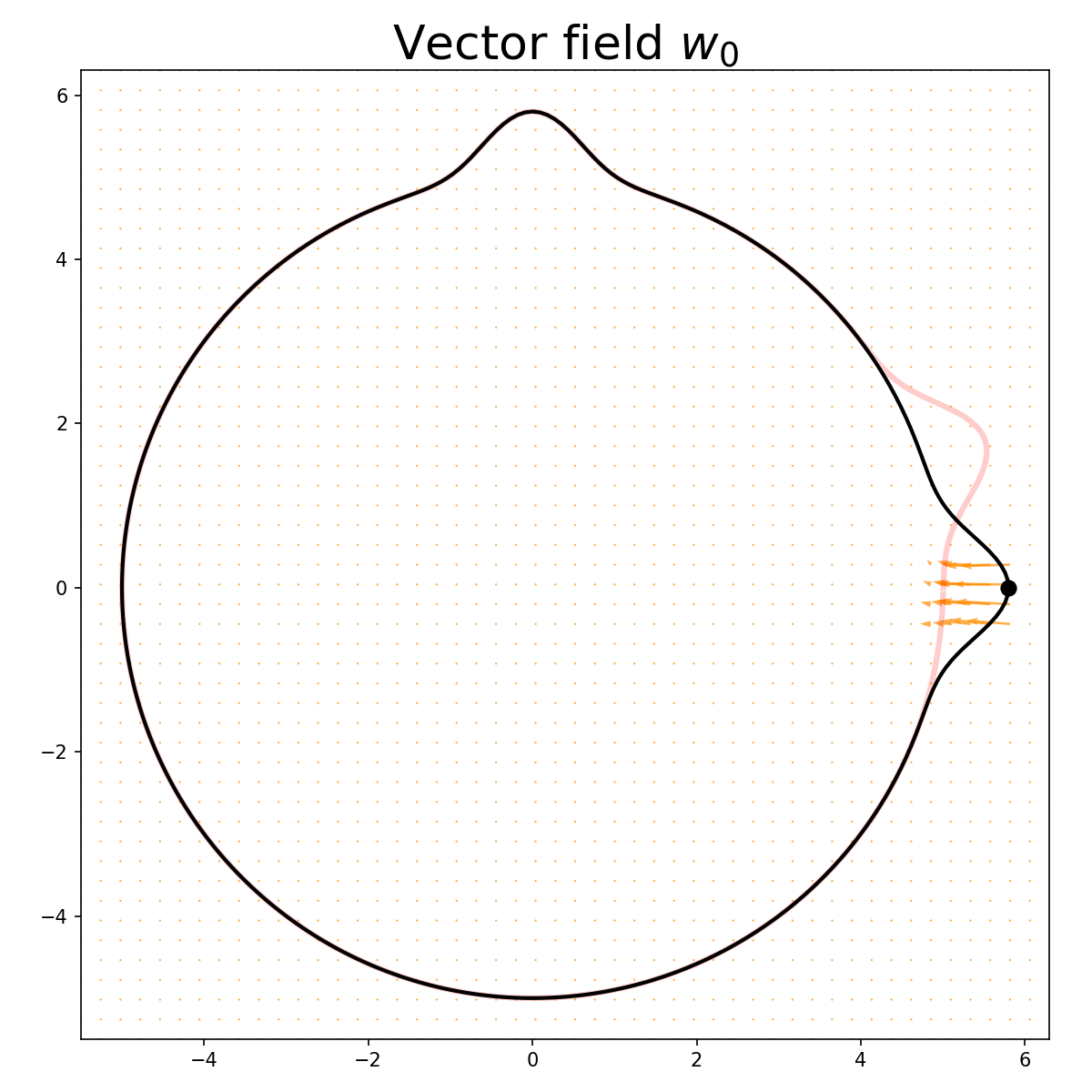}    
        \label{fig:w0_bosses}
    \end{subfigure}
    \vspace{0.5cm}
        \begin{subfigure}{0.35\textwidth}
        \centering
        \includegraphics[width=\linewidth]{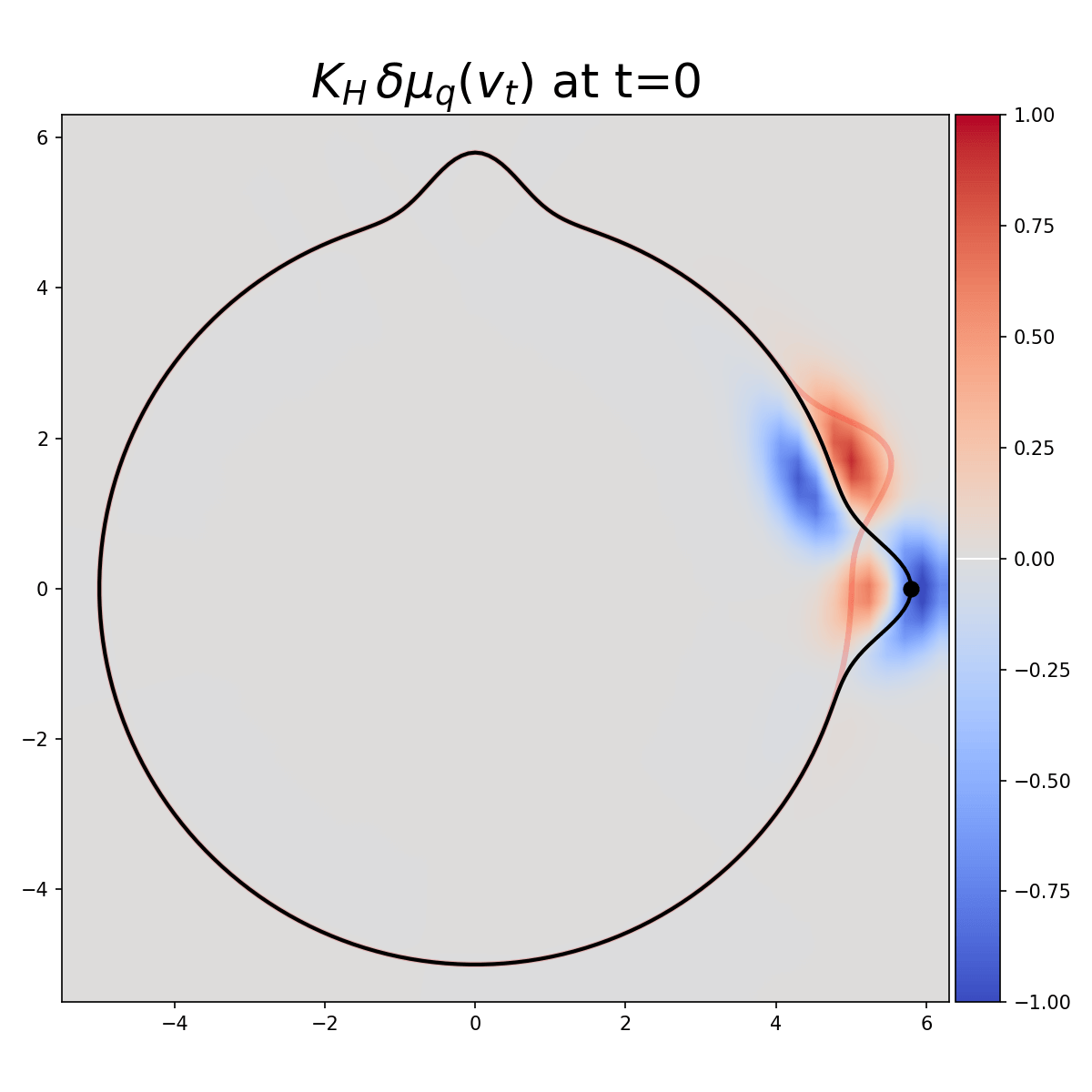}    
        \label{fig:heatmap_v_run1_bosses}
    \end{subfigure}
        \hspace{0.5cm}
        \begin{subfigure}{0.35\textwidth}
        \centering
        \includegraphics[width=\linewidth]{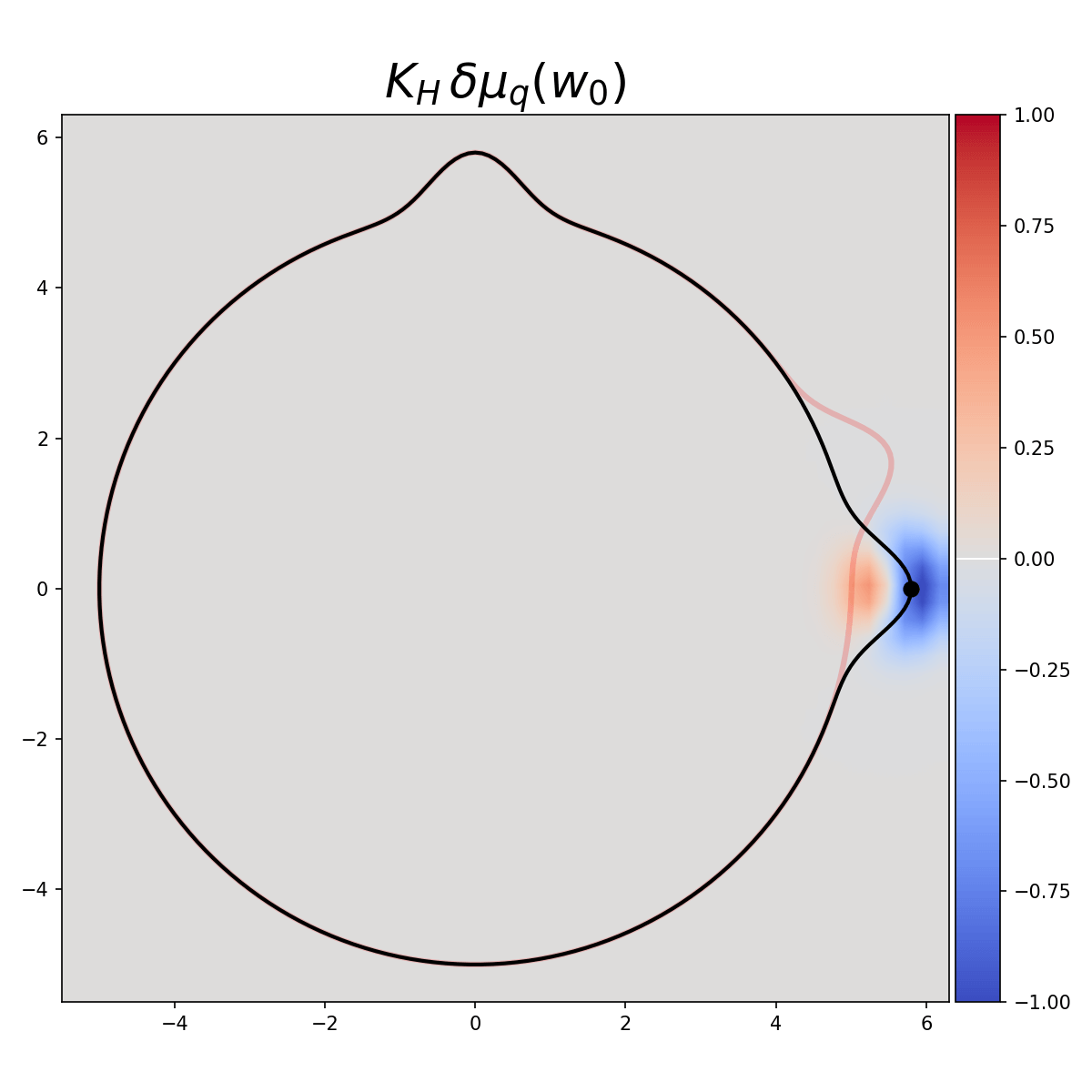}    
        \label{fig:heatmap_w_bosses}
    \end{subfigure}
    \caption{The first row shows the initial vector field $v_{t=0}$ from the first run (left) and the constraint vector field $w_0$ (right). Below them are the image representations of their first variation of varifolds associated with the template curve. The first variations are represented using a Gaussian kernel with scale $\sigma_H = 0.5$.}
    \label{fig:heatmap_run1}
\end{figure}

Indeed, \cref{fig:heatmap_run1} (top-left corner) shows that the initial vector field $v_0$ after optimization directly induces this shrinking behavior. Assume now that we would like the bump to be transported (as it could be desired for the matching of two gyri on a cortical surface). To correct the registration, we formulate a second optimization problem designed to find a new deformation decoupled from the problematic vector field generated during the first run. This second registration is similar to the initial one, but corrected using the coupling score evaluated at time $t=0$ between the new initial vector field $v_0$ and the reference space $W = \operatorname{span}(v_0^* \mathds{1}_D)$, with $D$ denoting the region of the right source bump, and $v_0^*$ representing the optimal initial vector field obtained from the first failed matching. The energy associated with this refined registration problem becomes:
\begin{equation}\label{eq:reg_pb2_bosses}
    \min_{v \in L^2([0,1],V)} \dfrac{1}{2} \int_0^1 \Vert v_t \Vert_V^2 \, dt + C_{q_0}\left(v_0, W\right) + \mathcal{D}(q_1)
\end{equation}
The vector field from which we aim to decouple is represented in the top-right corner. The bottom row shows the associated first variation, which are the quantities compared in the definition of the coupling score.

\begin{figure}[!h]
        \centering
        \includegraphics[width=\linewidth]{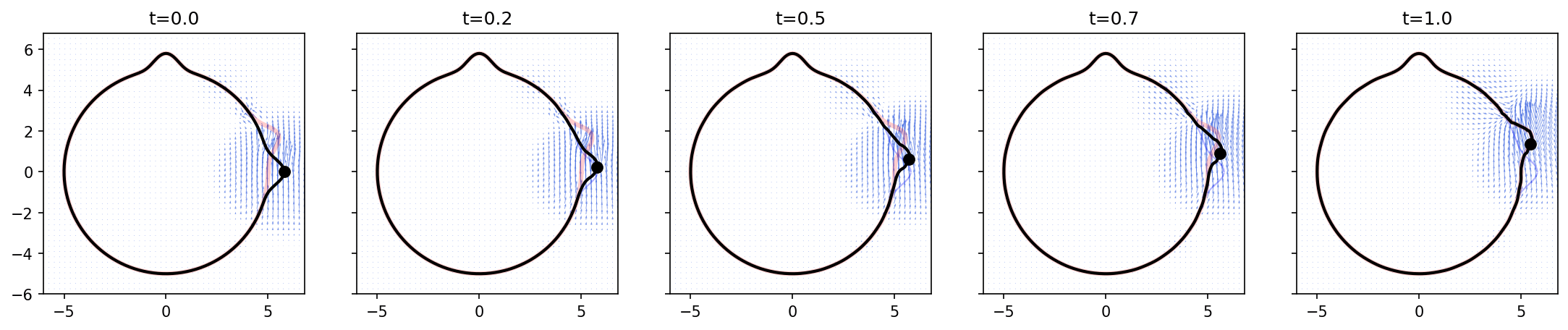}    
    \caption{Geodesic of the corrected registration problem \eqref{eq:reg_pb2_bosses} with coupling score.}
    \label{fig:run2_bosses}
\end{figure}

The geodesic generated by this second run is shown in \cref{fig:run2_bosses}. The coupling score successfully corrected the vector field, forcing the model to achieve the matching through a natural tangential displacement of the existing bump.

\section{Conclusion and future works}

In this paper, we proposed a novel framework designed to decouple different deformation modes in shape registration. We introduced the first variation of varifolds, a tool adapted from geometric measure theory, to provide a reparameterization-invariant representation of the infinitesimal action of vector fields on shapes. This allows us to define a coupling score that quantifies the extent to which the action of a vector field on shapes can be replicated by a chosen subspace. We provided a theoretical analysis of this score, illustrating its behavior for finite-dimensional spaces such as global translations and one-dimensional scalings. Finally, we integrated this coupling score into a registration model and  showed its versatility through different numerical experiments, such as decoupling deformations,  preventing or enforcing specific types of deformation, and iteratively correcting complex matching scenarios using structured directional priors.

While this study primarily focused on curves to work with simpler and more interpretable expressions, the proposed framework can be generalized to larger families of shapes, such as surfaces. Furthermore, the numerical experiments presented were based on toy examples in order to isolate and understand specific geometric phenomena that we wanted to highlight. A next step would be to evaluate the efficiency of this coupling score on real anatomical or biological datasets such as MRI for the matching of gyris on cortical surfaces. Regarding the registration strategy, our approach achieves decoupling by leveraging the geodesic equations of an unconstrained model. Consequently, future works could focus on the development of constrained registration frameworks specifically tailored for decoupling. Such approaches could include, for instance, deriving geodesic equations that directly incorporate the coupling score, or formulating models with distinct covectors to explicitly represent the different modes of deformation. From a modeling perspective, the coupling score could be used as a generative tool to construct and generate structured vector fields, as done with implicit deformation modules \cite{modules}. It might also be considered for partial matching applications in order to reconstruct a full global vector field, or to infer the underlying global deformation when the action of a vector field on a shape is only partially observed.

\clearpage
\appendix
\section{Proofs}

 \subsection{Proof of \cref{prop:frechet_varifold}}\label{app:proof_frechet}
 
\begin{proposition}[Fréchet differentiability of varifolds]
    Let $I = [0,1]$ or $\mathbb{S}^{1}$ and $Q = \{ q \in C^1(I, \mathbb{R}^d) \mid \inf_{s \in I} \|q'(s)\| > 0 \}$ be the set of immersions. The mapping
    \begin{equation*}
        \tilde{\mu} \colon \app{Q}{C_0^2(\mathbb{R}^d \times \mathbb{S}^{d-1}, \mathbb{R})'}{q}{\tilde{\mu}_q := \omega \mapsto \int_I \omega\left(q(s), \dfrac{q'(s)}{\|q'(s)\|}\right) \|q'(s)\| \, ds}
    \end{equation*}
    is continuously Fréchet differentiable.
\end{proposition}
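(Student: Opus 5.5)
The plan is to view $\tilde{\mu}$ as a composition of simple maps and differentiate directly, with remainders estimated uniformly over the unit ball of $C_0^2(\R^d\times\mathbb{S}^{d-1},\R)$. For $q\in Q$ and $h\in C^1(I,\R^d)$ with $\Vert h\Vert_{C^1}$ small, $q+h$ stays in $Q$ because $\inf_s\Vert q'(s)\Vert>0$ is an open condition in $C^1$. Write $\tau_q=q'/\Vert q'\Vert$. The maps $q\mapsto \Vert q'\Vert\in C^0(I,\R)$ and $q\mapsto \tau_q\in C^0(I,\mathbb{S}^{d-1})$ are smooth on $Q$, since $x\mapsto\Vert x\Vert$ and $x\mapsto x/\Vert x\Vert$ are smooth away from $0$. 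Their derivatives are
\[
\langle \tau_q,h'\rangle \qquad\text{and}\qquad \frac{(h')^\perp}{\Vert q'\Vert},
\]
respectively. Chaining these, the candidate derivative is
\[
A_q(h)(\omega)=\int_I \Big(\langle\nabla_x\omega(q,\tau_q),h\rangle+\Big\langle\nabla_\tau\omega(q,\tau_q),\frac{(h')^\perp}{\Vert q'\Vert}\Big\rangle\Big)\Vert q'\Vert+\omega(q,\tau_q)\langle\tau_q,h'\rangle\,ds .
\]
This is consistent with \cref{prop:continuous_first_var} when $h=v\circ q$. Here $\nabla_\tau$ is the tangential gradient on the sphere as recalled in \cref{app:diff_submanifolds}. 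Since $(h')^\perp$ is tangent to $\mathbb{S}^{d-1}$ at $\tau_q$, only that gradient appears. The bound $|A_q(h)(\omega)|\le C_q\Vert\omega\Vert_{C^2}\Vert h\Vert_{C^1}$ is immediate, so $A_q$ is a bounded linear map into the dual.

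The main step is the remainder estimate. Set $R(\omega)=\tilde{\mu}_{q+h}(\omega)-\tilde{\mu}_q(\omega)-A_q(h)(\omega)$. I would split the integrand as
\[
\omega(q+h,\tau_{q+h})\Vert q'+h'\Vert-\omega(q,\tau_q)\Vert q'\Vert .
\]
This equals $[\omega(q+h,\tau_{q+h})-\omega(q,\tau_q)]\Vert q'+h'\Vert+\omega(q,\tau_q)(\Vert q'+h'\Vert-\Vert q'\Vert)$. I then Taylor expand each bracket. For the $\omega$-bracket, I would extend $\omega$ smoothly to a neighborhood of $\R^d\times\mathbb{S}^{d-1}$, for instance by composing with the radial projection $\tau\mapsto\tau/\Vert\tau\Vert$. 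The $C^2$ norm is then controlled, and the second-order Taylor remainder is bounded by $\Vert\omega\Vert_{C^2}(\Vert h\Vert_\infty+\Vert\tau_{q+h}-\tau_q\Vert_\infty)^2$. The first-order error $\tau_{q+h}-\tau_q-(h')^\perp/\Vert q'\Vert$ is $O(\Vert h'\Vert_\infty^2)$, with a constant depending on $\inf\Vert q'\Vert$, and similarly for the norm. Collecting terms, together with the cross terms coming from $\Vert q'+h'\Vert-\Vert q'\Vert=O(\Vert h'\Vert_\infty)$, gives $\sup_{\Vert\omega\Vert_{C^2}\le1}|R(\omega)|\le C\Vert h\Vert_{C^1}^2$. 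This yields Fréchet differentiability. The obstacle is exactly this uniformity in $\omega$, and it is why $C_0^2$ test functions are needed rather than $C_0^1$: a Lipschitz bound on $d\omega$ gives the quadratic remainder. With only $C^1$ one would get $o(1)$ pointwise in $\omega$ but not uniformly.

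For continuity of $q\mapsto A_q$ in operator norm from $C^1(I,\R^d)$ to $(C_0^2)'$, I would estimate $|A_{q_1}(h)(\omega)-A_{q_2}(h)(\omega)|$. Each factor ($\nabla\omega$ evaluated at the two points, $\tau_{q_i}$, $\Vert q_i'\Vert$, and $1/\Vert q_i'\Vert$) differs by at most $C\Vert q_1-q_2\Vert_{C^1}$, again using the Lipschitz property of $\nabla\omega$ given by $\Vert\omega\Vert_{C^2}$. This gives $\Vert A_{q_1}-A_{q_2}\Vert\le C\Vert q_1-q_2\Vert_{C^1}$ locally, with $C$ uniform on a $C^1$-ball where $\inf\Vert q'\Vert$ stays bounded below. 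For $I=\mathbb{S}^1$ nothing changes. Integrability over the compact $I$ makes all constants finite.
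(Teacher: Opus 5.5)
Your proposal is correct and follows essentially the same route as the paper. You use the same candidate differential $A_q$, bound the remainder by $C\Vert h\Vert_{C^1}^2$ uniformly over the unit ball of $C_0^2$ via a second-order Taylor estimate (exactly where $C^2$ test functions are needed), and prove continuity through the Lipschitz bound $\Vert A_{q_1}-A_{q_2}\Vert\le C\Vert q_1-q_2\Vert_{C^1}$. The only cosmetic difference is that the paper packages position, tangent and length into the single $C^2$ function $F_\omega(x,y)=\omega(x,y/\Vert y\Vert)\Vert y\Vert$ and uses the integral-form Taylor remainder along the straight path $t\mapsto(q+th,q'+th')$, which your bracket-by-bracket expansion with the radial extension of $\omega$ reproduces.
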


\begin{proof}
    Let $q \in Q$. Since $q$ is an immersion defined on a compact domain $I$, there exist constants $M \ge c > 0$ such that $c \le \|q'(s)\| \le M$ for all $s \in I$. Because the space of immersions $Q$ is open in the $C^1$ topology, any variation $h \in C^1(I, \mathbb{R}^d)$ with a sufficiently small $C^1$-norm ensures that $q+h \in Q$. Specifically, we consider variations $h$ satisfying $\|h\|_{C^1} < \dfrac{c}{2}$, which guarantees $\|q'(s) + h'(s)\| \ge \dfrac{c}{2}$ for all $s \in I$.
    
    For $\omega \in C_0^2(\mathbb{R}^d \times \mathbb{S}^{d-1}, \mathbb{R})$ and for any $s \in I$, the mapping $t \mapsto \omega(q(s)+th(s), \tau_{q+th}(s))\|q'(s)+th'(s)\|$ is continuously differentiable in a neighborhood of $t=0$. Therefore, the mapping $t \mapsto \tilde{\mu}_{q+th}(\omega)$ is differentiable, and differentiation under the integral sign yields:
\begin{align*}
        \dfrac{d}{dt}\Big\vert_{t=0} \tilde{\mu}_{q+th}(\omega) &= \int_I \big\langle \nabla_x \omega(q(s), \tau_q(s)), h(s) \big\rangle \|q'(s)\| \, ds \nonumber\\
        &+ \int_I \big\langle \nabla_\tau \omega(q(s), \tau_q(s)), (I_d - \tau_q(s) \tau_q(s)^T)h'(s) \big\rangle \, ds \nonumber\\
        &+ \int_I \omega(q(s), \tau_q(s)) \big\langle \tau_q(s), h'(s) \big\rangle  \, ds.
    \end{align*}
    We propose the candidate for the Fréchet differential to be the linear operator $A_q$ defined by $A_q(h)(\omega) = \dfrac{d}{dt}\big\vert_{t=0} \tilde{\mu}_{q+th}(\omega)$ for $h \in C^1(I,\R^d)$ and $\omega \in C^2_0(\R^d \times \mathbb{S}^{d-1},\R)$, .
    By applying the triangle inequality, the Cauchy-Schwarz inequality, and using $\Vert I_d - \tau_q \tau_q^T \Vert \le 1$, the following bound holds
    \begin{align*}
        \vert A_q(h)(\omega) \vert &\le \int_I \Vert \nabla_x \omega \Vert \Vert h \Vert \Vert q' \Vert \, ds + \int_I \Vert \nabla_\tau \omega \Vert \Vert h' \Vert \, ds + \int_I \vert \omega \vert \Vert h' \Vert \, ds \\
        &\le \Vert \omega \Vert_{C^2_0} \Vert h \Vert_{C^1} \int_I \Vert q' \Vert \, ds + \Vert \omega \Vert_{C^2_0} \Vert h \Vert_{C^1} |I| + \Vert \omega \Vert_{C^2_0} \Vert h \Vert_{C^1} |I| \\
        &\le \Vert \omega \Vert_{C^2_0} \Vert h \Vert_{C^1} \left( \int_I \Vert q' \Vert \, ds + 2|I| \right),
    \end{align*}
   Consequently, $\omega \mapsto A_q(h)(\omega)$ is continuous and since it is linear, we conclude that $A_q(h) \in C_0^2(\R^d \times \mathbb{S}^{d-1},\R)'$.
   
    Let us show that $A_q$ is continuous. For $\omega \in C_0^2(\mathbb{R}^d \times \mathbb{S}^{d-1}, \mathbb{R})$, applying the Cauchy-Schwarz inequality, using $\|\tau_q(s)\| = 1$ and $\|I_d - \tau_q(s) \tau_q(s)^T\| \le 1$, it follows that:
    \begin{align*}
        |A_q(h)(\omega)| &\leq \int_I \left( \|\omega\|_{C^2_0} \|h\|_{C^1} M + \|\omega\|_{C^2_0} \|h\|_{C^1} + \|\omega\|_{C^2_0} \|h\|_{C^1} \right) \, ds \\
        &\leq |I| (M + 2) \|\omega\|_{C^2_0} \|h\|_{C^1}.
    \end{align*}
    Taking the supremum over all test functions $\omega$ satisfying $\|\omega\|_{C^2_0} \leq 1$, we obtain $\|A_q(h)\|_{(C^2_0)'} \leq C_q \|h\|_{C^1}$. Therefore, we conclude that $A_q$ is a bounded linear operator.

    Let us introduce the mapping $t \mapsto \tilde{\mu}_{q+th}(\omega)$, which is twice continuously differentiable. Its Taylor expansion with integral remainder evaluated at $t=1$ reads as:
    \begin{equation*}
         \tilde{\mu}_{q+h}(\omega) = \tilde{\mu}_{q}(\omega) + A_q(h)(\omega) + \int_0^1 (1 - \lambda) \dfrac{d^2}{dt^2} \Big\vert_{t= \lambda} \tilde{\mu}_{q+th}(\omega) \, d\lambda.
    \end{equation*}

    To make the second-order derivative explicit, we introduce the function $F_\omega \colon (x,y) \in \mathbb{R}^d \times (\mathbb{R}^d \setminus \{0\}) \mapsto \omega\left(x, \dfrac{y}{\|y\|}\right) \|y\|$, which is $C^2$ over its domain. Let us denote for each $s \in I$, $(q_\lambda(s), q'_\lambda(s)) = (q(s) + \lambda h(s), q'(s) + \lambda h'(s))$ and $\tau_\lambda(s) = \dfrac{q'_\lambda(s)}{\|q'_\lambda(s)\|}$. The second order derivative is given by: 
    \begin{equation*}
        \dfrac{d^2}{dt^2} \Big\vert_{t= \lambda} \tilde{\mu}_{q+th}(\omega) = \int_I d^2F_{\omega}(q_\lambda(s), q'_\lambda(s)) [(h(s), h'(s))^2] \, ds.
    \end{equation*}
    
    Recalling that the Jacobian matrix of the radial projection $y \mapsto \dfrac{y}{\|y\|}$ evaluated at $q'_{\lambda}(s)$ is $\dfrac{1}{\|q'_\lambda(s)\|}(I_d - \tau_\lambda(s) \tau_\lambda(s)^T)$, the partial Hessians of $F_\omega$ are given by:
    \begin{align*}
    \nabla_{xx}^2 F_{\omega}(q_\lambda, q'_\lambda) &= \|q'_\lambda\| \nabla_{xx}^2 \omega(q_\lambda, \tau_\lambda) \\
    \nabla_{xy}^2 F_{\omega}(q_\lambda, q'_\lambda) &= \nabla_{x\tau}^2 \omega(q_\lambda, \tau_\lambda) (I_d - \tau_\lambda \tau_\lambda^T) + \nabla_x \omega(q_\lambda, \tau_\lambda) \tau_\lambda^T \\
    \nabla_{yy}^2 F_{\omega}(q_\lambda, q'_\lambda) &= \dfrac{1}{\|q'_\lambda\|} \Big( \nabla_{\tau\tau}^2 \omega(q_\lambda, \tau_\lambda) (I_d - \tau_\lambda \tau_\lambda^T) \\
    &\quad + \tau_\lambda \nabla_\tau \omega(q_\lambda, \tau_\lambda)^T (I_d - \tau_\lambda \tau_\lambda^T) + \omega(q_\lambda, \tau_\lambda) (I_d - \tau_\lambda \tau_\lambda^T) \Big).
\end{align*}
where we omitted the parameter $s$ for readability.

    Since $\|h\|_{C^1} < c/2$, it follows that $M + c/2 \ge \|q'_\lambda(s)\| \ge c/2$. Combining these partial Hessians, there exists a constant $M_q > 0$ depending only on $q$ such that for all $\lambda \in [0,1]$ and $s \in I$:
    \begin{equation*}
        \left\| d^2F_\omega(q_\lambda(s), q'_\lambda(s)) \right\| \le M_q \|\omega\|_{C^2_0}.
    \end{equation*}
    
    Substituting this uniform bound into the remainder integral, we obtain:
    \begin{align*}
        \left| \int_0^1 (1 - \lambda) \dfrac{d^2}{dt^2} \Big\vert_{t= \lambda} \tilde{\mu}_{q+th}(\omega) \, d\lambda \right| 
        &\leq \int_0^1 \int_I (1-\lambda) \left\| d^2F_{\omega}(q_\lambda(s), q'_\lambda(s)) \right\| \|(h(s), h'(s))\|^2 \, ds \, d\lambda \\
        &\leq \int_0^1 \int_I (1-\lambda) M_q \|\omega\|_{C^2_0} \ \|h\|_{C^1}^2 \big) \, ds \, d\lambda \\
        &\leq \dfrac{1}{2} |I| M_q \|\omega\|_{C^2_0} \|h\|_{C^1}^2.
    \end{align*}

    Taking the supremum over all test functions satisfying $\|\omega\|_{C^2_0} \le 1$, we obtain that $\|\tilde{\mu}_{q+h} - \tilde{\mu}_{q} - A_q(h)\|_{(C^2_0)'} \le C'_q \|h\|_{C^1}^2$. This establishes that the mapping $q \mapsto \tilde{\mu}_q$ is Fréchet differentiable, with differential $d\tilde{\mu}_q = A_q$.

    Finally, we show that the mapping $q \mapsto A_q$ is continuous. Let $(q_n)_{n \in \mathbb{N}}$ be a sequence in $Q$ converging to $q$ in the $C^1$ topology. Let $h \in C^1(I,\R^d)$ and $\omega \in C_0^2(\R^d \times \mathbb{S}^{d-1},\R)$.    
    Using the triangle inequality, we have:
    \begin{align*}
        |A_q(h)(\omega) - A_{q_n}(h)(\omega)| &\le \int_I \left| \langle \nabla_x \omega(q, \tau_q), h \rangle \|q'\| - \langle \nabla_x \omega(q_n, \tau_{q_n}), h \rangle \|q_n'\| \right| ds \\
        &\quad + \int_I \left| \langle \nabla_\tau \omega(q, \tau_q), (I_d - \tau_q \tau_q^T)h' \rangle - \langle \nabla_\tau \omega(q_n, \tau_{q_n}), (I_d - \tau_{q_n} \tau_{q_n}^T)h' \rangle \right| ds \\
        &\quad + \int_I \left| \omega(q, \tau_q) \langle \tau_q, h' \rangle - \omega(q_n, \tau_{q_n}) \langle \tau_{q_n}, h' \rangle \right| ds.
    \end{align*}

    Since $q_n \to q$ in $C^1(I, \R^d)$, the sequences $(q_n)$ and $(q_n')$ converge uniformly on $I$. Therefore for $n$ sufficiently large, $c/2 \le \|q_n'(s)\| \le M + c/2$.
    On $\R^d \setminus B(0, c/2)$, the normalization mapping $y \mapsto \dfrac{y}{\|y\|}$ and the projection mapping $y \mapsto I_d - \dfrac{y y^T}{\|y\|^2}$ are smooth and Lipschitz continuous. Let $L > 0$ be a common Lipschitz constant for these maps on this domain. In particular, $\|\tau_q - \tau_{q_n}\| \le L \|q' - q_n'\| \le L \|q - q_n\|_{C^1}$.
    Furthermore, since $\omega \in C^2_0$, the function $\omega$ and its first derivatives $\nabla_x \omega$ and $\nabla_\tau \omega$ are bounded and Lipschitz continuous, with Lipschitz constants bounded by $\|\omega\|_{C^2_0}$.
    We bound the first integrand using the triangle inequality and Cauchy-Schwarz inequality:
    \begin{align*}
        &\left| \langle \nabla_x \omega(q, \tau_q), h \rangle \|q'\| - \langle \nabla_x \omega(q_n, \tau_{q_n}), h \rangle \|q_n'\| \right| \\
        &\le \left| \langle \nabla_x \omega(q, \tau_q) - \nabla_x \omega(q_n, \tau_{q_n}), h \rangle \right| \|q'\| + \left| \langle \nabla_x \omega(q_n, \tau_{q_n}), h \rangle \right| \big| \|q'\| - \|q_n'\| \big| \\
        &\le \|\omega\|_{C^2_0} \Big( \|q - q_n\| + \|\tau_q - \tau_{q_n}\| \Big) \|h\| M + \|\omega\|_{C^2_0} \|h\| \|q' - q_n'\| \\
        &\le K_1 \|\omega\|_{C^2_0} \|h\|_{C^1} \|q - q_n\|_{C^1}.
    \end{align*}

    Applying the same decomposition strategy to the second and third integrals and using the Lipschitz continuity of the projection operator and the unit tangent vectors, we obtain similar bounds:
    \begin{align*}
        \left| \langle \nabla_\tau \omega(q, \tau_q), (I_d - \tau_q \tau_q^T)h' \rangle - \langle \nabla_\tau \omega(q_n, \tau_{q_n}), (I_d - \tau_{q_n} \tau_{q_n}^T)h' \rangle \right| &\le K_2 \|\omega\|_{C^2_0} \|h\|_{C^1} \|q - q_n\|_{C^1} \\
        \left| \omega(q, \tau_q) \langle \tau_q, h' \rangle - \omega(q_n, \tau_{q_n}) \langle \tau_{q_n}, h' \rangle \right| &\le K_3 \|\omega\|_{C^2_0} \|h\|_{C^1} \|q - q_n\|_{C^1}.
    \end{align*}

    Integrating these point-wise bounds over the compact interval $I$, and defining $K = K_1 + K_2 + K_3$, we obtain:
    \begin{equation*}
        |A_q(h)(\omega) - A_{q_n}(h)(\omega)| \le |I| K \|\omega\|_{C^2_0} \|h\|_{C^1} \|q - q_n\|_{C^1}.
    \end{equation*}
    Taking the supremum over the unit balls $\|h\|_{C^1} \le 1$ and $\|\omega\|_{C^2_0} \le 1$ directly provides the bound on the operator norm difference:
    \begin{equation*}
        \|A_q - A_{q_n}\|_{\mathcal{L}(C^1, (C^2_0)')} \leq |I| K \|q - q_n\|_{C^1}.
    \end{equation*}
    Since $\|q - q_n\|_{C^1} \to 0$ as $n \to \infty$, then   $\|A_q - A_{q_n}\|_{\mathcal{L}(C^1, (C^2_0)')} \to 0$. This proves that the mapping $q \mapsto A_q$ is continuous, and therefore $q \mapsto \tilde{\mu}_q$ is continuously Fréchet differentiable.
\end{proof}

\subsection{Proof of \cref{prop:approx_first_var}}\label{app:proof_approx}
Recall the notations $\ell_f = \Vert q_{f^2}-q_{f^1} \Vert$, $\vec{t}_f = \dfrac{q_{f^2} - q_{f^1}}{\ell_f}$, $c_f = \dfrac{q_{f^1} + q_{f^2}}{2}$,
     $(\delta \ell_f)_v = \langle \dfrac{v(q_{f^2})-v(q_{f^1})}{\Vert q_{f^2}-q_{f^1} \Vert}, \vec{t}_f \rangle$ and  $ \nabla^\perp v_f=\dfrac{v(q_{f^2})-v(q_{f^1})}{\ell_f} - \Big \langle \dfrac{v(q_{f^2})-v(q_{f^1})}{\ell_f} , \vec{t}_f\Big \rangle\vec{t}_f$. We also recall from \cref{exam:smooth_curve} that the varifold associated with $q$, defined by $\tilde{\mu}_q(\omega) = \int_I \omega\left(q(s),\tau_{q}(s)\right) \Vert q'(s) \Vert ds$ can be approximated by a weighted sum of Dirac masses $\mu_q(\omega)= \sum_{f \in F_q} \ell_f \omega(c_f,\vec{t}_f)$ for a sufficiently fine discretization. The first variation of the varifold associated with $\tilde{\mu}_q$ is given by
\begin{align}\label{eq:cont_first_var_prop}
    \delta \tilde{\mu}_q(v)(\omega) &= \int_I \left( \omega ( q(s),\tau_q(s) ) \langle dv(q(s)) \cdot \tau_q(s), \tau_q(s) \rangle   + \left\langle \nabla_x \omega ( q(s),\tau_q(s) ), v(q(s)) \right\rangle \right) \Vert q'(s) \Vert ds  \nonumber \\
    &+ \int_I \left\langle \nabla_{\tau} \omega ( q(s),\tau_q(s) ), (dv(q(s))\cdot \tau_q(s))^\perp \right\rangle \Vert q'(s) \Vert ds
\end{align}
and the first variation of the varifold associated with $\mu_q$ is given by
\begin{equation}\label{eq:disc_first_var_prop}
      \delta \mu_q(v)(\omega) =\sum_{f \in F_q} \left[ \ell_f  (\delta \ell_f)_v \omega(c_f,\vec{t}_f)  +  \ell_f \langle \nabla_x \omega(c_f,\vec{t}_f), v_f \rangle  + \ell_f \langle \nabla_{\tau} \omega(c_f,\vec{t}_f), \nabla^\perp v_f \rangle \right].
\end{equation}

\begin{proposition}[Approximation of the first variation of varifold]
    Let $q \in Q$ be an immersion and $((q_i),F_q) $ a discretization of $q$. For any vector field $v \in C_0^2(\mathbb{R}^d,\mathbb{R}^d)$, 
    \begin{equation*}
        \left\Vert \delta \tilde{\mu}_q(v) - \delta \mu_q(v) \right\Vert_{C_0^2(\mathbb{R}^d \times \mathbb{S}^{d-1},\mathbb{R})'} \leq C_q \operatorname{length}(q) \Vert v \Vert_{C_0^2} \max_{f \in F_q} \ell_f
    \end{equation*}
    where $C_q > 0$ is a constant that depends on the curve and its discretization.
\end{proposition}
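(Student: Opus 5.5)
We will compare the two linear forms edge by edge. Let the discretization be given by parameters $0=s_0<s_1<\dots<s_N$ (or cyclically on $\mathbb{S}^1$), with $q_i=q(s_i)$, and let each edge $f$ correspond to an interval $I_f=[s_{f^1},s_{f^2}]$. By additivity (as in \cref{prop:additivity_first_var}), $\delta\tilde\mu_q(v)(\omega)=\sum_f \int_{I_f}(\cdots)\,ds$. It therefore suffices to prove, for every $\omega$ with $\Vert\omega\Vert_{C_0^2}\le 1$, a per-edge estimate
\begin{equation*}
\Big|\int_{I_f} (\cdots)\Vert q'(s)\Vert ds - \ell_f\big[(\delta\ell_f)_v\,\omega(c_f,\vec t_f)+\langle\nabla_x\omega(c_f,\vec t_f),v_f\rangle+\langle\nabla_\tau\omega(c_f,\vec t_f),\nabla^\perp v_f\rangle\big]\Big|\le C\,\Vert v\Vert_{C_0^2}\,\ell_f\max_g \ell_g .
\end{equation*}
Summing over $f$ and using $\sum_f\ell_f\le \operatorname{length}(q)$ then yields the stated bound.

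On each edge we will replace the continuous integrand by its frozen value at the discrete data. The key pointwise estimates are the following.
\begin{itemize}
\item[(i)] For $s\in I_f$, $\Vert q(s)-c_f\Vert\le \int_{I_f}\Vert q'\Vert$, which is comparable to $\ell_f$ up to a constant depending on $q$.
\item[(ii)] $\Vert \tau_q(s)-\vec t_f\Vert\le C_q\,\omega_{q'}(|I_f|)/c$, where $c=\inf\Vert q'\Vert$ and $\omega_{q'}$ is the modulus of continuity of $q'$. This is the mean-value identity $q_{f^2}-q_{f^1}=\int_{I_f}q'$ combined with Lipschitz continuity of $y\mapsto y/\Vert y\Vert$ away from $0$, as in the proof of \cref{prop:frechet_varifold}.
\item[(iii)] $\Vert v(q(s))-v_f\Vert\le \Vert v\Vert_{C^1}\,\ell$-scale.
\item[(iv)] The difference quotient satisfies $\frac{v(q_{f^2})-v(q_{f^1})}{\ell_f}=\frac{1}{\ell_f}\int_{I_f}dv(q(s))q'(s)\,ds$. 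Its distance to $dv(q(s))\tau_q(s)$ is controlled by $\Vert v\Vert_{C^2}\ell_f$ plus the tangent error (ii).
\item[(v)] $\int_{I_f}\Vert q'\Vert-\ell_f$ is controlled by the tangent oscillation.
\end{itemize}
Given these, each of the three terms (mass, position, tangent) splits by telescoping: we vary one argument at a time, using the Lipschitz bounds of $\omega,\nabla_x\omega,\nabla_\tau\omega$ (all bounded by $\Vert\omega\Vert_{C_0^2}$) and the boundedness of $v,dv$. This produces per-edge errors of the form $\Vert v\Vert_{C^2}\,\ell_f\cdot(\text{edge scale})$.

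The main obstacle is obtaining a \emph{linear} rate in $\max\ell_f$ for the tangent error (ii) and for the length error (v). With $q$ only $C^1$, $\tau_q$ is merely continuous, so these errors are controlled only by the modulus of continuity of $q'$, not by $\ell_f$. I would handle this as the statement permits, by absorbing the curve-dependent behaviour into $C_q$. Two routes are available.
\begin{itemize}
\item[(a)] Assume in addition that $q\in C^2$ (or $C^{1,1}$). Then $\Vert\tau_q(s)-\vec t_f\Vert\le (\Vert q''\Vert_\infty/c)\,|I_f|\le C_q\ell_f$, and similarly $|\int_{I_f}\Vert q'\Vert-\ell_f|\le C_q\ell_f^2$. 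Here $|I_f|\le \ell_f/(c')$ for fine discretizations, because $\ell_f\ge |I_f|(c-\omega_{q'}(|I_f|))$.
\item[(b)] Interpret $C_q$ as depending on the discretization, as stated, and bound the modulus term by a ratio of discretization quantities.
\end{itemize}
I would present route (a) as the core estimate and remark that (b) gives the stated general form. The relation $|I_f|\lesssim\ell_f$ uses the lower bound $\inf\Vert q'\Vert>0$ and is the step where the immersion hypothesis enters.
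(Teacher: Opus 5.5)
Your proposal is correct and is essentially the paper's proof. Both compare edge by edge with the integrand frozen at $(c_f,\vec t_f)$, telescope one argument at a time using Lipschitz bounds, and rely on pointwise bounds on $\Vert q(s)-c_f\Vert$, $\Vert \tau_q(s)-\vec t_f\Vert$ and $|1-L_f/\ell_f|$. Like your route (a), the paper assumes, beyond the stated $C^1$ hypothesis, that $q\in C^2$ (through a curvature bound $\kappa_{\max}$), and it absorbs $\max_f L_f/\ell_f$ into the discretization-dependent constant.

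The differences are minor. You control the difference quotient through $\frac{1}{\ell_f}\int_{I_f} dv(q(s))q'(s)\,ds$, whereas the paper uses a Taylor expansion of $v$ at $c_f$. You also flag the $C^1$ versus $C^2$ issue explicitly, which the paper glosses over.
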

\begin{proof}
    For a fixed test function $\omega \in C_0^2(\mathbb{R}^d \times \mathbb{S}^{d-1},\mathbb{R})$, we have:
    \begin{align}\label{eq:global_bound}
        (\delta \tilde{\mu}_{q}(v) - \delta \mu_q(v))(\omega)  &=  \int_I \omega(q, \tau_q) \langle dv(q) \cdot \tau_q, \tau_q \rangle \Vert q' \Vert ds - \sum_{f \in F_q} \ell_f (\delta \ell_f)_v \omega(c_f, \vec{t}_f) \\
        &\quad +  \int_I \langle \nabla_x \omega(q, \tau_q), v(q) \rangle \Vert q' \Vert ds - \sum_{f \in F_q} \ell_f \langle \nabla_x \omega(c_f, \vec{t}_f), v_f \rangle \notag\\
        &\quad +  \int_I \langle \nabla_{\tau} \omega(q, \tau_q), (dv(q) \cdot \tau_q)^\perp \rangle \Vert q'\Vert ds - \sum_{f \in F_q} \ell_f \langle \nabla_{\tau} \omega(c_f, \vec{t}_f), \nabla^\perp v_f \rangle . \notag
    \end{align}
    
    For any face $f \in F_q$ of the discretized curve $((q_i),F_q)$ we define the associated parameterization subinterval $I_f = [s_1, s_2]$ (with $s_1 < s_2$). Because $q$ is a $C^2$ immersion on a compact domain, its derivative is bounded, positive, and its maximal curvature $\kappa_{\max}$ is well-defined and finite. This guarantees that its continuous arc length $L_f = \int_{I_f} \Vert q'(s) \Vert ds$ is positive. Since the discretized curve consists of a finite set of faces $F_q$ and $\ell_f > 0$, we can define the constant $C_0 = \max_{f \in F_q} \dfrac{L_f}{\ell_f} \geq 1$. Thus, for any face, we have $L_f \leq C_0 \ell_f$.

    We first establish some inequalities that will be useful later. Let $f \in F_q$ be a face and $s \in I_f$. By the triangle inequality, we have:
    \begin{equation*}
        \Vert q(s) - c_f \Vert \leq \Vert q(s) - q_{f^1} \Vert + \Vert q_{f^1} - c_f \Vert =  \Vert q(s) - q_{f^1} \Vert + \dfrac{\ell_f}{2}.
    \end{equation*}
    To bound the first term, we apply the mean value inequality:
    \begin{equation*}
        \Vert q(s) - q_{f^1} \Vert = \left\Vert \int_{s_1}^s q'(u) du \right\Vert \leq \int_{s_1}^s \Vert q'(u) \Vert du \leq \int_{s_1}^{s_2} \Vert q'(u) \Vert du = L_f \leq C_0 \ell_f.
    \end{equation*}
    Combining these expressions yields the bound:
    \begin{equation}
        \Vert q(s) - c_f \Vert \leq \left(C_0 + \dfrac{1}{2}\right) \ell_f = K_1 \ell_f.
    \end{equation}

    We now bound the tangential deviation  $\Vert \tau_q(s) - \vec{t}_f \Vert$. To simplify the calculus, we  define $\tilde{q}_{\vert_{I_f}} : [0,L_f] \to \R^d$ which is the reparameterization of $q\vert_{I_f} : I_f \to \R^d$ by its arclength. We denote $\alpha : [0,L_f] \to I_f$ the reparameterization mapping such that $q\vert_{I_f} \circ \alpha = \tilde{q}\vert_{I_f}$. 
    For $s \in I_f$, we denote $u_s=\alpha^{-1}(s) \in [0,L_f]$. Then, the following bound holds
    \begin{align*}
        \Vert \tau_q(s) - \tau_q(s_1) \Vert = \Vert \tau_{\tilde{q}}(u_s) - \tau_{\tilde{q}}(0) \Vert = \Vert \int_0^{u_s} \tau_{\tilde{q}}'(x) dx \Vert \leq \int_0^{u_s} \Vert \tau'_{\tilde{q}}(x) \Vert dx \leq \kappa_{\max} L_f
    \end{align*}
    For the relative length error $ \vert 1 - \dfrac{L_f}{\ell_f} \vert$, we first establish that:
    \begin{align*}
        \ell_f  = \Vert \int_0^{L_f} \tau_{\tilde{q}}(u)du \Vert = \Vert \int_0^{L_f} \tau_{\tilde{q}}(0) + (\tau_{\tilde{q}}(u) - \tau_{\tilde{q}}(0))du \Vert &\geq \Vert \int_0^{L_f} \tau_{\tilde{q}}(0)du \Vert - \Vert \int_0^{L_f} \tau_{\tilde{q}}(u) - \tau_{\tilde{q}}(0)du \Vert \\ 
        &\geq L_f - \kappa_{\max} L_f^2 .
    \end{align*}
    Therefore,
\begin{equation}
\left\vert 1 - \dfrac{L_f}{\ell_f} \right\vert = \dfrac{L_f}{\ell_f} - 1 = \dfrac{L_f}{\ell_f} \left( 1 - \dfrac{\ell_f}{L_f} \right) \leq C_0  \kappa_{\max} L_f \leq C_0^2 \kappa_{\max}\ell_f.    
\end{equation}

    Finally, to bound the tangential deviation  $\Vert \tau_q(s) - \vec{t}_f \Vert$, we express the discrete tangent vector as the integral $\vec{t}_f = \dfrac{1}{\ell_f} \int_{0}^{L_f} \tau_{\tilde{q}}(u) du$. By subtracting and adding $\dfrac{L_f}{\ell_f}\tau_q(s)$, we obtain:
    \begin{align*}
        \Vert \tau_q(s) - \vec{t}_f \Vert &= \left\Vert \tau_{\tilde{q}}(u_s) \left( 1 - \dfrac{L_f}{\ell_f} \right)  - \dfrac{1}{\ell_f} \int_0^{L_f} (\tau_{\tilde{q}}(u) - \tau_{\tilde{q}}(u_s)) du \right\Vert \\
        &\leq \left\vert 1 - \dfrac{L_f}{\ell_f} \right\vert + \dfrac{1}{\ell_f} \int_0^{L_f} \Vert\tau_{\tilde{q}}(u_s) - \tau_{\tilde{q}}(u) \Vert du \\
        &\leq \kappa_{\max} L_f +\dfrac{L_f^2}{\ell_f} \kappa_{\max}
        \\
        &\leq 2 \kappa_{\max} C_0 \ell_f =K_2 \ell_f.
    \end{align*}

For a fixed test function $\omega \in C_0^2(\mathbb{R}^d \times \mathbb{S}^{d-1},\mathbb{R})$ with $\Vert \omega \Vert_{C_0^2} \leq 1$, we want to bound the global difference $\vert (\delta \tilde{\mu}_{q}(v) - \delta \mu_q(v))(\omega) \vert$ given in \cref{eq:global_bound} using the bounds we just established. To do so, we distribute the discrete sums into the integrals over each interval $I_f$. By the triangle inequality, the error is bounded by the sum of local pointwise errors:
    \begin{equation*}
        \vert (\delta \tilde{\mu}_{q}(v) - \delta \mu_q(v))(\omega) \vert \leq \sum_{f \in F_q} \int_{I_f} \Big( A_1(s) + A_2(s) + A_3(s) \Big) \|q'(s)\| ds,
    \end{equation*}
    where 
    \begin{align*}
        A_1(s) &= \left\vert \omega(q(s),\tau_q(s)) \langle dv(q(s)) \cdot \tau_q(s), \tau_q(s) \rangle - \dfrac{\ell_f}{L_f} (\delta \ell_f)_v \omega(c_f,\vec{t}_f) \right\vert, \\
        A_2(s) &= \left\vert \langle \nabla_x \omega(q(s),\tau_q(s)), v(q(s)) \rangle - \dfrac{\ell_f}{L_f} \langle \nabla_x \omega(c_f,\vec{t}_f), v_f \rangle \right\vert, \\
        A_3(s) &= \left\vert \langle \nabla_{\tau} \omega(q(s),\tau_q(s)), (dv(q(s))\cdot \tau_q(s))^\perp \rangle - \dfrac{\ell_f}{L_f} \langle \nabla_{\tau} \omega(c_f,\vec{t}_f), \nabla^\perp v_f \rangle \right\vert.
    \end{align*}
    
   We proceed to bound each of these three terms using the Lipschitz continuity of the integrands and the bounds established earlier.

Let $H_2(x,\tau) = \langle \nabla_x \omega(x,\tau), v(x) \rangle$. Because $v$ and $\omega$ have $C^2_0$ regularity, their derivatives are bounded by $\Vert v \Vert_{C^2_0}$ and $\Vert \omega \Vert_{C^2_0}$. Therefore, $H_2$ is Lipschitz continuous with constant bounded by $2\Vert v \Vert_{C^2_0} \Vert \omega \Vert_{C^2_0}$. We recall that $v_f = \dfrac{v(q_{f^1}) + v(q_{f^2})}{2}$ which differs from $v(c_f)$. To bound $A_2(s)$, we add and subtract both $H_2(c_f, \vec{t}_f) = \langle \nabla_x \omega(c_f, \vec{t}_f), v(c_f) \rangle$ and $\langle \nabla_x \omega(c_f, \vec{t}_f), v_f \rangle$, the triangle inequality yields:
    \begin{align*}
        A_2(s) &\leq \vert H_2(q(s),\tau_q(s)) - H_2(c_f,\vec{t}_f) \vert  + \left\vert \langle \nabla_x \omega(c_f,\vec{t}_f), v(c_f) - v_f \rangle \right\vert  + \left\vert 1 - \dfrac{\ell_f}{L_f} \right\vert \left\vert \langle \nabla_x \omega(c_f,\vec{t}_f), v_f \rangle \right\vert.
    \end{align*}
    To bound the middle term, we use the mean value inequality on $v$. Since $\|q_{f^i} - c_f\| = \ell_f/2$, we have $\|v(q_{f^i}) - v(c_f)\| \le \|v\|_{C^2_0} \dfrac{\ell_f}{2}$ for $i \in \{1,2\}$. Consequently, the difference is bounded by:
    \begin{equation*}
        \Vert v(c_f) - v_f \Vert \leq \dfrac{1}{2} \Vert v(c_f) - v(q_{f^1}) \Vert + \dfrac{1}{2} \Vert v(c_f) - v(q_{f^2}) \Vert \leq \Vert v \Vert_{C^2_0} \dfrac{\ell_f}{2}.
    \end{equation*}
    Substituting this bound, along with the Lipschitz continuity of $H_2$, we obtain:
    \begin{align*}
        A_2(s) &\leq   \Vert v \Vert_{C^2_0} \Vert \omega \Vert_{C^2_0} \left( 2\Vert q(s) - c_f \Vert + 2\Vert \tau_q(s) - \vec{t}_f \Vert +\dfrac{\ell_f}{2} + \left\vert 1 - \dfrac{\ell_f}{L_f} \right\vert\right) \\
        &\leq   2\Vert v \Vert_{C^2_0} \Vert \omega \Vert_{C^2_0} \left( 2K_1  + 2K_2  + \dfrac{1}{2}+ (C_0)^2 \kappa_{\max} \right)\ell_f
    \end{align*}

For $A_1(s)$, we introduce the $C^1$ function $H_1(x,\tau) = \omega(x,\tau) \langle dv(x) \cdot \tau, \tau \rangle$ which is Lipschitz continuous with constant bounded by $\Vert \omega \Vert_{C^2_0} \Vert v \Vert_{C_0^2}$.
We then proceed to bound $A_1(s)$. Taylor's inequality reads:
    \begin{align*}
        \left\Vert v(q_{f^i}) - v(c_f) - dv(c_f) \cdot \left( \dfrac{\ell_f}{2} \vec{t}_f \right) \right\Vert &\leq \dfrac{1}{2} \Vert v \Vert_{C_0^2} \left\Vert \dfrac{\ell_f}{2} \vec{t}_f \right\Vert^2 = \dfrac{1}{8} \Vert v \Vert_{C_0^2} \ell_f^2.
    \end{align*}
Therefore, subtracting the expression for $i=2$ and $i=1$ and applying the triangle inequality to this difference yields:
    \begin{equation*}
        \Vert v(q_{f^2}) - v(q_{f^1}) - dv(c_f) \cdot \ell_f \vec{t}_f \Vert \leq \dfrac{1}{8} \Vert v \Vert_{C_0^2} \ell_f^2 + \dfrac{1}{8} \Vert v \Vert_{C_0^2} \ell_f^2 = \dfrac{1}{4} \Vert v \Vert_{C_0^2} \ell_f^2.
    \end{equation*}
    Dividing this expression by $\ell_f$ provides the bound:
    \begin{equation*}
        \left\Vert \dfrac{v(q_{f^2}) - v(q_{f^1})}{\ell_f} - dv(c_f) \cdot \vec{t}_f \right\Vert \leq \dfrac{1}{4} \Vert v \Vert_{C_0^2} \ell_f.
    \end{equation*}

By adding and subtracting both $H_1(c_f,\vec{t}_f)$ and $\dfrac{\ell_f}{L_f}H_1(c_f,\vec{t}_f)$, we can decompose $A_1(s)$ into three terms using the triangle inequality:
    \begin{align*}
        A_1(s) &= \left\vert H_1(q(s),\tau_q(s)) - \dfrac{\ell_f}{L_f} \omega(c_f, \vec{t}_f) (\delta \ell_f)_v \right\vert \\
        &\leq \vert H_1(q(s),\tau_q(s)) - H_1(c_f,\vec{t}_f) \vert  + \dfrac{\ell_f}{L_f} \left\vert \omega(c_f,\vec{t}_f) \left\langle dv(c_f) \cdot \vec{t}_f - \dfrac{v(q_{f^2}) - v(q_{f^1})}{\ell_f}, \vec{t}_f \right\rangle \right\vert \\
        &\quad +\left\vert 1 - \dfrac{\ell_f}{L_f} \right\vert \vert H_1(c_f,\vec{t}_f) \vert.
    \end{align*}

    By the Lipschitz continuity of $H_1$, and the bounds previously established,the term $A_1$ is bounded by:
    \begin{align*}
        A_1(s) &\leq  \Vert \omega \Vert_{C^2_0} \Vert v \Vert_{C_0^2} \left( \Vert q(s) - c_f \Vert + \Vert \tau_q(s) - \vec{t}_f \Vert + \dfrac{1}{4}  \ell_f + \left\vert 1 - \dfrac{\ell_f}{L_f} \right\vert \right) \\
        &\leq \Vert \omega \Vert_{C^2_0} \Vert v \Vert_{C_0^2} \left( K_1 + K_2 + \dfrac{1}{4}  + (C_0)^2 \kappa_{\max} \right)\ell_f .
    \end{align*}
    
For $A_3(s)$, we introduce the $C^1$ function $H_3(x,\tau) = \langle \nabla_{\tau} \omega(x,\tau), (dv(x)\cdot \tau)^\perp \rangle$, which is Lipschitz continuous with constant bounded by $2\Vert \omega \Vert_{C_0^2} \Vert v \Vert_{C_0^2}$.
By applying the same strategy, we decompose $A_3(s)$ into three terms using the triangle inequality:
    \begin{align*}
        A_3(s) &= \left\vert H_3(q(s),\tau_q(s)) - \dfrac{\ell_f}{L_f} \langle \nabla_{\tau} \omega(c_f,\vec{t}_f), \nabla^\perp v_f \rangle \right\vert \\
        &\leq \vert H_3(q(s),\tau_q(s)) - H_3(c_f,\vec{t}_f) \vert + \dfrac{\ell_f}{L_f} \left\vert \left\langle \nabla_{\tau} \omega(c_f,\vec{t}_f), \left(dv(c_f) \cdot \vec{t}_f \right)^\perp - \nabla^\perp v_f \right\rangle \right\vert \\
        &\quad + \left\vert 1 - \dfrac{\ell_f}{L_f} \right\vert \vert H_3(c_f,\vec{t}_f) \vert
    \end{align*}
    
    We apply the Cauchy-Schwarz inequality to the inner product on the second term. The orthogonal projection is a linear operator with a norm bounded by $1$, meaning that: $\big\| (dv(c_f) \cdot \vec{t}_f)^\perp - \big(\dfrac{v(q_{f^2})-v(q_{f^1})}{\ell_f}\big)^\perp \big\| \leq \big\| dv(c_f) \cdot \vec{t}_f - \dfrac{v(q_{f^2})-v(q_{f^1})}{\ell_f} \big\|$. Combined with the bounds established previously yields the bound for $A_3(s)$:
    \begin{align*}
        A_3(s) &\leq \Vert \omega \Vert_{C^2_0} \Vert v \Vert_{C^2_0} \left( 2\Vert q(s) - c_f \Vert + 2\Vert \tau_q(s) - \vec{t}_f \Vert  + \dfrac{1}{4} \ell_f + \left\vert 1 - \dfrac{\ell_f}{L_f} \right\vert \right) \\
        &\leq \Vert \omega \Vert_{C^2_0} \Vert v \Vert_{C^2_0} \left( 2K_1 + 2K_2 + \dfrac{1}{4}  + (C_0)^2 \kappa_{\max} \right) \ell_f
    \end{align*}
    
    We combine the three terms $A_1(s) + A_2(s) + A_3(s)$ and substitute the bounds established previously, such that for all $s \in I_f$:
    \begin{equation*}
        A_1(s) + A_2(s) + A_3(s) \leq C'_q \Vert \omega \Vert_{C^2_0} \Vert v \Vert_{C_0^2} \ell_f.
    \end{equation*}
    where $C'_q$ is a constant that depends only on $q$ and its chosen discretization.

    Finally, we integrate this uniform pointwise bound over $I$ by summing over all the faces $f \in F_q$:
    \begin{align*}
        \vert (\delta \tilde{\mu}_{q}(v) - \delta \mu_q(v))(\omega) \vert &\leq \sum_{f \in F_q} \int_{I_f} \Big( A_1(s) + A_2(s) + A_3(s) \Big) \|q'(s)\| ds \\
        &\leq \sum_{f \in F_q} \int_{I_f} \left( C'_q \Vert \omega \Vert_{C^2_0} \Vert v \Vert_{C_0^2} \max_{f \in F_q}(\ell_f) \right) \|q'(s)\| ds \\
        &= C'_q \Vert \omega \Vert_{C^2_0} \Vert v \Vert_{C_0^2} \max_{f \in F_q}(\ell_f) \sum_{f \in F_q} \int_{I_f} \|q'(s)\| ds \\
        &= C'_q \Vert \omega \Vert_{C^2_0} \Vert v \Vert_{C_0^2} \max_{f \in F_q}(\ell_f) \operatorname{length}(q).
    \end{align*}

    Taking the supremum over all test functions such that $\Vert \omega \Vert_{C^2_0} \leq 1$, we definitively obtain the dual norm bound on the space of varifolds:
    \begin{equation*}
        \left\Vert \delta \tilde{\mu}_q(v) - \delta \mu_q(v) \right\Vert_{(C_0^2)'} \leq C_q \Vert v \Vert_{C_0^2} \max_{f \in F_q} \ell_f,
    \end{equation*}
    with $C_q=C_q'\operatorname{length}(q) $, which concludes the proof.
\end{proof}

\subsection{Proof of \cref{prop:continuous_invariance}}\label{app:continuous_invariance}

\begin{proposition}[Invariance of the curve under the action of a vector field] 
    Let $q \in C^1(I, \mathbb{R}^d)$ be an immersed curve with a finite number of self-intersections and let $v \in C_0^2(\mathbb{R}^d, \mathbb{R}^d)$ be a vector field. If $\delta \tilde{\mu}_q(v) = 0$, then $\operatorname{im}(q)$ is invariant by the action of $v$.
\end{proposition}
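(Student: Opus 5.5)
The key is to turn the identity $\delta\tilde{\mu}_q(v)=0$ into the pointwise statement that $v$ is tangent to $\operatorname{im}(q)$. Tangency then gives invariance through the flow: every orbit starting on the curve stays on it.

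\textbf{Step 1: localize.} Since $q$ has finitely many self-intersections, we can choose a parameter $s_0$ that is not a self-intersection preimage. There is a neighbourhood $J$ of $s_0$ and an open set $U\subset\R^d$ with $U\cap\operatorname{im}(q)=q(J)$, and $q(J)$ is an embedded $C^1$ arc. We then test $\delta\tilde{\mu}_q(v)$ against $\omega(x,\tau)=\varphi(x)\,\psi(x,\tau)$ with $\varphi\in C_c^\infty(U)$. Only the arc $q(J)$ contributes to the integral in \cref{prop:continuous_first_var}.

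\textbf{Step 2: kill the tangential terms.} Take $\omega$ of the form $\omega(x,\tau)=\langle g(x),n\rangle$. Here $g\in C_c^\infty(U,\R^d)$ and $n$ is a fixed vector. The better route, however, is to use the geometric meaning of the formula. The expression in \cref{prop:continuous_first_var} equals $\frac{d}{dt}\big|_{t=0}\tilde{\mu}_{\varphi_t\circ q}(\omega)$ along the flow $\varphi_t$ of $v$. For a test function depending only on position, $\omega(x,\tau)=\chi(x)$, integration by parts along the arc gives
\[
\int_J \bigl(\langle \nabla\chi,v\rangle + \chi\,\langle dv\,\tau,\tau\rangle\bigr)\|q'\|\,ds=\int_J \langle \nabla\chi, v^\perp\rangle\|q'\|\,ds - \int_J \chi\,\langle \kappa, v^\perp\rangle \|q'\|\,ds .
\]
This is the classical tangential divergence formula. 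It requires some care, since $q$ is only $C^1$. To avoid curvature we choose $\chi$ to vanish on the arc, so that $\nabla\chi$ is normal to the arc. Concretely, take $\chi(x)=\eta(x)\langle x-\pi(x),e\rangle$ near the arc, or more simply $\chi$ vanishing on $q(J)$ with prescribed normal gradient $\nabla\chi(q(s))=\rho(s)N(s)$, for any continuous normal field $N$ and cutoff $\rho$. With $\chi=0$ on the curve, both the mass term and the term $\langle\nabla_\tau\omega,\cdot\rangle$ (which vanishes because $\omega$ does not depend on $\tau$) drop out. Hence $0=\delta\tilde{\mu}_q(v)(\chi)=\int_J\rho(s)\langle N(s),v(q(s))\rangle\|q'(s)\|\,ds$.

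Since $\rho$ and $N$ are arbitrary, $v(q(s))^\perp=0$ for $s\in J$. By density of the non-intersection parameters and continuity, this holds for all $s\in I$.

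\textbf{Step 3: from tangency to invariance.} $v$ is a $C^1$ vector field that is tangent to the compact set $K=\operatorname{im}(q)$, which is a finite union of embedded $C^1$ arcs. We show that the flow $\varphi_t$ preserves $K$. On each embedded arc, the restriction $v(q(s))=a(s)q'(s)$ defines a continuous scalar field $a$. Solving $\dot{s}=a(s)$ on the arc produces, via $q(s(t))$, a solution of $\dot x=v(x)$ that stays in $K$. By uniqueness of ODE solutions ($v$ is Lipschitz), it coincides with the flow. At endpoints of open curves, and at the finitely many self-intersection points, we argue by continuity and finiteness: either $v$ vanishes there, or the tangency conditions from both branches are compatible. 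In either case the trajectory may be continued in $K$.

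\textbf{Main obstacle.} I expect the main obstacle to be this last step together with the regularity issue in Step 2. Solving $\dot s=a(s)$ with $a$ merely continuous may lose uniqueness, but existence suffices, since uniqueness is supplied by the ambient ODE. Constructing $\chi\in C_0^2$ vanishing on a $C^1$ arc with a prescribed normal gradient requires a tubular-type construction for a $C^1$ curve. If this is delicate, I would first approximate, or use $\chi(x)=\eta(x)\,\langle x-q(s_0),N_0\rangle$ on very short arcs where the arc is almost straight. I would then control the error using the $o(1)$ deviation of the tangent.
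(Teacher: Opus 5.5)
Your Steps 1--2 follow the paper's first step: the paper tests against $\chi(x)\langle x-\pi(x),\tilde v^\perp(x)\rangle$, which vanishes on the arc and has normal gradient $\chi\, v^\perp$ there. Your fallback $\chi=\eta\,\langle x-q(s_0),N_0\rangle$ with shrinking support is in fact the version that works at $C^1$ regularity; in the plane, a $C^2$ function vanishing on an arc with nonzero gradient forces the arc to be $C^2$.

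The genuine gap is in Step 3 for open curves, $I=[0,1]$. Tangency of $v$ along $\operatorname{im}(q)$ does not imply invariance. Take $q(s)=(s,0)$ and $v=e_1$ near the segment (cut off at infinity). Then $v\circ q$ is tangent everywhere, yet the flow slides the segment off itself. Your dichotomy (``either $v$ vanishes there, or the tangency conditions from both branches are compatible'') does not cover an endpoint, where there is a single branch. If $v(q(0))=a(0)q'(0)$ with $a(0)\neq 0$, then in one time direction the solution of $\dot s=a(s)$ reaches $s=0$ in finite time, and the ambient trajectory exits $K$. Nothing you proved excludes this, because Step 2 only used test functions vanishing on the curve, and these cannot see the endpoints. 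In the example, $\delta\tilde{\mu}_q(v)(\omega)=\omega(q(1),e_1)-\omega(q(0),e_1)$ is carried entirely by the endpoints.

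The missing idea, which is how the paper finishes, is to use $\delta\tilde{\mu}_q(v)=0$ a second time, with test functions that do not vanish on the curve. Write $v\circ q=\alpha\tau_q$ and use $\langle\tau_q',\tau_q\rangle=0$. The mass term becomes $\omega\,\alpha'$, and the position and tangent terms combine into $\alpha\frac{d}{ds}\omega(q,\tau_q)$. So the integrand is $\frac{d}{ds}\bigl(\alpha\,\omega(q,\tau_q)\bigr)$, and $\delta\tilde{\mu}_q(v)(\omega)=\alpha(1)\,\omega(q(1),\tau_q(1))-\alpha(0)\,\omega(q(0),\tau_q(0))$. Localizing $\omega$ at each endpoint gives $\alpha(0)=\alpha(1)=0$.

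The paper does this with $\tau_q'$, which tacitly assumes $C^2$ curves. At $C^1$ regularity you can take $\omega=\chi(x)$ and note that $\alpha^2=\|v\circ q\|^2$ is $C^1$ with derivative $2\alpha\langle dv(q)\tau_q,\tau_q\rangle\|q'\|$. Hence $\alpha$ is $C^1$ near any endpoint where it is nonzero, which is all a contradiction argument there needs.

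Once the endpoints are zeros of the Lipschitz field $v$, no trajectory along an arc reaches them in finite time. Your ODE argument then gives genuine flow-invariance of $\operatorname{im}(q)$. That is a stronger conclusion than the paper's, which stops at ``$v\circ q$ is tangential and vanishes at the boundary''.
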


\begin{proof}
    Assume that $\delta \tilde{\mu}_q(v)(\omega) = 0$ for all test functions $\omega \in C_0^2(\mathbb{R}^d \times \mathbb{S}^{d-1}, \mathbb{R})$, that is
    \begin{align}\label{eq:first_var_zero}
        \int_I \Big( \omega(q, \tau_q) \langle dv(q) \cdot \tau_q, \tau_q \rangle &+ \langle \nabla_x \omega(q, \tau_q), v(q) \rangle + \langle \nabla_{\tau} \omega(q, \tau_q), (dv(q) \cdot \tau_q)^\perp \rangle \Big) \| q'(s) \| ds = 0
    \end{align}
    where we omitted the parameter $s$ in the integrand for the sake of readability. For any parameter $s \in I$ and any vector field $w : \R^d \to \R^d$, we define the normal component of $w$ with respect to the curve as $w(q(s))^{\perp} =w(q(s)) - \langle w(q(s)), \tau_q(s) \rangle \tau_q(s)$.
    
    We first prove that the vector field $v$ is purely tangential everywhere along the curve, that is $(v \circ q)^{\perp}= 0$. Suppose that there exists a parameter $s^* \in I$ such that $v(q(s^*))^{\perp} \neq 0$. If $q(s^*)$ is a self-intersection point or an endpoint, because the set of self-intersections is finite and the mapping $s \mapsto v(q(s))^{\perp}$ is continuous, we can choose a parameter $s_0$ arbitrarily close to $s^*$ such that the point $q_0 = q(s_0)$ is not a self-intersection or an endpoint, and the normal component remains strictly non-zero, i.e., $v(q(s_0))^{\perp} \neq 0$. 

    Because $q$ is an immersion, it is locally an embedding at $q_0$ \cite[Theorem 4.25]{smooth_manifold_lee}, yielding a small closed interval $J \subset I$ around $s_0$ such that $q(J)$ is a one-dimensional embedded submanifold with boundary. As a result, $q(J)$ admits a neighborhood $U$ on which the nearest-point projection $\pi: U \to q(J)$ is well-defined and smooth \cite[Proposition 6.25]{smooth_manifold_lee}.
Furthermore, by \cite[Lemma 5.34]{smooth_manifold_lee}, there exists a smooth vector field $\tilde{v}^\perp \colon U \to \mathbb{R}^d$ such that $\tilde{v}^\perp(q(s)) = v(q(s))^\perp$ for all $s \in J$. Let $\chi \colon \mathbb{R}^d \to \mathbb{R}$ be a smooth non-negative bump function with compact support $\operatorname{supp}(\chi) \Subset U$ satisfying $\chi(q_0) > 0$. We define the test function $\omega$ over $\mathbb{R}^d \times \mathbb{S}^{d-1}$ by setting $\omega(x,\tau) = 0$ outside $\operatorname{supp}(\chi)$, and for $x \in U$:
\begin{equation}\label{eq:omega}
    \omega(x, \tau) = \chi(x) \langle x - \pi(x), \tilde{v}^\perp(x) \rangle.
\end{equation}
    
    Evaluated on the curve, the test function vanishes $\omega(q(s), \tau) = 0$ since $q(s) - \pi(q(s)) = 0$ for all $s$. Moreover, since $\omega$ is independent of $\tau$, $\nabla_{\tau} \omega = 0$. By substituting the test function defined in \cref{eq:omega} into \cref{eq:first_var_zero}, the first and third terms vanish entirely, leaving only:
    \begin{equation*}
        \int_I \langle \nabla_x \omega(q(s), \tau_q(s)), v(q(s)) \rangle \| q'(s) \| ds = 0.
    \end{equation*}
    The spatial gradient at $x \in U$ is given by the product rule:
    \begin{equation*}
        \nabla_x \omega(x) = \nabla \chi(x) \langle x - \pi(x), \tilde{v}^\perp(x) \rangle + \chi(x) \Big[ d(\id-\pi)(x)^T \, \tilde{v}^\perp(x) + d(\tilde{v}^\perp)(x)^T \, (x-\pi(x)) \Big].
    \end{equation*}
    Evaluated on the curve again, the first and third terms vanish since $q(s)-\pi(q(s)) = 0$. Furthermore, the differential $d(x-\pi)(x) = I_d - d\pi(x)$ evaluated on $q(J)$ corresponds to the orthogonal projection onto the normal space of $q(J)$ at $x$. Since $\tilde{v}^\perp(q(s)) = v(q(s))^\perp$ belongs to this normal space, multiplying it by this projection leaves it unchanged. Thus, $\nabla_x \omega(q(s)) = \chi(q(s)) v(q(s))^\perp$. Since $\chi$ vanishes outside of $U$, the integral over $I$ reduces to an integral over $J$:
    \begin{equation*}
        \int_J \chi(q(s)) \langle v(q(s))^\perp, v(q(s)) \rangle \| q'(s) \| ds = \int_J \chi(q(s)) \| v(q(s))^\perp \|^2 \| q'(s) \| ds = 0.
    \end{equation*}
    Because the integrand is continuous, non-negative, and strictly positive at $s = s_0$, the integral must be strictly positive, which contradicts \cref{eq:first_var_zero}. Therefore, $v(q(s))^\perp = 0$ for all $s \in I$, meaning $v(q(s)) = \alpha(s)\tau_q(s)$ for some scalar function $\alpha$. It remains to show that if $I=[0,1]$, then $\alpha(0)=\alpha(1)=0$.
    Thanks to this expression, the differential of $v$ along the curve becomes:
    \begin{equation*}
        dv(q(s)) \cdot \tau_q(s) = \dfrac{1}{\| q'(s) \|} \dfrac{d}{ds}\left(v(q(s))\right) = \dfrac{\alpha'(s)}{\| q'(s) \|} \tau_q(s) + \alpha(s) \dfrac{\tau_q'(s)}{\| q'(s) \|}.
    \end{equation*}
    We substitute this back into \cref{eq:first_var_zero} for an arbitrary test function $\omega$. In particular, noting that $\tau_q$ is a unit vector, we have $\langle \tau_q', \tau_q \rangle = 0$. Gathering the remaining terms yields:
    \begin{equation*}
        \int_I \left( \alpha'(s) \omega(q(s),\tau_q(s)) + \alpha(s) \langle \nabla_x \omega(q(s),\tau_q(s)), \tau_q(s) \rangle \| q'(s) \| + \alpha(s) \langle \nabla_{\tau} \omega(q(s),\tau_q(s)), \tau_q'(s) \rangle \right) ds = 0.
    \end{equation*}
    By the chain rule, the total derivative of $\omega$ along the curve is $\dfrac{d}{ds} [\omega(q(s),\tau_q(s))] = \langle \nabla_x \omega, q'(s) \rangle + \langle \nabla_{\tau} \omega, \tau_q'(s) \rangle$. Recalling that $q'(s) = \| q'(s) \| \tau_q(s)$, the integral simplifies to:
    \begin{equation*}
        \int_I \left( \alpha'(s) \omega + \alpha(s) \dfrac{d}{ds}\omega \right) ds = \int_I \dfrac{d}{ds} \big( \alpha(s) \omega(q(s),\tau_q(s)) \big) ds = 0.
    \end{equation*}
    If $I = \mathbb{S}^1$ (closed curve), the integral is trivially zero. If $I = [0,1]$ (open curve), the integral simplifies to:
    \begin{equation*}
        \alpha(1) \omega(q(1), \tau_q(1)) - \alpha(0) \omega(q(0), \tau_q(0)) = 0.
    \end{equation*}
    Since the test function $\omega$ can be chosen arbitrarily and independently at the endpoints $q(1)$ and $q(0)$, we have $\alpha(1) = 0$ and $\alpha(0) = 0$.

    In both scenarios, the action of the vector field $v$ on the curve is purely tangential ($v = \alpha \tau_q$), and in the case of an open curve, it vanishes at the boundaries. Therefore, the image of the curve remains unchanged under the action of $v$.
\end{proof}

\subsection{Proof of \cref{prop:continuity_first_var}} \label{app:proof_continuity_first_var}

\begin{proposition}[Continuity of $\delta \mu_q$] 
Let $V \hookrightarrow C_0^2(\R^d,\R^d)$ be a RKHS of vector fields and $q$ be a curve. The first variation of varifolds
$$\delta \mu_q : \app{V}{ C_0^2(\R^d \times \mathbb{S}^{d-1},\R)'}{v}{\delta \mu_q(v)}$$ is linear and continuous.
\end{proposition}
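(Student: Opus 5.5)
Linearity is immediate from the formula of \cref{exam:formula_first_var_discrete}. For a fixed discretized curve $q$ and test function $\omega$, each of the three terms in $\delta\mu_q(v)(\omega)$ is linear in $v$, because $v$ enters only through the finitely many evaluations $v(q_i)$. Indeed, $v_f$, $(\delta \ell_f)_v$ and $\nabla^\perp v_f$ are linear combinations of $v(q_{f^1})$ and $v(q_{f^2})$, with coefficients depending only on $q$ (namely $\ell_f$ and $\vec t_f$). Linearity in $\omega$ also shows that $\delta\mu_q(v)$ is a linear form, and we then bound its dual norm.

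For continuity, we bound term by term. Fix $\omega$ with $\Vert\omega\Vert_{C_0^2}\le 1$, so that $|\omega|,\Vert\nabla_x\omega\Vert,\Vert\nabla_\tau\omega\Vert\le 1$. The relevant quantities satisfy
\[
\Vert v_f\Vert\le \Vert v\Vert_\infty,\qquad |(\delta\ell_f)_v|\le \frac{2\Vert v\Vert_\infty}{\ell_f},\qquad \Vert\nabla^\perp v_f\Vert\le \frac{2\Vert v\Vert_\infty}{\ell_f},
\]
where the last bound uses that the normal projection has norm at most one. Alternatively, the mean value inequality gives the sharper bounds $\Vert dv\Vert_\infty$ in place of $2\Vert v\Vert_\infty/\ell_f$, which avoids dividing by $\ell_f$. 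Either way we obtain
\[
|\delta\mu_q(v)(\omega)|\le \sum_{f\in F_q}\big(\ell_f\Vert v\Vert_\infty+2\ell_f\Vert dv\Vert_\infty\big)\le 2L_q\Vert v\Vert_{C_0^1}\le 2L_q \Vert v\Vert_{C_0^2},
\]
with $L_q=\sum_f \ell_f$. Taking the supremum over $\omega$ gives $\Vert\delta\mu_q(v)\Vert_{(C_0^2)'}\le 2L_q\Vert v\Vert_{C_0^2}$.

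To conclude, we use that $V\hookrightarrow C_0^2(\R^d,\R^d)$ is a continuous embedding, so there is $c_V$ with $\Vert v\Vert_{C_0^2}\le c_V\Vert v\Vert_V$. The composition is then a bounded linear map $V\to C_0^2(\R^d\times\mathbb S^{d-1},\R)'$. The same argument applies to the continuous representation $\tilde\mu_q$ via the formula of \cref{prop:continuous_first_var}. There, the integrands are bounded by $\Vert\omega\Vert_{C_0^2}\Vert v\Vert_{C_0^1}$ times $\Vert q'\Vert$, giving the constant $3\,\mathrm{length}(q)$. Alternatively, one can view $\delta\tilde\mu_q$ as the composition of $v\mapsto v\circ q\in C^1(I,\R^d)$, which is bounded by $\Vert v\Vert_{C_0^1}(1+\Vert q'\Vert_\infty)$, with the bounded operator $d\tilde\mu_q$ from \cref{prop:frechet_varifold}.

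There is no real obstacle here. The only point requiring care is the mass and tangent terms, where one should use the mean value bound on $v$ along each edge rather than the crude $1/\ell_f$ bound, so that the constant depends only on the total length of $q$.
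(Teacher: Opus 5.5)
Your proof is correct and follows the paper's argument: bound each of the three terms of the discrete formula for $\Vert\omega\Vert_{C_0^2}\le 1$, take the supremum over test functions, and compose with the embedding constant of $V\hookrightarrow C_0^2(\R^d,\R^d)$. The only difference is in the mass and tangent terms. The paper uses the crude vertex bounds $\ell_f|(\delta\ell_f)_v|\le 2\Vert v\Vert_{C_0^2}$ and $\ell_f\Vert\nabla^\perp v_f\Vert\le 4\Vert v\Vert_{C_0^2}$, which give the constant $\sum_{f\in F_q}(6+\ell_f)$ and already suffice for a fixed discretization, so your mean-value refinement is optional rather than necessary; what it buys is a constant $2L_q$ that does not depend on the number of edges. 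Note also that the crude route alone does not yield your displayed $2L_q$ bound, so your ``either way'' is slightly loose.
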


For the sake of clarity, we present the proof only for the approximation of a varifold by a weighted sum of Dirac masses. However, the result holds for any varifold associated with submanifolds embedded in $\R^d$, and the proof follows similar arguments.
\begin{proof} 
Linearity is immediate from the definition. Let show that the operator $\delta \mu_q$ is bounded. Let $\omega \in C_0^2(\R^d \times \mathbb{S}^{d-1},\R)$ be a test function such that $\Vert \omega \Vert_{C_0^2} \leq 1$.

\begin{align*}
    \vert \delta \mu_q(v) (\omega) \vert &\leq \sum_{f \in F_q} \ell_f  \vert(\delta \ell_f)_v \vert \vert \omega(c_f,\vec{t}_f) \vert  +  \ell_f \vert\langle \nabla_x \omega(c_f,\vec{t}_f), v_f \rangle \vert  + \ell_f \vert \langle \nabla_{\tau} \omega(c_f,\vec{t}_f), \nabla^\perp v_f \rangle  \vert \\
    &\leq \Vert \omega \Vert_{C_0^2}  \sum_{f \in F_q} \left( \ell_f  \vert(\delta \ell_f)_v \vert   +  \ell_f  \Vert v \Vert_{C_0^2}  + \ell_f \Vert \nabla^\perp v_f \Vert \right)
\end{align*}
where $\ell_f = \Vert q_{f^2}-q_{f^1} \Vert$, $(\delta \ell_f)_v = \langle \dfrac{v(q_{f^2})-v(q_{f^1})}{\Vert q_{f^2}-q_{f^1} \Vert}, \vec{t}_f \rangle$ and  $ \nabla^\perp v_f=\dfrac{v(q_{f^2})-v(q_{f^1})}{\ell_f} - \Big \langle \dfrac{v(q_{f^2})-v(q_{f^1})}{\ell_f} , \vec{t}_f\Big \rangle\vec{t}_f$.

For a face $f=[q_{f^1},q_{f^2}]$, since the discrete terms are linear combinations of evaluations of $v$ at the vertices, it follows that $\ell_f \vert (\delta \ell_f)_v \vert \leq 2 \Vert v \Vert_{C_0^2}$ and $\ell_f \Vert \nabla^\perp v_f \Vert \leq 4 \Vert v \Vert_{C_0^2}$. Consequently,
\begin{align*}
    \vert \delta \mu_q(v) (\omega) \vert &\leq \Vert \omega \Vert_{C_0^2}  \sum_{f \in F_q} \left( 2 \Vert v \Vert_{C_0^2} +  \ell_f  \Vert v \Vert_{C_0^2}  + 4 \Vert v \Vert_{C_0^2} \right) \\
    &\leq \Vert \omega \Vert_{C_0^2}  \Vert v \Vert_{C_0^2} \sum_{f \in F_q} (6  +  \ell_f). 
\end{align*}
Moreover, since $V \hookrightarrow C_0^2(\R^d,\R^d)$, there exists $\gamma_V >0$ such that $\Vert v \Vert_{C_0^2} \leq \gamma_V \Vert v \Vert_V$, 
\begin{align*}
    \Vert \delta \mu_q(v) \Vert_{C_0^2(\R^d\times \mathbb{S}^{d-1},\R)'} \leq \gamma_V \left( \sum_{f \in F_q} (6  +  \ell_f) \right) \Vert v \Vert_{V}     
\end{align*}
which concludes the proof.

\end{proof}

\subsection{Proof of \cref{prop:closed_form_min_J}} \label{app:proof_closed_form}

Recall that the coupling score is defined through the minimization of the following functional: 
\begin{equation*}
    J_v : \app{W}{\R}{w}{\Vert \delta \mu_q(w) - \delta \mu_q(v) \Vert^2_{H'} + \lambda \Vert w \Vert^2_W}
\end{equation*}
We also recall that $(e_1, \dots , e_d)$ denotes the canonical basis of $\R^d$, and $\delta_x^e : w \in W \mapsto \langle w(x), e \rangle_{\R^d}$ is the continuous linear evaluation functional. 
For a discretized curve $q = (q_i)_{1 \leq i \leq N}$ in $\R^d$, we define the set $\Theta_q \subset [| 1, N |] \times [| 1, d |]$ such that the set $\{ w_{i,j} \mid (i,j) \in \Theta_q \}$, where $w_{i,j} := K_W \delta_{q_i}^{e_j}$, forms a maximal linearly independent subset of $\{w_{i,j} \mid 1 \leq i \leq N \,, 1 \leq j \leq d \} \subset W$.
\begin{proposition}
Let $q = (q_i)_{1 \leq i \leq N}$ be a discretized curve in $\R^d$. Using the basis elements $w_{i,j}$ defined above, we construct the column vector 
$$Y = \left( \langle \delta \mu_q (v),  \delta \mu_q (w_{i,j}) \rangle_{H'} \right)_{(i,j) \in \Theta_q} \in \R^{\# \Theta_q}$$ 
and the matrix 
$$S = \left( \langle  (K_W (K_{H} \delta \mu_q)^\ast \delta \mu_q  + \lambda \id) w_{i,j}, w_{i',j'} \rangle_{W} \right)_{(i,j) , (i', j') \in \Theta_q} \in \R^{\# \Theta_q \times \# \Theta_q} \,.$$
Then the minimizer of $J_v$ is given by 
$$w^* = \sum_{(i,j) \in \Theta_q} \beta_{i,j} w_{i,j}$$ 
where $\beta = S^{-1}Y$.
\end{proposition}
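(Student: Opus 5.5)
We combine \cref{prop:min_w} with a representer-type argument. By \cref{prop:min_w}, the unique minimizer satisfies $(\Lambda_q \delta\mu_q^W + \lambda\,\id_W) w^* = \Lambda_q \delta\mu_q^V(v)$ with $\Lambda_q = K_W (K_H\delta\mu_q^W)^*$. So the plan is: (i) show that the range of $\Lambda_q$ lies in $W_q := \operatorname{span}\{w_{i,j} \mid (i,j)\in\Theta_q\}$; (ii) deduce that $w^*\in W_q$; (iii) identify the coefficients by testing the optimality equation against the basis elements.

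\textbf{Step (i).} From \cref{exam:formula_first_var_discrete}, for fixed $\omega\in H$ the map $w \mapsto \delta\mu_q(w)(\omega)$ depends on $w$ only through the finitely many values $w(q_i)$, $1\le i\le N$, and is linear in them. It is therefore a linear combination of the evaluation functionals $\delta_{q_i}^{e_j}$. Hence for any $\eta\in H'$, the element $(K_H\delta\mu_q^W)^*\eta \in W'$ lies in $\operatorname{span}\{\delta_{q_i}^{e_j}\}$. Applying $K_W$ puts $\Lambda_q\eta$ in $\operatorname{span}\{w_{i,j}\}_{i,j}$, and by maximality of $\Theta_q$ this span equals $W_q$.

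\textbf{Step (ii).} The same argument shows that $\delta\mu_q^W$ vanishes on $W_q^\perp$. Indeed, if $w\perp W_q$, then $\langle w, w_{i,j}\rangle_W = \langle w(q_i), e_j\rangle = 0$ for all $i,j$, so $w(q_i)=0$ for every $i$ and hence $\delta\mu_q(w)=0$. Consequently $T:=\Lambda_q\delta\mu_q^W+\lambda\,\id_W$ maps $W_q$ into $W_q$, and it acts as $\lambda\,\id$ on $W_q^\perp$. The right-hand side $\Lambda_q\delta\mu_q^V(v)$ lies in $W_q$. Decomposing $w^* = w_1 + w_2$ with $w_1 \in W_q$ and $w_2 \in W_q^\perp$, the $W_q^\perp$-component of the equation reads $\lambda w_2 = 0$, so $w_2 = 0$ because $\lambda>0$.

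\textbf{Step (iii).} Write $w^*=\sum_{(i,j)\in\Theta_q}\beta_{i,j}w_{i,j}$ and take the $W$-inner product of $Tw^*=\Lambda_q\delta\mu_q^V(v)$ with each $w_{i',j'}$. The left-hand side gives $(S\beta)_{(i',j')}$, using the definition of $S$ and its symmetry. For the right-hand side, the adjoint/Riesz identities already used in the proof of \cref{prop:min_w} give $\langle \Lambda_q\eta, w_0\rangle_W = \langle \eta, \delta\mu_q(w_0)\rangle_{H'}$, so it equals $Y_{(i',j')}$. It remains to show that $S$ is invertible. For $\beta\ne0$, the vector $w=\sum\beta_{i,j}w_{i,j}$ is nonzero by linear independence, and $\beta^\top S\beta=\|\delta\mu_q(w)\|_{H'}^2+\lambda\|w\|_W^2>0$. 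Thus $S$ is positive definite and $\beta=S^{-1}Y$.

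The main obstacle is step (i)–(ii): making rigorous that the discrete first variation factors through pointwise evaluations at the vertices, and that $T$ preserves $W_q$ while acting as $\lambda\,\id$ on its complement. The remaining steps are routine linear algebra.
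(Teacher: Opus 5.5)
Your proof is correct and follows essentially the same route as the paper: reduce to $\operatorname{span}\{w_{i,j}\}_{(i,j)\in\Theta_q}$, obtain the linear system $S\beta = Y$, and show $S$ is positive definite via $X^\top S X = \Vert \delta\mu_q(w_X)\Vert_{H'}^2 + \lambda \Vert w_X\Vert_W^2$. The only difference is in how you justify the reduction to the span. The paper simply invokes the Representer Theorem and then sets the gradient of the reduced quadratic $\tilde J(\beta)$ to zero. You instead derive this explicitly from the normal equation of \cref{prop:min_w}: the range of $\Lambda_q$ lies in the span, and $\delta\mu_q^W$ vanishes on its orthogonal complement. This makes that step self-contained.
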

\begin{proof}[Proof of \cref{prop:closed_form_min_J}]
    Since $J_v$ only depends on the evaluations of $w$ at the discrete vertices $q_i$, the Representer Theorem ensures that its minimizer $w^*$ belongs to the finite-dimensional space $\operatorname{span}( \{w_{i,j} \mid (i,j) \in \Theta_q \})$. Thus, there exists a column coefficient vector $\beta \in \R^{\#\Theta_q}$ such that $w^* = \sum_{(i,j) \in \Theta_q} \beta_{i,j} w_{i,j}$.

    Substituting this expression into $J_v$, the problem reduces to minimizing a standard finite-dimensional quadratic function $\tilde{J}(\beta)$ with respect to the vector $\beta$:
    \begin{equation*}
        \tilde{J}(\beta) = \beta^\top S \beta - 2 \beta^\top Y + \Vert \delta \mu_q(v) \Vert^2_{H'}
    \end{equation*}
    where $Y \in \R^{\#\Theta_q}$ is the column vector with components $Y_{i,j} = \langle \delta \mu_q(v), \delta \mu_q(w_{i,j}) \rangle_{H'}$, and $S \in \R^{\#\Theta_q \times \#\Theta_q}$ is the symmetric matrix with entries defined by:
    \begin{eqnarray*}
        S_{(i,j),(i',j')} &=& \langle \delta \mu_q(w_{i,j}), \delta \mu_q(w_{i',j'}) \rangle_{H'} + \lambda \langle w_{i,j}, w_{i',j'} \rangle_{W} \\
        &=& \langle K_W (K_H \delta \mu_q)^* \delta \mu_q(w_{i,j}), w_{i',j'} \rangle_W + \lambda \langle w_{i,j}, w_{i',j'} \rangle_{W} \\
        &=& \langle (K_W (K_H \delta \mu_q)^* \delta \mu_q + \lambda \operatorname{id_w})(w_{i,j}), w_{i',j'} \rangle_W\\
    \end{eqnarray*}
    Since $\tilde{J}$ is a strictly convex quadratic function (provided $S$ is positive definite), its unique minimum is reached when its gradient is zero, yielding the linear system $S \beta = Y$.
    Let us now prove that the matrix $S$ is indeed strictly positive definite, and thus invertible. Let $X \in \R^{\#\Theta_q}$ be a column vector such that $X^\top S X = 0$, and define the associated vector field $w_X = \sum_{(i,j) \in \Theta_q} X_{i,j} w_{i,j}$. By definition of the matrix $S$, we have:
    \begin{equation*}
        X^\top S X = \Vert \delta \mu_q(w_X) \Vert_{H'}^2 + \lambda \Vert w_X \Vert_W^2 = 0
    \end{equation*}
    Since $\lambda > 0$ and both terms are non-negative, it follows that $\Vert w_X \Vert_W^2 = 0$, which implies $w_X = 0$. Because the family $\{w_{i,j} \mid (i,j) \in \Theta_q\}$ is linearly independent by the definition of $\Theta_q$, the linear combination $w_X$ is equal to zero if and only if all its coefficients are zero, i.e., $X = 0$. 
    Thus, $X^\top S X = 0 \implies X = 0$. Consequently, $S$ is strictly positive definite and invertible, leading to the unique optimal coefficients $\beta = S^{-1} Y$.
\end{proof}

\subsection{Proof of \cref{prop:existence_minimizer}}\label{app:proof_existence_minimizer}

Let $I=[0,1]$ or $ \mathbb{S}^{1}$ and $Q =\{ q \in C^{1}(I,\mathbb{R}^d) \mid \Vert q'(s) \Vert \ne 0 \}$. Let $q_S \in Q$ be a template curve and $V,W$ be two RKHS continuously embedded in $C_0^{2}(\mathbb{R}^d, \mathbb{R}^d)$.
We consider the following energy minimization problem:
\begin{equation} \label{eq:J2_energy}
\inf_{(v, w) \in L^2([0,1],V \times W)} J_2(v,w) = \int_0^1 \left( \dfrac{1}{2} \| v_t \|_V^2 + \dfrac{1}{2} \| w_t \|_W^2 + C_{q_t}(v_t,W) \right) dt + \mathcal{D}(q_1)
\end{equation}
subject to the evolution equation $\dot{q}_t = (v_t + w_t) \circ q_t$ and the initial condition $q_0 = q_S$. 

For the sake of generality, we rely on the continuous formulation of the first variation rather than its discrete approximation via Dirac measures. We recall that the first variation associated with a curve $q$ and a vector field $v$ is given by, for any test function $\omega \in C^2_0(\R^d \times \mathbb{S}^{d-1},\R)$,
\begin{align*}
    \delta \tilde{\mu}_q(v)(\omega) &= \int_I \left( \langle \nabla_x \omega ( q(s),\tau_q(s) ), v(q(s)) \rangle + \langle \nabla_{\tau} \omega ( q(s),\tau_q(s) ), (dv(q(s))\cdot \tau_q(s))^\perp \rangle \right) \Vert q'(s) \Vert ds  \\
    &\quad + \int_I \omega ( q(s),\tau_q(s) ) \langle dv(q(s)) \cdot \tau_q(s), \tau_q(s) \rangle \Vert q'(s) \Vert  ds 
\end{align*}
where $\tau_q(s) = \dfrac{q'(s)}{\Vert q'(s) \Vert}$, $v(q(s))^\top = \langle v(q(s)),\tau_q(s) \rangle \tau_q(s)$ and $v(q(s))^\perp = v(q(s)) - v(q(s))^\top$.

\begin{proposition}
    Assume that the data attachment term $q \in Q \mapsto \mathcal{D}(q)$ is lower semi-continuous and non-negative. Then, the functional $J_2:(v,w)\mapsto \int_0^1 \left( \dfrac{1}{2} \| v_t \|_V^2 + \dfrac{1}{2} \| w_t \|_W^2 + C_{q_t}(v_t,W) \right) dt + \mathcal{D}(q_1)$ admits at least one global minimizer $(v^*, w^*) \in L^2([0,1], V \times W)$.
\end{proposition}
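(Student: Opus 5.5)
We argue by the direct method of the calculus of variations in $L^2([0,1],V\times W)$ with its weak topology, as in standard LDDMM existence results. Since $\mathcal{D}\ge 0$ and $C_{q_t}(v_t,W)=\Vert w^*_t\Vert_W^2\ge 0$, $J_2$ is bounded below and $\inf J_2<\infty$ (take $(v,w)=(0,0)$). We therefore pick a minimizing sequence $(v^n,w^n)$. The bound $J_2(v^n,w^n)\le \inf J_2+1$ controls $\int_0^1\Vert v^n_t\Vert_V^2+\Vert w^n_t\Vert_W^2\,dt$, so after extraction $(v^n,w^n)\rightharpoonup(v,w)$ weakly in $L^2([0,1],V\times W)$.

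Next we pass to the limit in the trajectories. Since $V,W\hookrightarrow C_0^2(\R^d,\R^d)$, the sum $V+W$ is an RKHS embedded in $C_0^2$. The classical weak-to-strong continuity of the flow map (Trouvé/Younes, or \cite[Theorem 3.5]{pierron2024} used in \cref{prop:evol_eq_first_var}) then gives that the flows $\varphi^n_t$ generated by $v^n+w^n$ converge to $\varphi_t$, the flow of $v+w$, uniformly in $t$ together with their first derivatives on compacts. Consequently $q^n_t=\varphi^n_t\circ q_S\to q_t$ in $C^1(I,\R^d)$ uniformly in $t\in[0,1]$, and $q_t$ stays in $Q$. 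Lower semicontinuity of $\mathcal{D}$ then yields $\mathcal{D}(q_1)\le\liminf\mathcal{D}(q^n_1)$, and weak lower semicontinuity of the $L^2$ norms handles the kinetic terms.

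The main obstacle is the coupling term $\int_0^1 C_{q_t}(v_t,W)\,dt$, which depends jointly on the state and the control. By \cref{prop:min_w}, $w^*_t=(\Lambda_{q_t}\delta\mu^W_{q_t}+\lambda\id_W)^{-1}\Lambda_{q_t}\delta\mu^V_{q_t}(v_t)$, so $C_{q_t}(v,W)=\langle B_{q_t}v,v\rangle_V$ with $B_q=(\delta\mu^V_q)^*M_q\delta\mu^V_q$ a bounded nonnegative self-adjoint operator on $V$, where $M_q$ is the corresponding operator on $H'$.
\begin{itemize}
\item[(i)] We first show $q\mapsto B_q\in\mathcal{L}(V)$ is continuous in operator norm for the $C^1$ topology on $Q$. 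Writing $\delta\tilde\mu_q(v)=d\tilde\mu_q(v\circ q)$, continuity of $q\mapsto d\tilde\mu_q$ (\cref{prop:frechet_varifold}) together with $\Vert v\circ q-v\circ\tilde q\Vert_{C^1}\le C\Vert v\Vert_{C^2}\Vert q-\tilde q\Vert_{C^1}$ gives operator-norm continuity of $q\mapsto\delta\tilde\mu^V_q,\delta\tilde\mu^W_q$ into $(C_0^2)'$, hence into $H'$. The inverse $(\cdot+\lambda\id_W)^{-1}$ is continuous, with norm at most $1/\lambda$.
\item[(ii)] We then write $\int C_{q^n_t}(v^n_t)\,dt=\int\langle B_{q_t}v^n_t,v^n_t\rangle\,dt+\int\langle(B_{q^n_t}-B_{q_t})v^n_t,v^n_t\rangle\,dt$. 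The second integral tends to $0$ because $\sup_t\Vert B_{q^n_t}-B_{q_t}\Vert\to0$ and $(v^n)$ is bounded in $L^2$.
\item[(iii)] The first integral is a convex, continuous, nonnegative quadratic functional on $L^2([0,1],V)$, with measurability of $t\mapsto B_{q_t}$ coming from continuity. It is therefore weakly lower semicontinuous, so $\int\langle B_{q_t}v_t,v_t\rangle\le\liminf\int\langle B_{q_t}v^n_t,v^n_t\rangle$.
\end{itemize}

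Collecting the liminf inequalities gives $J_2(v,w)\le\liminf J_2(v^n,w^n)=\inf J_2$, so $(v,w)$ is a global minimizer. The delicate point is (i): one must check that uniform-in-$t$ $C^1$ convergence of $q^n_t$ suffices. This is where the $C_0^2$ embedding of $V$ and the $C^1$-continuity of $q\mapsto d\tilde\mu_q$ are both needed. The uniform bounds on $\Vert q'\Vert$ away from $0$ follow from the flow estimates.
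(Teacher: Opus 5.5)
Your proposal is correct and follows the same route as the paper: the direct method with weak $L^2$ compactness, then uniform-in-$t$ $C^1$ convergence of the trajectories from standard LDDMM flow theory, then operator-norm continuity of $q\mapsto L_q=(\Lambda_q\delta\mu_q^W+\lambda\id_W)^{-1}\Lambda_q\delta\mu_q^V$, which gives $\sup_t\Vert L_{q^n_t}-L_{q_t}\Vert\to 0$ (you derive this continuity slightly more economically from the $C^1$ Fréchet differentiability of $q\mapsto\tilde\mu_q$ plus a chain-rule estimate, whereas the paper redoes the bounds by hand). The only other difference is cosmetic: the paper shows $L_{q^n}(v^n)\rightharpoonup L_{q}(v)$ in $L^2([0,1],W)$ and applies weak lower semicontinuity of the norm, while you freeze the quadratic form $\langle B_{q_t}\cdot,\cdot\rangle_V$ at the limit trajectory and use its convexity, which amounts to the same thing.
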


We first proves the following lemma, which ensures the continuity of the first variation with respect to its underlying shape.

\begin{lemma} \label{lemma:continuity_varifold_continuous}
The mapping $q \in Q \mapsto \delta \tilde{\mu}_q \in \mathcal{L}(V, H')$ is continuous with respect to the $C^1$ topology on $Q$ and the operator norm topology on $\mathcal{L}(V, H')$.
\end{lemma}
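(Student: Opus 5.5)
The plan is to bound $\Vert \delta\tilde{\mu}_q(v) - \delta\tilde{\mu}_{q_n}(v)\Vert_{H'}$ uniformly over $\Vert v\Vert_V \le 1$ by a multiple of $\Vert q-q_n\Vert_{C^1}$. The argument mirrors the continuity part of the proof of \cref{prop:frechet_varifold}. Since $H \hookrightarrow C_0^2(\R^d\times\mathbb{S}^{d-1},\R)$ continuously, there is $\gamma_H>0$ with $\Vert \omega\Vert_{C_0^2}\le \gamma_H\Vert\omega\Vert_H$. Hence $\Vert \cdot\Vert_{H'} \le \gamma_H \Vert\cdot\Vert_{(C_0^2)'}$, and it suffices to work with test functions $\omega$ with $\Vert\omega\Vert_{C_0^2}\le 1$. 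Likewise $V\hookrightarrow C_0^2(\R^d,\R^d)$ gives $\Vert v\Vert_{C_0^2}\le\gamma_V\Vert v\Vert_V$. So the goal becomes the estimate
\[
|\delta\tilde\mu_q(v)(\omega)-\delta\tilde\mu_{q_n}(v)(\omega)| \le K\,\Vert\omega\Vert_{C_0^2}\Vert v\Vert_{C_0^2}\Vert q-q_n\Vert_{C^1}
\]
for $q_n\to q$ in $C^1$ and $n$ large.

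First, I would fix constants. As in the proof of \cref{prop:frechet_varifold}, let $c\le\Vert q'\Vert\le M$. For $n$ large we then have $c/2\le\Vert q_n'\Vert\le M+c/2$. On $\R^d\setminus B(0,c/2)$, the maps $y\mapsto y/\Vert y\Vert$ and $y\mapsto I_d-yy^\top/\Vert y\Vert^2$ are Lipschitz with a common constant $L$. This gives $\Vert\tau_q-\tau_{q_n}\Vert_\infty\le L\Vert q-q_n\Vert_{C^1}$, and also $\big|\Vert q'\Vert-\Vert q_n'\Vert\big|\le\Vert q-q_n\Vert_{C^1}$.

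Next, I would write the difference using the expression in \cref{prop:continuous_first_var}. It splits into three integrals over $I$: a mass term $\omega(q,\tau_q)\langle dv(q)\tau_q,\tau_q\rangle\Vert q'\Vert$, a position term $\langle\nabla_x\omega(q,\tau_q),v(q)\rangle\Vert q'\Vert$, and a tangent term $\langle\nabla_\tau\omega(q,\tau_q),(I_d-\tau_q\tau_q^\top)dv(q)\tau_q\rangle\Vert q'\Vert$. Each integrand has the form $G(q(s),\tau_q(s))\Vert q'(s)\Vert$, where $G$ is built from $\omega,\nabla\omega$ and $v,dv$. In every case $G$ is bounded by $C\Vert\omega\Vert_{C^2_0}\Vert v\Vert_{C^2_0}$ and Lipschitz in $(x,\tau)$ with constant $C\Vert\omega\Vert_{C^2_0}\Vert v\Vert_{C^2_0}$. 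The Lipschitz property holds because it involves only first derivatives of $\omega$ and $dv$ together with second derivatives of $\omega$ and $v$, all of which are controlled by the $C^2_0$ norms. The projection factor must also be handled, and this is done through its Lipschitz dependence on $q'$.

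Then I would add and subtract $G(q_n,\tau_{q_n})\Vert q'\Vert$ and apply the triangle inequality. This yields, for each term, a bound by
\[
C\Vert\omega\Vert\Vert v\Vert\big(M(\Vert q-q_n\Vert_\infty+\Vert\tau_q-\tau_{q_n}\Vert_\infty)+\Vert q-q_n\Vert_{C^1}\big).
\]
Integrating over the compact $I$ gives the estimate above with $K=|I|\,C(M(1+L)+1)$. Taking suprema over $\Vert\omega\Vert_{C^2_0}\le1$ and over $\Vert v\Vert_V\le 1$ then gives $\Vert\delta\tilde\mu_q-\delta\tilde\mu_{q_n}\Vert_{\mathcal L(V,H')}\le\gamma_H\gamma_V K\Vert q-q_n\Vert_{C^1}\to0$.

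The main obstacle is keeping the Lipschitz estimates uniform in $v$, which is what makes the convergence hold in operator norm rather than pointwise. This is why each term has to be bounded by the product $\Vert\omega\Vert_{C^2_0}\Vert v\Vert_{C^2_0}$, with no dependence on $v$ beyond its $C^2$ norm. The tangent term needs the most care, since it couples $\nabla_\tau\omega$, $dv$ and the $q$-dependent projection.
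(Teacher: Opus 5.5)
Your proposal is correct and follows essentially the same route as the paper. Both arguments split the continuous first variation into the mass, position and tangent terms, then estimate each by an add-and-subtract triangle inequality. That step uses the Lipschitz bounds on $\omega$, $v$ and their derivatives, together with the Lipschitz continuity of $y\mapsto y/\Vert y\Vert$ away from the origin. Both then integrate over $I$ and use the embedding constants $\gamma_V,\gamma_H$ to pass to the operator norm on $\mathcal{L}(V,H')$. The only cosmetic difference is that you package each integrand as a single Lipschitz function $G(x,\tau)$, whereas the paper expands each term separately, for example treating $(x,\tau)\mapsto(I_d-\tau\tau^\top)dv(x)\tau$ as Lipschitz with constant $3\Vert v\Vert_{C^2_0}$.
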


\begin{proof}
 Let $q \in Q$ be a curve and let $(q^n)_{n \in \mathbb{N}}$ be a sequence in $Q$ converging to $q$ in the $C^1$ topology. In particular, $(q^n)_n$ and $((q^n)')_n$ converge uniformly to $q$ and $q'$, respectively, on $I$. Therefore, there exists a constant $M>0$ such that for all $n \in \N$, $\Vert q^n \Vert_{C^1} \leq M$. 
    Because $q$ is an immersion on a compact set, there exists a constant $m > 0$ such that $\|q'(s)\| \ge m$ for all $s \in I$. For $n$ sufficiently large, uniform convergence ensures that $\|(q^n)'(s)\| \ge m/2$.

    For $v \in V$ and $\omega \in H$, we aim to bound $\vert \delta \tilde{\mu}_{q^n}(v)(\omega)-\delta \tilde{\mu}_q(v)(\omega) \vert$. First, we establish some uniform bounds that will be useful later. Since $\omega$ and $v$ have $C^2_0$ regularity, the functions and their first derivatives are bounded by their $C_0^2$ norms, and Lipschitz continuous with constant bounded by their $C_0^2$ norms.
    
    We bound the pointwise difference of the curves  by the difference in $C^1$ norm: $\Vert q^n(s) - q(s)\Vert \leq \Vert q^n - q \Vert_{C^1}$ and $\big| \Vert (q^n)'(s) \Vert - \Vert q'(s) \Vert \big| \leq \Vert (q^n)'(s) - q'(s) \Vert \leq \Vert q^n - q \Vert_{C^1}$. The mapping $y \mapsto \dfrac{y}{\|y\|}$ is smooth and Lipschitz continuous on the domain $\R^d \setminus B(0, m/2)$. Because both $(q^n)'$ and $q'$ take values in this domain, there exists a constant $L_m > 0$ such that the difference of the unit tangent vectors is bounded by $\Vert \tau_{q^n}(s) - \tau_q(s) \Vert \leq L_m \Vert (q^n)'(s) - q'(s) \Vert \leq L_m \Vert q^n - q \Vert_{C^1}$. 

    In the following, we omit the parameter $s$ for readability.
    We first focus on the term that depends on $\nabla_x \omega$. Using the triangle inequality, the Cauchy-Schwarz inequality, and the uniform bounds established above, it follows that:
    \begin{align*}
        \vert \langle \nabla_x \omega(q^n, \tau_{q^n})&,v(q^n)\rangle \Vert (q^n)' \Vert - \langle \nabla_x \omega(q, \tau_{q}),v(q)\rangle \Vert q' \Vert \vert \\
        &\leq \left\vert \langle \nabla_x \omega(q^n, \tau_{q^n}) - \nabla_x \omega(q, \tau_q), v(q^n) \rangle \right\vert \Vert (q^n)' \Vert + \left\vert \langle \nabla_x \omega(q, \tau_q), v(q^n) - v(q) \rangle \right\vert \Vert (q^n)' \Vert \\
        &\quad + \left\vert \langle \nabla_x \omega(q, \tau_q), v(q) \rangle \right\vert \big\vert \Vert (q^n)' \Vert - \Vert q' \Vert \big\vert \\
        &\leq  \Vert \omega \Vert_{C_0^2} \Big(\Vert q^n - q \Vert_{C^1} + \Vert \tau_{q^n}- \tau_q \Vert \Big) \Vert v \Vert_{C_0^2} M  + \Vert \omega \Vert_{C_0^2} \Vert v \Vert_{C_0^2} \Vert q^n - q \Vert_{C^1} M \\
        &\quad + \Vert \omega \Vert_{C_0^2} \Vert v \Vert_{C_0^2} \Vert q^n - q \Vert_{C^1} \\
        &\leq \Vert \omega \Vert_{C_0^2} \Vert v \Vert_{C_0^2}  ( 2M+L_mM +1)\Vert q^n - q \Vert_{C^1}
    \end{align*}

    Similarly, we bound the term involving $\nabla_\tau \omega$ using the same strategy. The projected vector is bounded by $\Vert (dv(q) \cdot \tau_q)^\perp \Vert \leq \Vert dv \Vert_\infty \Vert \tau_q \Vert \leq \Vert v \Vert_{C^2_0}$. The mapping $F:(x, \tau) \mapsto (dv(x)\cdot \tau)^\perp = (I_d - \tau\tau^T) dv(x) \tau$ is Lipschitz continuous on its domain with a constant bounded by $3\Vert v \Vert_{C_0^2}$. Thus, the difference is bounded as follows:
    \begin{align*}
        \vert \langle \nabla_{\tau} \omega ( q^n,\tau_{q^n} )&, (dv(q^n)\cdot \tau_{q^n})^\perp \rangle \Vert (q^n)' \Vert - \langle \nabla_{\tau} \omega ( q,\tau_q), (dv(q)\cdot \tau_q)^\perp \rangle \Vert q' \Vert \vert \\
        &\leq \left\vert \langle \nabla_{\tau} \omega(q^n, \tau_{q^n}) - \nabla_{\tau} \omega(q, \tau_q), (dv(q^n)\cdot \tau_{q^n})^\perp \rangle \right\vert \Vert (q^n)' \Vert \\
        &\quad + \left\vert \langle \nabla_{\tau} \omega(q, \tau_q), (dv(q^n)\cdot \tau_{q^n})^\perp - (dv(q)\cdot \tau_q)^\perp \rangle \right\vert \Vert (q^n)' \Vert \\
        &\quad + \left\vert \langle \nabla_{\tau} \omega(q, \tau_q), (dv(q)\cdot \tau_q)^\perp \rangle \right\vert \big\vert \Vert (q^n)' \Vert - \Vert q' \Vert \big\vert \\
        &\leq \Vert \omega \Vert_{C^2_0} \Vert v \Vert_{C^2_0} \Big( \Vert q^n - q \Vert_{C^1} + \Vert \tau_{q^n} - \tau_q \Vert \Big) M \\
        &\quad + 3\Vert \omega \Vert_{C^2_0}  \Vert v \Vert_{C^2_0} \Big( \Vert q^n - q \Vert_{C^1} + \Vert \tau_{q^n} - \tau_q \Vert \Big) M \\
        &\quad  + \Vert \omega \Vert_{C^2_0} \Vert v \Vert_{C^2_0} \Vert q^n - q \Vert_{C^1} \\
        &\leq \Vert \omega \Vert_{C^2_0} \Vert v \Vert_{C^2_0} \Vert q^n - q \Vert_{C^1} (4M +4M L_m + 1)  .
    \end{align*}
    
    Finally, we apply again the same decomposition strategy to the term $\omega(q,\tau_q) \langle dv(q) \cdot \tau_q, \tau_q \rangle \|q'\|$. The mapping $(x, \tau) \mapsto \langle dv(x)\cdot \tau, \tau \rangle$ is Lipschitz with a constant bounded by $\Vert v \Vert_{C_0^2}$. Evaluating the difference yields:
    \begin{align*}
        &\vert \omega ( q^n,\tau_{q^n} ) \langle dv(q^n)\cdot \tau_{q^n}, \tau_{q^n} \rangle \Vert (q^n)' \Vert - \omega ( q,\tau_q) \langle dv(q)\cdot \tau_q, \tau_q \rangle \Vert q' \Vert \vert \\
        &\leq \left\vert \Big( \omega(q^n, \tau_{q^n}) - \omega(q, \tau_q) \Big) \langle dv(q^n)\cdot \tau_{q^n}, \tau_{q^n} \rangle \right\vert \Vert (q^n)' \Vert  + \left\vert \omega(q, \tau_q) \Big( \langle dv(q^n)\cdot \tau_{q^n}, \tau_{q^n} \rangle - \langle dv(q)\cdot \tau_q, \tau_q \rangle \Big) \right\vert \Vert (q^n)' \Vert \\
        &\quad + \left\vert \omega(q, \tau_q) \langle dv(q)\cdot \tau_q, \tau_q \rangle \right\vert \big\vert \Vert (q^n)' \Vert - \Vert q' \Vert \big\vert \\
        &\leq \Vert \omega \Vert_{C^2_0} \Vert v \Vert_{C^2_0} \Big( \Vert q^n - q \Vert_{C^1} + \Vert \tau_{q^n} - \tau_q \Vert \Big) M  + \Vert \omega \Vert_{C^2_0} \Vert v \Vert_{C^2_0} \Big( \Vert q^n - q \Vert_{C^1} + \Vert \tau_{q^n} - \tau_q \Vert \Big) M \\
        &\quad + \Vert \omega \Vert_{C^2_0} \Vert v \Vert_{C^2_0} \Vert q^n - q \Vert_{C^1} \\
          &\leq \Vert \omega \Vert_{C^2_0} \Vert v \Vert_{C^2_0} \Vert q^n - q \Vert_{C^1} (2M +2M L_m + 1).
    \end{align*}

  Integrating over $I$ gives the final bound:
    \begin{equation*}
        \left| \delta \mu_{q^n}(v)(\omega) - \delta \mu_q(v)(\omega) \right| \le K'_q \|q^n - q\|_{C^1} \|v\|_{C_0^2} \|\omega\|_{C_0^2}
    \end{equation*}
    where $K'_q > 0$ is a global constant depending on the curve $q$.
    Since $V \hookrightarrow C_0^2(\R^d,\R^d)$ and $H\hookrightarrow C_0^2(\R^d \times \mathbb{S}^{d-1},\R)$, there exists constants such that $\Vert v \Vert_{C_0^2}\leq \gamma_V \Vert v \Vert_V$ and $\Vert \omega \Vert_{C_0^2}\leq \gamma_H\Vert \omega \Vert_H$ . So, the previous inequality can be written:
    \begin{equation*}
        \left| \delta \tilde{\mu}_{q^n}(v)(\omega) - \delta \tilde{\mu}_q(v)(\omega) \right| \le K_q \|q^n - q\|_{C^1} \|v\|_{V} \|\omega\|_{H}
    \end{equation*}

    Finally, by taking the supremum over the unit balls $\|v\|_V \le 1$ and $\|\omega\|_H \le 1$, we obtain:
    \begin{equation*}
        \left\Vert \delta \tilde{\mu}_{q^n} - \delta \tilde{\mu}_q \right\Vert_{\mathcal{L}(V, H')} \le K_q \|q^n - q\|_{C^1(I)}.
    \end{equation*}
    Then, $\delta \tilde{\mu}_{q^n} \to \delta \tilde{\mu}_q$ as $q^n \to q$, which concludes the continuity of the mapping $q \mapsto \delta \tilde{\mu}_q$
\end{proof}

Building on the continuity of the first variation, we can show that the coupling score is also continuous with respect to the curve.

\begin{lemma} \label{lemma:continuity_correlation}
For a fixed $v \in V$, the mapping $q \in Q \mapsto C_q(v, W)$ is continuous.
\end{lemma}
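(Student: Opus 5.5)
The plan is to write the coupling score through the closed form of \cref{prop:min_w} and to show that every operator appearing there depends continuously on $q$, using \cref{lemma:continuity_varifold_continuous}. For a fixed curve $q$, set $A_q \eqdef \delta\tilde{\mu}_q^W \in \mathcal{L}(W,H')$ and $B_q \eqdef \delta\tilde{\mu}_q^V \in \mathcal{L}(V,H')$. The restriction of the operator from \cref{lemma:continuity_varifold_continuous} to $V$ and to $W$ (both continuously embedded in $C_0^2$) gives that $q \mapsto A_q$ and $q\mapsto B_q$ are continuous for the $C^1$ topology on $Q$ and the operator norm topology. The lemma is stated for $V$, but its proof only uses $\Vert w\Vert_{C_0^2}\le\gamma_W\Vert w\Vert_W$, so it applies to $W$ as well. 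By \cref{prop:min_w}, $w^*_q = (A_q^\dagger A_q + \lambda \id_W)^{-1} A_q^\dagger B_q v$. Here $A_q^\dagger = \Lambda_q = K_W (K_H A_q)^*$ is the Hilbert adjoint of $A_q$ between $W$ and $H'$. Indeed, by the computation in the proof of \cref{prop:min_w}, $\langle \Lambda_q h, w\rangle_W = \langle h, A_q w\rangle_{H'}$.

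The key steps, in order, are as follows. First, taking the Hilbert adjoint is an isometry on $\mathcal{L}(W,H')$, so $\Vert A_{q^n}^\dagger - A_q^\dagger\Vert = \Vert A_{q^n}-A_q\Vert \to 0$ whenever $q^n\to q$ in $C^1$. Second, products of norm-convergent operators converge in norm. Hence $T_q \eqdef A_q^\dagger A_q + \lambda\id_W$ and $A_q^\dagger B_q$ depend continuously on $q$.

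Third, $T_q$ is self-adjoint and satisfies $T_q \ge \lambda \id_W$. This gives the uniform bound $\Vert T_q^{-1}\Vert \le 1/\lambda$, valid for every $q$. The identity
\[
T_{q^n}^{-1}-T_q^{-1} = T_{q^n}^{-1}(T_q - T_{q^n})T_q^{-1}
\]
then yields $\Vert T_{q^n}^{-1}-T_q^{-1}\Vert \le \lambda^{-2}\Vert T_{q^n}-T_q\Vert \to 0$. Consequently $w^*_{q^n}\to w^*_q$ in $W$. Finally, $C_q(v,W)=\Vert w^*_q\Vert_W^2$ is the composition of this continuous map with the continuous function $\Vert\cdot\Vert_W^2$, which proves the lemma.

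The main obstacle is the inversion step: continuity of the inverse requires a lower bound on $T_q$ that is uniform in $q$. The strict positivity $\lambda>0$ supplies exactly this bound, $T_q\ge\lambda\id_W$. Without it (the case $\lambda=0$), the operator $A_q^\dagger A_q$ could degenerate, for instance on a circle under rotations, and continuity could fail. A minor point to check is that \cref{lemma:continuity_varifold_continuous} is phrased for the continuous first variation $\delta\tilde{\mu}_q$. The score should therefore be interpreted with $\tilde{\mu}_q$, consistent with the convention adopted in this appendix.
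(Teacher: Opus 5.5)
Your proposal is correct and follows essentially the same route as the paper. Both arguments use the closed form of $w^*$ from \cref{prop:min_w}, the continuity of $q \mapsto \delta\mu_q^W$ and $q \mapsto \delta\mu_q^V$ from \cref{lemma:continuity_varifold_continuous}, the continuity of $\Lambda_q$ through the adjoint and Riesz maps, invertibility from the coercivity bound $\langle T_q w, w\rangle_W \ge \lambda \Vert w\Vert_W^2$, and finally composition with $\Vert\cdot\Vert_W^2$.

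The only difference is minor. You prove continuity of $q \mapsto T_q^{-1}$ directly through the resolvent identity and the uniform bound $\Vert T_q^{-1}\Vert \le 1/\lambda$, which even gives a local Lipschitz estimate. The paper instead invokes continuity of inversion on the open set of invertible operators. So a lower bound uniform in $q$ is convenient, but not actually required as your "main obstacle" remark suggests.
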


\begin{proof}
    Let $\delta \mu_q^W \in \mathcal{L}(W,H')$ and $\delta \mu_q^V \in \mathcal{L}(V,H')$ denote the restrictions of the first variation $\delta \mu_q$ to the spaces $W$ and $V$, respectively. 
    We recall that the coupling score is defined by $C_q(v,W) = \Vert w^* \Vert^2_W$, where $w^* \in W$ is given by:
    \begin{equation*}
        w^* = (\Lambda_q \delta \mu_q^W + \lambda \operatorname{id}_W)^{-1} \Lambda_q \delta \mu_q^V (v)
    \end{equation*}
    with $\Lambda_q \eqdef K_W (K_H \delta \mu_q^W)^* \in \mathcal{L}(H',W)$. The explicit derivation of $w^*$ is provided in the proof of \cref{prop:min_w}.

    First, by the continuity of the mapping $q \mapsto \delta \mu_q^W$ (\cref{lemma:continuity_varifold_continuous}), of  the Riesz isomorphisms $K_H$ and $K_W$, and of the adjoint operator, it follows that $q \mapsto \Lambda_q$ is continuous. Consequently, the mapping $q \mapsto \Lambda_q \delta\mu_q^W + \lambda \id_W$ is also continuous. For any $w \in W$, we have:
    \begin{equation*}
        \langle \Lambda_q \delta \mu_q^W w, w \rangle_W = \langle (K_H \delta \mu_q^W)^* \delta \mu_q^W w, w \rangle_{W',W} = \langle \delta \mu_q^W w, K_H \delta \mu_q^W w \rangle_{H',H} = \| \delta \mu_q^W w \|_{H'}^2 \ge 0.
    \end{equation*}
    Consequently, for any $\lambda > 0$, $\langle (\Lambda_q \delta \mu_q^W  + \lambda \operatorname{id}_W) w, w \rangle_W \ge \lambda \|w\|_W^2$. The coercivity ensures that $(\Lambda_q \delta \mu_q^W + \lambda \operatorname{id}_W)$ is an invertible operator.
    The set of invertible operators $W \to W$ forms an open subset of $\mathcal{L}(W,W)$, and the inversion mapping $M \mapsto M^{-1}$ is continuous on this set. Therefore, the mapping $q \mapsto (\Lambda_q \delta\mu_q^W + \lambda \operatorname{id}_W)^{-1}$ is continuous.
    By composition of continuous operators, the mapping $ q \mapsto L_q \in \mathcal{L}(V,W)$ defined by:
    \begin{equation*}
        L_q = (\Lambda_q \delta \mu_q^W  + \lambda \operatorname{id}_W)^{-1} \Lambda_q \delta \mu_q^V
    \end{equation*}
    is continuous.
    
    Since for a fixed $v \in V$, the evaluation map $L_q \mapsto L_q(v)$ is continuous, as is the norm on $W$, then by composition, the coupling score $q \mapsto C_q(v,W)$ is continuous.
\end{proof}

Finally, we prove \cref{prop:existence_minimizer} that establishes the existence of a minimizer for the optimal control problem.

\begin{proof}
    Let $(v^n, w^n)_{n \in \mathbb{N}}$ be a minimizing sequence for $J_2$. Therefore, for $n$ sufficiently large, $J_2(v^n, w^n) \leq J_2(0,0) < \infty$. Since the data attachment term $\mathcal{D}$ and the coupling score $C_q$ are non-negative, the sequence $(v^n,w^n)_n$ is bounded in $L^2([0,1], V \times W)$.
    Therefore, there exists a subsequence $(v^{n_k},w^{n_k})_{n_k}$ converging weakly to some limit $(v^*, w^*) \in L^2([0,1], V \times W)$. Let $\varphi^{n_k}$ and $\varphi^*$ be the flows generated by $(v^{n_k} + w^{n_k})$ and $(v^* + w^*)$ respectively, and let $q^{n_k}_t = \varphi^{n_k}_t \circ q_S$ and $q^*_t = \varphi^*_t \circ q_S$ be the corresponding trajectories. In particular, since $q_S(I)$ is compact, for any $t \in [0,1]$ and any $n_k \in \mathbb{N}$, $q_t^{n_k}(I)$ and $q^*_t(I)$ are also compact.

    Since the RKHS $V$ and $W$ are continuously embedded in $C_0^2(\R^d,\R^d)$, and the data
    attachment term $\mathcal{D}$  is lower semi-continuous, \cite[Theorem 1]{arguillere2014shapedeformationanalysisoptimal} guarantee that $q^{n_k}$ converges uniformly to $q^*$ on $[0,1]$ and we have the following lower semi-continuity inequality:

    \begin{equation}\label{eq:lsc_lddmm}
        \int_0^1 \dfrac{1}{2} \left( \| v_t^* \|_V^2 + \| w_t^* \|_W^2 \right)\, dt + \mathcal{D}(q^*_1) \leq \liminf_{k \to \infty}  \int_0^1 \dfrac{1}{2} \left( \| v_t^{n_k} \|_V^2 + \| w_t^{n_k} \|_W^2 \right)\, dt + \mathcal{D}(q^{n_k}_1).
    \end{equation} 

    Let us now show that $L_{q^{n_k}}(v^{n_k}) \rightharpoonup L_{q^*}(v^*)$ weakly in $L^2([0,1], W)$. Because $q^{n_k}$ converges uniformly to $q^*$, the set $A = \{q^{n_k}\}_{k \in \mathbb{N}} \cup \{ q^* \}$ is compact in $C([0,1],C^1(I,\mathbb{R}^d))$. Let us introduce the continuous mapping $f \colon [0,1] \times C([0,1],C^1(I,\mathbb{R}^d)) \to C^1(I,\mathbb{R}^d)$ defined by $(t,q) \mapsto q_t$. Therefore, $\{ q_t^{n_k}\}_{t,k} \cup \{q_t^* \}_t = f([0,1] \times A)$ is a compact set of $C^1(I,\R^d)$ as the image of a compact set under a continuous mapping. Consequently, since $q \mapsto L_q$ is continuous (\cref{lemma:continuity_correlation}), it is uniformly continuous on $f([0,1] \times A)$. As a result, the sequence of bounded operators $t \mapsto L_{q_t^{n_k}}$ converges uniformly to $t \mapsto L_{q_t^*}$ on $[0,1]$.
    
    For any test function $\psi \in L^2([0,1], W)$, we can decompose the inner product difference as:
    \begin{equation*}
        \int_0^1 \langle L_{q^{n_k}_t}(v^{n_k}_t) - L_{q^*_t}(v_t^*), \psi_t \rangle_W \, dt = \int_0^1 \langle (L_{q^{n_k}_t} - L_{q^*_t})(v^{n_k}_t), \psi_t \rangle_W \, dt + \int_0^1 \langle v^{n_k}_t - v^*_t, (L_{q^*_t})^* \psi_t \rangle_V \, dt.
    \end{equation*}
    By the Cauchy-Schwarz inequality inequality, the first term tends to $0$ due to the convergence $L_{q^{n_k}_t} \to L_{q^*_t}$ and the boundedness of the sequence $v^{n_k}$ in $L^2$. The second term tends to $0$ due to the weak convergence $v^{n_k} \rightharpoonup v^*$ in $L^2([0,1], V)$. Thus, $L_{q^{n_k}}(v^{n_k}) \rightharpoonup L_{q^*}(v^*)$ in $L^2([0,1], W)$. 
    
    By the weak lower semi-continuity of the norm, we directly obtain:
    \begin{equation} \label{eq:lsc_corr}
        \int_0^1 \|L_{q^*_t}(v^*_t)\|_W^2 \, dt \le \liminf_{k \to \infty} \int_0^1 \|L_{q^{n_k}_t}(v^{n_k}_t)\|_W^2 \, dt.
    \end{equation}
    Summing inequalities \eqref{eq:lsc_lddmm} and \eqref{eq:lsc_corr}, and recalling that $C_{q_t}(v_t,W) = \|L_{q_t}(v_t)\|_W^2$, we conclude that $J_2(v^*, w^*) \le \liminf_{k \to \infty} J_2(v^{n_k}, w^{n_k})$. Since $(v^{n_k}, w^{n_k})_{n_k}$ is a minimizing sequence, it follows that $J_2(v^*, w^*) = \inf J_2(v,w)$, which proves the existence of a global minimizer.
\end{proof}

\subsection{Proof of \cref{prop:static_to_dynamic}}\label{app:proof_static_to_dynamic}

\begin{proposition}[Relation between minimizers in static and dynamic setting] Assume that the data attachment $\mathcal{D}$ is invariant by reparameterization.
    Let $(v^*, w^*) \in L^2([0,1], V \times W)$ be a global minimizer of $J_2$, and let $q^*$ be the associated optimal trajectory starting from $q_0 = q_S$. For almost every $t \in [0,1]$, the pair $(v_t^*, w_t^*)$ is a solution to the static minimization problem:
    \begin{equation} \label{eq:static_split_prop}
        \min_{(v,w) \in V \times W} \dfrac{1}{2} \left( \|v\|_V^2 + \|w\|_W^2 + C_{q_t^*}(v, W) \right)
    \end{equation}
    subject to the constraint on the first variations:
    \begin{equation*}
        \delta \mu_{q_t^*} (v) + \delta \mu_{q_t^*} (w) = \delta \mu_{q_t^*} (v_t^*) + \delta \mu_{q_t^*} (w_t^*).
    \end{equation*}
\end{proposition}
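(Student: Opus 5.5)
We argue by contradiction, comparing the cost of $(v^*,w^*)$ with that of a competitor built from pointwise-in-time better static solutions. For each $t$ the static problem minimizes a strictly convex quadratic functional on the closed affine subspace
\begin{equation*}
\mathcal{A}_t=\{(v,w)\in V\times W \mid \delta\mu_{q^*_t}(v)+\delta\mu_{q^*_t}(w)=\delta\mu_{q^*_t}(v^*_t)+\delta\mu_{q^*_t}(w^*_t)\}.
\end{equation*}
This subspace is closed because $\delta\mu_{q}$ is continuous by \cref{prop:continuity_first_var}, and $C_q(\cdot,W)=\Vert L_q(\cdot)\Vert_W^2$ is a continuous quadratic form (proof of \cref{lemma:continuity_correlation}). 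So the static problem has a unique minimizer $(\bar v_t,\bar w_t)$, obtained from a projection-type formula built from $L_{q^*_t}$ and $\delta\mu_{q^*_t}$. By \cref{lemma:continuity_varifold_continuous} and the continuity of $t\mapsto q^*_t$ in $C^1$, this formula depends measurably on $t$. Since $(v^*_t,w^*_t)\in\mathcal{A}_t$, we also have $\Vert(\bar v_t,\bar w_t)\Vert\lesssim\Vert(v^*_t,w^*_t)\Vert$, hence $(\bar v,\bar w)\in L^2([0,1],V\times W)$. Suppose the static cost of $(\bar v_t,\bar w_t)$ is strictly below that of $(v^*_t,w^*_t)$ on a set $E$ of positive measure. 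Set $(\tilde v_t,\tilde w_t)=(\bar v_t,\bar w_t)$ on $E$ and $(v^*_t,w^*_t)$ elsewhere.

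The core step is to show that the trajectory $\tilde q$ generated by $\tilde v+\tilde w$ is a time-dependent reparameterization of $q^*$. Put $u_t=(\tilde v_t+\tilde w_t)-(v^*_t+w^*_t)$. The constraint gives $\delta\mu_{q^*_t}(u_t)=0$, and the same holds for the continuous first variation $\delta\tilde\mu$, since the constraint is understood for the representation used in the dynamics. Then \cref{prop:continuous_invariance}, or more precisely its proof, gives $u_t\circ q^*_t=\alpha_t\,\tau_{q^*_t}$ with $\alpha_t$ vanishing at the endpoints when $I=[0,1]$. This requires $q^*_t$ to have finitely many self-intersections, which I would add as an assumption or inherit from $q_S$, since diffeomorphisms preserve this property. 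I would then solve on $I$ the ODE
\begin{equation*}
\dot\psi_t=\alpha_t(\psi_t)/\Vert (q^*_t)'(\psi_t)\Vert,\qquad \psi_0=\id .
\end{equation*}
Because $\alpha_t$ vanishes at the boundary, $\psi_t$ is an increasing homeomorphism of $I$. The chain rule gives $\frac{d}{dt}(q^*_t\circ\psi_t)=(v^*_t+w^*_t+u_t)\circ(q^*_t\circ\psi_t)$, and uniqueness of solutions of $\dot q=(\tilde v_t+\tilde w_t)\circ q$ in $Q$ then yields $\tilde q_t=q^*_t\circ\psi_t$.

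By reparameterization invariance of varifolds (\cref{exam:smooth_curve}), $\delta\tilde\mu$ and hence $C_{q}(\cdot,W)$ depend only on the oriented image, so $C_{\tilde q_t}=C_{q^*_t}$. By assumption $\mathcal{D}(\tilde q_1)=\mathcal{D}(q^*_1)$. Therefore $J_2(\tilde v,\tilde w)-J_2(v^*,w^*)$ equals the integral over $E$ of the difference of static costs, which is strictly negative. This contradicts global minimality, so for a.e.\ $t$ the pair $(v^*_t,w^*_t)$ is the static minimizer.

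The main obstacle is the reparameterization step. First, $\alpha_t=\langle u_t\circ q^*_t,\tau_{q^*_t}\rangle$ is only continuous in $s$ when $q^*_t$ is merely $C^1$, so the $\psi$-ODE may lack uniqueness and $\psi_t$ may not be $C^1$. I would try either to assume $q_S\in C^2$ (then $q^*_t\in C^2$, since the flow is $C^2$, and $\alpha_t$ is Lipschitz), or to avoid solving for $\psi$ and instead argue directly that the oriented images $\tilde q_t(I)$ and $q^*_t(I)$ coincide. The latter would be done via a Grönwall estimate on a distance between varifolds, using the $AC^2$ regularity of \cref{prop:evol_eq_first_var}. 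Second, the measurability of $t\mapsto(\bar v_t,\bar w_t)$ needs a short argument, which I would handle through the explicit linear-algebraic formula for the constrained minimizer.
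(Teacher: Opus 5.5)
Your proposal follows the same route as the paper's proof: argue by contradiction on a set of times of positive measure, replace the controls there by a strictly better static competitor, and use \cref{prop:continuous_invariance} to see that the velocity change is tangential, so the new trajectory is a reparameterization of $q^*$. Reparameterization invariance of $\mathcal{D}$ and of $C_q(\cdot,W)$ then makes $J_2$ strictly decrease.

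Your extra care fills in steps the paper's proof leaves unjustified: taking the unique static minimizer gives measurability and $L^2$ control, and you make $C_{\tilde q_t}=C_{q^*_t}$ and the self-intersection hypothesis explicit. You also build $\psi_t$ instead of asserting the reparameterization. The $C^2$ assumption is in fact unnecessary: any Carath\'eodory solution $\psi_t(s)$ of your parameter ODE makes $t\mapsto q^*_t(\psi_t(s))$ solve the Lipschitz equation $\dot y=(\tilde v_t+\tilde w_t)(y)$. Uniqueness there forces $q^*_t\circ\psi_t=\tilde q_t$, and together with continuity in $t$ and local injectivity of $q^*_t$ this determines $\psi_t$ uniquely.
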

\begin{proof}
Suppose there exists a subset $A \subset [0,1]$ of strictly positive Lebesgue measure such that for all $t \in A$, the pair $(v_t^*, w_t^*)$ does not minimize the static problem \eqref{eq:static_split_prop}. 
    
    Consequently, for each $t \in A$, there exists $(\tilde{v}_t, \tilde{w}_t) \in V \times W$ satisfying the constraint:
    \begin{equation} \label{eq:constraint_proof}
        \delta \mu_{q_t^*} (\tilde{v}_t + \tilde{w}_t) = \delta \mu_{q_t^*} (v_t^* + w_t^*)
    \end{equation}
    with a strictly lower cost:
    \begin{equation} \label{eq:cost_proof}
        \| \tilde{v}_t \|_V^2 + \| \tilde{w}_t \|_W^2 + C_{q_t^*}(\tilde{v}_t, W) < \| v_t^* \|_V^2 + \| w_t^* \|_W^2 + C_{q_t^*}(v_t^*, W).
    \end{equation}
    
    We construct a time-dependent vector field $(\hat{v}, \hat{w}) \in L^2([0,1], V \times W)$ by replacing the optimal fields strictly on the set $A$:
    \begin{equation*}
        (\hat{v}_t, \hat{w}_t) = 
        \begin{cases} 
            (\tilde{v}_t, \tilde{w}_t) & \text{if } t \in A \\
            (v_t^*, w_t^*) & \text{if } t \notin A
        \end{cases}
    \end{equation*}
    By definition, the new vector field $\hat{v}_t + \hat{w}_t$ coincides with $v_t^* + w_t^*$ outside of $A$, and satisfies the constraint \eqref{eq:constraint_proof} inside $A$. Thus, for almost every $t \in [0,1]$, we have:
\begin{equation} \label{eq:varifold_equality_proof}
    \delta \mu_{q_t^*}(\hat{v}_t + \hat{w}_t) = \delta \mu_{q_t^*}(v_t^* + w_t^*)
\end{equation}

Thanks to the linearity of the first variation, it follows that $\delta \mu_{q^*_t}\left( (\hat{v}_t + \hat{w}_t) - (v_t^* + w_t^*) \right)=0$, which implies from \cref{prop:continuous_invariance} that the vector field $(\hat{v}_t + \hat{w}_t) - (v_t^* + w_t^*)$ is tangent to the curve $q_t^*$. In other words, $(\hat{v}_t + \hat{w}_t) \circ q^*_t$ and  $(v_t^* + w_t^*) \circ q^*_t$ only differs by a tangential component along the curve.

Let $t\mapsto \hat{q}_t$ be the trajectory of curves generated by the flow of the new vector field $(\hat{v}_t + \hat{w}_t)$ starting from $\hat{q}_0 = q_S$. The image of the curve induced by the flow of $\hat{v}_t + \hat{w}_t$ and $v^*_t + w^*_t$ is the same since they only differ by a tangential component. Consequently, for all $t \in [0,1]$, $\hat{q}_t$ is a reparameterization of $q_t^*$ and so $\mathcal{D}(q_1^*)=\mathcal{D}(\hat{q}_1)$ using that $\mathcal{D}$ is invariant by reparameterization.
Then, evaluating the energy of $(\hat{v},\hat{w})$ yields:
\begin{align*}
    \int_0^1  \|\hat{v}_t\|_V^2 + \|\hat{w}_t\|_W^2 + C_{\hat{q}_t}(\hat{v}_t, W)  dt 
    &= \int_{[0,1] \setminus A}  \|v_t^*\|_V^2 + \|w_t^*\|_W^2 + C_{q_t^*}(v_t^*, W)  dt \\
    &+ \int_{A}  \|\tilde{v}_t\|_V^2 + \|\tilde{w}_t\|_W^2 + C_{q_t^*}(\tilde{v}_t, W)  dt 
\end{align*}
Because the inequality \eqref{eq:cost_proof} holds strictly on $A$, and $A$ has a strictly positive Lebesgue measure, integrating this inequality implies that the new total dynamic energy is strictly lower:
\begin{equation*}
    J_2(\hat{v}, \hat{w}) < J_2(v^*, w^*)
\end{equation*}
This strictly contradicts the assumption that $(v^*, w^*)$ is a global minimizer of $J_2$, which concludes the proof.
\end{proof}

\section{Alternative formulation of the first variation}

Recall that for $\omega \in C_0^1(\R^d \times \mathbb{S}^{d-1},\R)$, the first variation $\delta \tilde{\mu}_q(v)$ is given by:
\begin{align*}
   \delta \tilde{\mu}_q(v) &= \int_I \left( \langle \nabla_x \omega ( q(s),\tau_q(s) ), v(q(s)) \rangle + \langle \nabla_{\tau} \omega ( q(s),\tau_q(s) ), (dv(q(s))\cdot \tau_q(s))^\perp \rangle \right) \Vert q'(s) \Vert ds  \\
    &\quad + \int_I \omega ( q(s),\tau_q(s) ) \langle dv(q(s)) \cdot \tau_q(s), \tau_q(s) \rangle \Vert q'(s) \Vert  ds 
\end{align*}
where $\tau_q(s) = \dfrac{q'(s)}{\Vert q'(s) \Vert}$ denotes the unit tangent vector at point $q(s)$ and $v(q(s))^\perp = v(q(s)) -  \langle v(q(s)),\tau_q(s) \rangle \tau_q(s)$ denote the normal components of the vector field along the curve. 

An integration by parts facilitates the interpretation of the first variation, as it provides an expression depending directly on $v$ rather than its differential. This requires the assumption $\omega \in C_0^2(\R^d \times \mathbb{S}^{d-1},\R)$.
We proceed by integration by parts on the last two terms of $\delta \tilde{\mu}_q(v)$.  In the following, we omit the parameter $s$ for readability.

Differentiating the decomposition $v\circ q = (v\circ q)^\perp + \langle v\circ q, \tau_q \rangle \tau_q$ with respect to $s$ yields:
\begin{equation*}
    (v\circ q)' = ((v\circ q)^\perp)' + \langle (v\circ q)', \tau_q \rangle \tau_q + \langle v\circ q, \tau_q' \rangle \tau_q + \langle v\circ q, \tau_q \rangle \tau_q'.
\end{equation*}
By definition, the normal projection of this derivative is $((v\circ q)')^\perp = (v\circ q)' - \langle (v\circ q)', \tau_q \rangle \tau_q$. Combining with the last equality gives:
\begin{equation*}
    (dv(q)\cdot q')^\perp = ((v\circ q)')^\perp = ((v\circ q)^\perp)' + \langle v\circ q, \tau_q' \rangle \tau_q + \langle v\circ q, \tau_q \rangle \tau_q'.
\end{equation*}
Using this equality and the fact that the spherical gradient $\nabla_{\tau} \omega$ belongs to the tangent space of the sphere (i.e., $\langle \nabla_{\tau} \omega, \tau_q \rangle = 0$), the inner product simplifies to:
 \begin{equation*}
     \langle \nabla_{\tau} \omega, ((v\circ q)')^\perp \rangle = \langle \nabla_{\tau} \omega, ((v\circ q)^\perp)' \rangle + \langle v\circ q, \tau_q \rangle \langle \nabla_{\tau} \omega, \tau_q' \rangle.
 \end{equation*}
We apply a first integration by parts.
\begin{align*}
    \int_I \langle \nabla_{\tau} \omega ( q,\tau_q ), (dv(q) \cdot q')^\perp \rangle ds &= \int_I \big( \langle \nabla_{\tau} \omega, (v\circ q^\perp)' \rangle + \langle v\circ q, \tau_q \rangle \langle \nabla_{\tau} \omega, \tau_q' \rangle \big) ds \\
    &=\Big[ \langle \nabla_{\tau} \omega ( q,\tau_q ), (v\circ q)^\perp \rangle \Big]_I \\
    &\quad - \int_I \left\langle \nabla^2_{x,\tau} \omega ( q,\tau_q )\tau_q, (v\circ q)^\perp \right\rangle \Vert q'\Vert ds \\
    &\quad -  \int_I \left\langle  \nabla^2_{\tau,\tau} \omega ( q,\tau_q )H_q , (v\circ q)^\perp \right\rangle \Vert q'\Vert ds\\
    &\quad + \int_I \langle \nabla_{\tau} \omega ( q,\tau_q), H_q \rangle \langle v\circ q, \tau_q \rangle \Vert q' \Vert ds.
\end{align*}
where  $H_q$ denotes the curvature and $\tau_q' = \Vert q' \Vert H_q$.

For the last term of $\delta \tilde{\mu}_q(v)$, we first rewrite the integrand as $\langle dv(q)\cdot \tau_q, \tau_q \rangle \Vert q' \Vert = \langle (v\circ q)', \tau_q \rangle= \dfrac{d}{ds}\langle v  \circ q, \tau_q \rangle - \langle v \circ q, \tau_q' \rangle$. Integrating by parts then yields:
\begin{align*}
    \int_I \omega ( q,\tau_q ) \langle (v\circ q)', \tau_q \rangle ds &= \int_I \omega ( q,\tau_q )  \dfrac{d}{ds}\langle v\circ q, \tau_q \rangle ds - \int_I \omega ( q,\tau_q )\langle v\circ q, \tau_q' \rangle ds \\
    &= \Big[ \omega ( q,\tau_q ) \langle v\circ q,\tau_q \rangle \Big]_I \\
    &\quad - \int_I \langle \nabla_x \omega ( q,\tau_q ) , \tau_q \rangle \langle v\circ q,\tau_q\rangle\Vert q' \Vert ds \\
    &\quad - \int_I \langle \nabla_{\tau} \omega ( q,\tau_q ) ,H_q \rangle \langle v\circ q, \tau_q\rangle \Vert q' \Vert ds \\
    &\quad - \int_I \omega ( q,\tau_q ) \langle v\circ q , H_q \rangle \Vert q' \Vert ds.
\end{align*}

Substituting these expressions back into the original expression of $\delta \tilde{\mu}_{q}(v)$ leads to:
\begin{align*}
    \delta \tilde{\mu}_q(v) &= \int_I \langle \nabla_x \omega ( q,\tau_q ), (v\circ q)^\perp \rangle \Vert q' \Vert ds \\
    &\quad - \int_I \left\langle \nabla^2_{x,\tau} \omega ( q,\tau_q )\tau_q + \nabla^2_{\tau,\tau} \omega ( q,\tau_q )H_q , (v\circ q)^\perp \right\rangle \Vert q'\Vert ds \\
    &\quad - \int_I \omega ( q,\tau_q ) \langle H_q, (v\circ q)^\perp \rangle \Vert q' \Vert ds \\
    &\quad + \Big[ \omega ( q,\tau_q ) \langle v \circ q, \tau_q \rangle + \langle \nabla_{\tau} \omega ( q,\tau_q ), (v\circ q)^\perp \rangle \Big]_I.
\end{align*}

Note that except for the boundary term, the first variation depends only on the orthogonal component of $v \circ q$. This is a consequence of the invariance by reparameterization of varifolds.

\section{Reminder on differential calculus in submanifolds of $\R^d$} \label{app:diff_submanifolds}

This section provides a brief review of differential calculus on submanifolds of $\R^d$ used throughout this paper. A detailed presentation of these results and their proofs can be found in~\cite{boumal2023intromanifolds}.

Let $M, N$ be smooth submanifolds embedded in $\R^d$.

\begin{definition}[Differentiability between submanifolds]
    The map $f : M \to N$ is $C^k$ at $p \in M$ if there exist $C^k$ charts $(U,\varphi)$ containing $p$ and $(V,\psi)$ containing $f(p)$ such that $f(U) \subset V$, and the composition $\psi \circ f \circ \varphi^{-1}$ is $C^k$ from $\varphi(U)$ to $\psi(V)$.
\end{definition}

To define the derivative of such mappings, we first need to formalize the notion of tangent vector. For $p \in M$, a tangent vector at $p$ is an equivalence class $[\gamma]$ of smooth curves $\gamma \colon (-\varepsilon,\varepsilon) \to M$ with $\gamma(0) = p$, where the equivalence relation is defined as follows: 
\begin{equation*}
    \gamma_1 \sim \gamma_2 \Longleftrightarrow \exists \text{ a chart } (U,\varphi) \text{ such that } (\varphi \circ \gamma_1)'(0) = (\varphi \circ \gamma_2)'(0).
\end{equation*}

We denote by $T_xM$ the tangent space to $M$ at $x \in M$ and $TM$ the tangent bundle of $M$.

\begin{definition}[Expression of the differential]
    Let $f \colon M \to N$ be a differentiable map and $p \in M$. The derivative of $f$ at $p$ is the linear map defined by:
    \begin{equation*}
        df(p) \colon [\gamma] \in T_pM \mapsto [f \circ \gamma] \in T_{f(p)}N.
    \end{equation*}
\end{definition}

Once the differential established, we can define the Riemannian gradient. For a submanifold embedded in $\R^d$, the Riemannian gradient defined over the submanifold is related to the classical ambient gradient.
We now assume that $M$ is an embedded submanifold of $\mathbb{R}^d$, endowed with the Riemannian metric induced by the ambient Euclidean inner product. 

\begin{definition}
Let $f \colon M \to \mathbb{R}$ be a differentiable map. The Riemannian gradient of $f$ is the unique vector field $\nabla_M f : M \to TM$ such that
\begin{equation}
    df(x) \cdot v = \langle \nabla_M f(x) ,v\rangle_x
\end{equation}
for all $(x,v) \in TM$. 
\end{definition}

We denote $\nabla f$ the standard gradient in $\R^d$ and $\operatorname{proj}_x \colon \mathbb{R}^d \to \mathbb{R}^d$ the orthogonal projector onto $T_x M$.

\begin{proposition}
    Let $f \colon M \to \mathbb{R}$ be a differentiable map. For any $x \in M$, the Riemannian gradient of $f$ is given by:
    \begin{equation*}
        \nabla_M f(x) = \operatorname{proj}_{x}(\nabla \tilde{f}(x))
    \end{equation*}
    where $\tilde{f}$ is any smooth extension of $f$ to a neighborhood of $M$ in $\mathbb{R}^d$, and $\nabla \tilde{f}(x)$ denotes its standard Euclidean gradient.
\end{proposition}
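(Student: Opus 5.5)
The proposition is the standard characterization of the Riemannian gradient on an embedded submanifold with the induced metric. The plan is to verify that $\operatorname{proj}_x(\nabla \tilde f(x))$ satisfies the defining property of $\nabla_M f(x)$, and then conclude by uniqueness.

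\textbf{Step 1 (the differential of $f$ through the extension).} Let $\tilde f$ be a smooth extension of $f$ to a neighbourhood $U$ of $M$, so that $\tilde f|_{M\cap U} = f$. Fix $x \in M$ and a tangent vector $v = [\gamma] \in T_xM$, where $\gamma \colon (-\varepsilon,\varepsilon) \to M$ is smooth with $\gamma(0)=x$. Since $M$ is embedded in $\R^d$, the class $[\gamma]$ is identified with the ambient velocity $\gamma'(0) \in \R^d$. By the expression of the differential recalled above, $df(x)\cdot v$ is the derivative of $f\circ\gamma$ at $0$. Because $\gamma$ takes values in $M$, we have $f\circ\gamma = \tilde f\circ\gamma$. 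The classical chain rule in $\R^d$ then gives
$$df(x)\cdot v = \langle \nabla \tilde f(x), \gamma'(0)\rangle = \langle \nabla\tilde f(x), v\rangle.$$

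\textbf{Step 2 (replacing the gradient by its projection).} Split $\nabla\tilde f(x) = \operatorname{proj}_x(\nabla\tilde f(x)) + n$, where $n \in (T_xM)^\perp$ because $\operatorname{proj}_x$ is the orthogonal projector onto $T_xM$. For $v \in T_xM$ we have $\langle n, v\rangle = 0$. Since the metric on $M$ is the restriction of the Euclidean inner product, this yields
$$df(x)\cdot v = \langle \operatorname{proj}_x(\nabla\tilde f(x)), v\rangle_x \quad \text{for all } v \in T_xM.$$
Moreover, $\operatorname{proj}_x(\nabla\tilde f(x))$ lies in $T_xM$ by construction.

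\textbf{Step 3 (uniqueness).} The inner product $\langle\cdot,\cdot\rangle_x$ on the finite-dimensional space $T_xM$ is nondegenerate, so the vector in $T_xM$ representing the linear form $df(x)$ is unique (Riesz). It is therefore equal to $\nabla_M f(x)$. This argument also shows that the result does not depend on which extension $\tilde f$ is chosen, since the left-hand side $df(x)\cdot v$ involves only $f$.

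The main point needing care is Step 1. It requires the identification of the abstract tangent vector $[\gamma]$, defined through charts, with the ambient vector $\gamma'(0)$. This is justified because the inclusion $M \hookrightarrow \R^d$ is a smooth embedding: its differential maps $[\gamma]$ injectively to $\gamma'(0)$, and this is exactly how $T_xM$ is regarded as a subspace of $\R^d$ when $\operatorname{proj}_x$ is defined. The existence of a smooth local extension $\tilde f$ (used implicitly in the statement) follows from a submanifold chart that straightens $M$ near $x$. The remaining steps are linear algebra.
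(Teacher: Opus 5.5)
Your proof is correct and is the standard argument (chain rule through the extension, discarding the normal component of $\nabla\tilde f(x)$ by orthogonality, then uniqueness of the representing vector in $T_xM$). The paper gives no proof of its own and defers to Boumal's book, whose proof is exactly this.

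One side remark is imprecise: for a merely differentiable $f$, a slice-chart extension is only as regular as $f$, not smooth. This is harmless, because the statement takes $\tilde f$ as given and your Step 1 only uses differentiability of $\tilde f$ at $x$.
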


Note that for a differentiable mapping $f \colon M \to \R$ there always exists a neighborhood $U$ of $ M$ and a differentiable function $\tilde{f} \colon U \to \R$ that is an extension of  $f$ \cite[Lemma 5.34]{smooth_manifold_lee}. 

\begin{example}[Application to the unit sphere $M = \mathbb{S}^{d-1}$]\label{app:exam_grad}
    Let $\mathbb{S}^{d-1} = \{ x \in \mathbb{R}^d \mid \|x\| = 1 \}$ denote the unit sphere. For any point $x \in \mathbb{S}^{d-1}$, the tangent space at $x$ is $T_x\mathbb{S}^{d-1} = \{ v \in \mathbb{R}^d \mid \langle v, x \rangle = 0 \}$. The orthogonal projection of any vector $w \in \mathbb{R}^d$ onto $T_x\mathbb{S}^{d-1}$ is given by:
    \begin{equation*}
        \operatorname{proj}_x(w) = w - \langle w, x \rangle x = (I_d - xx^T)w.
    \end{equation*}

    Let $f \colon \mathbb{S}^{d-1} \to \mathbb{R}$ be a differentiable function. To compute its gradient on $\mathbb{S}^{d-1}$, we consider an arbitrary differentiable extension $\tilde{f}$ defined on a neighborhood of the sphere. Following the previous proposition, the Riemannian gradient at $x \in \mathbb{S}^{d-1}$ is the projection of the Euclidean gradient of $\tilde{f}$ in $\mathbb{R}^d$:
    \begin{equation*}
        \nabla_{\mathbb{S}^{d-1}} f(x) = \operatorname{proj}_x(\nabla \tilde{f}(x)) = (I_d - xx^T) \nabla \tilde{f}(x).
    \end{equation*}

    A natural choice for such an extension on $\mathbb{R}^d \setminus \{0\}$ is $\tilde{f} \colon y \in \mathbb{R}^d \setminus \{0\} \mapsto f\left(\dfrac{y}{\|y\|}\right)$. 
    For this extension, we have $\langle \nabla \tilde{f}(x), x \rangle = 0$, which yields the equality $\nabla_{\mathbb{S}^{d-1}} f(x) = \nabla \tilde{f}(x)$. 
\end{example}
In this article, we often refer to this gradient as the spherical or tangential gradient, and we denote it $\nabla_{\tau}$.
Having established the first-order derivatives, we now focus on second-order calculus on manifolds. On a general smooth manifold $M$, there is a priori no canonical way to identify tangent spaces at different points. This lack of canonical isomorphism makes it impossible to differentiate a vector field by simply comparing its values across the manifold. To overcome this, one introduces a connection, which provides a rigorous rule for taking directional derivatives of vector fields. Let $\mathfrak{X}(M)$ denote the set of all differentiable vector fields on $M$. Recall that for $f \in C^\infty(M)$ and $X \in \mathfrak{X}(M)$, $X \cdot f \colon M \to \mathbb{R}$ denotes the directional derivative of $f$ along $X$. An affine connection on $M$ is an operator $\nabla \colon \mathfrak{X}(M) \times \mathfrak{X}(M) \to \mathfrak{X}(M)$, denoted $(X,Y) \mapsto \nabla_X Y$, that is $C^\infty(M)$-linear with respect to the first coordinate, $\mathbb{R}$-linear with respect to the second, and satisfies the Leibniz rule, meaning that for any $f \in C^\infty(M)$, $\nabla_X (fY) = f \nabla_X Y + (X \cdot f)Y$. In particular, if $M$ is a Riemannian manifold, there exists an unique connection that is compatible with the metric and symmetric, named the Levi-Civita connection.

\begin{definition}[Riemannian Hessian]
    Let $M$ be a Riemannian manifold and let $\nabla$ be its Levi-Cita connection. The Riemannian Hessian of the $C^2$ mapping $f \colon M \to \R$ at $x \in M$ is the linear map $\operatorname{Hess}_M f(x) \colon T_xM \to T_xM$ defined as follows, for $u \in T_xM$:
    \begin{equation*}
        \operatorname{Hess}_M f(x)[u] = \nabla_u \operatorname{grad}_M f(x).
    \end{equation*}
\end{definition}
In the previous definition, we denoted the gradient $\operatorname{grad}_M$ instead of $\nabla_M$ to prevent confusion with the connection.

For embedded submanifolds, we can benefit from the ambient metric to compute the Hessian, in the same fashion as we did for the gradients.

\begin{proposition}[Riemannian Hessian on submanifolds]\label{app:hessian}
    Let $M$ be a Riemannian submanifold embedded in $\mathbb{R}^d$. Consider a $C^2$ function $f \colon M \to \mathbb{R}$. Let $\bar{G}$ be a smooth extension of $\nabla_M f$, that is, $\bar{G}$ is any smooth vector field defined on a neighborhood of $M$ in the embedding space such that $\bar{G}(x) = \nabla_M f(x)$ for all $x \in M$. Then, for $u \in T_xM$,
    \begin{equation*}
        \operatorname{Hess}_M f(x)[u] = \operatorname{proj}_x(d\bar{G}(x)[u]).
    \end{equation*}
\end{proposition}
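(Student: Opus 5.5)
The plan is to derive the formula from the definition of the Levi-Civita connection on an embedded Riemannian submanifold. For a Riemannian submanifold $M\subset\R^d$ with the induced metric, the Levi-Civita connection is the tangential projection of the ambient Euclidean directional derivative: for $U,V\in\mathfrak{X}(M)$ and any smooth extension $\bar V$ of $V$ to a neighborhood of $M$,
\begin{equation*}
\nabla_U V(x)=\operatorname{proj}_x\big(d\bar V(x)[U(x)]\big).
\end{equation*}
Once this is established, applying it with $V=\nabla_M f$ and $\bar V=\bar G$ directly gives $\operatorname{Hess}_M f(x)[u]=\nabla_u \nabla_M f(x)=\operatorname{proj}_x(d\bar G(x)[u])$, which is the claim.

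The first step is to show that the right-hand side is well defined, i.e.\ independent of the chosen extension $\bar V$. If $\bar V_1,\bar V_2$ both extend $V$, then $\bar V_1-\bar V_2$ vanishes on $M$. Taking a curve $\gamma$ in $M$ with $\gamma(0)=x$ and $\gamma'(0)=u\in T_xM$, we get $d(\bar V_1-\bar V_2)(x)[u]=\tfrac{d}{dt}\big|_{t=0}(\bar V_1-\bar V_2)(\gamma(t))=0$. This argument also shows that the value depends only on $u=U(x)$.

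The second step is to verify that $D_UV:=\operatorname{proj}(d\bar V[U])$ satisfies the axioms of an affine connection. Function-linearity in $U$ holds because $d\bar V(x)$ is linear. $\R$-linearity in $V$ is immediate. For the Leibniz rule, take an extension $\bar f$ of $f$; then $\bar f\bar V$ extends $fV$, and
\begin{equation*}
d(\bar f\bar V)[U]=(U\cdot f)\bar V+f\,d\bar V[U].
\end{equation*}
Projecting and using $\operatorname{proj}V=V$ gives the rule.

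The third step, which I expect to be the main point, is to check symmetry and metric compatibility, so that uniqueness of the Levi-Civita connection identifies $D$ with $\nabla$.

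For metric compatibility, differentiate $\langle \bar V,\bar W\rangle$ along $U$:
\begin{equation*}
U\langle V,W\rangle=\langle d\bar V[U],W\rangle+\langle V,d\bar W[U]\rangle.
\end{equation*}
Since $W$ and $V$ are tangent, their inner products with the normal components of $d\bar V[U]$ and $d\bar W[U]$ vanish. So we may insert the projections, which yields $\langle D_UV,W\rangle+\langle V,D_UW\rangle$.

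For symmetry, compute $D_UV-D_VU=\operatorname{proj}\big(d\bar V[\bar U]-d\bar U[\bar V]\big)$. The expression inside the projection is the ambient Lie bracket $[\bar U,\bar V]$. Because $\bar U$ and $\bar V$ are tangent to $M$ along $M$, this bracket restricted to $M$ equals $[U,V]$, which is itself tangent, so the projection leaves it unchanged. I would justify this last fact by noting that the flows of tangent extensions preserve $M$ locally, or alternatively by a computation in a local chart.

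With these three steps, uniqueness gives $D=\nabla$, and the proposition follows by taking $V=\nabla_M f$ and its extension $\bar G$.
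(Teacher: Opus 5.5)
Your argument is correct and is the standard one. The paper gives no proof of its own and refers to Boumal's book, where this result is obtained exactly as you propose: one shows that $\operatorname{proj}_x(d\bar V(x)[u])$ defines a well-defined, symmetric, metric-compatible connection on $M$, identifies it with the Levi-Civita connection by uniqueness, and then takes $V=\nabla_M f$ with extension $\bar G$. The one step you only sketch, that $[\bar U,\bar V]$ restricted to $M$ is tangent and coincides with $[U,V]$, is the usual naturality of the Lie bracket under the inclusion $M\hookrightarrow\R^d$, so nothing is missing.
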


\begin{example}[Application to the unit sphere $M = \mathbb{S}^{d-1}$]
    Consider a smooth function $f \colon \mathbb{S}^{d-1} \to \mathbb{R}$ and an extension of $f$ defined by $\tilde{f} \colon y \in \mathbb{R}^d \setminus \{0\} \mapsto f\left(\dfrac{y}{\|y\|}\right)$. 
    As established in \cref{app:exam_grad}, the Riemannian gradient coincides with the Euclidean gradient:
    \begin{equation*}
        \operatorname{grad}_{\mathbb{S}^{d-1}} f(x) = \nabla \tilde{f}(x).
    \end{equation*}
    This provides a natural extension $\tilde{G}$ of the Riemannian gradient $\operatorname{grad}_{\mathbb{S}^{d-1}} f$ to the ambient neighborhood, namely $\tilde{G}(x) = \nabla \tilde{f}(x)$. 
    The differential of this extension in the direction $u \in T_x\mathbb{S}^{d-1}$ is given by $d\tilde{G}(x)[u] = \nabla^2 \tilde{f}(x)u$, where $\nabla^2 \tilde{f}(x)$ is the Euclidean Hessian matrix. 
    Applying \cref{app:hessian}, the Riemannian Hessian is the orthogonal projection of this differential onto $T_x\mathbb{S}^{d-1}$:
    \begin{align*}
        \operatorname{Hess}_{\mathbb{S}^{d-1}} f(x)[u] &= \operatorname{proj}_x \big( d\tilde{G}(x)[u] \big) \\
        &= (I_d - xx^T)\nabla^2 \tilde{f}(x)u.
    \end{align*}
\end{example}

Finally, we recall some relations between differentiability and Lipschitz continuity.

\begin{proposition}[Lipschitz continuity of $f$]
    If $f \colon M \to \mathbb{R}$ has a continuous gradient, then $f$ is $L$-Lipschitz continuous if and only if for all $x\in M$, $\Vert \operatorname{grad}_M f(x) \Vert \leq L$.
\end{proposition}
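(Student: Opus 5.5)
We will prove the statement for $M$ connected and endowed with its Riemannian (geodesic) distance
\[
d_M(x,y)=\inf\Big\{\textstyle\int_0^1\Vert\gamma'(t)\Vert\,dt \;\Big|\; \gamma \text{ piecewise } C^1 \text{ in } M,\ \gamma(0)=x,\ \gamma(1)=y\Big\}.
\]
This is the metric for which the equivalence is sharp. For the ambient Euclidean distance, only the direction ``Lipschitz $\Rightarrow$ bounded gradient'' survives in general, and we will say where the other direction breaks. Both directions reduce, through the definition of the Riemannian gradient $df(x)\cdot u=\langle \nabla_M f(x),u\rangle$, to the one-variable fundamental theorem of calculus along curves.

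\emph{($\Leftarrow$)} Assume $\Vert \nabla_M f\Vert\le L$ on $M$. Fix $x,y\in M$ and a piecewise $C^1$ curve $\gamma$ in $M$ joining them. The chain rule for maps between submanifolds, with $f\circ\gamma$ piecewise $C^1$ because $\nabla_M f$ is continuous, gives
\[
f(y)-f(x)=\int_0^1 \langle \nabla_M f(\gamma(t)),\gamma'(t)\rangle\,dt .
\]
By Cauchy--Schwarz, $|f(y)-f(x)|\le L\int_0^1\Vert\gamma'\Vert\,dt$. Taking the infimum over all such curves yields $|f(y)-f(x)|\le L\,d_M(x,y)$.

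\emph{($\Rightarrow$)} Assume $f$ is $L$-Lipschitz and fix $x\in M$. If $\nabla_M f(x)=0$ there is nothing to prove, so suppose not and set $u=\nabla_M f(x)/\Vert\nabla_M f(x)\Vert\in T_xM$. By the definition of tangent vectors there is a smooth curve $\gamma$ in $M$ with $\gamma(0)=x$ and $\gamma'(0)=u$. Its length on $[0,t]$ is $\int_0^t\Vert\gamma'\Vert=t+o(t)$, so $d_M(x,\gamma(t))\le t+o(t)$. The same bound holds for the Euclidean distance, since $\Vert\gamma(t)-x\Vert\le t+o(t)$. We then compute
\[
\Vert\nabla_M f(x)\Vert=df(x)\cdot u=\lim_{t\to0^+}\frac{f(\gamma(t))-f(x)}{t}\le \limsup_{t\to 0^+}\frac{L\,(t+o(t))}{t}=L .
\]

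The main obstacle is not computational but one of precision: fixing which distance ``$L$-Lipschitz'' refers to. If the Euclidean distance is intended, the ($\Leftarrow$) argument only gives $|f(y)-f(x)|\le L\,d_M(x,y)$. Since $d_M\ge\Vert x-y\Vert$ with strict inequality in general (for instance on a curved or disconnected $M$), this does not yield a Euclidean Lipschitz bound. I would therefore state the result with $d_M$ and connected $M$, and add a remark that ($\Rightarrow$) holds for either metric.

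A secondary technical point is the use of piecewise $C^1$ curves in the definition of $d_M$. This requires applying the chain rule on each smooth piece and summing, which is routine.
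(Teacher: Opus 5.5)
Your proof is correct. The paper gives no argument of its own for this recalled fact and only points to Boumal's book. Your two steps are the standard proof found there: for ($\Leftarrow$), integrate $\langle \nabla_M f(\gamma(t)),\gamma'(t)\rangle$ along piecewise $C^1$ curves and take the infimum of lengths; for ($\Rightarrow$), test the Lipschitz bound along a curve whose initial velocity points in the direction of $\nabla_M f(x)$. Your reading of ``$L$-Lipschitz'' with respect to the Riemannian distance $d_M$ is the right one, since the equivalence fails for the ambient Euclidean distance. Connectedness is not even essential if you set $d_M=+\infty$ between different components.
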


\begin{proposition}[Lipschitz continuity of $\operatorname{grad}_M f$]
    If $f \colon M \to \mathbb{R}$ is twice continuously differentiable, then $\operatorname{grad}_M f$ is $L$-Lipschitz continuous if and only if the operator norm of $\operatorname{Hess}_M f(x)$ is bounded by $L$ for all $x$, that is, for all $x\in M$, $\Vert \operatorname{Hess}_M f(x) \Vert \leq L$.
\end{proposition}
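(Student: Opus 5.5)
The statement to prove is the last one in the appendix: the Lipschitz characterization of $\operatorname{grad}_M f$ through the operator norm of $\operatorname{Hess}_M f$. The plan mirrors the Euclidean proof, with straight segments replaced by curves on $M$ and derivatives taken along those curves. I take the distance on $M$ to be the Riemannian (geodesic) distance, and I read Lipschitz continuity of the vector field through parallel transport: for minimizing geodesics $\gamma$ from $x$ to $y$,
\begin{equation*}
    \Vert P_{\gamma}^{-1}\operatorname{grad}_M f(y)-\operatorname{grad}_M f(x)\Vert\le L\, d(x,y).
\end{equation*}
Here $P_{\gamma}$ denotes parallel transport along $\gamma$ for the Levi-Civita connection, which is an isometry between tangent spaces.

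\textbf{Sufficiency.} Assume $\Vert \operatorname{Hess}_M f(z)\Vert\le L$ for every $z$. Let $\gamma:[0,1]\to M$ be a minimizing geodesic from $x$ to $y$, so $\Vert\gamma'\Vert\equiv d(x,y)$. Set
\begin{equation*}
    G(t)=P_{0\to t}^{-1}\operatorname{grad}_M f(\gamma(t))\in T_xM .
\end{equation*}
Parallel transport intertwines covariant and ordinary derivatives, so
\begin{equation*}
    G'(t)=P_{0\to t}^{-1}\nabla_{\gamma'(t)}\operatorname{grad}_M f=P_{0\to t}^{-1}\operatorname{Hess}_M f(\gamma(t))[\gamma'(t)].
\end{equation*}
By isometry, $\Vert G'(t)\Vert\le L\Vert\gamma'(t)\Vert$. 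Integrating over $[0,1]$ gives $\Vert G(1)-G(0)\Vert\le L\,d(x,y)$, which is the Lipschitz estimate.

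\textbf{Necessity.} Assume the Lipschitz bound. Fix $x$ and a unit vector $u\in T_xM$, and let $\gamma(t)=\operatorname{Exp}_x(tu)$. For small $t$, $\gamma$ is minimizing with $d(x,\gamma(t))=t$, so
\begin{equation*}
    \Vert (G(t)-G(0))/t\Vert\le L .
\end{equation*}
Letting $t\to0$ gives $\Vert\operatorname{Hess}_M f(x)[u]\Vert\le L$, using the formula for $G'(0)$ from the sufficiency step. Taking the supremum over unit $u$ yields the bound on the operator norm.

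The main obstacle is the existence of minimizing geodesics in the sufficiency direction, which requires completeness. For a non-complete $M$ the statement only holds locally, or with $d$ defined as an infimum over piecewise smooth curves. In the latter case I would apply the same integration argument to any curve: along non-geodesic curves, parallel transport still commutes with differentiation, so each curve $c$ joining $x$ and $y$ yields a bound of $L$ times its length. Taking the infimum of the lengths then gives the result, with the transport understood along that curve.

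For the unit sphere used in the paper, completeness holds by compactness. Moreover, by \cref{app:hessian} the Hessian is $(I_d-xx^T)\nabla^2\tilde f(x)$, so the bound can be checked directly. This is the case actually needed for the Lipschitz constants of $\nabla_\tau\omega$ in the proofs above.
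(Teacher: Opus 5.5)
The paper does not prove this statement itself but defers to Boumal's book. Your argument is correct and is essentially the standard proof given there: differentiate $t\mapsto P_{0\to t}^{-1}\operatorname{grad}_M f(\gamma(t))$, whose derivative is the transported Hessian applied to $\gamma'(t)$, integrate along a minimizing geodesic for sufficiency, and pass to the limit along short (hence minimizing) geodesics for necessity.

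Your closing ``main obstacle'' is not actually one. With the minimizing-geodesic definition you adopted, no completeness is needed, since the condition is vacuous for pairs not joined by a minimizing geodesic. Your suggested ``infimum over curves'' fix also does not make sense as written, because the transported vector $P_c^{-1}\operatorname{grad}_M f(y)$ itself depends on the curve $c$.
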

\section{Reminder on geodesic shooting}\label{sec:geodesic_shooting}

This appendix provides further details about the large deformation framework \cite{LDDMM} recalled in \cref{sec:cons_reg_prob}. The variational problem 

\begin{align}\label{eq:app_opt_cont}
   \min_{v \in L^2([0,1],V)} J(v) &= \int_0^1 \dfrac{1}{2} \Vert v_t \Vert_V^2 dt + \mathcal{D}(q_1) \\
    \text{s.t.} \quad & 
    \begin{cases}
        \dot{q}_t &= v_t \cdot q_t \\
        q_0 &= q^{(0)}
    \end{cases} \notag
\end{align}
can be interpreted as an optimal control problem where $v$ plays the role of the control. The Pontryagin Maximum Principle (PMP) provides a characterization of the solution to this variational problem. Introducing the Hamiltonian $H$, where $p \in T_q^*Q$ is a costate (or momentum) variable:
\begin{equation*}
    H(q,p,v) = \langle p, v \cdot q \rangle - \dfrac{1}{2} \Vert v \Vert_V^2
\end{equation*}
If $v$ is an optimal solution of problem \eqref{eq:app_opt_cont}, then there exists a time-dependent costate $p_t$ satisfying the Hamiltonian equations:
\begin{equation*} \label{eq:hamilt_eq}
    \begin{cases}
        \dot{q}_t &= \partial_p H(q_t,p_t,v_t) \\
        \dot{p}_t &= - \partial_q H(q_t,p_t,v_t) \\
        0 &= \partial_v H(q_t,p_t,v_t)
    \end{cases}
    \quad \Longleftrightarrow \quad
    \begin{cases}
        \dot{q}_t &= v_t \cdot q_t \\
        \dot{p}_t &= - (\partial_q \xi_{q_t}(v_t))^* p_t \\
        v_t &= K_V \xi_{q_t}^* p_t
    \end{cases}
\end{equation*}
Since the Hamiltonian remains constant with respect to time along optimal trajectories (geodesics), it follows that:
\begin{equation*}
    \int_0^1 \dfrac{1}{2} \Vert v_t \Vert^2_V dt = \dfrac{1}{2} \Vert v_0 \Vert^2_V = \dfrac{1}{2} \Vert K_V \xi_{q_0}^* p_0 \Vert^2_V
\end{equation*}
The costate variable thus enables us to perform geodesic shooting, which simplifies the previous energy minimization problem to:
\begin{align}
\label{eq:geod_shoot}
   \min_{p_0 \in T_{q_0}^*Q} E(p_0) &= \dfrac{1}{2} \Vert K_V \xi_{q_0}^* p_0 \Vert_V^2 + \mathcal{D}(q_1) \\
   \text{s.t.} \quad & 
    \begin{cases}
        \dot{q}_t &= v_t \cdot q_t \\
        \dot{p}_t &= - (\partial_q \xi_{q_t}(v_t))^* p_t
    \end{cases} \notag 
\end{align}

\clearpage
\addcontentsline{toc}{section}{References}
\begingroup
\raggedright 
\printbibliography
\endgroup

\end{document}